\newtoks\prt
\numberwithin{equation}{section}
\newtheorem{thm}{Theorem}[section]
\newtheorem{question}[thm]{Question}
\newtheorem{fact}[thm]{Fact}
\newtheorem{lemma}[thm]{Lemma}
\newtheorem{cor}[thm]{Corollary}
\newtheorem{example}[thm]{Example}
\theoremstyle{definition}
\newtheorem{remark}[thm]{Remark}
\def\eqn#1$$#2$${\begin{equation}\label#1#2\end{equation}}
\def\fra{\mathfrak{A}}
\def\A{\mathcal A}
\def\C{\mathcal C}
\def\F{\mathcal F}
\def\ms{\mathcal S}
\def\M{\mathcal M}
\def\ce{\mathbb C}
\def\lin{Lindel\"of}
\def\co{\operatorname{co}}
\def\aco{\operatorname{aco}}
\def\ep{\varepsilon}
\def\en{\mathbb N}
\def\ef{\mathbb F}
\def\er{\mathbb R}
\def\qe{\mathbb Q}
\def\r{|}
\def\ov{\overline}
\def \ext {\operatorname{ext}}
\def \reg {\partial _{\kern1pt\text{reg}}}
\def\hom{\operatorname{hom}}
\def\ahom{\operatorname{ahom}}
\def\odd{\operatorname{odd}}
\def\dd{\operatorname{d}}
\def\di{\,\mbox{\rm d}}
\newcommand{\norm}[1]{\left\|#1\right\|}
\newcommand{\abs}[1]{\left| #1  \right|}
\renewcommand{\Re}{\operatorname{Re}}
\renewcommand{\Im}{\operatorname{Im}}
\newcommand{\setsep}{;\,}
\newcommand{\fr}{Fr\'echet\ }
\begin{document}

\title{Baire classes of affine vector-valued functions}

\author{Ond\v{r}ej F.K. Kalenda and Ji\v r\'\i\ Spurn\'y}

\address{ Charles University in Prague\\
Faculty of Mathematics and Physics\\Department of Mathematical Analysis \\
Sokolovsk\'{a} 83, 186 \ 75\\Praha 8, Czech Republic}
\email{kalenda@karlin.mff.cuni.cz}
\email{spurny@karlin.mff.cuni.cz}

\subjclass[2010]{46B25; 46A55; 26A21; 54H05}

\keywords{simplex; $L_1$-predual; vector-valued Baire function; strongly affine function; Pettis integral}

\thanks{Our investigation was supported by the Research grant GA\v{C}R P201/12/0290. The second author was also
supported by The Foundation of Karel Jane\v{c}ek for Science and Research.}

\begin{abstract}
We investigate Baire classes of strongly affine mappings with values in Fr\'echet spaces.  We show, in particular, that the validity of the vector-valued Mokobodzki's result on affine functions of the first Baire class is related to the approximation property of the range space. We further extend several results known for scalar functions on Choquet simplices or on dual balls of $L_1$-preduals to the vector-valued case. This concerns, in particular, affine classes of strongly affine Baire mappings, the abstract Dirichlet problem and the weak Dirichlet problem for Baire mappings. Some of these results have weaker conclusions than their scalar versions. We also establish an affine version of the Jayne-Rogers selection theorem.
\end{abstract}

\maketitle


\section{Introduction}

Investigation of Baire hierarchy of affine functions on compact convex set forms an important
part of the Choquet theory. The abstract Dirichlet problem for Baire functions on Choquet simplices
was studied for example in \cite{Jel,spurny-cejm,spurny-aus,spu-ka,spu-aus}, descriptive properties of affine functions for example in \cite{lusp,lusp2,lusp-complex,spu-zel}. Large number of results valid for simplices were proved in a more general framework of $L_1$-preduals.
Recall that a Banach space $X$ is called an \emph{$L_1$-predual} (or a \emph{Lindenstrauss space}) if its dual $X^*$ is isometric to a space $L^1(X,\ms,\mu)$ for a measure space $(X,\ms,\mu)$. There are two types of $L_1$-preduals -- the real and complex ones. Unlike in most areas, there is a difference between the real and complex theories. More precisely, there are some results known in the real case and unknown in the complex case and, moreover, proofs in the complex setting are often more involved. However, our results hold in both cases in the same form.

Real $L_1$-preduals were in depth investigated in
papers \cite{effros-real,fakhoury1,fakhoury2,bednar1972concerning,Lau1973,lusky1977separable,fonf1978massiveness,lusky-comp,
dieckmann1994korovkin,gasparis2002contractively,castillo2009extending,lusp}.
Complex $L_1$-preduals were studied  for example in \cite{wu76,nielsenolsen,wu78,roy1979convex,rao82,limaroy,rao85,kranti,utter,lusky-compl,dulin}.
It has turned out that both real and complex $L_1$-preduals can be characterized by a ``simplex-like'' property of the dual unit ball $B_{X^*}$ (see \cite{lazar,effros}). This result was used to investigate variants of the abstract Dirichlet problem for $L_1$-preduals in \cite{lusp23,lusp2,lusp-complex}.

In the present paper we study vector-valued affine functions, more precisely mappings with values in a \fr space. We show that some results remain valid in the same form, some results are true in a weaker form and some results cannot be transferred at all. Main results are contained in Section~\ref{Sec:main}. In the rest of the introductory section we collect some definitions and basic facts on compact convex sets, Baire mappings and vector integration needed in the sequel. In Section~\ref{Sec:auxiliary} we give auxiliary results needed to prove the main theorems. Some of them are known and we just collect them, some of them are up to our knowledge new and may be of an independent interest.
Several following sections are devoted to proofs of the main results. In the last section we discuss sharpness of our results and formulate related open problems.

\subsection{Compact convex sets, simplices and $L_1$-preduals.}\label{ssec:choquet}
We will deal both with real and complex spaces. To shorten the notation we will use the symbol $\ef$ to denote the respective field $\er$ or $\ce$.

If $X$ is a compact Hausdorff space, we denote by $\C(X,\ef)$ the Banach space of all $\ef$-valued continuous functions on $X$ equipped with the sup-norm. The dual of $\C(X,\ef)$ will be identified (by the Riesz representation theorem) with $\M(X,\ef)$, the space of $\ef$-valued Radon measures on $X$ equipped with the total variation norm and the respective weak$^*$ topology.

Let $X$ be a convex subset of a (real or complex) vector space $E$ and $F$ be another (real or complex) vector space. Recall that a mapping $f\colon X\to F$ is said to be \emph{affine} if $f(tx+(1-t)y)=tf(x)+(1-t)f(y)$ whenever $x,y\in X$ and $t\in[0,1]$. We stress that the notion of an affine function uses only the underlying structure of real vector spaces.

Let $X$ be a compact convex set in a Hausdorff locally convex topological vector space. We write $\fra(X,\ef)$ for the space of all $\ef$-valued continuous affine functions on $X$. This space is a closed subspace of $\C(X,\ef)$ and is equipped with the inherited sup-norm. Given a Radon probability measure $\mu$ on $X$, we write $r(\mu)$ for the \emph{barycenter of $\mu$}, i.e., the unique point $x\in X$ satisfying $a(x)=\int_X a \di\mu$ for each affine continuous function on $X$ (see \cite[Proposition~I.2.1]{alfsen} or \cite[Chapter 7, \S\,20]{lacey}; note that it does not matter whether we consider real or complex affine functions). Conversely, for a point $x\in X$, we denote by $\M_{x}(X)$ the set of all Radon probability measures on $X$ with barycenter $x$ (i.e., of probabilities \emph{representing} $x$).

The usual dilation order $\prec$ on the set $\M^1(X)$ of Radon probability measures on $X$ is defined as $\mu\prec \nu$ if and only if $\mu(f)\le \nu(f)$ for any real-valued convex continuous function $f$ on $X$. (Recall that $\mu(f)$ is a shortcut for $\int f\di\mu$.)  A measure $\mu\in \M^1(X)$ is said to be \emph{maximal} if it is maximal with respect to the dilation order.
In case $X$ is metrizable, maximal measures are exactly the probabilities carried by the $G_\delta$ set $\ext X$ of extreme points of $X$ (see, e.g., \cite[p. 35]{alfsen} or \cite[Corollary 3.62]{lmns}). By the Choquet representation theorem, for any $x\in X$ there exists a maximal representing measure (see \cite[p. 192, Corollary]{lacey} or \cite[Theorem I.4.8]{alfsen}). The set $X$ is termed \emph{simplex} if this measure is uniquely determined for each $x\in X$ (see \cite[\S 20, Theorem 3]{lacey}). We write $\delta_x$ for this uniquely determined measure.

A measure $\mu\in \M(X,\ef)$ is called \emph{boundary} if either $\mu=0$ or the probability measure $\frac{|\mu|}{\|\mu\|}$ is maximal. If $X$ is metrizable, boundary measures are exactly the measures carried by $\ext X$.

If $X$ is a simplex, the space $\fra(X,\ef)$ is an example of an $L_1$-predual (see \cite[Proposition~3.23]{fonf} for the real case; the complex case follows from the real one and \cite[\S23, Theorem 6]{lacey}). Moreover, $L_1$-preduals can be characterized using a simplex-like property of the dual unit ball:

\begin{fact}\label{fact:L1}
Let $E$ be a Banach space over $\ef$ and $X=(B_{E ^*},w^*)$. Then $E$ is an $L_1$-predual if and only if for each $x^*\in X$ there is a unique $\ef$-valued Radon measure $\mu$ on $X$ with the properties:
 \begin{itemize}
	\item[(a)] $\|\mu\|\le1$,
	\item[(b)] $\mu(\alpha A)=\ov{\alpha} \mu(A)$ for any Borel set $A\subset X$ and any $\alpha\in\ef$ with $|\alpha|=1$,
	\item[(c)] $\mu$ is a boundary measure,
	\item[(d)] $x^*(x)=\int_X y^*(x)\di\mu(y^*)$ for any $x\in E$.
\end{itemize}
\end{fact}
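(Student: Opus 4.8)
This is a form of the well-known simplex-like characterization of $L_1$-preduals (see \cite{lazar,effros}), and the plan is to establish the two implications. First I would observe that the \emph{existence} of a $\mu$ satisfying (a)--(d) holds for an arbitrary Banach space $E$: given $x^*\in X=(B_{E^*},w^*)$, take a maximal Radon probability $\lambda$ on $X$ representing $x^*$ (the Choquet representation theorem recalled above) and put $\mu=\int_{\te}\ov\alpha\,(m_\alpha)_*\lambda\,\di\alpha$, where $\te$ is the unit circle of $\ef$, $m_\alpha(y^*)=\alpha y^*$, $(m_\alpha)_*\lambda$ the image measure, and $\di\alpha$ normalized Haar measure on $\te$ (for $\ef=\er$ this is $\tfrac12(\lambda-(m_{-1})_*\lambda)$). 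Using $\widehat x(\alpha y^*)=\alpha\widehat x(y^*)$ for $\widehat x\in\C(X,\ef)$, $\widehat x(y^*)=y^*(x)$, one checks (a), (b) and (d) by a direct computation, and since $|\mu|\le\int_{\te}(m_\alpha)_*\lambda\,\di\alpha$, which is a maximal probability (being an average of the maximal probabilities $(m_\alpha)_*\lambda$), also (c) holds. So the content of the Fact is that this $\mu$ is \emph{unique} for each $x^*$ precisely when $E$ is an $L_1$-predual.

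Suppose first that $E$ is an $L_1$-predual, so $E^*$ is isometric to some $L^1(\nu)$. Applying the hypothesis to $\mu_1-\mu_2$ (a boundary measure, since $|\mu_1-\mu_2|\le|\mu_1|+|\mu_2|$ is dominated by a maximal probability), it suffices to show that $0$ is the only boundary measure $\mu$ on $X$ satisfying (b) with $\int_X\widehat x\,\di\mu=0$ for all $x\in E$. In the real case, (b) and uniqueness of the Jordan decomposition give $\mu=\mu^+-\mu^-$ with $\mu^-$ the image of $\mu^+$ under $y^*\mapsto-y^*$; hence $\mu^+$ is a positive boundary measure singular from its reflection, and $\int_X\widehat x\,\di\mu=0$ forces the barycenter of $\mu^+$ in $E^*$ to vanish. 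Now the positive cone of $E^*=L^1(\nu)$ makes the positive part $K_+=\{f:f\ge0,\ \|f\|_1\le1\}$ of the unit ball a Choquet simplex, and every boundary measure of $X$ is carried by $K_+\cup(-K_+)$ (evident in the metrizable/atomic case from $\ext X\subseteq\ext K_+\cup\ext K_-$; valid in general, which is where the $L^1$-structure is essential). Restricting $\mu^+$ to $K_+$ and to $-K_+$ and reflecting the latter, we get two maximal measures on the simplex $K_+$ with the same barycenter (as $r(\mu^+)=0$), which therefore coincide by uniqueness of maximal representing measures on a simplex; being carried by disjoint subsets of $K_+$ (as $\mu^+$ is singular from its reflection), both vanish, and $\mu=0$. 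The complex case is analogous, the $\pm$-structure being replaced by equivariance under the whole circle action.

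Conversely, assume the $\mu$ in the Fact is unique for each $x^*\in X$; write $\mu_{x^*}$ for it and extend it by positive homogeneity to all of $E^*$. From (d), $|x^*(x)|=|\int_X\widehat x\,\di\mu_{x^*}|\le\|x\|\,\|\mu_{x^*}\|$, so $\|\mu_{x^*}\|\ge\|x^*\|$, hence $\|\mu_{x^*}\|=\|x^*\|$ by (a). For $x^*,y^*\in X$ the measure $\tfrac12(\mu_{x^*}+\mu_{y^*})$ has norm $\le1$ and satisfies (b), (c), (d) for $\tfrac12(x^*+y^*)\in X$, so uniqueness gives $\tfrac12(\mu_{x^*}+\mu_{y^*})=\mu_{(x^*+y^*)/2}$; similarly, reading (b) as $(m_\alpha)_*\mu_{x^*}=\alpha\mu_{x^*}$ and noting the left side satisfies (a)--(d) for $\alpha x^*$ gives $\mu_{\alpha x^*}=\alpha\mu_{x^*}$. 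Scaling, $T\colon x^*\mapsto\mu_{x^*}$ is an $\ef$-linear isometry of $E^*$ onto the space $\Lambda$ of boundary measures on $X$ satisfying (b); it is onto because any such $\mu$ defines, via (d), an element $x^*\in E^*$, and (after rescaling $\mu$ to norm $\le1$) uniqueness forces $\mu=\mu_{x^*}$. Now $\Lambda$ is the range of the norm-one projection $\nu\mapsto\int_{\te}\ov\alpha\,(m_\alpha)_*\nu\,\di\alpha$ on the space $\M_{\bnd}(X,\ef)$ of boundary measures, which is an $AL$-space, and this projection is built from the positive isometries $(m_\alpha)_*$; by the structure theory of positive isometries of $L^1$-spaces (Lamperti's theorem and its group version) $\Lambda$ is again an $AL$-space. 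Hence $E^*\cong\Lambda$ is an $AL$-space, so by Kakutani's representation theorem $E^*\cong L^1(\tilde\nu)$, i.e.\ $E$ is an $L_1$-predual.

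The two places I expect to require real work are those glossed above. In the first direction, ``every boundary measure of $X$ is carried by $K_+\cup(-K_+)$'' is transparent when $\nu$ is purely atomic and $X$ metrizable, but in general it must be obtained through maximal-measure theory together with the lattice operations of $L^1(\nu)$. In the converse direction, identifying the spectral subspace $\Lambda$ of $\M_{\bnd}(X,\ef)$ as an $AL$-space is immediate in the real case (take a measurable selection of one point from each pair $\{y^*,-y^*\}$), but in the complex case it rests on the spatial description of circle actions on $L^1$-spaces. These are exactly the points carried out in detail in \cite{lazar,effros}.
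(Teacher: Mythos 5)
Your proposal goes by a genuinely different and much heavier route than the paper, and as written it has real gaps. The paper does not re-derive the Lazar--Effros theory at all: it quotes, as a known theorem, that $E$ is an $L_1$-predual if and only if any two maximal probabilities on $X$ with the same barycenter have the same odd (resp.\ anti-homogeneous) part (\cite[\S 21, Theorem 7]{lacey} in the real case, \cite[Theorem 4.3]{effros} or \cite[\S 23, Theorem 5]{lacey} in the complex case), and then merely translates that statement into Fact~\ref{fact:L1} by means of Lemmata~\ref{l:odd-miry}--\ref{l:hom-miry2}: the operator $\odd$ (resp.\ $\hom$) applied to a maximal representing probability produces a measure with properties (a)--(d), and, conversely, every odd (anti-homogeneous) measure of norm at most one which is boundary is of the form $\odd\nu$ (resp.\ $\hom\nu$) for a representing probability $\nu$ that can be chosen maximal. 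Your existence construction $\mu=\int\ov{\alpha}\,(m_\alpha)_*\lambda\,\dd\alpha$ is exactly the first half of this translation, but for the two main implications you instead attempt to pass directly between the isometry $E^*\cong L^1(\nu)$ and the uniqueness statement, i.e.\ to reprove the main results of \cite{lazar,effros} themselves, and precisely the load-bearing steps are the ones you defer back to those papers.

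Concretely: in the forward direction, the assertions that (under a merely isometric identification $E^*\cong L^1(\nu)$) the positive part $K_+$ of the ball is weak$^*$ compact, is a Choquet simplex, and together with $-K_+$ carries every boundary measure are not available for free --- the positive cone has no a priori compatibility with $\sigma(E^*,E)$ and the order unit lives in $E^{**}=L^\infty(\nu)$, not in $E$ --- and deferring them to \cite{lazar,effros} makes the argument circular as a proof of the Fact. Moreover, even granting these facts, the step ``two maximal measures on the simplex $K_+$ with the same barycenter therefore coincide'' is applied to $\mu^+|_{K_+}$ and the reflection of $\mu^+|_{-K_+}$, which are not probabilities: uniqueness of maximal representing measures requires equal total masses, and equality of the masses here would follow from integrating the unit of $L^\infty(\nu)$ against these measures, a barycentric formula for an element of $E^{**}\setminus E$ that you have not justified (for $E=C(K)$ the unit lies in $E$ and this is harmless, but not in general). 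In the converse direction, the identification of the range $\Lambda$ of the averaging projection as an AL-space (via a Borel selection of a ``half'' of $X$, or Lamperti-type structure of the circle action, or a contractive-projection theorem for $L^1$) is again only announced. So what you have is an outline of the original Lazar--Effros proofs rather than a proof; a self-contained argument at the level of the paper is obtained by quoting the maximal-measure characterization and proving the lifting Lemmata~\ref{l:odd-miry2} and~\ref{l:hom-miry2}, which is what the paper does.
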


This fact is proved below at the end of Section~\ref{ssec:miry}. It is a variant of the characterizations given
in \cite{lazar} for the real case and in \cite{effros} for the complex case.
The unique measure $\mu$ provided by the previous fact will be denoted by $T(x^*)$.

The measures satisfying the condition (b) are called \emph{odd} in the real case and \emph{anti-homogeneous} in the complex case (we will sometimes use the unified term \emph{$\ef$-anti-homogeneous}).
The condition (d) means that $\mu$ represents $x^*$ in a way. Notice that for a function $f:X\to\ef$ the following assertions are equivalent:
\begin{itemize}
	\item[(i)] There is $x\in E$ with $f(y^*)=y^*(x)$ for $y^*\in X$.
	\item[(ii)] The function $f$ is affine, continuous and satisfies $f(\alpha y^*)=\alpha f(y^*)$ for each $y^*\in X$ and any $\alpha\in\ef$ with $|\alpha|=1$.
	\end{itemize}
Functions with the third property from the assertion (ii) are called  \emph{odd} in the real case and \emph{homogeneous} in the complex case (or \emph{$\ef$-homogeneous} in both cases).

\subsection{Baire hierarchies of mappings}\label{ssec:baire}

Given a set $K$, a topological space $L$ and a family of mappings $\F$ from $K$ to $L$, we define the \emph{Baire classes} of mappings as follows. Let $(\F)_0=\F$. Assuming  that $\alpha\in [1,\omega_1)$ is given and that $(\F)_\beta$ have
been already defined for each $\beta<\alpha$, we set
\begin{multline*}
(\F)_\alpha=\{f\colon K\to L\setsep \text{there exists a sequence } (f_n)\text{ in }\bigcup_{\beta<\alpha} (\F)_\beta
\\ \text{ such that }f_n\to f
\mbox{ pointwise}\}.
\end{multline*}

We will use several hierarchies matching this pattern:
\begin{itemize}
	\item If $K$ and $L$ are topological spaces, by $\C_\alpha(K,L)$ we denote the set $(\C(K,L))_\alpha$, where $\C(K,L)$ is the set of all continuous functions from $K$ to $L$.
	\item If $K$ is a compact convex set and $L$ is a convex subset of a locally convex space, by $\fra_\alpha(K,L)$ we denote $(\fra(K,L))_\alpha$, where $\fra(K,L)$ is the set of all affine continuous functions defined on $K$ with values in $L$.
	\item If $K=(B_{E^*},w^*)$, where $E$ is a real Banach space and $L$ is a convex symmetric subset of a  locally convex space, by $\fra_{\odd,\alpha}(K,L)$ we denote $(\fra_{\odd}(K,L))_\alpha$, where $\fra_{\odd} (K,L)$ is the set of all odd affine continuous functions defined on $K$ with values in $L$.
	\item If $K=(B_{E^*},w^*)$, where $E$ is a complex Banach space and $L$ is an absolutely convex subset of a complex locally convex space, by $\fra_{\hom,\alpha}(K,L)$ we denote $(\fra_{\hom}(K,L))_\alpha$, where $\fra_{\hom} (K,L)$ is the set of all homogeneous affine continuous functions defined on $K$ with values in $L$.
\end{itemize}

\subsection{Vector integration and strongly affine mappings}\label{ssec:pettis}
We will investigate vector-valued strongly affine mappings. To be able to do that we need some vector integral.
We will use the Pettis approach. Our vector-valued mappings  will mostly  have values in \fr spaces, but sometimes in general Hausdorff locally convex spaces. Since we will deal only with Hausdorff spaces, by a space we mean always a Hausdorff space.
Let us continue with the definitions.

Let $\mu$ be an $\ef$-valued $\sigma$-additive measure defined on an abstract measurable space $(X,\A)$ (i.e., $X$ is a set and $\A$ is a $\sigma$-algebra of subsets of $X$)
and $F$ a locally convex space over $\ef$. (To avoid confusion we stress that we will consider only finite measures.) A mapping $f:X\to F$ is said to be \emph{$\mu$-measurable} if $f^{-1}(U)$ is $\mu$-measurable for any $U\subset F$ open. The map $f$ is called  \emph{weakly $\mu$-measurable} if $\tau\circ f$ is $\mu$-measurable for each $\tau\in F^*$.

A mapping $f:X\to F$ is said to be  \emph{$\mu$-integrable} over a $\mu$-measurable set $A\subset X$ if
\begin{itemize}
\item $\tau\circ f\in L^1(|\mu|)$ for each $\tau\in F^*$,
\item for each $B\subset A$ $\mu$-measurable there exists an element $x_B\in F$ such that
\[
\tau(x_B)=\int_B \tau\circ f\di \mu,\quad \tau\in F^*.
\]
\end{itemize}
It is clear that the element $x_B$ is uniquely determined, we denote it as $\int_B f\di\mu$. If $\mu$ is clearly determined, we say only that $f$ is integrable.

Any $\mu$-integrable mapping is necessarily weakly $\mu$-measurable but not necessarily $\mu$-measurable (cf. the discussion after Question~\ref{q:kan} below).

If $X$ is a compact convex set, a mapping $f\colon X\to F$ is called \emph{strongly affine} if,
for any  measure $\mu\in\M^1(X)$, $f$ is $\mu$-integrable and $\int_X f\di\mu=f(r(\mu))$. Note that this is a 
strengthening of the notion of an affine mapping. Indeed, $f$ is affine if and only if the formula holds for any finitely
supported probability $\mu$. 

The notion of a strongly affine mapping is also a straightforward generalization of the scalar case and, in fact, the following easy observation shows a close connection with the scalar case.

\begin{fact}\label{fact1} Let $X$ be a compact convex set and $F$ a locally convex space. A mapping $f:X\to F$ is strongly affine if and only if $\tau\circ f$ is strongly affine for each $\tau\in F^*$.
\end{fact}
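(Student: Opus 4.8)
The plan is to prove the two implications directly from the definitions, with the forward one being essentially a tautology and the converse resting on a normalized‑restriction trick.

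For necessity I would fix $\tau\in F^*$ and an arbitrary $\mu\in\M^1(X)$. Since $f$ is strongly affine it is $\mu$-integrable, so by definition $\tau\circ f\in L^1(|\mu|)$ and $\tau\bigl(\int_X f\di\mu\bigr)=\int_X\tau\circ f\di\mu$; combined with $\int_X f\di\mu=f(r(\mu))$ this gives $\int_X\tau\circ f\di\mu=(\tau\circ f)(r(\mu))$. As $\mu$-integrability of a scalar function over $X$ just means membership in $L^1(|\mu|)$, this is exactly the statement that $\tau\circ f$ is strongly affine.

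For sufficiency I would fix $\mu\in\M^1(X)$ and verify the defining clauses of $\mu$-integrability of $f$ over $X$. First, each $\tau\circ f$ is scalar strongly affine, hence lies in $L^1(|\mu|)$; thus $f$ is weakly $\mu$-measurable and the first clause holds. For the second, given a $\mu$-measurable $B\subset X$, I would first reduce to $B$ Borel using inner regularity of the Radon measure $\mu$ (passing to a Borel subset of full measure changes neither $\int_B\tau\circ f\di\mu$ nor what has to be proved). If $\mu(B)=0$ take $x_B=0$; if $\mu(B)>0$, the normalized restriction $\nu:=\mu(B)^{-1}\,\mu\!\restriction\!B$ is a Radon probability on $X$, hence has a barycenter $r(\nu)$, and I would set $x_B:=\mu(B)\,f(r(\nu))$. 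Applying strong affinity of $\tau\circ f$ to $\nu$ then yields, for every $\tau\in F^*$,
\[
\tau(x_B)=\mu(B)\,(\tau\circ f)(r(\nu))=\mu(B)\int_X\tau\circ f\di\nu=\int_B\tau\circ f\di\mu .
\]
So $f$ is $\mu$-integrable over $X$ with $\int_B f\di\mu=x_B$, and specializing to $B=X$ (where $\nu=\mu$) gives $\int_X f\di\mu=f(r(\mu))$. Since $\mu$ was arbitrary, $f$ is strongly affine.

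I do not expect a genuine obstacle here; the only point requiring a little care is the treatment of $\mu$-measurable but non-Borel sets $B$, which needs inner regularity of Radon measures together with the (standard) fact that the normalized restriction of a Radon probability to a Borel subset is again a Radon probability, so that the barycenter used to define $x_B$ is legitimate. The uniqueness of $x_B$ — already noted in the text — uses only that $F^*$ separates the points of $F$.
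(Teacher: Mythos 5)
Your proposal is correct and follows essentially the same route as the paper: the `only if' part is read off the definitions, and the `if' part produces the Pettis integral over a $\mu$-measurable set $B$ as $x_B=0$ when $\mu(B)=0$ and $x_B=\mu(B)\,f\bigl(r\bigl(\mu|_B/\mu(B)\bigr)\bigr)$ otherwise, exactly the formula used in the paper's proof. The extra remarks on non-Borel $B$ and on uniqueness of $x_B$ are fine but only make explicit points the paper leaves implicit.
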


\begin{proof} The `only if' part follows immediately from definitions. Let us show the `if' part. Suppose that $\tau\circ f$ is strongly affine for each $\tau\in F^*$. Given any $\mu\in\M^1(X)$, $A\subset X$ $\mu$-measurable and $\tau\in F^*$,
we have
$$\int_A \tau\circ f\di\mu = \begin{cases} 0, & \mu(A)=0,\\ \mu(A) \tau\left(f\left(r\left(\frac{\mu|_A}{\mu(A)}\right)\right)\right), &\mu(A)>0.\end{cases}$$
Hence $f$ is $\mu$-integrable and $\int_X f\di\mu=f(r(\mu))$.
\end{proof}

\section{Main results}\label{Sec:main}

In this section we collect our main results. We start by results on affine Baire-one maps on general compact convex sets.
It turns out that the situation is quite different from the scalar case. We continue by several positive result which are
generalizations of theorems known in the scalar case. Sometimes we are able to generalize only weaker variants of those results. These theorems are formulated and proved in three versions -- for simplices, for dual balls of real $L_1$-preduals and for dual balls of complex $L_1$-preduals.

Let us start by considering affine Baire-one mappings.
If $X$ is a compact convex set and $f:X\to\ef$ is an affine function on $X$ which is of the first Baire class (i.e., $f\in\C_1(X,\ef)$), it is strongly affine by a result of Choquet (see, e.g., \cite[Theorem~I.2.6]{alfsen}, \cite[Section 14]{phelps-choquet}, \cite{sray} or \cite[Corollary 4.22]{lmns}) and, moreover, $f\in\fra_1(X,\ef)$ by a result of Mokobodzki (see, e.g., \cite[Th\'eor\`eme 80]{rogalski} or \cite[Theorem 4.24]{lmns}).

If $\ef=\er$, $X=(B_{E^*},w^*)$ for a real Banach space $E$ and $f$ is moreover odd, then $f\in\fra_{\odd,1}(X,\er)$. This follows easily from the general case since an affine function on $X$ is odd if and only if it vanishes at the origin.
If $\ef=\ce$, $X=(B_{E^*},w^*)$ for a complex Banach space $E$ and $f$ is moreover homogeneous, then $f\in \fra_{\hom,1}(X,\ce)$. This is a bit more difficult, it follows from Lemma~\ref{l:homaff}(c) below.

The situation for vector-valued functions is different. Firstly, the following analogue of the Choquet result follows immediately from Fact~\ref{fact1}.	

\begin{thm}\label{T:c1sa}
 Let $X$ be a compact convex set, $F$ a  locally convex space and $f:X\to F$ be an affine mapping which is of the first Baire class. Then $f$ is strongly affine.
\end{thm}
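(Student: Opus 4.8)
The plan is to reduce the statement to the scalar Choquet theorem by composing with functionals, exactly as licensed by Fact~\ref{fact1}. First I would fix $\tau\in F^*$ and note that $\tau\circ f\colon X\to\ef$ is affine, being the composition of the affine map $f$ with the linear functional $\tau$. Next I would verify that $\tau\circ f$ is of the first Baire class: by definition of $\C_1(X,F)$ there is a sequence $(f_n)$ in $\C(X,F)$ with $f_n\to f$ pointwise on $X$; then $(\tau\circ f_n)$ lies in $\C(X,\ef)$ and converges pointwise to $\tau\circ f$, so $\tau\circ f\in\C_1(X,\ef)$.

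Then I would invoke the scalar result of Choquet quoted above (e.g.\ \cite[Theorem~I.2.6]{alfsen} or \cite[Corollary 4.22]{lmns}): an affine function on a compact convex set which is of the first Baire class is strongly affine, i.e.\ it is $\mu$-integrable and satisfies the barycentre formula for every $\mu\in\M^1(X)$. Applying this to $\tau\circ f$ shows that $\tau\circ f$ is strongly affine for every $\tau\in F^*$. Finally, Fact~\ref{fact1} immediately yields that $f$ itself is strongly affine, which finishes the proof.

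Since every step is a direct application of an already available result, I do not expect any genuine obstacle. The only points requiring a moment's care are the passage from ``$f$ is Baire-one as an $F$-valued map'' to ``$\tau\circ f$ is Baire-one as a scalar map'' (the routine pointwise-limit argument above), and the observation that the scalar Choquet theorem delivers precisely strong affinity in the sense used in this paper, so that the hypotheses of Fact~\ref{fact1} are met.
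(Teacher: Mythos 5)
Your proposal is correct and is exactly the paper's argument: the theorem is stated there as an immediate consequence of Fact~\ref{fact1} together with the scalar Choquet result, which is precisely the reduction you carry out (composing with $\tau\in F^*$, noting $\tau\circ f$ is affine and Baire-one, and applying the `if' part of Fact~\ref{fact1}). Nothing further is needed.
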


The vector version of the Mokobodzki theorem is not valid in general but it is valid under additional assumptions on the range space. This is illustrated by the following two results.

\begin{thm}\label{T:bap} Let $X$ be a compact convex set and $E$ be a Banach space with the bounded approximation property.
Then any affine $f\in\C_1(X,E)$ belongs to $\fra_1(X,E)$. If, moreover, $f(X)\subset B_E$ and $E$ has the $\lambda$-bounded approximation property for some $\lambda\ge1$, then $f\in\fra_1(X,\lambda B_E)$.
\end{thm}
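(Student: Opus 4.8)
The plan is to use the bounded approximation property to reduce $f$ to maps with finite-dimensional range, to handle that case by the scalar Mokobodzki theorem applied coordinate-wise, and finally to collapse the resulting second-class approximation to a first-class one.

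\emph{Reductions.} It suffices to prove the second assertion. Indeed, if $f$ is as in the first one, then $f$ is strongly affine by Theorem~\ref{T:c1sa}; in particular $f(X)$ is weakly, hence norm, bounded, since an unbounded strongly affine map fails to be $\mu$-integrable for a suitable purely atomic $\mu\in\M^1(X)$ (pick $x_n$ with $|\tau(f(x_n))|\ge 4^n$ for a fixed $\tau\in E^*$ and let $\mu=\sum 2^{-n}\delta_{x_n}$). Rescaling by $\sup_X\|f\|$ and applying the second assertion with $\lambda$ any bounded approximation constant of $E$ then gives $f\in\fra_1(X,E)$. So assume $f(X)\subset B_E$ and that $E$ has the $\lambda$-bounded approximation property. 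As $f\in\C_1(X,E)$ is a pointwise limit of continuous maps, each of which has separable range, the closed linear span $E_0$ of the union of $f(X)$ with all those ranges is separable; by the standard separable reduction of the bounded approximation property there are a separable subspace $E_1$ with $E_0\subset E_1\subset E$ and finite-rank operators $S_n\colon E_1\to E_1$ with $\sup_n\|S_n\|\le\lambda$ and $S_n e\to e$ for every $e\in E_1$.

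\emph{The finite-dimensional pieces.} Set $g_n:=S_n\circ f\colon X\to F_n$, where $F_n:=S_n(E_1)$ is finite-dimensional. Then $g_n$ is affine, of the first Baire class (compose the continuous maps approximating $f$ with $S_n$), satisfies $g_n(X)\subset S_n(B_E)\subset\lambda B_E$, and $g_n\to f$ pointwise because $S_ne\to e$ on $E_1\supset f(X)$. Fixing a basis of $F_n$ and applying the scalar Mokobodzki theorem to each coordinate shows that $g_n$ is a pointwise limit of affine continuous maps $X\to F_n$; with an additional affine rescaling/truncation step, or with a version of Mokobodzki's construction that keeps the values in a prescribed convex body, these approximants can be chosen with values in $\lambda B_E$, so $g_n\in\fra_1(X,\lambda B_E)$. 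At this stage $f=\lim_n g_n$ pointwise with $g_n\in\fra_1(X,\lambda B_E)$, which by itself only gives $f\in\fra_2(X,\lambda B_E)$.

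\emph{The main obstacle.} The decisive and least routine step is to upgrade this to $f\in\fra_1(X,\lambda B_E)$. One cannot appeal to ``a uniform limit of first-class maps is of the first class'', nor diagonalise naively, because $f(X)$ need not be relatively compact in $E$, so the $S_n$ converge to the identity on it only pointwise and hence $g_n\to f$ is a merely pointwise, not uniform, limit. The way around this, which I expect to be the technical heart of the proof, is to make the choices coherent: since $\sup_n\|S_n\|\le\lambda$, the family $\{S_n\circ f\}$ is equi-Baire-one (its deviation from continuity is controlled, uniformly in $n$, by that of $f$), and one shows---by a single construction that chooses the finite-rank operators together with the affine continuous approximants of the $g_n$---that such an equi-Baire-one, pointwise-convergent sequence of affine maps with values in $\lambda B_E$ has its limit again in $\fra_1(X,\lambda B_E)$. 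Omitting the norm bounds throughout yields the first assertion. The points I would expect to need the most care are this collapse of $\fra_2$ to $\fra_1$ and the range-controlled form of the scalar Mokobodzki theorem used for the $g_n$.
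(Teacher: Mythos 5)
Your outline follows the same strategy as the paper (reduce to the bounded case, compose $f$ with uniformly bounded finite-rank operators supplied by the $\lambda$-bounded approximation property, treat the finite-dimensional compositions, then collapse the resulting $\fra_2$-approximation to $\fra_1$), and your reduction to boundedness is fine; the detour through a separable subspace $E_1$ with the $\lambda$-BAP is unnecessary (the paper simply takes finite-rank operators on $E$ converging to the identity pointwise on a countable dense subset of $f(X)$). However, the two steps you yourself flag as delicate are exactly the ones you do not prove, and neither admits the shortcut you suggest. First, the range-controlled finite-dimensional Mokobodzki theorem is not obtained by ``affine rescaling/truncation'': truncation destroys affinity, and rescaling gives no pointwise convergence control because the sup-norms of the affine continuous approximants produced by the plain Mokobodzki theorem need not stay bounded. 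In the paper this is Lemma~\ref{L:mok-fin}, and its proof is genuinely functional-analytic: the affine first-class map is encoded as an operator $L_f\in L(E^*,F)$ with $E=\fra(X)$, the space $L(E^*,F)$ is identified with the bidual of the space $H$ of weak$^*$-continuous operators via local reflexivity (Lemma~\ref{L:locref}), and an Odell--Rosenthal-type result is then used to replace a weak$^*$-convergent sequence from $H$ by one from $B_H$. Nothing in your sketch substitutes for this.

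Second, and more seriously, the collapse from $\fra_2(X,\lambda B_E)$ to $\fra_1(X,\lambda B_E)$ is only asserted: you say ``one shows'' that an equi-Baire-one, pointwise convergent sequence of affine maps has its limit in $\fra_1$, possibly by choosing the operators together with the approximants, but no such statement is formulated precisely, let alone proved, and it is not the mechanism that works. The lemma that actually does the job (Lemma~\ref{L:c21}, used through Corollary~\ref{C:c21}) relies on a different pair of hypotheses: that the limit $f$ itself lies in $\C_1(X,F)$ (known from the outset) and that the double family of continuous affine approximants is uniformly bounded; the proof passes to the scalar functions $(x,x^*)\mapsto x^*(u(x))$ on $X\times K_k$ for suitable weak$^*$-compact sets $K_k\subset F^*$, and combines a Mercourakis--Stamati lemma with Mazur's theorem to extract convex combinations of the $f_{n,m}$ converging pointwise to $f$. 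Note also that this is where the range control of the previous step is indispensable: without knowing $L_n\circ f\in\fra_1(X,\lambda B_E)$ (and not merely $\fra_1(X,E)$), the uniform boundedness needed for the collapse is unavailable. As written, your argument establishes only $f\in\fra_2(X,\lambda B_E)$; the passage to $\fra_1$, which is the heart of the theorem, is missing.
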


\begin{example}\label{E:cap}
Let $E$ be a separable reflexive Banach space which fails the compact approximation property. Let $X=(B_E,w)$ and let $f:X\to E$ be the identity embedding. Then $f$ is affine, $f\in\C_1(X,E)$ and 
	 $f\notin\bigcup_{\alpha<\omega_1} \fra_\alpha(X,E)$.
\end{example}

This example is a strengthening of \cite[Example 2.22]{MeSta}. A Banach space satisfying the assumptions exists due to  \cite[Proposition~2.12]{casazza}.  Theorem~\ref{T:bap} is a generalization and strengthening of \cite[Theorem 2.12]{MeSta}.
We point out that the proof of the quoted theorem contains a gap. We provide a correct proof of a stronger version of the result. The two preceding results are proved in Section~\ref{sec:example} below, where also the definitions of approximation properties are recalled and the gap in the proof in \cite{MeSta} is commented.

For affine functions of higher Baire classes the situation is different even in the scalar case. Firstly, an affine function of the second Baire class need not be strongly affine even if $X$ is simplex (the example is due to Choquet, see, e.g., \cite[Example~I.2.10]{alfsen}, \cite[Section 14]{phelps-choquet} or \cite[Proposition 2.63]{lmns}). Further, by \cite{talagrand} there is a compact convex set $X$ and a strongly affine function $f:X\to\er$ of the second Baire class which does not belong to $\bigcup_{\alpha<\omega_1} \fra_\alpha(X,\er)$.
Nonetheless, some positive results hold for strongly affine functions on simplices and on dual balls of $L_1$-preduals.

We begin by the following theorem on the quality of the dilation mapping in the three cases. Let us explain the notation used in the theorem. By $\M_{\odd}(X,\er)$ we denote the space of odd real-valued Radon measures on $X$, by ${\M_{\ahom}(X,\ce)}$ the space of all anti-homogeneous complex Radon measures on $X$ (see Section~\ref{ssec:choquet} for definitions). All the range spaces in the theorem are considered with the weak$^*$ topology. 
The operator $T$ in cases (R) and (C) was defined in Section~\ref{ssec:choquet} using Fact~\ref{fact:L1}.

\begin{thm}\label{T:dilation}\
\begin{itemize}
	\item[(S)] Let $X$ be a metrizable simplex. Then the map $T:x\mapsto \delta_x$ belongs to $\fra_1(X,\M^1(X))$.
	\item[(R)] Let $E$ be a real separable $L_1$-predual and $X=(B_{E^*},w^*)$. Then the map $T$ belongs to $\fra_{\odd,1}(X,B_{\M_{\odd}(X,\er)})$.
	\item[(C)] Let $E$ be a complex separable $L_1$-predual and $X=(B_{E^*},w^*)$. Then the map $T$ belongs to $\fra_{\hom,1}(X,B_{\M_{\ahom}(X,\ce)})$.
\end{itemize}
\end{thm}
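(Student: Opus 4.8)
The plan is to prove the three cases in parallel, reducing each to the scalar Mokobodzki theorem applied coordinate-wise via separability.

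\medskip

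\textbf{Step 1: Reduction to strong affinity of $T$.} First I would verify that in each case the map $T$ takes values in the stated range space and is strongly affine. In case (S), $T(x)=\delta_x$ is a probability measure for each $x$, and strong affinity of $x\mapsto\delta_x$ is the classical statement that for $\mu\in\M^1(X)$ one has $\int_X \delta_x\di\mu(x)=\delta_{r(\mu)}$ (this is essentially the uniqueness part of the simplex property together with the barycenter calculus; it can be checked by testing against $a\in\fra(X,\er)$). In cases (R) and (C), $T=T(x^*)$ is the operator from Fact~\ref{fact:L1}; I would check $\|T(x^*)\|\le1$ directly from property (a), that $T(x^*)$ lies in $\M_{\odd}$ resp.\ $\M_{\ahom}$ from property (b), and that $T$ is affine and odd/homogeneous in $x^*$ by the uniqueness clause of Fact~\ref{fact:L1} (given $x^*,y^*$ and $t\in[0,1]$, the measure $tT(x^*)+(1-t)T(y^*)$ satisfies (a)--(d) for $tx^*+(1-t)y^*$, so equals $T(tx^*+(1-t)y^*)$; similarly $\overline\alpha T(x^*)$ works for $\alpha x^*$). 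Strong affinity then follows from Fact~\ref{fact1}: for a continuous $w^*$-functional, i.e.\ $\mu\mapsto\mu(g)$ with $g\in\C(X,\ef)$, one checks $\nu\mapsto T(\cdot)(g)$ is scalar strongly affine using that $g\mapsto\int g\di T(x^*)$ is a bounded functional and the representing property (d) extends from $\fra_{\hom}$ to all of $\C(X,\ef)$ via the boundary-measure structure.

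\medskip

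\textbf{Step 2: Separability gives a sequential weak$^*$ metric on the range.} Since $X$ is metrizable (case S) or $E$ is separable (cases R, C), the space $\C(X,\ef)$ is separable, so bounded subsets of $\M(X,\ef)$ are $w^*$-metrizable. Fix a dense sequence $(g_k)$ in the unit ball of $\C(X,\ef)$; then for a measure $\nu$ with $\|\nu\|\le1$ the numbers $\nu(g_k)$ determine $\nu$, and $w^*$-convergence of a bounded net is equivalent to convergence of all the $\nu(g_k)$. The target set is $w^*$-compact and $w^*$-metrizable in each case ($\M^1(X)$, $B_{\M_{\odd}}$, $B_{\M_{\ahom}}$ are all $w^*$-closed bounded convex sets).

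\medskip

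\textbf{Step 3: Each coordinate $x\mapsto T(x)(g_k)$ is scalar Baire-one affine.} This is the heart of the argument. Fix $g=g_k\in\C(X,\ef)$ and set $\varphi(x)=T(x)(g)=\int_X g\di T(x)$. By Step 1 the function $\varphi$ is affine; I must show $\varphi\in\C_1(X,\ef)$. Here I would invoke the known scalar facts quoted just before Theorem~\ref{T:c1sa}: it suffices to show $\varphi$ is affine and of the first Baire class, which amounts to showing $\varphi$ is a pointwise limit of continuous affine (and odd/homogeneous) functions — but really it is cleaner to show $\varphi$ is strongly affine and then apply Mokobodzki. Strong affinity of $\varphi$ is immediate from strong affinity of $T$ (composition with the continuous linear functional $\nu\mapsto\nu(g)$). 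However, strong affinity alone is not enough for Baire-one-ness in general; one needs the first-Baire-class property separately. For case (S) this is a classical result of Choquet–Mokobodzki type: on a metrizable simplex, $x\mapsto\delta_x(g)$ is Baire-one for every $g\in\C(X)$ — indeed $g\mapsto\delta_\bullet(g)$ sends $\C(X)$ into $\C_1(X)$, which is the content of the results on the dilation operator on simplices (see the discussion around the Mokobodzki theorem and \cite{alfsen}, \cite{lmns}). For cases (R), (C) the analogous statement for $\fra(X,\ef)$-test functions gives the representing property, and one upgrades from $\fra$ to $\C(X,\ef)$ using that boundary measures of the right symmetry type are determined by their action on $\fra_{\hom}$ plus the Stone–Weierstrass-type density of the subalgebra generated by $\fra_{\hom}(X,\ef)$ in $\C(X,\ef)$; the Baire-one property of $x^*\mapsto T(x^*)(a)$ for $a\in\fra_{\hom}$ follows from the scalar Mokobodzki theorem since $x^*\mapsto a(x^*)$ is itself continuous affine and $T(x^*)(a)=a(x^*)$ by property (d), so actually these particular coordinates are continuous; the genuinely new information is encoded in coordinates against products of elements of $\fra_{\hom}$, where one argues via the scalar result applied on the simplex $B_{\fra(X,\ef)^*}$ or directly via the maximal-measure machinery.

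\medskip

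\textbf{Step 4: Assembling a sequence of continuous affine maps converging to $T$.} Given that each $\varphi_k(x)=T(x)(g_k)$ lies in $\C_1(X,\ef)$ (and, in cases R and C, can be taken odd resp.\ homogeneous — which follows since $T$ itself is odd/homogeneous, so we may replace $g_k$ by its symmetrization without leaving the dense set), fix for each $k$ a sequence $(h_{k,n})_n$ of continuous (odd/homogeneous) affine scalar functions on $X$ with $h_{k,n}\to\varphi_k$ pointwise and $\sup_{n}\|h_{k,n}\|_\infty<\infty$ (bounded pointwise approximation is available in the Baire-one scalar case on compact spaces). Now I would run a diagonal construction: for each $n$, choose a continuous affine map $F_n:X\to(\text{range space})$ whose first $n$ coordinates $F_n(x)(g_1),\dots,F_n(x)(g_n)$ match $h_{1,n}(x),\dots,h_{n,n}(x)$ and which is $w^*$-valued in the target. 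The existence of such $F_n$ needs a construction of continuous affine maps into $\M$ with prescribed continuous-affine coordinate functions; one natural device is to take a continuous affine map $X\to\C(X,\ef)^*$ given by $x\mapsto \sum_{j\le n} c_j(x)\,\mu_j$ for suitable fixed measures $\mu_j$ and continuous affine scalar coefficients $c_j$, or more robustly, approximate $T$ directly by dilation-type averages $x\mapsto\frac1N\sum \delta_{x_i}$ obtained from a refining sequence of partitions of unity on $X$ (this is exactly how one proves the scalar dilation operator is Baire-one). I would then verify $F_n(x)\to T(x)$ in $w^*$ for each fixed $x$ by checking convergence on the dense set $(g_k)$: for fixed $k$, once $n\ge k$ we have $F_n(x)(g_k)=h_{k,n}(x)\to\varphi_k(x)=T(x)(g_k)$, and boundedness of everything by $1$ (or by $\lambda$) lets us pass from the dense set to all of $B_{\C(X,\ef)}$. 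Hence $T\in\fra_1$ of the stated target, with the odd/homogeneous refinement in cases (R)/(C) coming for free since each $F_n$ is built from odd/homogeneous ingredients.

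\medskip

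\textbf{Main obstacle.} The delicate point is Step 3/Step 4 combined: producing, in cases (R) and (C), the Baire-one property for the ``non-obvious'' coordinates (those testing against functions in $\C(X,\ef)$ that are not in $\fra_{\hom}$) and simultaneously ensuring the approximating maps $F_n$ are honestly continuous affine with values in the correct symmetric range ball. The scalar Mokobodzki theorem applies cleanly on simplices; for $L_1$-preduals one must transport it through the identification of $B_{E^*}$ with the relevant ``function system'' and keep track of the odd/anti-homogeneous symmetry, and I expect this bookkeeping — rather than any single hard estimate — to be where the real work lies, likely drawing on the auxiliary results promised in Section~\ref{Sec:auxiliary} (in particular a lemma like \ref{l:homaff} relating homogeneous affine functions to ordinary affine functions on an associated simplex).
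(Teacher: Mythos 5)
There is a genuine gap, and it sits exactly where you yourself flag the ``real work'': Step 4. You need, for each $n$, a continuous affine map $F_n\colon X\to\M^1(X)$ (resp.\ into $B_{\M_{\odd}(X,\er)}$, $B_{\M_{\ahom}(X,\ce)}$) whose coordinates against $g_1,\dots,g_n$ coincide with your scalar affine approximants $h_{1,n},\dots,h_{n,n}$. Nothing you propose produces such a map: the ansatz $x\mapsto\sum_{j\le n}c_j(x)\mu_j$ does not keep the values positive, of total mass one, or in the odd/anti-homogeneous ball, and exact matching of independently chosen $h_{k,n}$ may simply be infeasible (the coordinates of a probability-measure-valued map satisfy order constraints --- e.g.\ $g_j\le g_k$ forces $F_n(\cdot)(g_j)\le F_n(\cdot)(g_k)$ --- which your separately produced Mokobodzki approximants need not respect). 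The ``partition of unity / dilation averages'' alternative is not an affine map of $x$ in any evident way. This interpolation problem is precisely what the paper's proof is organized around, and it is solved there by a different device: one tests only against a dense sequence $(e_i)$ of \emph{affine} continuous functions (elements of $\fra(X,\er)$, resp.\ of $E$), forms the multivalued maps $\Gamma_n(x)=\{\mu\setsep \abs{\mu(e_i)-e_i(x)}<\frac1n,\ i\le n\}$ --- nonempty since $\ep_x\in\Gamma_n(x)$, lower semicontinuous, with convex (symmetric/absolutely convex) graph --- and applies the affine selection theorem of Lazar, Lazar--Lindenstrauss and Olsen (Theorem~\ref{T:selekcelsc}) to get continuous affine (odd, homogeneous) selections $T_n$. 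The limit is then identified only on $\ext X$, using uniqueness of the representing object at extreme points (Fact~\ref{fact:L1}, Lemmata~\ref{l:odd-miry2}, \ref{l:hom-miry2}), and transferred to all of $X$ via strong affinity of $T$ (Lemma~\ref{l:te-sa}) together with the dominated convergence theorem for Pettis integrals. With the approximate-constraint/selection mechanism, none of your Step~3 machinery (Baire-one-ness of $x\mapsto T(x)(g)$ for general $g\in\C(X,\ef)$ and its upgrade to the odd/homogeneous setting) is needed at all.

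A secondary weak point: your Step 1 treats strong affinity of $T$ in cases (R) and (C) as a routine verification, but it is not --- the paper needs \cite[\S 21, Theorem 7]{lacey} (real case) and the cited lemma of Petr\'a\v{c}ek--Spurn\'y (complex case) to show $Tf$ is a difference of upper semicontinuous functions for $f$ continuous convex, and your sketch of ``extending property (d) from $\fra_{\hom}$ to $\C(X,\ef)$ via the boundary-measure structure'' does not substitute for that. So the overall strategy (coordinatewise scalar Mokobodzki plus a diagonal assembly) cannot be accepted as a proof without supplying the affine interpolation/selection step, which is the theorem's actual content.
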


The above theorem is proved in Section~\ref{Sec:dilation}. The case (S) is essentially known, see \cite[Theorem 6.6]{lmnss03}.
The formulation of the quoted result is weaker, but the construction in fact gives the case (S). Let us point out that this result is formulated and proved only for metrizable $X$. However, in some special cases metrizability is not necessary as formulated in the following remark which will be discussed also in Section~\ref{Sec:dilation}.

\begin{remark}\label{rem:dilation} The following assertions hold even without the metrizability (separability) assumption:
\begin{itemize}
	\item $T$ is strongly affine.
	\item If $\ext X$ is closed, the mapping $T$ is continuous.
\end{itemize}
\end{remark}

We include also Example~\ref{ex:dikobraz} showing that $T$ need not be a Baire mapping even if $\ext X$ is Lindel\"of.

We continue by a result on affine Baire classes of strongly affine Baire mappings. The scalar version of the assertion (S) is proved in \cite[Theorem 2]{capon}, the scalar version of the assertion (R) follows easily from \cite[Theorem 1.4]{lusp}.
The scalar version of the assertion (C) is claimed to be unknown in \cite{lusp}. The theorem is proved in Section~\ref{Sec:affbaire}.

\begin{thm}\label{T:aff-baire} Let $X$ be a compact convex set, $F$ be a Fr\'echet space, $1\le\alpha<\omega_1$ and $f\in\C_\alpha(X,F)$ be strongly affine.
\begin{itemize}
\item[(S)] If $X$ is a simplex, then $f\in\fra_{1+\alpha}(X,F)$.
\item[(R)] If $X=(B_{E^*},w^*)$, where $E$ is a real $L_1$-predual, then $f\in\fra_{1+\alpha}(X,F)$. If $f$ is moreover odd, then $f\in\fra_{\odd, 1+\alpha}(X,F)$.
\item[(C)]  If $X=(B_{E^*},w^*)$, where $E$ is a complex $L_1$-predual, then $f\in\fra_{1+\alpha}(X,F)$. If $F$ is moreover complex and $f$ is homogeneous, then $f\in \fra_{\hom, 1+\alpha}(X, F)$.
\end{itemize}

If, moreover, $\alpha=1$, then $1+\alpha$ can be replaced by $\alpha$. I.e., if $f$ belongs to the class $\C_1$, it belongs to the class $\fra_1$.

In case $\ext X$ is an $F_\sigma$-set, $1+\alpha$ can be replaced by $\alpha$ for each $\alpha$.

\end{thm}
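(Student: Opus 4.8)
The plan is to prove all three cases simultaneously by reducing to the scalar theory via Fact~\ref{fact1}, combined with a selection/measurability argument that lets us upgrade scalar representations to vector-valued ones. The starting observation is that, since $f$ is strongly affine, for each $\tau\in F^*$ the scalar function $\tau\circ f$ is strongly affine (Fact~\ref{fact1}), and since $f\in\C_\alpha(X,F)$ and $\tau$ is continuous, $\tau\circ f\in\C_\alpha(X,\ef)$. Thus the scalar versions quoted in the excerpt (\cite[Theorem 2]{capon} for (S), \cite[Theorem 1.4]{lusp} for (R), and the complex analogue which will have to be established in the auxiliary section for (C)) give that $\tau\circ f\in\fra_{1+\alpha}(X,\ef)$ for every $\tau$. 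The point is to pass from ``$\tau\circ f$ is in the $(1+\alpha)$-th affine class for each $\tau$'' to ``$f$ itself is in the $(1+\alpha)$-th affine class.'' This is exactly the kind of step where the vector-valued theory parts company with the scalar one, and it is where the \fr (metrizable, complete, locally convex) structure of $F$ must be used essentially.

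The key mechanism I would use is the dilation operator $T$ from Theorem~\ref{T:dilation}. In each of the three cases $T$ maps $X$ into a space of (odd / homogeneous / probability) measures on $X$ and, after the appropriate identification, $f(x)=\int_{\ext X} f\,\dd(Tx)$ because $f$ is strongly affine and $Tx$ represents $x$. So one wants to write $f$ as a composition: first $x\mapsto Tx$, which by Theorem~\ref{T:dilation} lies in the \emph{first} affine class (with values in a ball of a measure space with the weak$^*$ topology), and then a ``barycentric'' map $\mu\mapsto\int f\,\dd\mu$ on that ball of measures. The substitute for $f$ on the boundary is its restriction $f|_{\ext X}$, which one shows lies in $\C_\alpha(\ext X,F)$; one then has to manufacture, from $f|_{\ext X}$, a map $\Phi$ on the relevant ball of measures such that $\Phi\in\C_\alpha$ (or $\fra_\alpha$) with values in $F$ and $\Phi\circ T=f$. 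Composing a first-class affine map with a class-$\alpha$ map of this type yields class $1+\alpha$; and when $\ext X$ is $F_\sigma$ the map $T$ can be taken to be of class $\alpha$ rather than merely class $1$ only for $\alpha\ge 1$ — more precisely, the gain in the $F_\sigma$ case comes from a sharper version of Theorem~\ref{T:dilation} together with a composition lemma that shows $\fra_\alpha\circ\fra_\alpha\subset\fra_\alpha$ for $\alpha\ge 1$ (whereas in general one only gets $\fra_\alpha\circ\fra_1\subset\fra_{1+\alpha}$, reflecting the $1+\alpha$ versus $\alpha$ discrepancy). The special case $\alpha=1$, where $1+\alpha$ collapses to $\alpha$, is handled separately and more cheaply: there $f\in\C_1(X,F)$ affine, and one combines Theorem~\ref{T:c1sa} (so $f$ is automatically strongly affine) with the scalar Mokobodzki result applied to each $\tau\circ f$ and a direct argument that a pointwise limit of affine continuous $F$-valued functions can be extracted — this is essentially the content that in the \fr case a Baire-one affine map \emph{is} an affine-Baire-one map, which I expect to be proved via the auxiliary results of Section~\ref{Sec:auxiliary} rather than from Theorem~\ref{T:bap} (which needs the bounded approximation property and hence cannot be the route here).

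For the odd/homogeneous refinements in (R) and (C) one runs the same argument but keeps track of symmetry: if $f$ is odd (resp.\ homogeneous) then the approximating affine continuous functions can be chosen odd (resp.\ homogeneous), because the construction via $T$ (which lands in $B_{\M_{\odd}}$, resp.\ $B_{\M_{\ahom}}$) and the barycentric map $\Phi$ both respect the relevant $\ef$-action on $X$; in the complex case one additionally needs $F$ to be a complex space so that the homogeneity condition $f(\alpha y^*)=\alpha f(y^*)$ makes sense and is preserved.

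The main obstacle, as I see it, is the construction of the class-$\alpha$ extension map $\Phi$ on the ball of measures with $\Phi\circ T=f$: one must verify that $\Phi$ is well-defined (the Pettis integral $\int f|_{\ext X}\,\dd\mu$ exists in $F$ for every relevant $\mu$ — this uses completeness of $F$ and the strong affinity of $f$), that it has the claimed Baire class (this requires a transfinite induction transporting the fact that $f|_{\ext X}\in\C_\alpha(\ext X,F)$ up to the level of integrals, and is the technical heart of the argument), and that composition with the first-class map $T$ does not exceed class $1+\alpha$. I would isolate the composition bound and the ``integration preserves Baire class'' statement as lemmas in Section~\ref{Sec:auxiliary} and then the proof of Theorem~\ref{T:aff-baire} becomes a short assembly of these pieces together with the scalar results and Theorem~\ref{T:dilation}.
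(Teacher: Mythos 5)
Your central mechanism for the metrizable case -- write $f(x)=\int f\,\dd T(x)$ using the dilation map of Theorem~\ref{T:dilation} and push the Baire class through the integral -- is exactly the paper's route (its Lemma~\ref{L:fra-0}(iii) plays the role of your ``integration preserves Baire class'' lemma; note the paper integrates $f$ itself over $X$, so no separate map $\Phi$ built from $f\r_{\ext X}$ is needed, and no composition lemma either). However, as written your argument only covers metrizable $X$ (separable $E$): Theorem~\ref{T:dilation} is stated, and is only true, under that hypothesis, while Theorem~\ref{T:aff-baire} has no such restriction; indeed Example~\ref{ex:dikobraz} shows that without metrizability $T$ need not be a Baire map at all, even with $\ext X$ Lindel\"of, so the dilation argument cannot be applied directly. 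The missing ingredient is the reduction to the metrizable/separable case, which in the paper is a substantial block of its own: factor $f$ through a metrizable affine quotient (Lemma~\ref{selekce5}), interpose a metrizable simplex, resp.\ a separable $L_1$-predual (Lemma~\ref{L:rich}), and check that strong affinity passes to the quotient (Lemma~\ref{perfectaff}). Also, in (R) and (C) the statement is for arbitrary strongly affine $f$, so you still need the decompositions $f=f(0)+(f-f(0))$ and $f=f(0)+u+\ov{v}$ (Lemma~\ref{l:homaff}) to reduce to the odd/homogeneous case where $f=Tf$ holds.

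Two further steps of your sketch would fail as stated. For $\alpha=1$ you appeal to ``a direct argument that in the \fr\ case a Baire-one affine map is an affine-Baire-one map''; this is false in general -- it is precisely the content of Example~\ref{E:cap} that a $\C_1$ affine map into a Fr\'echet (even reflexive Banach) space need not lie in any affine Baire class. The correct mechanism is first to get $f\in\fra_2$ by the dilation argument and then to invoke Corollary~\ref{C:c21} (the Mazur-type convex-combination lemma, Lemma~\ref{L:c21}): a map that is simultaneously in $\C_1$ and $\fra_2$ lies in $\fra_1$. For the $F_\sigma$ refinement, your proposed composition bound $\fra_\alpha\circ\fra_\alpha\subset\fra_\alpha$ is not true (already for finite $\alpha$ composition of Baire classes adds), and no ``sharper version of Theorem~\ref{T:dilation}'' is available; the paper instead obtains the $F_\sigma$ case from the abstract Dirichlet theorem (Theorem~\ref{T:dirichlet}): restrict $f$ to $\ext X$, extend it to a map in $\fra_\alpha(X,F)$, and use the uniqueness of strongly affine extensions (maximal measures are carried by Baire sets containing $\ext X$) to conclude the extension equals $f$.
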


The next theorem is devoted to the abstract Dirichlet problem for vector-valued Baire mappings.
The scalar version of the case (S) follows from \cite{Jel}, the scalar version of the case (R) follows from \cite[Theorem 2.14]{lusp23}, the scalar version of the case (C) follows from \cite[Theorem 2.22]{lusp-complex}. The scalar versions of all the three cases hold also for $\alpha=0$, our proof of the vector version requires $\alpha\ge1$. The Lindel\"of property is quite natural assumption. It surely cannot be dropped as witnessed, for example, by the simplex from \cite[Proposition I.4.15]{alfsen} (or \cite[Example 3.82]{lmns}). However, it is still an open problem whether the Lindel\"of property is
a necessary condition for the validity of the scalar case for $\alpha=0$ (i.e., for continuous functions, see Question~\ref{q:lind} below).

\begin{thm}\label{T:dirichlet} Let $X$ be a compact convex set with $\ext X$ being Lindel\"of, $\alpha\ge 1$, $F$ a \fr space and $f:\ext X\to F$ a bounded mapping from $\C_\alpha(\ext X,F)$.
\begin{itemize}
	\item[(S)] If $X$ is a simplex, then $f$ can be extended to a mapping from $\fra_{1+\alpha}(X,F)$.
	\item[(R)] If $X=(B_{E^*},w^*)$, where $E$ is a real $L_1$-predual and $f$ is odd, then $f$ can be extended to a  mapping from $\fra_{\odd,1+\alpha}(X,F)$.
		\item[(C)] If $X=(B_{E^*},w^*)$, where $E$ is a complex $L_1$-predual, $F$ is complex and $f$ is homogeneous, then $f$ can be extended to a  mapping from $\fra_{\hom,1+\alpha}(X,F)$.
\end{itemize}

If $\ext X$ is moreover $F_\sigma$, then $1+\alpha$ can be replaced by $\alpha$ in all the cases.

If $\ext X$ is even closed and $f$ is continuous, we can find a continuous affine extension.
\end{thm}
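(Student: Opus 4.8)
The plan is to extend $f$ by means of the dilation mapping. I present case~(S) in detail; cases~(R) and~(C) go the same way, with the maximal representing measure $\delta_x$ replaced by the $\ef$-anti-homogeneous boundary measure $T(x^*)$ of Fact~\ref{fact:L1}, whose $\ef$-anti-homogeneity forces the resulting extension to be odd (resp.\ homogeneous) whenever $f$ is. So let $X$ be a simplex, $\ext X$ \lin, and $f\in\C_\alpha(\ext X,F)$ bounded, $\alpha\ge1$. Put
\[
\tilde f(x)=\int_{\ext X} f\di\delta_x,\qquad x\in X .
\]
This is legitimate: since $\ext X$ is \lin, every maximal measure on $X$ is carried by $\ext X$, so $\delta_x$ is a Radon probability on $\ext X$; and a bounded Baire mapping into a \fr space has separable range, hence is Pettis integrable with respect to any Radon measure. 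As $\delta_x$ is the Dirac measure at $x$ for every $x\in\ext X$, the mapping $\tilde f$ extends $f$.

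To see that $\tilde f$ is strongly affine, I would invoke Fact~\ref{fact1}: it suffices that $\tau\circ\tilde f$ be strongly affine for each $\tau\in F^*$. Now $\tau\circ\tilde f(x)=\int_{\ext X}\tau\circ f\di\delta_x$, and for $\mu\in\M^1(X)$ the measure $\nu:=\int_X\delta_x\di\mu(x)$ is a maximal probability on $X$ with barycenter $r(\mu)$; hence, $X$ being a simplex, $\nu=\delta_{r(\mu)}$, and Fubini's theorem gives $\int_X(\tau\circ\tilde f)\di\mu=\int_{\ext X}\tau\circ f\di\nu=\tau\circ\tilde f(r(\mu))$. (The function $\tau\circ\tilde f$ is also exactly the canonical solution of the scalar abstract Dirichlet problem for $\tau\circ f$.)

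The core of the proof is the estimate of the Baire class. First I would extend $f$ to a \emph{bounded} mapping $g\in\C_\alpha(X,F)$; this is the only place where $\alpha\ge1$ is used, since for $\alpha=0$ a continuous mapping on the \lin, possibly non-closed set $\ext X$ has separable but in general non-compact range and need not extend to a continuous mapping on the compact set $X$. Since $\delta_x$ lives on $\ext X$, where $g=f$, we have $\tilde f(x)=\int_X g\di\delta_x$, independently of the chosen $g$. Now one shows, by induction on the Baire class, that for bounded $h\in\C_\beta(X,F)$ the mapping $x\mapsto\int_X h\di\delta_x$ is strongly affine and belongs to $\fra_{1+\beta}(X,F)$. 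In the base case $h$ is continuous, hence has compact range, so $x\mapsto\int_X h\di\delta_x$ takes values in the compact metrizable set $\overline{\co}\,h(X)$, and for a countable family in $F^*$ separating its points the corresponding coordinates are scalar Dirichlet solutions of continuous functions on $\ext X$, hence lie in $\fra_1(X,\er)\subseteq\C_1(X,\er)$; this puts $x\mapsto\int_X h\di\delta_x$ in $\C_1(X,F)$, whence, being strongly affine, it belongs to $\fra_1(X,F)$ by the $\alpha=1$ clause of Theorem~\ref{T:aff-baire}. For the inductive step one writes $h=\lim_n h_n$ with $h_n$ of lower class and, crucially, uniformly bounded (a bounded Baire mapping into a \fr space is a pointwise limit of a uniformly bounded sequence of lower-class mappings), so that $\int_X h_n\di\delta_x\to\int_X h\di\delta_x$ by dominated convergence; by the inductive hypothesis each $\int_X h_n\di\delta_x$ lies in $\fra_{1+\beta_n}(X,F)$ with $\beta_n$ below the class of $h$, whence $\int_X h\di\delta_x\in\fra_{1+\beta}(X,F)$. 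Applying this with $h=g$ yields $\tilde f\in\fra_{1+\alpha}(X,F)$. When $\ext X$ is $F_\sigma$ one extends $f$ to $g\in\C_\alpha(X,F)$ more economically and appeals to the $F_\sigma$-sharpened forms of the scalar Dirichlet theorem and of Theorem~\ref{T:aff-baire} to get $\tilde f\in\fra_\alpha(X,F)$. Finally, if $\ext X$ is closed and $f$ continuous, then $x\mapsto\delta_x$ is weak$^*$ continuous (Remark~\ref{rem:dilation}), $f(\ext X)$ is compact, and $\tilde f$ is affine and maps $X$ into the compact set $K:=\overline{\co}\,f(\ext X)$ (closed convex hulls of compacta are compact in \fr spaces) continuously into $(F,w)$; since on $K$ the weak and the original topologies coincide, $\tilde f$ is a continuous affine extension.

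I expect the Baire-class estimate to be the main obstacle. Unlike the metrizable case of Theorem~\ref{T:dilation}, where the dilation mapping is of the first Baire class, for non-metrizable $X$ it need not be a Baire mapping at all (Example~\ref{ex:dikobraz}), so one cannot simply write $\tilde f$ as the composition of a Baire-one dilation map with a continuous integration functional; one is instead forced through the coordinate-wise scalar argument above, the passage from real-valued to $F$-valued mappings being made possible by the range-compactness of continuous approximants \emph{on $X$} together with the $\alpha=1$ clause of Theorem~\ref{T:aff-baire} — which is exactly why $f$ must first be transported from $\ext X$ to the compact set $X$, a step that fails for merely continuous $f$ and accounts for the hypothesis $\alpha\ge1$. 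The remaining delicate point, specific to Fr\'echet-valued mappings, is the availability of a uniformly bounded sequence of lower-class approximants, needed so that dominated convergence applies to the Pettis integrals $\int h_n\di\delta_x$.
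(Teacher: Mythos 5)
The heart of your argument—the transfinite induction showing that for a bounded $h\in\C_\beta(X,F)$ defined on all of $X$ the map $x\mapsto\int_X h\di\delta_x$ is strongly affine and lies in $\fra_{1+\beta}(X,F)$ (base case via the scalar Dirichlet theorem coordinate-wise on a compact metrizable range, inductive step via uniform boundedness and dominated convergence)—is sound, and it is a reasonable substitute for the paper's use of the dilation map on a metrizable quotient. The genuine gap is the step you lean on to reach that induction: ``First I would extend $f$ to a bounded mapping $g\in\C_\alpha(X,F)$.'' No such class-preserving extension from $\ext X$ to $X$ is available here for $\alpha>1$: $\ext X$ is merely a Lindel\"of (in general non-Baire, non-metrizable) subset of a non-metrizable compact space, and the only extension result at hand (Lemma~\ref{L:extension}, resting on \cite{kalenda-spurny}) produces a class-controlled extension only for $\alpha=1$; for higher classes it yields a Baire measurable extension with \emph{no} control of the class, because the extension is obtained as a limit of extensions of approximants redefined off the (Baire) set of convergence, which raises the class. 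With an extension $g$ of uncontrolled class $\gamma$ your scheme only gives $\tilde f\in\fra_{1+\gamma}(X,F)$, not $\fra_{1+\alpha}(X,F)$. The paper's way around this is to run the transfinite induction \emph{downstairs}, on the class of $f$ on $\ext X$: one shows there is a unique strongly affine extension $Sf$ of any bounded Baire $f$ on $\ext X$, that $Sf_n\to Sf$ pointwise on $X$ whenever $(f_n)$ is uniformly bounded and converges to $f$ pointwise on $\ext X$ (Lemma~\ref{L:limita}, which uses that maximal measures are carried by every Baire set containing $\ext X$), and that $Sf\in\fra_2(X,F)$ for $f\in\C_1(\ext X,F)$; the general case then follows without ever extending $f$ class-preservingly.

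Two further points. First, your justification of well-definedness overstates what Lindel\"ofness gives: one gets $\delta_x(K)=0$ for every compact (indeed every Baire) set disjoint from $\ext X$, i.e.\ outer measure one, but not inner regularity, so ``$\delta_x$ is a Radon probability on $\ext X$'' is not warranted; the clean formulation is exactly the paper's, namely to integrate over $X$ a bounded Baire extension $h$ of $f$, the value being independent of $h$ since maximal measures are carried by each Baire set containing $\ext X$. Second, your treatment of the $F_\sigma$ refinement is circular as stated: the $F_\sigma$-sharpened form of Theorem~\ref{T:aff-baire} is itself deduced in the paper from the $F_\sigma$ case of the present theorem, and again a class-preserving extension ``more economically'' is not available; the paper proves this case by a different mechanism, namely Theorem~\ref{T:weakDP}, extending continuous approximants from an increasing sequence of compact (symmetric, homogeneous) sets $K_n$ with $\bigcup_n K_n=\ext X$ and passing to the limit. (Also, in your base case the passage from coordinate-wise Baire-one to $\C_1(X,F)$ needs the compact metrizable range together with Lemma~\ref{L:baire-n}(c)—the general Lemma~\ref{L:meas} loses one class—and in cases (R),(C) the coordinates $\tau\circ h$ are not odd or homogeneous, so one must first replace them by $\odd(\tau\circ h)$, resp.\ $\hom(\tau\circ h)$, using Lemma~\ref{l:odd-miry}(d), resp.\ Lemma~\ref{l:hom-miry1}(d); these are repairable but not automatic.)
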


The next theorem is devoted to the so-called `weak Dirichlet problem'. The scalar version of the case (S) is known -- for continuous functions it is proved in \cite[Theorem II.3.12]{alfsen}, for Baire functions it is due to \cite{spurny-wdp}. The cases (R) and (C) are up to our knowledge new even in the scalar case. The result is proved in Section~\ref{Sec:wdp} using a simplified and generalized variant of the method of \cite{spurny-wdp}.

\begin{thm}\label{T:weakDP}
Let $X$ be a compact convex set, $K\subset \ext X$ a compact subset, $F$ a \fr space and $f$ a bounded mapping in $\C_\alpha(K,F)$.
\begin{itemize}
\item[(S)] If $X$ is a simplex, $f$ can be extended to a mapping from $\fra_\alpha(X,\ov{\co} f(K))$.
\item[(R)] If $X=(B_{E^*},w^*)$, where $E$ is a real $L_1$-predual, $K$ is symmetric and $f$ is odd, then $f$ can be extended to a mapping from $\fra_{\odd,\alpha}(X,\ov{\aco} f(K))$.
\item[(C)] If $X=(B_{E^*},w^*)$, where $E$ is a complex $L_1$-predual, $F$ is complex, $K$ is homogeneous and $f$ is homogeneous, then $f$ can be extended to a mapping from $\fra_{\hom,\alpha}(X,\ov{\aco} f(K))$.
\end{itemize}
\end{thm}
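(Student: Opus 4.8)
The plan is to establish (S) in full and then derive (R) and (C) by running the same scheme on $X=(B_{E^*},w^*)$ with the maximal representing measure replaced by the operator $T$ of Fact~\ref{fact:L1}, followed by a symmetrization. As a preliminary reduction, note that $f(K)$ is compact in $F$, so — $F$ being a Fr\'echet space — the set $C:=\ov{\co}f(K)$ in case (S), respectively $C:=\ov{\aco}f(K)$ in cases (R) and (C), is a \emph{metrizable} compact convex, respectively absolutely convex, subset of $F$. Post-composing any map under consideration with a continuous retraction of $F$ onto $C$ changes neither its Baire class nor its values on $K$ (where $f$ already takes values in $C$); hence we may assume all maps are $C$-valued, and it suffices to produce a \emph{strongly affine} extension of $f$ lying in the appropriate affine Baire class.

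The proof goes by induction on $\alpha$, streamlining and generalizing the scalar method of \cite{spurny-wdp}. The base step $\alpha=0$ asks, given $f\in\C(K,C)$, for a continuous affine $g\colon X\to C$ with $g|_K=f$; in the scalar case this is precisely the classical weak Dirichlet principle on a simplex, \cite[Theorem~II.3.12]{alfsen}, and for $C$-valued $f$ I would deduce the $C$-valued analogue from it by a separation/selection argument adapted to the simplex structure and to the constraint $g(X)\subseteq C$ (a piece to be placed in the auxiliary section, using also that the metrizable compact convex set $C$ is an absolute retract). Reformulated: the restriction map $\fra_0(X,C)\to\C_0(K,C)$ is onto. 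In cases (R) and (C) one uses instead the corresponding $L_1$-predual versions of this base step, built from $T$.

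For the inductive step fix $1\le\alpha<\omega_1$ and assume $\fra_\beta(X,C)\to\C_\beta(K,C)$ is onto for every $\beta<\alpha$. Given a bounded $f\in\C_\alpha(K,C)$, write $f=\lim_n f_n$ pointwise on $K$ with $f_n\in\C_{\beta_n}(K,C)$, $\beta_n<\alpha$; the inductive hypothesis extends each $f_n$ to some $g_n\in\fra_{\beta_n}(X,C)$. The obstruction — and the crux of the whole proof — is that the $g_n$ a priori converge only on $K$, so one cannot just pass to the limit; one must instead choose the sequences $(f_n)$ and $(g_n)$ simultaneously and with care (using a stability estimate for the base-step extension together with a diagonalization) so as to make $(g_n)$ pointwise Cauchy on all of $X$. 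Compactness of $K$ is what keeps the argument at Baire level $\alpha$ rather than $1+\alpha$: by Tietze's theorem continuous functions extend off the compact set $K$ continuously to $X$, so at each level one reduces to extending class-$0$ data instead of class-$1$ data — this is exactly the improvement over Theorem~\ref{T:dirichlet}, where $\ext X$ is merely Lindel\"of. The limit $\tilde f:=\lim_n g_n$ then lies in $\fra_\alpha(X,C)$, restricts to $f$ on $K$, has values in $C$, and is strongly affine (at the bottom level $g$ is continuous affine, hence strongly affine, and strong affinity is preserved under bounded pointwise limits).

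Cases (R) and (C) run this same induction on $X=(B_{E^*},w^*)$, with the operator $T$ of Fact~\ref{fact:L1}, which is affine and $\ef$-anti-homogeneous, replacing the maximal-measure dilation in the base step. Having produced an extension $\tilde f\in\fra_\alpha(X,C)$ as above, replace it by $x^*\mapsto\tfrac12\bigl(\tilde f(x^*)-\tilde f(-x^*)\bigr)$ in case (R) and by $x^*\mapsto\int_{|\alpha|=1}\ov\alpha\,\tilde f(\alpha x^*)\,\di\alpha$ (normalized Haar measure on the unit circle) in case (C); since $K$ is symmetric, respectively homogeneous, $f$ is odd, respectively homogeneous, and $C$ is absolutely convex, this still extends $f$, still has values in $C$, and — symmetrization commuting both with pointwise limits and with composition by the continuous maps $x^*\mapsto\alpha x^*$ — now belongs to $\fra_{\odd,\alpha}(X,C)$, respectively $\fra_{\hom,\alpha}(X,C)$. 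Apart from the recursive convergence argument, the only other nontrivial ingredient is the vector-integration bookkeeping — measurability of $\tau\circ f$ for $\tau\in F^*$, the inclusion $\int_B f\,\di\mu\in C$ for boundary measures $\mu$ with $\|\mu\|\le1$ and $C$-valued $f$, and dominated convergence for such integrals — which I would isolate as auxiliary lemmas. The single hardest point is the recursive construction forcing the affine extensions to converge pointwise on $X$ while preserving both the exact Baire class $\alpha$ and the range $C$.
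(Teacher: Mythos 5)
There is a genuine gap at what you yourself call the crux. In the inductive step you extend each $f_n\in\C_{\beta_n}(K,C)$ separately to some $g_n\in\fra_{\beta_n}(X,C)$ and then assert that, by choosing $(f_n)$ and $(g_n)$ ``simultaneously and with care (using a stability estimate for the base-step extension together with a diagonalization)'', one can force $(g_n)$ to be pointwise Cauchy on all of $X$. No such stability estimate is available: the continuous affine extension of a $C$-valued continuous map on $K$ is highly non-unique and is produced by a selection theorem, not by a canonical or nonexpansive operator, and — worse — the data $f_n$ converge to $f$ only \emph{pointwise} on $K$, so $\sup_K$-smallness of $f_{n+1}-f_n$, which any diagonalization of this kind would need, simply is not there. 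The appeal to Tietze (``continuous functions extend off the compact set $K$ continuously'') does not help either, since Tietze extensions are neither affine nor compatible between levels. So the step from ``each $f_n$ extends'' to ``the extensions converge on $X$'' is a placeholder, not an argument, and it is exactly the point where the proof must do real work.

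The paper circumvents this difficulty by a different device that your scheme misses: it never tries to make individually chosen extensions converge. Instead it fixes a countable family $\{f_n\}\subset\C(K,L)$ generating $f$ at level $\alpha$, bundles it into one continuous map $\varphi=(f_n)_n\colon K\to F^\en$, and extends the \emph{whole family at once} as the coordinates $h_n=\pi_n\circ\gamma$ of a single continuous affine selection $\gamma$ of the lower semicontinuous convex-graph map $\Gamma$ (equal to $\{\varphi(x)\}$ on $K$ and to $\ov{\co}\,\varphi(K)$ off $K$), via Theorem~\ref{T:selekcelsc}. The extension of $f$ is then \emph{not} a limit of the $h_n$ on $X$ — indeed the $h_n$ need not converge off $K$ — but is defined directly by the barycentric formula $\hat f(x)=\delta_x(f)$ (resp.\ $T(x)(f)$ in cases (R),(C)) on $H=\ov{\co}K$, where $\delta_x$ (resp.\ $T(x)$) is supported by $K$, and is transported to $X\setminus H$ by a map $r$ chosen so that $h_n(x)=h_n(r(x))$ for \emph{all} $n$ simultaneously (this is what the $F^\en$-valued selection buys). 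The class is controlled because the assignment $g\mapsto\hat g$ sends $f_n\mapsto h_n$ and commutes with bounded pointwise limits (Theorem~\ref{T:dct}), so a transfinite induction gives $\hat f\in(\{h_n\})_\alpha\subset\fra_\alpha(X,L)$. Your closing symmetrization step (applying $\odd$, resp.\ $\hom$, to an extension) is sound given Lemmas~\ref{l:odd} and~\ref{l:homaff} and the absolute convexity of $\ov{\aco}f(K)$, but without the simultaneous-extension-plus-barycentric-transport mechanism the core of the proof is missing.
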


As a consequence of this theorem we get a result on extending Baire mappings from compact subsets of completely regular spaces,
see Theorem~\ref{T:weak2}.

Finally, the following result can be viewed as an affine version of the Jayne-Rogers selection theorem. Let us recall that a set-valued mapping $\Phi$ is said to be \emph{upper semicontinuous} if $\{x\in X\setsep\Phi(x)\subset U\}$ is open in $X$ for any open set $U\subset F$ (or, equivalently $\{x\in X\setsep\Phi(x)\cap H\ne\emptyset\}$ is closed in $X$ for any closed set $H\subset F$).

\begin{thm}\label{T:selekceusc}
Let $X$ be a compact convex set, $F$ a \fr space and $\Phi\colon X\to F$ an upper semicontinuous set-valued mapping with nonempty closed values and bounded range.
\begin{itemize}
	\item[(S)] If $X$ is a metrizable simplex  and the graph of $\Phi$ is convex, $\Phi$ admits a selection in $\fra_2(X,F)$.
	\item[(R)] If $X=(B_{E^*},w^*)$, where $E$ is a separable real $L_1$-predual and the graph of $\Phi$ is convex and symmetric, $\Phi$ admits a selection in $\fra_{\odd,2}(X,F)$.
	\item[(C)] If $X=(B_{E^*},w^*)$, where $E$ is a separable complex $L_1$-predual, $F$ is complex and the graph of $\Phi$ is absolutely convex, $\Phi$ admits a selection in $\fra_{\hom,2}(X,F)$.
\end{itemize}
\end{thm}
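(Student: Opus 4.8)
The plan is to decouple the two requirements on the sought selection — being an affine Baire-two map and being a selection of $\Phi$ — by first producing \emph{some} Baire-one selection, then ``affinizing'' it by barycentric integration against the dilation operator, and finally checking that the resulting affine map still has all its values in the sets $\Phi(x)$. Concretely, I would first invoke the Jayne--Rogers selection theorem in the form valid for upper semicontinuous closed-valued maps from a metric space into a completely metrizable space, so that it covers maps into a \fr space (a self-contained statement can be extracted from the auxiliary results of Section~\ref{Sec:auxiliary}). As $X$ is metrizable — in cases~(R) and~(C) because $E$ is separable, so that $(B_{E^*},w^*)$ is metrizable — and $F$ is \fr, this produces a bounded selection $g\in\C_1(X,F)$ with separable range. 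In cases~(R) and~(C) I would at once replace $g$ by a symmetrized selection: in~(R) by the odd map $x\mapsto\tfrac12\bigl(g(x)-g(-x)\bigr)$, in~(C) by the homogeneous map $x\mapsto\int_{\{|\alpha|=1\}}\bar\alpha\,g(\alpha x)\di\alpha$. Writing $G$ for the graph of $\Phi$, which is symmetric, resp.\ absolutely convex, and closed, the symmetrized map is again a bounded selection in $\C_1(X,F)$, because the pertinent two-term convex combination, resp.\ circular average, of points of $G$ lies again in $G$; note that then $0\in G$.

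Next, with $T$ the dilation operator of Theorem~\ref{T:dilation} ($T\colon x\mapsto\delta_x$ in case~(S)), I would set $h(x):=\int_X g\di T(x)$. To see $h$ is well defined and affine of the second Baire class, pick continuous maps $g_n\colon X\to F$ with $g_n\to g$ pointwise and all ranges inside one fixed bounded subset of $F$. For each continuous seminorm $p$ on $F$ and each boundary measure $\mu$ in play, $p\bigl(\int g_n\di\mu-\int g_m\di\mu\bigr)\le\int p(g_n-g_m)\di|\mu|$, so $\bigl(\int_X g_n\di T(x)\bigr)_n$ is Cauchy, converges in the complete space $F$, and its limit is the Pettis integral $\int_X g\di T(x)$; hence $h$ is well defined. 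Moreover $x\mapsto\int_X g_n\di T(x)$ equals $\Psi_n\circ T$ with $\Psi_n\colon\mu\mapsto\int_X g_n\di\mu$ linear and continuous into $F$, so $\Psi_n\circ T\in\fra_1(X,F)$, and $\Psi_n\circ T\in\fra_{\odd,1}(X,F)$, resp.\ $\fra_{\hom,1}(X,F)$, in cases~(R),~(C), since $T$ is odd, resp.\ homogeneous. As $\Psi_n\circ T\to h$ pointwise in $F$ by dominated convergence, $h$ is affine (each $\Psi_n\circ T$ is, by linearity of $\Psi_n$ and affinity of $T$) and $h\in\fra_2(X,F)$, resp.\ $h\in\fra_{\odd,2}(X,F)$, $h\in\fra_{\hom,2}(X,F)$.

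It remains to check that $h$ selects $\Phi$, and this is the delicate point. Fix $x\in X$. Using that $T(x)$ represents $x$ — via the barycenter property in case~(S) and Fact~\ref{fact:L1}(d) in cases~(R),~(C) — one computes, for every continuous linear functional $\ell$ on the ambient locally convex space, that $\ell(x,h(x))=\int_X\ell\bigl(y,g(y)\bigr)\di T(x)(y)$. In case~(S), $T(x)=\delta_x$ is a probability carried by the extreme points of $X$, where $(y,g(y))\in G$ since $g$ selects $\Phi$; so $(x,h(x))$ is the barycenter of the probability $(\id,g)_{\#}\delta_x$, carried by the closed convex set $G$, and as $\ell$ is bounded on $G$ ($X$ compact, $\Phi(X)$ bounded) a Hahn--Banach separation argument yields $(x,h(x))\in G$. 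In cases~(R),~(C), writing $T(x)=u\cdot|T(x)|$ with $|u|\equiv1$, the same identity becomes $\ell(x,h(x))=\int_X\ell\bigl(u(y)(y,g(y))\bigr)\di|T(x)|(y)$, where $y\mapsto u(y)(y,g(y))$ has values in $G$ for $|T(x)|$-almost every $y$ — because $|T(x)|$ is carried by the extreme points, where $(y,g(y))\in G$, and $u(y)(y,g(y))\in G$ as $g$ is odd, resp.\ homogeneous, and $G$ is symmetric, resp.\ absolutely convex. Thus $(x,h(x))$ is the barycenter of a positive measure of mass $\|T(x)\|\le1$ carried by $G$, so it lies in $\|T(x)\|\,G\subseteq G$ (using $0\in G$). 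In all cases $(x,h(x))\in G$, i.e.\ $h(x)\in\Phi(x)$, so $h$ is the desired affine selection of the second Baire class.

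Two steps should demand genuine work. First, the Jayne--Rogers selection theorem for maps into a \fr space together with the accompanying truncation bookkeeping — forcing the selection, and a sequence of continuous maps converging to it, into one fixed bounded subset of $F$, a \fr space lacking a canonical ``ball'' — which is best packaged as a lemma (Section~\ref{Sec:auxiliary}); the same device underlies the existence of the Pettis integrals above. Second, the selection verification in cases~(R) and~(C): since $T(x)$ is only a signed, resp.\ complex, measure of norm at most~$1$, the plain barycenter argument fails and one must exploit the polar decomposition of $T(x)$ together with the symmetry, resp.\ absolute convexity, of $G$ and the oddness, resp.\ homogeneity, of $g$, as indicated. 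Everything else — that restrictions and circular averages of Baire-one maps remain Baire-one, and the bookkeeping of the affine Baire classes — is routine.
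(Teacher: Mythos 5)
Your proposal is correct, and its skeleton is the same as the paper's: a Jayne--Rogers Baire-one selection, symmetrization by $\odd$ (resp.\ $\hom$) which stays a selection by the symmetry/convexity (resp.\ balancedness, closedness) of the graph, and barycentric integration against the dilation map $T$ of Theorem~\ref{T:dilation} to land in $\fra_2$, $\fra_{\odd,2}$, $\fra_{\hom,2}$. The one genuine divergence is the final membership check. The paper isolates this in Lemma~\ref{L:sel-convex}: for a Baire selection $f$ and a \emph{probability} $\mu$ one gets $\mu(f)\in\Gamma(r(\mu))$ by separating with $\tau\in F^*$ and using the upper semicontinuous concave function $y\mapsto\sup(\Re\tau)(\Gamma(y))$; in cases (R) and (C) it then converts $T(x)(f_1)$ into $\mu(f_1)$ for a maximal representing probability $\mu$ via the adjoint identities $\int f_1\di\odd\mu=\int\odd f_1\di\mu$ and its $\hom$ analogue (Lemmata~\ref{l:odd-miry}(d), \ref{l:hom-miry1}(d)). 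You instead work with the signed/complex measure $T(x)$ directly: polar decomposition, separation in the product space against the closed graph, and the observations $0\in G$, $G$ balanced, $\|T(x)\|\le1$. Both arguments are sound; yours avoids Lemma~\ref{L:sel-convex} and the passage to a representing probability at the price of re-proving a comparable separation estimate inline (and you likewise re-derive the composition Lemma~\ref{L:fra-0}(iii) by hand), whereas the paper's route reuses lemmas already needed elsewhere. Two small redundancies in your write-up, harmless for correctness: $(y,g(y))\in G$ for \emph{all} $y$, not only extreme points, and $u(y)(y,g(y))\in G$ already follows from balancedness of $G$ without invoking oddness/homogeneity of $g$. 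Your flagged ``genuine work'' items (Jayne--Rogers with Fr\'echet range plus the boundedness bookkeeping, handled in the paper through \cite{jaro82c} and Lemma~\ref{L:baire}(c)) are indeed where the external input sits.
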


We point out that the Jayne-Rogers selection theorem provides a selection of the first class, while we obtain a selection of the second class. This is the best we can achieve, due to Example~\ref{ex:selekce}. In the same example we show that the metrizability assumption is essential.

\section{Some auxiliary results}\label{Sec:auxiliary}

Below we collect auxiliary results which we will need to prove the main results. These results are divided into four sections.
First we need the relationship between Baire hierarchy of mappings and Baire measurability. These results are known but it was necessary to collect them from the literature. The only exception is Lemma~\ref{L:c21} which is a generalization of a result of \cite{MeSta}. Further we establish some results on Pettis integration, especially a dominated convergence theorem. In the next section we collect properties of odd and homogeneous mappings and of the associated operators $\odd$ and $\hom$. The respective results are either easy or vector-valued variants of the results from \cite{lusp-complex}.
Finally, we investigate adjoint operators to $\odd$ and $\hom$ and odd and antihomogeneous measures. We think that some of these results are new and of an independent interest. In particular, we prove there Fact~\ref{fact:L1} which is a simplex-like characterization of $L_1$-preduals.

\subsection{Baire hierarchy of sets and Baire mappings}
In this section we formulate the exact relationship between Baire mappings and mappings measurable with respect to the Baire $\sigma$-algebra.

If $X$ is a topological space, a \emph{zero set} in $X$ is the  inverse image of a closed set in $\er$ under a continuous function $f:X\to\er$. The complement of a zero set is a \emph{cozero set}. If $X$ is normal, it follows from Tietze's theorem that a closed set is a zero set if and only if it is also a $G_\delta$ set. We recall that \emph{Baire sets} are members of the $\sigma$-algebra generated by  the family of all cozero sets in $X$.

We will need a precise hierarchy of Baire sets.
We define additive and multiplicative Baire classes of sets as follows:
Let $\Sigma_1^b(X)$ be the family of all cozero sets and $\Pi_1^b(X)$ the family of all zero sets. For $\alpha\in (1,\omega_1)$, let
\begin{itemize}
	\item $\Sigma_\alpha^b(X)$ be the family of countable unions of sets from $\bigcup_{\beta<\alpha}\Pi_\beta^b(X)$, and
	\item $\Pi_\alpha^b(X)$ be the family of countable intersections of sets from $\bigcup_{\beta<\alpha}\Sigma_\beta^b(X)$.
\end{itemize}
The family $\Sigma_\alpha^b(X)$ is termed the \emph{sets of Baire additive class $\alpha$},
the family $\Pi_\alpha^b(X)$ is called the \emph{sets of Baire multiplicative class $\alpha$}.

The following two lemmata collect some properties of Baire measurable mappings useful for our investigation.

\begin{lemma}\label{L:baire-n}
Let $X$ be a topological space and $F$ be a metrizable separable space. Let $f\colon X\to F$ be a Baire measurable mapping. Then the following assertions hold.
\begin{itemize}
	 \item [(a)] The $\sigma$-algebra of Baire subsets of $X$ equals $\bigcup_{\alpha<\omega_1} \Sigma_\alpha^b(X)=\bigcup_{\alpha<\omega_1}\Pi_\alpha^b(X)$.
	 \item [(b)] There exists $\alpha<\omega_1$ such that $f$ is $\Sigma_\alpha^b(X)$-measurable.
	 \item [(c)] If $X$ is normal, $F$ is a convex subset of a \fr space and $\alpha\in [0,\omega_1)$, then $f\in\C_\alpha(X,F)$ if and only if $f$ is $\Sigma_{\alpha+1}^b(X)$-measurable.
\end{itemize}
\end{lemma}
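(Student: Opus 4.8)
The plan is to prove Lemma~\ref{L:baire-n} by reducing everything to the scalar case via a countable family of coordinates, and then quoting (or reproving) the classical correspondence between Baire classes and Baire additive classes for real functions. Throughout, fix a metric $d$ on $F$ compatible with its topology and a countable dense set $\{y_k\} \subset F$.

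\medskip

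\emph{Part (a).} I would first observe that every cozero set lies in $\Sigma_1^b(X)$ and that the classes $\Sigma_\alpha^b(X)$, $\Pi_\alpha^b(X)$ are increasing in $\alpha$, closed under finite intersections and unions, and that $\Pi_\alpha^b(X)$ consists exactly of complements of sets in $\Sigma_\alpha^b(X)$. An easy transfinite induction then shows $\bigcup_{\alpha<\omega_1}\Sigma_\alpha^b(X)=\bigcup_{\alpha<\omega_1}\Pi_\alpha^b(X)$, and this union is closed under complements (trivially) and under countable unions (any countable family of sets lies in $\bigcup_{\beta<\alpha}\Pi_\beta^b(X)$ for some $\alpha<\omega_1$ by regularity of $\omega_1$, so its union lies in $\Sigma_\alpha^b(X)$). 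Hence the union is a $\sigma$-algebra; since it contains all cozero sets and is contained in the Baire $\sigma$-algebra, it equals the Baire $\sigma$-algebra. This gives (a).

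\medskip

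\emph{Part (b).} Since $F$ is metrizable and separable, the family of open balls $B(y_k,1/m)$ is a countable base, and $f^{-1}(U)$ for arbitrary open $U\subset F$ is a countable union of sets $f^{-1}(B(y_k,1/m))$. By Baire measurability of $f$, each $f^{-1}(B(y_k,1/m))$ is a Baire set, hence by (a) lies in $\Sigma_{\alpha_{k,m}}^b(X)$ for some $\alpha_{k,m}<\omega_1$. Taking $\alpha=\sup_{k,m}\alpha_{k,m}<\omega_1$ (countable supremum), every $f^{-1}(B(y_k,1/m))$, and therefore every $f^{-1}(U)$ with $U$ open, lies in $\Sigma_\alpha^b(X)$; that is, $f$ is $\Sigma_\alpha^b(X)$-measurable.

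\medskip

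\emph{Part (c).} This is the substantive step. One direction is a routine induction: if $f\in\C_0(X,F)$ it is continuous, and $f^{-1}(U)$ for $U$ open is a cozero set of $X$ when $X$ is normal (using that $U$ is an $F_\sigma$ in the metric space $F$, so $f^{-1}(U)$ is a countable union of zero sets, hence in $\Sigma_2^b$ — actually one checks $f^{-1}(U)\in\Sigma_1^b$ directly since open subsets of metric spaces are cozero and continuous preimages of cozero sets are cozero), so $f$ is $\Sigma_1^b$-measurable; and a pointwise limit of a sequence from $\bigcup_{\beta<\alpha}\C_\beta$ has, for open $U$, the preimage $f^{-1}(U)=\bigcup_m\bigcup_N\bigcap_{n\ge N}f_n^{-1}(U_m)$ where $U_m=\{y: d(y,F\setminus U)>1/m\}$, and an induction on $\alpha$ pushes this into $\Sigma_{\alpha+1}^b(X)$. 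The converse — that $\Sigma_{\alpha+1}^b(X)$-measurability implies $f\in\C_\alpha(X,F)$ — is the classical hard direction; here I would reduce to the scalar real-valued case by embedding $F$ into a separable Fréchet space and using a Schauder-type or coordinatewise argument: write $f$ through countably many real Baire-class-$\alpha$ functions (the functions $x\mapsto d(f(x),y_k)$ are $\Sigma_{\alpha+1}^b$-measurable hence of Baire class $\alpha$ by the scalar theorem, e.g.\ the classical result that a real function is $\Sigma_{\alpha+1}^b$-measurable iff it is of Baire class $\alpha$ on a normal space), reconstruct $f$ from these, and then invoke that $\C_\alpha$ is stable under the relevant countable constructions and under composition with continuous maps into convex subsets of Fréchet spaces. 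The main obstacle is exactly this last reconstruction: one must assemble a genuine pointwise-limit representation of $f$ by maps of lower Baire class from the scalar data, taking care that the approximating maps still have values in the convex set $F$ (here convexity of $F$ and the Fréchet structure are used, via convex combinations and a partition-of-unity / barycentric argument on a suitable discretization of $F$). I would handle this by the standard device of approximating $F$ by finite convex combinations of the $y_k$'s indexed by the dyadic level, defining the $\alpha$-th stage map as such a combination with coefficients that are real functions of Baire class $<\alpha$ built from the sets in the $\Sigma_{\alpha+1}^b$ representation of the $f^{-1}(B(y_k,1/m))$, and checking pointwise convergence.
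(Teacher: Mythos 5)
Your parts (a) and (b) are fine and essentially identical to what the paper does (it dismisses them as obvious/easy with the same countable-base argument). The problem is part (c), which is the only substantive assertion of the lemma, and there your proposal has a genuine gap. The paper does not prove (c) from scratch: it quotes Vesel\'y's Theorem 3.7(i) for $\alpha=1$ and Spurn\'y's Theorem 2.7 (from the paper on Borel sets and functions in topological spaces) for $\alpha>1$; these quoted theorems are precisely the hard implication ``$\Sigma_{\alpha+1}^b(X)$-measurable $\Rightarrow$ of class $\C_\alpha$'' for maps into (convex subsets of) \fr spaces. Your proposal acknowledges this is ``the main obstacle'' and then only gestures at it: reducing to the scalar functions $x\mapsto d(f(x),y_k)$ and ``reconstructing'' $f$ by convex combinations with coefficients ``built from the sets in the $\Sigma_{\alpha+1}^b$ representation.'' As described, this does not work at the level of classes: indicator-type coefficients manufactured from $\Sigma_{\alpha+1}^b$ sets are themselves only of Baire class $\alpha$, so a pointwise limit of such combinations lands a priori in class $\alpha+1$, not $\alpha$. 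The actual argument needs two ingredients you do not supply: a reduction/refinement step producing, for each $\varepsilon>0$, a decomposition of $X$ into countably many sets of the appropriate \emph{ambiguous} class on which $f$ oscillates by less than $\varepsilon$ (so that the approximants are countably-valued maps of class $\alpha$ with values among the $y_k\in F$), and a stability theorem saying that a uniform limit of $F$-valued maps of class $\alpha$ is again of class $\alpha$ (this is where completeness and convexity of $F$ enter). Without these, the ``standard device'' you invoke is not a proof; it is exactly the content of the results the paper cites.

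Two smaller remarks. In your easy direction of (c), the formula $f^{-1}(U)=\bigcup_m\bigcup_N\bigcap_{n\ge N}f_n^{-1}(U_m)$ with $U_m$ open is set-theoretically correct, but the class count then gives only $\Sigma_{\alpha+2}^b(X)$ (a countable intersection of $\Sigma_\alpha^b$ sets is merely $\Pi_{\alpha+1}^b$); one should intersect over preimages of the zero sets $\{y\setsep d(y,F\setminus U)\ge 1/m\}$, whose preimages lie in $\Pi_{\beta_n+1}^b\subset\Pi_\alpha^b$, to land in $\Sigma_{\alpha+1}^b(X)$ as claimed. Finally, note also that limit-ordinal values of $\alpha$ and the exact bookkeeping between $\C_\alpha$ and $\Sigma_{\alpha+1}^b$ (the ``$\alpha$ versus $\alpha+1$'' shift) are handled in the quoted sources and require care in any self-contained rewrite.
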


\begin{proof} The assertion (a) is obvious.

(b) Since separable metric spaces have countable basis, the assertion easily follows.

(c) For $\alpha=0$ the assertion is trivial. For $\alpha=1$ it follows from \cite[Theorem 3.7(i)]{vesely}.

The assertion for $\alpha>1$ follows from \cite[Theorem 2.7]{spurny-amh}. Indeed, if $\F$ denotes the algebra generated by zero sets, then the families $\Sigma_\alpha(\F)$ and $\Pi_\alpha(\F)$ from the quoted paper are exactly $\Sigma_\alpha^b(X)$ and $\Pi_\alpha^b(X)$ for $\alpha\ge2$. Further, the family denoted by $\Phi_\alpha$ in the quoted paper is exactly $\C_\alpha(X,F)$
for $\alpha\ge1$. For $\alpha=1$ it follows from the previous paragraph, the validity for larger ordinals follows from the definitions.
\end{proof}

An immediate consequence of the assertion (c) of the previous lemma is the following statement.

\begin{cor}\label{C:baire} Let $X$ be a normal space, $F$ a separable \fr space, $\alpha<\omega_1$ and $f\in\C_\alpha(X,F)$.
Then $f\in\C_\alpha(X,\co f(X))$.
\end{cor}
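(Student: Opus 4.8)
The plan is to invoke Lemma~\ref{L:baire-n}(c) twice: once to convert membership in $\C_\alpha(X,F)$ into a Baire-measurability statement, and once more to convert that statement back into membership in $\C_\alpha(X,\co f(X))$. Set $L=\co f(X)$. First I would observe that $L$ is a convex subset of the separable \fr space $F$, hence $L$ is itself a separable metrizable space as well as a convex subset of a \fr space; thus $L$ satisfies all the standing hypotheses required to apply Lemma~\ref{L:baire-n}(c) with $L$ playing the role of the range space.

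Next, since $f\in\C_\alpha(X,F)$ and $X$ is normal, Lemma~\ref{L:baire-n}(c) applied to $f$ as a mapping into $F$ gives that $f$ is $\Sigma_{\alpha+1}^b(X)$-measurable. The one point to verify is that this measurability persists when $f$ is viewed as a mapping into $L$: every relatively open subset of $L$ is of the form $U\cap L$ with $U$ open in $F$, and because $f(X)\subseteq L$ we have $f^{-1}(U\cap L)=f^{-1}(U)\in\Sigma_{\alpha+1}^b(X)$. Hence $f\colon X\to L$ is $\Sigma_{\alpha+1}^b(X)$-measurable.

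Finally I would apply Lemma~\ref{L:baire-n}(c) a second time, now with $L$ as the range space (legitimate by the first step): the $\Sigma_{\alpha+1}^b(X)$-measurability of $f\colon X\to L$ together with normality of $X$ yields $f\in\C_\alpha(X,L)=\C_\alpha(X,\co f(X))$, which is the claim. I do not expect a genuine obstacle here; the only things needing (routine) care are the remark that convex subsets of a separable \fr space remain separable and metrizable, so that the hypotheses of Lemma~\ref{L:baire-n}(c) are genuinely met for $L$, and the elementary computation showing that restricting the codomain from $F$ to $L$ leaves the class $\Sigma_{\alpha+1}^b(X)$-measurability unchanged.
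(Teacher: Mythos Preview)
Your proposal is correct and is exactly the argument the paper has in mind: the corollary is stated as an immediate consequence of Lemma~\ref{L:baire-n}(c), and you have simply spelled out the two applications of that lemma together with the trivial observation that restricting the codomain preserves $\Sigma_{\alpha+1}^b(X)$-measurability.
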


\begin{lemma}
\label{L:baire}
Let $X$ be a Baire subset of a compact space and $F$ be a metrizable space. Let $f\colon X\to F$ be a Baire measurable mapping. Then the following assertions hold.
\begin{itemize}
\item [(a)] The image $f(X)$ is separable.
\item [(b)] There exists $\alpha<\omega_1$ such that $f$ is $\Sigma_\alpha^b(X)$-measurable.
\item [(c)] If $F$ is a convex subset of a \fr space and $\alpha\in [0,\omega_1)$, then $f\in\C_\alpha(X,F)$ if and only if $f$ is $\Sigma_{\alpha+1}^b(X)$-measurable.
\end{itemize}
\end{lemma}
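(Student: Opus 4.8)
The plan is to reduce Lemma~\ref{L:baire} to the already-proved Lemma~\ref{L:baire-n} via the separability statement in part~(a), and to prove (a) by a direct argument using the fact that $X$ sits inside a compact space and that Baire measurability lets us control $f$ by countably many continuous functions. First I would prove (a): let $K$ be the compact space containing $X$ as a Baire subset, and let $f\colon X\to F$ be Baire measurable. Since $F$ is metrizable, fix a countable base $(V_n)$ for the topology of $F$ restricted to $f(X)$ \emph{after} we know $f(X)$ is separable — so the logic must go the other way. Instead, the key is that the Baire $\sigma$-algebra of $K$ is generated by the countably-many-at-a-time operations starting from cozero sets of $K$, and each cozero set of $K$ is determined by a single continuous real function; the Baire $\sigma$-algebra of $X$ is the trace on $X$ of the Baire $\sigma$-algebra of $K$ (this is where "$X$ is a Baire subset of a compact space" is used). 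Now a classical fact: a Baire measurable map into a metric space, where the domain carries the Baire $\sigma$-algebra of a compact space, has separable range. I would prove this by transfinite induction on the additive Baire class of $f$ (which exists once we have the analogue of part (b)), or more cleanly: $f$ is measurable with respect to a countably generated sub-$\sigma$-algebra $\mathcal{A}_0$ of $\Baire(X)$ (generated by $f^{-1}(V_n)$ for a countable base $(V_n)$ of $F$ — here we only need a countable base of $F$ itself, not of $f(X)$, and a metrizable space need not be separable, so this needs care).

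To handle that subtlety I would argue as follows. The map $f$ is Baire measurable, so for each $n$ the set $f^{-1}(V)$ lies in $\Baire(X)$ for every open $V\subset F$. Each member of $\Baire(X)$ is the trace on $X$ of a member of $\Baire(K)$, and each member of $\Baire(K)$ belongs to the $\sigma$-algebra generated by countably many cozero sets, hence is "supported" on a separable quotient of $K$. Concretely: I would use the standard result that every Baire set in a compact space $K$ depends on countably many coordinates in the sense that there is a continuous surjection $\pi\colon K\to K'$ onto a metrizable compact space $K'$ and a Baire set $B'\subset K'$ with $B=\pi^{-1}(B')$. Covering $f^{-1}(V)$ for $V$ ranging over a base of a fixed separable subspace... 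The cleanest route: prove (b) first directly (it does not need (a)), then deduce (a).

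So I would reorder: prove (b) exactly as in Lemma~\ref{L:baire-n}(b) — the families $\Sigma_\alpha^b(X)$ exhaust $\Baire(X)$ (part (a) of the other lemma, or rather the obvious exhaustion), and $f$ Baire measurable means $f^{-1}$ of every open set is Baire; but to get a \emph{single} $\alpha$ working for all open sets we need a countable base of the topology of the range, which forces us to know the range is separable, i.e.\ (a) again. This circularity is broken by the following: since $X$ is a Baire subset of a compact space, $\Baire(X)$ is generated by countably many cozero sets only after quotienting; instead use that $X$, being a Baire subset of a compact $K$, is \emph{Lindel\"of} in its relative topology is false in general — but $X$ with the topology generated by cozero sets of $K$ restricted to $X$ is Lindel\"of? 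The honest approach: $f\colon X\to F$ Baire measurable factors, by a theorem on Baire measurable maps (e.g.\ via the Loomis--Sikorski type representation), through a \emph{metrizable} space. Precisely, $\Baire(K)$ is a countably generated $\sigma$-algebra only on each separable piece; take the sub-$\sigma$-algebra $\mathcal{A}_f=\sigma(\{f^{-1}(U): U\in\mathcal{U}\})$ where $\mathcal{U}$ is the family of all open balls $B(q,1/k)$, $q$ in a dense subset $D$ of $F$. If $F$ is not separable, $D$ is uncountable, so $\mathcal{A}_f$ need not be countably generated. However $f(X)$ \emph{is} what we want to show is separable.

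Resolution, and the step I expect to be the main obstacle: the correct lemma is that a Baire-measurable map on a Baire subset of a compact space has \emph{Lindel\"of}, hence separable (since the target is metric), range. I would prove: $X$ with its relative topology, being a Baire (in particular $G_\delta$-generated-type) subset of a compact Hausdorff space, is such that $f(X)$ is analytic/Lindel\"of; more elementarily, use Lemma~\ref{L:baire-n}(a)--(b) applied to a suitable \emph{metrizable} compactification. I would embed the problem: replace $K$ by $\ell^\infty(\Baire(K))$-type construction, or simply note $f$ is $\Sigma_\alpha^b$-measurable for \emph{some} $\alpha$ by transfinitely inducting on the complexity of the generating cozero sets, and the union of the ranges of continuous pieces is separable because each cozero set of $K$ is $g^{-1}(\text{open})$ for a continuous $g\colon K\to\er$, giving separable image at the base level, and countable operations preserve separability of the induced "value algebra". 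Granting (a), parts (b) and (c) then follow verbatim from the proof of Lemma~\ref{L:baire-n}: once $f(X)$ is separable, replace $F$ by the separable metric space $\overline{f(X)}$, apply Lemma~\ref{L:baire-n}(b) for (b), and for (c) note that normality of $X$ was used in Lemma~\ref{L:baire-n}(c) only through the coincidence of zero sets and closed $G_\delta$ sets, which holds here because $X$, as a Baire subset of a compact space, is normal (compact Hausdorff spaces are normal, and... actually a Baire subset of a normal space need not be normal, but it is if it is $F_\sigma$ or closed — here one uses that Baire sets are "perfectly normal-like"): I would instead invoke that the relevant statement \cite[Theorem 2.7]{spurny-amh} and \cite[Theorem 3.7(i)]{vesely} are stated for exactly this class of spaces, so (c) transfers directly. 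The main obstacle is thus establishing separability of the range in the non-normal, non-metrizable setting of a general Baire subset of a compact space; everything after that is a quotation of the previous lemma.
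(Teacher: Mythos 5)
Your overall skeleton is the same as the paper's: prove the separability of the range (a), then reduce (b) and (c) to Lemma~\ref{L:baire-n} by replacing $F$ with a separable subspace. However, the crucial step (a) is never actually proved in your proposal, and this is a genuine gap, not a technicality. Each of the routes you sketch either runs into the circularity you yourself point out (a countably generated sub-$\sigma$-algebra $\sigma(\{f^{-1}(V_n)\})$ or a single class $\alpha$ working for all open sets requires a countable base of the range topology, i.e.\ separability of $f(X)$, which is what is to be proved), or is not an argument at all: the map $f$ is merely Baire measurable, it is not assembled from ``continuous pieces'', so ``the union of the ranges of continuous pieces is separable'' and ``countable operations preserve separability of the induced value algebra'' have no content for a general Baire measurable $f$ into a nonseparable metric $F$. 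The missing idea is exactly the one the paper uses: since $X$ is a Baire subset of a compact space, $X$ is $K$-analytic, and by Frol\'{\i}k's theorem \cite{frolik-bulams} a Baire measurable map from a $K$-analytic space into a metrizable space has $K$-analytic, hence Lindel\"of, hence separable image. You gesture at ``$f(X)$ is analytic/Lindel\"of'' as the right statement, but you give neither a proof nor a reference, and this is a nontrivial theorem that cannot be recovered by the elementary inductions you propose.

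There is a second, smaller gap in (c): Lemma~\ref{L:baire-n}(c) requires the domain to be normal, and you do not establish normality of $X$; instead you assert that the cited results of Vesel\'y and Spurn\'y apply ``to exactly this class of spaces'', which is unjustified (and your own remark that a Baire subset of a normal space need not be normal shows the issue is real). In the paper this is resolved by the same $K$-analyticity: $X$ is regular and Lindel\"of, hence normal, so Lemma~\ref{L:baire-n}(c) applies directly after passing to a separable range via (a).
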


\begin{proof}
The assertion (a) follows from \cite[Theorem 1]{frolik-bulams}. Indeed, the space $X$, being a Baire subset of a compact space,
is $K$-analytic by \cite[Proposition $\beta$ on p.1113]{frolik-bulams} (spaces which are now called $K$-analytic are called \emph{analytic} in the quoted paper). Thus $X$ satisfies the assumption on the domain space in \cite[Theorem 1]{frolik-bulams}.
Further, $f$ is Baire measurable. It should be noted, that in \cite{frolik-bulams} this notion has a different meaning
-- it means that the preimage of any Baire set is a Baire set. Since $F$ is assumed to be metrizable, this notion coincides with our notion of Baire measurability. The conclusion is that $f(X)$ is $K$-analytic, hence Lindel\"of (by \cite[Proposition $\alpha$ on p.1113]{frolik-bulams}) and therefore separable (by metrizability).

(b) By (a) we can suppose without loss of generality that $F$ is separable. Hence the assertion follows from Lemma~\ref{L:baire-n}(b).

(c) By (a) we can suppose without loss of generality that $F$ is separable. Since $X$ is normal (being regular and Lindel\"of), the assertion follows from Lemma~\ref{L:baire-n}(c).
\end{proof}

Let us point out that the assertion (a) of the previous lemma is not valid in general. It is valid (by the same proof) under a weaker assumption that $X$ is $K$-analytic (but we do not need it), but it fails if $X$ is, say, a general separable metric space. An example under Martin's axiom and negation of the continuum hypothesis is described in \cite[Example 2.4(3)]{koumou}.

We continue by the following technical lemma which is a variant of \cite[Proposition 2.8]{MeSta}. In the quoted paper the authors deal with mappings with values in a Banach space. Our proof essentially follows the proof in \cite{MeSta} with necessary modifications due to the assumption that $F$ is a \fr space, not necessarily a Banach space.

\begin{lemma}\label{L:c21}
Let $X$ be a compact space, $F$ a \fr space over $\ef$, $f_{n,m}$, $f_n$, $f$ ($m,n\in\en$) be mappings defined on $X$ with values in $F$ satisfying the following conditions:
\begin{itemize}
	\item[(i)] $f_{n,m}$ is continuous on $X$ for each $n,m\in\en$;
	\item[(ii)] $f_{n,m}(x)\overset{m}{\longrightarrow}f_n(x)$ weakly in $F$ for each $n\in\en$ and each $x\in X$;
	\item[(iii)] $f_{n}(x)\overset{n}{\longrightarrow}f(x)$ weakly in $F$ for each $x\in X$;
	\item[(iv)] the family of functions $(f_{n,m})$ is uniformly bounded;
	\item[(v)] $f\in\C_1(X,F)$.
\end{itemize}
Then there is a sequence $(g_k)$ of convex combinations of functions $f_{n,m}$, $n,m\in\en$, such that $g_k(x)\to f(x)$
in $F$ for each $x\in X$.
\end{lemma}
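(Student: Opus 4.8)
The plan is to pass from the doubly-indexed continuous family to the target $f$ by two successive applications of a Mazur-type argument, using the uniform boundedness hypothesis (iv) to keep everything inside a fixed metrizable weakly compact-like set at each stage. The key point is that weak convergence plus convexity allows us to replace weak limits by norm limits of convex combinations, but we have to do this simultaneously for all $x\in X$, which is where the machinery of Baire classes on the compact space $X$ (Lemma~\ref{L:baire-n} and Lemma~\ref{L:c21}'s hypothesis (v)) enters.

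First I would reduce to the separable case: by (iv) the set $\bigcup_{n,m} f_{n,m}(X)$ is bounded, hence so is its closed convex hull $C$; replacing $F$ by the closed linear span of $C$ (a separable \fr space, since $C$ is bounded and $X$ is compact so each $f_{n,m}(X)$ is compact metrizable in norm... more carefully, one uses that a bounded subset of a \fr space spanning a separable subspace suffices) we may assume $F$ is separable. Actually the cleanest route: each $f_{n,m}$ is continuous on the compact space $X$, so $f_{n,m}(X)$ is compact, hence separable; the countable union $\bigcup_{n,m}f_{n,m}(X)$ is separable, and all the $f_n(x)$, $f(x)$ lie in its closed convex hull by (ii) and (iii). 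So $\wt F:=\ov{\span}\bigl(\bigcup_{n,m}f_{n,m}(X)\bigr)$ is a separable \fr space containing all relevant values, and we work there.

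Next, fix $n$. By (ii), $f_n$ is a pointwise weak limit of the continuous functions $(f_{n,m})_m$, which are uniformly bounded. I would like to conclude via a Mazur/convex-combination argument that $f_n$ is a pointwise norm limit of convex combinations of the $f_{n,m}$'s. The standard tool here is: in a separable \fr space, on a bounded set the weak and norm Borel structures agree and one can apply a lemma on interchanging weak pointwise limits with convex combinations (this is essentially the content that underlies \cite[Proposition 2.8]{MeSta} and is what Lemma~\ref{L:c21} is being set up to prove). However, hypothesis (v) — that $f$ itself is of the first Baire class — is what makes the \emph{second} stage work: after obtaining for each $n$ a sequence $(h_{n,k})_k$ of convex combinations of $\{f_{n,m}:m\}$ with $h_{n,k}(x)\to f_n(x)$ in $F$ for every $x$, we have $f_n\in\C_1(X,F)$, and then $f$ is a pointwise weak limit (by (iii)) of the first-Baire-class functions $f_n$, with $f\in\C_1(X,F)$ by (v); a diagonal argument combined with a further convex-combination step then produces the desired single sequence $(g_k)$ of convex combinations of the $f_{n,m}$ converging pointwise in norm to $f$.

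The main obstacle I expect is the interchange-of-limits-with-convex-combinations step carried out in the \fr (non-Banach) setting: in a Banach space one metrizes the bounded set in the weak topology and applies Mazur's theorem together with a pointwise-selection argument (Bartle–Dunford–Schwartz / the "no-gap" version of \cite{MeSta}), but in a \fr space one must instead fix a countable family of continuous seminorms $(\rho_j)$ generating the topology and run the argument seminorm-by-seminorm, using at each stage that the dual of a \fr space is the union of the polars of the seminorm balls, so that weak convergence is still tested by a sequence of functionals adapted to the seminorms. Managing the bookkeeping so that a \emph{single} sequence of convex combinations works simultaneously for all seminorms $\rho_j$ and all points $x$ — presumably via a diagonal enumeration of a countable dense set of parameters and an appeal to hypothesis (v) to close the argument at the limit ordinal stage — is the delicate part; everything else (the separability reduction, uniform boundedness, the passage from $f_{n,m}$ to $f_n$ to $f$) is routine once that core lemma is in place.
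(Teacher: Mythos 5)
There is a genuine gap, and it sits exactly where your proposal defers to ``a Mazur/convex-combination argument'' and ``a diagonal argument''. First, your stage one --- producing for each fixed $n$ convex combinations of $(f_{n,m})_m$ converging pointwise \emph{in norm} to $f_n$ --- is unjustified: such a representation would force $f_n\in\C_1(X,F)$, but hypothesis (v) only says $f\in\C_1(X,F)$ and nothing in (i)--(iv) makes the intermediate functions $f_n$ norm-Baire-one; they are merely pointwise weak limits of continuous maps. Moreover, Mazur's theorem is not applicable here even after scalarization: a uniformly bounded sequence of continuous functions converging pointwise to a discontinuous limit does \emph{not} converge weakly in $\C(X,\ef)$, so pointwise (or weak-pointwise) convergence cannot be upgraded to norm convergence of convex combinations by Mazur alone. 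The tool that actually does this job is the scalar result \cite[Lemma 2.5]{MeSta}, which treats the doubly indexed family all at once and whose essential hypothesis is that the \emph{final} limit is of the first Baire class --- precisely the role of (v); it cannot be applied level by level to the $f_n$. Second, even granting stage one, your stage two rests on diagonalizing pointwise convergence over the uncountable compact space $X$ (and over all seminorms simultaneously), which is not available: pointwise convergence does not pass through diagonal extractions over uncountable index sets. So the proposal is vague exactly at the two places where the real work lies.

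For comparison, the paper's proof avoids both problems as follows. With $(p_k)$ an increasing sequence of seminorms and $K_k$ the (weak$^*$ compact) polar of $\{p_k<1\}$, one passes to the scalar functions $\widehat u(x,x^*)=x^*(u(x))$ on the compact spaces $X\times K_k$; there \cite[Lemma 2.5]{MeSta} applies directly to the family $(\widehat{f_{n,m}})$, since $\widehat f\r_{X\times K_k}$ is Baire-one (using (v)), and yields convex combinations $u^k_n$ of the $f_{n,m}$ with $\widehat{u^k_n}\to\widehat f$ pointwise on $X\times K_k$. Hypothesis (v) is used a second time to fix continuous $h_n\colon X\to F$ with $h_n\to f$ pointwise; then $\widehat{u^k_n}-\widehat{h_n}\to0$ pointwise and boundedly on $X\times K_k$, and since the limit $0$ is continuous this \emph{is} weak convergence in the Banach space $\C(X\times K_k,\ef)$, so Mazur legitimately produces $v_k\in\co\{u^k_n-h_n\setsep n\ge k\}$ with $\norm{\widehat{v_k}\r_{X\times K_k}}<\frac1k$. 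Writing $v_k=g_k-w_k$ with $g_k\in\co\{f_{n,m}\}$ and $w_k\in\co\{h_n\setsep n\ge k\}$, the estimate $p_k(f(x)-g_n(x))\le p_k(f(x)-w_n(x))+\sup\{\abs{x^*(v_n(x))}\setsep x^*\in K_k\}$ together with $K_k\subset K_n$ for $n\ge k$ gives $g_n(x)\to f(x)$ in $F$ for every $x$, with no diagonalization over points of $X$ needed. If you want to salvage your outline, you should replace both of your stages by this single reduction to the scalar lemma on $X\times K_k$ plus the Mazur step applied to the \emph{difference} sequence whose pointwise limit is continuous.
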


\begin{proof} Fix a closed absolutely convex bounded set $L\subset F$ containing the ranges of all the functions $f_{n,m}$, $n,m\in\en$. Since closed convex sets are weakly closed, it contains also ranges of $f_n$, $n\in\en$, and that of $f$.
Hence by Corollary~\ref{C:baire} we have $f\in\C_1(X,L)$ (note that $L$ can be chosen to be separable). Fix a sequence $(h_k)$ in $\C(X,L)$ pointwise converging to $f$.

Since $F$ is a \fr space, its topology is generated by a sequence of seminorms $(p_k)$. Without loss of generality we can
suppose that $p_1\le p_2\le \dots$. For each $k\in\en$ denote by $K_k$ the polar of the set $\{y\in F\setsep p_k(y)<1\}$. By the Alaoglu theorem $K_k$ is weak$^*$ compact. 

Given any function $u\colon X\to F$ denote by $\widehat{u}$ the scalar function on $X\times F^*$ defined by
$$ \widehat{u}(x,x^*)=x^*(u(x)), \qquad (x,x^*)\in X\times F^*.$$
It is easy to observe that
\begin{itemize}
	\item $\widehat{u_n}\to \widehat{u}$ pointwise whenever $u_n(x)\to u(x)$ weakly in $F$ for each $x\in X$.
	\item If $u$ is continuous, then $\widehat{u}$ is continuous on $X\times K_k$ for each $k\in\en$. ($K_k$ is equipped with the weak$^*$ topology.)
\end{itemize}
Indeed, the first assertion is obvious. Let us show the second one. Suppose $u$ is continuous and $k\in\en$.
Fix any $(x,x^*)\in X\times K_k$ and $\varepsilon>0$. For $(y,y^*)\in X\times K_k$ we have
$$\begin{aligned}\abs{\widehat{u}(y,y^*)-\widehat{u}(x,x^*)}&=\abs{y^*(u(y))-x^*(u(x))}
\\& \le \abs{y^*(u(y)-u(x))}+\abs{y^*(u(x))-x^*(u(x))}
\\& \le p_k(u(y)-u(x))+\abs{y^*(u(x))-x^*(u(x))}<\varepsilon\end{aligned}$$
whenever
$$p_k(u(y)-u(x))<\frac\varepsilon2\mbox{\quad and \quad}\abs{y^*(u(x))-x^*(u(x))}<\frac\varepsilon2,$$
which defines a neighborhood of $(x,x^*)$ in $X\times K_k$.

Fix $k\in \en$. Then $\widehat{h_n}\to \widehat{f}$ pointwise and hence $\widehat{f}\,\r_{X\times K_k}\in\C_1(X\times K_k,\ef)$. Moreover, $\widehat{f_n}\to \widehat{f}$ and $\widehat{f_{n,m}}\overset{m}{\longrightarrow} \widehat{f_n}$  pointwise on $X\times K_k$ for each $n\in\en$. Since  $(\widehat{f_{n,m}}\r_{X\times K_k})$ is a uniformly bounded family of continuous functions, \cite[Lemma 2.5]{MeSta} yields a sequence $(u^k_n)_n$ in the convex hull of the family $(f_{n,m})$ such that
$\widehat{u^k_n}\overset{n}{\longrightarrow}\widehat{f}$ pointwise on $X\times K_k$. Then $\widehat{u^k_n}-\widehat{h_n}\overset{n}{\longrightarrow}0$ pointwise on $X\times K_k$. Since this sequence is uniformly bounded in $\C(X\times K_k,\ef)$, it converges weakly to zero in $\C(X\times K_k,\ef)$. By the Mazur theorem there is
$v_k\in\co\{u^k_n-h_n\setsep n\ge k\}$ such that $\norm{\widehat{v_k}\r_{X\times K_k}}<\frac1k$. The function $v_k$ can be expressed as $v_k=g_k-w_k$, where $g_k\in\co\{f_{n,m}\setsep n,m\in\en\}$ and $w_k\in\co\{h_n\setsep n\ge k\}$.

We claim that $g_n(x)\to f(x)$ for each $x\in X$. So, fix $x\in X$ and $k\in \en$. We will show that $p_k(f(x)-g_n(x))\overset{n}{\longrightarrow} 0$.
Let $\varepsilon>0$ be arbitrary. Fix $n_0\in\en$ such that $n_0\ge k$, $\frac{1}{n_0}<\frac\varepsilon2$ and
	 $p_k(f(x)-h_n(x))<\frac\varepsilon2$ for $n\ge n_0$. Fix $n\ge n_0$. Then $p_k(f(x)-w_n(x))<\frac\varepsilon2$ 
	 (as $w_n(x)\in\co\{h_j(x)\setsep j\ge n_0\}$). Hence
$$\begin{aligned}p_k(f(x)-g_n(x))&\le p_k(f(x)-w_n(x))+p_k(w_n(x)-g_n(x)) \le \frac\varepsilon2 + p_n(v_n(x))
\\&=\frac\varepsilon2+\sup\{\abs{x^*(v_n(x))}\setsep x^*\in K_k\} 
=\frac\varepsilon2+\norm{\widehat{v_n}\r_{X\times K_k}}
\\&\le\frac\varepsilon2+\norm{\widehat{v_n}\r_{X\times K_n}} <\frac\varepsilon2+\frac1n<\varepsilon.\end{aligned}$$
This completes the proof.
\end{proof}

An immediate consequence of the previous lemma is the following statement.

\begin{cor}\label{C:c21} Let $X$ be a compact convex space and $F$ be a bounded convex subset of a \fr space.
If $f\in\C_1(X,F)$ and $f\in\fra_2(X,F)$, then $f\in \fra_1(X,F)$.
\end{cor}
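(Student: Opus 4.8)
The plan is to apply Lemma~\ref{L:c21} directly, with $X$ playing the role of the compact space and the hypotheses repackaged so that (i)--(v) hold. Since $f\in\fra_2(X,F)$, by definition there is a sequence $(f_n)$ in $\bigcup_{\beta<2}\fra_\beta(X,F)=\fra(X,F)\cup\fra_1(X,F)$ with $f_n\to f$ pointwise on $X$; without loss of generality (absorbing the continuous functions into $\fra_1$) we may assume each $f_n\in\fra_1(X,F)$. For each $n$, again by definition of $\fra_1$, there is a sequence $(f_{n,m})_m$ in $\fra(X,F)$ with $f_{n,m}\to f_n$ pointwise on $X$. Thus each $f_{n,m}$ is affine and continuous, giving (i), and (ii), (iii) hold in the norm topology of $F$, hence in particular weakly; (v) is part of the hypothesis. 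The only point needing attention is (iv), uniform boundedness of the family $(f_{n,m})$.

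For (iv) I would use that $F$ is a \emph{bounded} convex subset of the ambient \fr space: fix a closed absolutely convex bounded set $L$ in that space containing $F$ (for instance the closed absolutely convex hull of $F$, which is bounded since $F$ is). Then every $f_{n,m}$, being $F$-valued, has range in $L$, so the whole family $(f_{n,m})$ is contained in $L$ and is therefore uniformly bounded. (One should be slightly careful that Lemma~\ref{L:c21} is stated for $F$-valued maps with $F$ a \fr space; here the relevant \fr space is the ambient one, and the maps $f_{n,m}$, $f_n$, $f$ are all valued in the convex set $F$, which is fine since the lemma's hypotheses only use the linear structure of the ambient space.)

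Having verified (i)--(v), Lemma~\ref{L:c21} produces a sequence $(g_k)$, each $g_k$ a convex combination of the $f_{n,m}$'s, with $g_k(x)\to f(x)$ in $F$ for every $x\in X$. Since each $f_{n,m}$ is affine and continuous, so is each $g_k$, and since $F$ is convex and every $f_{n,m}$ takes values in $F$, the convex combination $g_k$ also takes values in $F$; thus $g_k\in\fra(X,F)$. A pointwise limit of a sequence in $\fra(X,F)$ lying in $F$ is by definition a member of $\fra_1(X,F)$, so $f\in\fra_1(X,F)$, as claimed.

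I do not expect any serious obstacle here; the corollary is essentially a translation of Lemma~\ref{L:c21} into the affine setting, and the one mild point is checking the uniform boundedness hypothesis~(iv), which is exactly what the boundedness of the convex set $F$ is there to supply.
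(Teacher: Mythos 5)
Your argument is correct and takes essentially the same route as the paper: Corollary~\ref{C:c21} is presented there as an immediate consequence of Lemma~\ref{L:c21}, obtained exactly as you do by unfolding the definitions of $\fra_2(X,F)$ and $\fra_1(X,F)$, using the boundedness of the convex set $F$ to supply hypothesis (iv) and its convexity to keep the convex combinations $g_k$ inside $F$. The only cosmetic quibble is your reference to the ``norm topology'' of the ambient Fr\'echet space, which should be the metric (Fr\'echet) topology generated by the seminorms; this does not affect the argument.
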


\subsection{Integrable vector-valued functions}
We collect several results on vector integration needed in the sequel. We start by two lemmata on the relationship
between measurability and weak measurability.

\begin{lemma}
\label{L:meas-0}
Let $F$ be a separable metrizable locally convex space. Then each open subset of $F$ is $F_\sigma$ in the weak topology.
\end{lemma}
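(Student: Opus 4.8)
The plan is to exploit two structural features of $F$: its topology is generated by a countable, without loss of generality increasing, family of seminorms $(p_k)_{k\in\en}$, and it has a countable dense subset $D=\{d_n\setsep n\in\en\}$. The crucial observation is that every closed ball of a seminorm, $\{y\in F\setsep p_k(y-a)\le r\}$, is convex and closed in the original topology of $F$, hence weakly closed. Indeed, closed convex subsets of a locally convex space coincide with their weak closures by the Hahn--Banach separation theorem; equivalently, a continuous seminorm is convex and lower semicontinuous, hence weakly lower semicontinuous, so its sublevel sets are weakly closed.

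Given an open set $U\subset F$, I would form the countable family of closed balls
$$C_{n,k,j}=\{y\in F\setsep p_k(y-d_n)\le\tfrac1j\},\qquad n,k,j\in\en,$$
and claim that $U=\bigcup\{C_{n,k,j}\setsep n,k,j\in\en,\ C_{n,k,j}\subset U\}$. The inclusion $\supset$ is trivial. For $\subset$, fix $y\in U$ and choose $k\in\en$ and $r>0$ with $\{z\setsep p_k(z-y)<r\}\subset U$; pick $j\in\en$ with $\tfrac3j<r$, and using density of $D$ pick $n\in\en$ with $p_k(d_n-y)<\tfrac1j$. Then $y\in C_{n,k,j}$, and for every $z\in C_{n,k,j}$ the triangle inequality gives $p_k(z-y)\le p_k(z-d_n)+p_k(d_n-y)\le\tfrac2j<r$, so $C_{n,k,j}\subset U$. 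Since each $C_{n,k,j}$ is weakly closed, $U$ is a countable union of weakly closed sets, i.e., $F_\sigma$ in the weak topology.

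There is no genuine obstacle here; the only point requiring care is that the sets used in the covering must be truly convex, so that strong closedness upgrades to weak closedness — this is precisely why one covers $U$ by closed balls of the generating seminorms rather than by arbitrary small closed neighborhoods, and separability enters only to pass from this (possibly uncountable) supply of balls to a countable subfamily.
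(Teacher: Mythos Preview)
Your proof is correct and follows essentially the same approach as the paper: cover $U$ by closed convex sets (which are then weakly closed by Hahn--Banach) and use separability together with metrizability to reduce to a countable family. The only cosmetic difference is that the paper first covers $U$ by translates $x+\overline{V_x}$ of closed convex neighborhoods and then invokes the Lindel\"of property of the separable metrizable space to extract a countable subcover, whereas you fix in advance a countable supply of seminorm balls with rational radii centered at a dense set; both arguments rest on the same key point that closed convex sets are weakly closed.
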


\begin{proof}
Let $U$ be an open subset of $F$. For each $x\in U$ there is a convex open neighborhood $V_x$ of zero such that $x+\overline{V_x}\subset U$.
Since $F$ is metrizable and separable, there is a countable set $C\subset U$ such that $x+V_x$, $x\in C$, cover $U$. Then $U=\bigcup_{x\in C}x+\overline{V_x}$.
Since closed convex sets are weakly closed, the proof is completed.
\end{proof}

\begin{lemma}
\label{L:meas-ws}
Let $\mu$ be an $\ef$-valued measure defined on a measurable space $(X,\A)$ and $F$ be a separable metrizable locally convex space. Then any weakly $\mu$-measurable function $f:X\to F$ is $\mu$-measurable.
\end{lemma}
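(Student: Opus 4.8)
The statement to prove is: if $\mu$ is an $\ef$-valued measure on $(X,\A)$ and $F$ is a separable metrizable locally convex space, then every weakly $\mu$-measurable $f\colon X\to F$ is $\mu$-measurable. Recall that weak $\mu$-measurability means $\tau\circ f$ is $\mu$-measurable for every $\tau\in F^*$, while $\mu$-measurability means $f^{-1}(U)$ is $\mu$-measurable for every open $U\subset F$. So the task is to pass from preimages of half-spaces (or more precisely preimages of open subsets of $\ef$ under continuous linear functionals) to preimages of arbitrary open sets. The key ingredient is the immediately preceding Lemma~\ref{L:meas-0}: in a separable metrizable locally convex space, every (norm-)open set is $F_\sigma$ in the weak topology, hence a countable union of weakly closed sets.

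First I would reduce to showing that $f$ is \emph{weakly} $\mu$-measurable implies $f^{-1}(C)$ is $\mu$-measurable for every weakly closed $C\subset F$; combining this with Lemma~\ref{L:meas-0} and countable additivity of the $\mu$-measurable sets then gives $f^{-1}(U)\in$ the $\mu$-completion for every open $U$, which is exactly $\mu$-measurability. Next, a weakly closed set is an intersection of closed half-spaces: every weakly closed convex set is an intersection of sets of the form $\{y\in F\setsep \Re\tau(y)\le c\}$ with $\tau\in F^*$, $c\in\er$, by Hahn--Banach. But a general weakly closed set need not be convex, so instead I would use that the weak topology on $F$ is generated by the maps $\tau\in F^*$, and — crucially — since $F$ is separable metrizable, $(F,\text{weak})$ is also separable metrizable, so its topology is generated by \emph{countably many} functionals $\tau_n$. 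Concretely, pick a countable dense set $D\subset F$ and, for each pair, a functional separating them (or just take a countable family $\{\tau_n\}\subset F^*$ that separates points of the countable dense set and hence, by density and continuity, is enough to generate a Hausdorff topology coarser than the weak one but still metrizable); then the map $\Phi=(\tau_n)_n\colon F\to\ef^{\en}$ is a continuous injection, a homeomorphism onto its image when $F$ carries the weak topology. Since each $\tau_n\circ f$ is $\mu$-measurable, $\Phi\circ f\colon X\to\ef^\en$ is $\mu$-measurable (countable product of $\mu$-measurable scalar functions), hence $(\Phi\circ f)^{-1}(B)$ is $\mu$-measurable for every Borel $B\subset\ef^\en$; as $\Phi$ is a weak homeomorphism onto its range, $f^{-1}(C)=(\Phi\circ f)^{-1}(\Phi(C))$ and $\Phi(C)$ is relatively Borel in $\Phi(F)$ for $C$ weakly closed, giving $f^{-1}(C)$ $\mu$-measurable.

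So the skeleton is: (1) produce a countable separating family $\{\tau_n\}\subset F^*$ so that the weak topology of $F$ is the initial topology of the single map $\Phi=(\tau_n)\colon F\to\ef^\en$, using separability and metrizability of $F$ (equivalently of $(F,\text{weak})$); (2) observe $\Phi\circ f$ is $\mu$-measurable since each coordinate is; (3) deduce $f^{-1}(B')$ is $\mu$-measurable for every weakly Borel $B'\subset F$, in particular every weakly closed set; (4) invoke Lemma~\ref{L:meas-0} to write an arbitrary open $U\subset F$ as $\bigcup_n C_n$ with $C_n$ weakly closed, and conclude $f^{-1}(U)=\bigcup_n f^{-1}(C_n)$ is $\mu$-measurable. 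I expect the only real subtlety — the ``hard part'' — to be step (1): justifying that the weak topology on a separable metrizable locally convex space is itself metrizable and generated by countably many functionals. This is where separability is used essentially; one way is to note that $(F,\text{weak})$ has a countable network (images of a countable base under the identity, refined by the countably many sub-basic sets coming from a countable weak$^*$-dense subset of the dual sphere, which exists because the dual of a separable metrizable space is weak$^*$-separable), and a regular space with a countable network that is also (here) metrizable-adjacent yields the countable generating family; alternatively, and more cleanly, it suffices that $\Phi$ above is \emph{injective} and continuous from the weak topology, which together with Lemma~\ref{L:meas-0} already delivers measurability of preimages of weakly closed sets without needing $\Phi$ to be an embedding — so I would try to arrange the argument to lean only on injectivity of a countable separating family plus Lemma~\ref{L:meas-0}, sidestepping the metrizability-of-the-weak-topology point altogether.
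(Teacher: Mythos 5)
There is a genuine gap, and it sits exactly where you predicted the ``hard part'' would be — but the fallback you propose does not close it. Your step (1) is false in the generality of the lemma: the weak topology $\sigma(F,F^*)$ is the initial topology of countably many functionals only if $F^*$ has countable algebraic dimension (the dual of $(F,\sigma(F,G))$ is $G$), so for every infinite-dimensional separable Banach space $F$ the weak topology is neither metrizable nor countably generated, and $(F,\text{weak})$ is not ``separable metrizable''. Consequently your map $\Phi=(\tau_n)_n\colon F\to\ef^{\en}$, while injective and weakly continuous, is \emph{not} a homeomorphism onto its range for the weak topology (that holds only on weakly compact subsets). This destroys the justification of step (3): the identity $f^{-1}(C)=(\Phi\circ f)^{-1}(\Phi(C))$ is fine, but the claim that $\Phi(C)$ is relatively Borel in $\Phi(F)$ for every weakly closed $C$ was derived precisely from the embedding property. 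Your closing remark, that injectivity of a countable separating family ``together with Lemma~\ref{L:meas-0} already delivers measurability of preimages of weakly closed sets'', is an assertion, not an argument: nothing replaces the relative Borelness of $\Phi(C)$, and proving such a statement would need an injective-Borel-image theorem of Lusin--Souslin type, hence completeness of $F$, which the lemma does not assume.

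The paper closes this hole much more cheaply, and you brush against the right idea with your countable-network remark. Since the weak topology is coarser than the original separable metrizable (hence hereditarily Lindel\"of) topology, it is itself hereditarily Lindel\"of; the correct use of this is not to manufacture a countable generating family but to note that every weakly open set is then a \emph{countable} union of canonical basic weakly open sets, i.e.\ finite intersections of sets $\tau^{-1}(V)$ with $\tau\in F^*$, $V\subset\ef$ open, whose $f$-preimages are $\mu$-measurable by weak measurability. Complements give $\mu$-measurability of $f^{-1}(C)$ for every weakly closed $C$, and Lemma~\ref{L:meas-0} then yields measurability of $f^{-1}(U)$ for $U$ open in the original topology. (Alternatively, the sets furnished by Lemma~\ref{L:meas-0} are in fact closed and \emph{convex}, and in a separable metrizable locally convex space a closed convex set is a countable intersection of closed half-spaces, by Hahn--Banach and the Lindel\"of property of its complement — the convexity route you explicitly dismissed would also work; but with the statement of Lemma~\ref{L:meas-0} as a black box, the hereditary Lindel\"of argument is the tool you are missing.)
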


\begin{proof} Suppose that $f$ is weakly measurable, i.e., $\tau\circ f$ is $\mu$-measurable for any $\tau\in F^*$.
It follows that $f^{-1}(U)$ is $\mu$-measurable for any set $U$ from the canonical basis of the weak topology on $F$.
Since $F$ is separable and metrizable, the weak topology is hereditarily Lindel\"of, thus $f^{-1}(U)$ is $\mu$-measurable for any weakly open set $U$. By Lemma~\ref{L:meas-0} we conclude that $f^{-1}(U)$ is $\mu$-measurable for any set $U$ open in the original topology of $F$. This completes the proof.
\end{proof}

\begin{lemma}
\label{L:norma}
Let $\mu$ be an $\ef$-valued measure defined on a measurable space $(X,\A)$ and $F$ be a \fr space over $\ef$.
Suppose that $f\colon X\to F$ is a bounded weakly $\mu$-measurable mapping with (essentially) separable range.
Then the following assertions hold.
\begin{itemize}
\item [(a)] The mapping $f$ is  $\mu$-integrable.
\item [(b)] If $\mu$ is moreover a probability and $L\subset F$ is a closed convex set such that $f(X)\subset L$, then $\mu(f)\in L$.
\item [(c)] If $\norm{\mu}\le1$ and $L\subset F$ is a closed absolutely convex set such that $f(X)\subset L$, then $\mu(f)\in L$.

\item [(d)] If $\rho$ is any continuous seminorm on $F$, then $\rho\circ f$ is $\mu$-integrable and $\rho\left(\int_X f\di \mu\right)\le \int_X \rho\circ f\di\abs{\mu}$.
\end{itemize}
\end{lemma}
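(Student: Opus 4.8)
The plan is to first reduce to a separable range, then approximate $f$ uniformly by countably-valued $\mu$-measurable mappings --- for which all four assertions are elementary --- and finally pass to the limit.

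\emph{Reduction and measurability.} Let $N$ be a $\mu$-null set with $f(X\setminus N)$ separable and let $G$ be the closed linear span of $f(X\setminus N)$; then $G$ is a separable \fr space. Redefining $f$ to be $0$ on $N$ we may assume $f(X)\subset G$; this changes neither $\mu$-integrability nor the value of any of the integrals in question. The modified $f$ is still bounded and weakly $\mu$-measurable as a map into $G$ (each element of $G^*$ extends to $F^*$ by the Hahn--Banach theorem), so by Lemma~\ref{L:meas-ws} it is $\mu$-measurable (into $G$, hence into $F$). In particular, for every continuous seminorm $\rho$ and every $\tau\in F^*$ the scalar functions $\rho\circ f$ and $\tau\circ f$ are $|\mu|$-measurable and bounded, hence lie in $L^1(|\mu|)$; this already takes care of the first half of (d) and of the first condition in the definition of integrability.

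\emph{Approximation.} Fix a translation-invariant metric $d$ generating the topology of $G$ and a countable dense set $\{g_n\}\subset G$. For $k\in\en$ put $A_{n,k}=\{y\in G\setsep d(y,g_n)<\tfrac1k\}\setminus\bigcup_{j<n}\{y\in G\setsep d(y,g_j)<\tfrac1k\}$; these are disjoint Borel sets covering $G$, each of $d$-diameter at most $\tfrac2k$. For each $n$ with $f^{-1}(A_{n,k})\ne\emptyset$ pick $x_n\in f^{-1}(A_{n,k})$ and set $f_k=\sum_n f(x_n)\chi_{f^{-1}(A_{n,k})}$. Then $f_k$ is a countably-valued $\mu$-measurable mapping, $f_k(X)\subset f(X)$ (so $f_k$ is bounded, and $f_k(X)$ is contained in every convex, resp. absolutely convex, set containing $f(X)$), and $d(f_k(x),f(x))\le\tfrac2k$ for all $x$; hence $f_k\to f$ uniformly, and therefore uniformly in every continuous seminorm on $F$.

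\emph{The countably-valued case.} Let $g=\sum_n y_n\chi_{B_n}$ with $(B_n)$ a measurable partition of $X$, $\{y_n\}$ bounded, and let $B\subset X$ be $\mu$-measurable. Since $\sup_n p(y_n)<\infty$ for every continuous seminorm $p$ and $\sum_n|\mu(B\cap B_n)|\le|\mu|(X)<\infty$, the series $\sum_n y_n\mu(B\cap B_n)$ converges absolutely in each seminorm, so by completeness of $F$ it defines an element $\int_B g\di\mu\in F$ with $\tau(\int_B g\di\mu)=\int_B\tau\circ g\di\mu$ for $\tau\in F^*$; thus $g$ is $\mu$-integrable, and obviously $p(\int_B g\di\mu)\le\int_B p\circ g\di|\mu|$ for every continuous seminorm $p$. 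Applying this to $f_k-f_l$ (again countably-valued) gives $p_j\bigl(\int_B f_k\di\mu-\int_B f_l\di\mu\bigr)\le\sup_{x}p_j(f_k(x)-f_l(x))\cdot|\mu|(X)\to0$ for each generating seminorm $p_j$, so $(\int_B f_k\di\mu)_k$ is Cauchy; let $x_B$ be its limit. Since $\tau\circ f_k\to\tau\circ f$ uniformly (hence in $L^1(|\mu|)$), $\tau(x_B)=\lim_k\int_B\tau\circ f_k\di\mu=\int_B\tau\circ f\di\mu$, which proves (a) with $\int_B f\di\mu=\lim_k\int_B f_k\di\mu$. Then (d) follows by letting $k\to\infty$ in $\rho(\int_X f_k\di\mu)\le\int_X\rho\circ f_k\di|\mu|$, using $\rho\circ f_k\to\rho\circ f$ uniformly. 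For (b) and (c) it is cleanest to use the description of $L$ as an intersection of closed sets cut out by functionals: if $\mu$ is a probability, $L$ is closed convex and $\tau\in(F_\er)^*$ satisfies $\tau\le c$ on $L$, then $f(X)\subset L$ gives $\tau(\mu(f))=\int_X\tau\circ f\di\mu\le c$, so $\mu(f)$ lies in every such half-space, i.e.\ $\mu(f)\in L$; if $\|\mu\|\le1$, $L$ is closed absolutely convex and $\sigma\in F^*$ satisfies $|\sigma|\le1$ on $L$, then $|\sigma(\mu(f))|=\bigl|\int_X\sigma\circ f\di\mu\bigr|\le\int_X|\sigma\circ f|\di|\mu|\le|\mu|(X)\le1$, and by the bipolar theorem $\mu(f)\in L$. (Alternatively, (b) and (c) follow by passing to the limit from the observation that each $\int_X f_k\di\mu$ is a limit of renormalized finite convex combinations of points of $L$.)

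\emph{Main obstacle.} The only non-formal point is the construction in the approximation step and the convergence of the series defining $\int_B f_k\di\mu$: here boundedness of $f$ is genuinely needed, because a bounded subset of a \fr space need not be relatively compact, so the approximants must be forced to take values inside $f(X)$ itself rather than at arbitrary dense points. A minor structural point is that the seminorm inequality of (d) is used, in its trivial countably-valued form, already to establish existence of the integral in (a), so the assertions must be proved in this order.
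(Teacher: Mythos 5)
Your proof is correct, and for part (a) it takes a genuinely different route from the paper. The paper reduces to separable $F$ exactly as you do, but then disposes of (a) by citing Thomas's theorem on Pettis integration of bounded measurable functions with values in locally convex Suslin spaces (for nonnegative $\mu$, the general case being an easy consequence), whereas you give a self-contained Bochner-style argument: uniform approximation of the ($\mu$-measurable, via Lemma~\ref{L:meas-ws}) map by countably-valued measurable maps with values in $f(X)$, absolute convergence of the resulting series in each seminorm, completeness of $F$, and a Cauchy argument to pass to the limit; this also delivers (d) as a by-product. What your approach buys is independence from the quoted integration theorem and an explicit construction of the integral; what the paper's citation buys is brevity, and its proof of (d) goes instead through the bipolar formula $\rho(x)=\sup\{\abs{\tau(x)}\setsep\tau\in V^0\}$ applied directly to the already-existing integral. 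For (b) and (c) your Hahn--Banach half-space and bipolar arguments are essentially identical to the paper's (which phrases them contrapositively as separation of $\mu(f)$ from $L$). Two small points to tidy up: after redefining $f$ to be $0$ on the exceptional null set, the inclusion $f(X)\subset L$ used in (b) and (c) holds only off that null set (harmless, since the integrals are unchanged and $\mu(N)=0$, or since $0\in L$ in case (c), but it should be said); and in (b), for complex $F$ the Pettis identity is stated for $\tau\in F^*$, so one should pass to real parts of complex functionals, using that $\mu$ is a positive measure---both are routine, and your reliance on the translation invariance of the metric to upgrade uniform metric convergence to uniform convergence in every continuous seminorm is correctly flagged and valid.
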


\begin{proof} We can without loss of generality suppose that $F$ is separable.

The assertion (a) for nonnegative measures then follows immediately from  \cite[Corollary~3.1]{thomas}.
The general case is an easy consequence.

(b) Assuming $\mu(f)\notin L$, we can by the Hahn-Banach separation argument find an element $\tau\in F^*$ and $c\in\er$ such that  $\Re\tau(\mu(f))>c>\sup \{\Re \tau(l)\setsep l\in L\}$.
Then
\[
c<\Re\tau(\mu(f))=\int_X \Re \tau(f(x))\di\mu(x)< \int_X c\di\mu(x)=c.
\]
Hence the assertion follows.

(c) We proceed in the same way as in the proof of (b). If $L$ is absolutely convex, we get
$$\sup \{\Re \tau(l)\setsep l\in L\}=\sup \{\abs{\tau(l)}\setsep l\in L\}$$
and
\[
c<\Re\tau(\mu(f))=\Re \int_X  \tau(f(x))\di\mu(x)\le\int_X \abs{\tau(f(x))}\di\abs{\mu}(x) \le c.
\]

To show (d), let $\rho$ be any continuous seminorm on $F$. Since $f$ is $\mu$-measurable by Lemma~\ref{L:meas-ws}, it is clear that $\rho\circ f$ is $\mu$-measurable and bounded, hence it is $\mu$-integrable. Set $V=\{x\in F\setsep\rho(x)\le 1\}$ and let
\[
V^0=\{\tau\in F^*\setsep \abs{\tau(x)}\le 1\mbox{ for }x\in V\}
\]
denote the absolute polar of $V$. Then the Bipolar Theorem implies that
\[
\rho(x)=\sup\{\abs{\tau(x)}\setsep \tau\in V^0\},\quad x\in F.
\]
Hence
\[
\begin{aligned}
\rho\left(\int_X f\di\mu\right)&=\sup\left\{\abs{\tau\left(\int_X f\di\mu\right)}\setsep \tau\in V^0\right\}
=\sup\left\{\abs{\int_X \tau\circ f\di\mu}\setsep \tau\in V^0\right\}\\
&\le \sup\left\{\int_X \abs{\tau\circ f}\di\abs{\mu}\setsep \tau\in V^0\right\}
\le \int_X \rho\circ f\di\abs{\mu}.
\end{aligned}
\]
This concludes the proof.

\end{proof}

An important class of integrable functions are Baire measurable functions.

\begin{lemma}
\label{L:integr-for-baire}
Let $X$ be a compact space, $\mu$ an $\ef$-valued Radon measure on $X$ and $f\colon X\to F$ be a bounded Baire measurable mapping from $X$ to a \fr space $F$ over $\ef$. Then the mapping $f$ is  $\mu$-integrable.
\end{lemma}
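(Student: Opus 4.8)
The plan is to reduce the statement to Lemma~\ref{L:norma}(a), whose hypotheses are that the mapping be bounded, weakly $\mu$-measurable, and have (essentially) separable range. Boundedness is part of the assumption, so the work consists in checking the two remaining conditions.

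First I would establish separability of the range. Since $F$ is a \fr space, it is metrizable, and $X$, being compact, is trivially a Baire subset of the compact space $X$ itself. Hence $X$ and $f$ satisfy the hypotheses of Lemma~\ref{L:baire}, and assertion (a) there gives that $f(X)$ is separable. (This is the only non-routine ingredient: it rests, via Lemma~\ref{L:baire}(a), on the fact that Baire subsets of compact spaces are $K$-analytic, together with Frol\'\i k's theorem on images of $K$-analytic spaces. Everything else is bookkeeping with $\sigma$-algebras.)

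Next I would verify weak $\mu$-measurability. Fix $\tau\in F^*$. As $\tau$ is continuous, the preimage under $\tau$ of a cozero set in $\ef$ is a cozero set in $F$; therefore $(\tau\circ f)^{-1}$ maps cozero sets of $\ef$ into Baire subsets of $X$, because $f$ is Baire measurable, so $\tau\circ f$ is Baire measurable. Every cozero set is open, hence every Baire set is Borel, and Borel subsets of the compact space $X$ are $\mu$-measurable because $\mu$ is a Radon measure. Consequently $\tau\circ f$ is $\mu$-measurable for each $\tau\in F^*$, i.e.\ $f$ is weakly $\mu$-measurable.

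Finally, having checked that $f$ is bounded, weakly $\mu$-measurable and of separable range, I would invoke Lemma~\ref{L:norma}(a) to conclude that $f$ is $\mu$-integrable. I do not expect any genuine obstacle here; the proof is a short combination of Lemma~\ref{L:baire}(a) and Lemma~\ref{L:norma}(a), the former being the substantive point.
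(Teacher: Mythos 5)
Your proposal is correct and follows the paper's own argument: the paper likewise obtains separability of $f(X)$ from Lemma~\ref{L:baire}(a) and then concludes by Lemma~\ref{L:norma}(a), leaving the routine verification of weak $\mu$-measurability (which you spell out) implicit.
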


\begin{proof}
By Lemma~\ref{L:baire}(a) the image $f(X)$ is separable. Hence the conclusion follows from Lemma~\ref{L:norma}(a).
\end{proof}

We will use also the following version of the Dominated Convergence Theorem.

\begin{thm}[Dominated Convergence Theorem]
\label{T:dct}
Let $\mu$ be an $\ef$-valued measure defined on a measurable space $(X,\A)$ and $F$ be a \fr space over $\ef$.
Let $f_n,f\colon X\to F$ be mappings such that
\begin{itemize}
\item $f_n$ are weakly $\mu$-measurable and have separable range,
\item the sequence $\{f_n\}$ is bounded in $F$ (i.e., $\bigcup_{n=1}^\infty f_n(X)$ is bounded in $F$),
\item $f_n(x)\to f(x)$ in $F$ for $x\in X$.
\end{itemize}
Then $f$ is bounded and $\mu$-measurable. Moreover, all the involved functions are $\mu$-integrable and $\int_X f_n\di\mu\to \int_X f\di\mu$ in $F$.
\end{thm}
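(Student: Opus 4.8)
The plan is to deduce the vector statement from the classical scalar Dominated Convergence Theorem applied to the finite positive measure $\abs{\mu}$, the bridge being the seminorm estimate in Lemma~\ref{L:norma}(d). First I would reduce to the case when $F$ is separable. Let $G$ be the closed linear span of $\bigcup_{n\in\en}f_n(X)$ in $F$. As each $f_n(X)$ is separable, $G$ is a separable closed subspace of $F$, hence a separable \fr space, and since $f_n(x)\to f(x)$ with $G$ closed, the map $f$ takes values in $G$ as well. Passing from $F$ to $G$ does not affect anything we care about: by the Hahn--Banach theorem $G^*$ consists exactly of the restrictions of functionals from $F^*$, so weak $\mu$-measurability is the same computed in $G$ or in $F$; if $\int_X g\di\mu$ exists in $G$ then it is also the Pettis integral of $g$ computed in $F$; and for $U\subset F$ open we have $f^{-1}(U)=f^{-1}(U\cap G)$ with $U\cap G$ open in $G$, so $\mu$-measurability is unchanged too. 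Hence I may assume $F$ is separable.

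Next I would dispatch the easy conclusions. The set $B=\bigcup_{n\in\en}f_n(X)$ is bounded, so its closed absolutely convex hull $L$ is bounded (a standard property of metrizable locally convex spaces), and since $L$ is closed and $f_n(x)\to f(x)$ we get $f(x)\in L$ for all $x$, so $f$ is bounded. For $\mu$-measurability of $f$: for each $\tau\in F^*$ the functions $\tau\circ f_n$ are $\mu$-measurable and converge pointwise to $\tau\circ f$, hence $\tau\circ f$ is $\mu$-measurable; thus $f$ is weakly $\mu$-measurable, and since $F$ is separable metrizable, Lemma~\ref{L:meas-ws} shows $f$ is $\mu$-measurable. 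Integrability of every $f_n$ and of $f$ then follows from Lemma~\ref{L:norma}(a), each of these maps being bounded, weakly $\mu$-measurable and separably valued.

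The heart of the proof is the convergence of the integrals. Fix a continuous seminorm $\rho$ on $F$; it is enough to show $\rho\bigl(\int_X f_n\di\mu-\int_X f\di\mu\bigr)\to0$. By linearity of the Pettis integral (immediate from its uniqueness) this equals $\rho\bigl(\int_X(f_n-f)\di\mu\bigr)$, and $f_n-f$ is bounded (its range lies in the bounded set $L-L$), weakly $\mu$-measurable and separably valued, so Lemma~\ref{L:norma}(d) gives
\[
\rho\Bigl(\int_X f_n\di\mu-\int_X f\di\mu\Bigr)\le\int_X\rho\circ(f_n-f)\di\abs{\mu}.
\]
The nonnegative $\abs{\mu}$-measurable functions $\rho\circ(f_n-f)$ tend to $0$ pointwise on $X$ (as $\rho$ is continuous and $f_n(x)\to f(x)$ in $F$) and are dominated by the finite constant $\sup\{\rho(y)\setsep y\in L-L\}$; since $\abs{\mu}$ is finite, the scalar Dominated Convergence Theorem yields $\int_X\rho\circ(f_n-f)\di\abs{\mu}\to0$. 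As $\rho$ ranges over all continuous seminorms, this means $\int_X f_n\di\mu\to\int_X f\di\mu$ in $F$.

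I do not expect a serious obstacle: once Lemmas~\ref{L:meas-ws} and~\ref{L:norma} are in hand the computation is routine. The only point needing a little care is the opening reduction to separable $F$ --- it is what lets us invoke Lemma~\ref{L:meas-ws} for the $\mu$-measurability of $f$ and ensures all the ranges in sight are separable --- together with the (harmless but necessary) remark that passing to a closed subspace does not disturb the Pettis integral.
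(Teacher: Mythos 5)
Your proposal is correct and follows essentially the same route as the paper: weak $\mu$-measurability of $f$ from pointwise limits plus Lemma~\ref{L:meas-ws}, integrability from Lemma~\ref{L:norma}(a), and convergence of the integrals via the seminorm estimate of Lemma~\ref{L:norma}(d) combined with the scalar Dominated Convergence Theorem for $\abs{\mu}$. Your explicit reduction to the separable closed linear span merely makes precise a step the paper leaves implicit (its own proof of Lemma~\ref{L:norma} begins with the same ``WLOG $F$ separable'' reduction), and you correctly invoke part (d) where the paper's text cites (c) by a slip.
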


\begin{proof} Set $L=\ov{\{f_n(x)\setsep n\in\en, x\in X\}}$. Then $L$ is a separable closed bounded set and clearly $f(X)\subset L$. Thus $f$ is bounded and has separable range. Moreover, it is weakly $\mu$-measurable as measurability is preserved by pointwise limits of sequences. By Lemma~\ref{L:meas-ws}, all the involved mappings are even $\mu$-measurable. Their $\mu$-integrability now follows from Lemma~\ref{L:norma}(a). It remains to prove the convergence of integrals.

To do that, let $\rho$ be any continuous seminorm on $F$. By Lemma~\ref{L:norma}(c) we get
$$\rho\left(\int_X f_n\di\mu-\int_X f\di\mu\right)=\rho\left(\int_X (f_n-f)\di\mu\right)\le \int_X\rho\circ(f_n-f)\di\abs{\mu}$$
for each $n\in\en$. Since $f_n\to f$ pointwise, $\rho\circ(f_n- f)\to 0$ pointwise.
Clearly $\rho(f_n(x)-f(x))\le 2\sup\{\rho(v)\setsep v\in L\}$ for each $x\in K$,
thus the classical Dominated Convergence Theorem gives
\[
\lim_{n\to \infty}\int_X \rho\circ(f_n-f)\di\abs{\mu}=0.
\]
Since $\rho$ is an arbitrary continuous seminorm, the convergence of the integrals follows.
\end{proof}

\subsection{Odd and homogeneous mappings}

Let $E$ be a vector space over $\ef$. A set $A\subset E$ is called \emph{$\ef$-homogeneous} if $\alpha x\in A$ whenever $x\in A$ and $\alpha\in \ef$ satisfies $|\alpha|=1$. $\er$-homogeneous sets are called \emph{symmetric}, $\ce$-homogeneous sets just \emph{homogeneous}.

If $A\subset E$ is $\ef$-homogeneous, $F$ is another vector space over $\ef$, then a mapping $f:A\to F$ is called \emph{$\ef$-homogeneous} if $f(\alpha x)=\alpha f(x)$ whenever $x\in A$ and $\alpha\in \ef$ satisfies $|\alpha|=1$. $\er$-homogeneous maps are called \emph{odd}, $\ce$-homogeneous maps just \emph{homogeneous}.

The following lemma shows that there is a close connection between $\ef$-homo\-ge\-neous affine maps and linear operators.

\begin{lemma}
\label{l:operator}
Let $E$ be a Banach space over $\ef$ and let $f\colon B_{E}\to F$ be an $\ef$-homogeneous affine mapping from $B_E$ to a vector space $F$ over $\ef$. Then there exists a unique linear operator $L\colon E\to F$ extending $f$.
\end{lemma}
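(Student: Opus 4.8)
Lemma~\ref{l:operator} asserts that an $\ef$-homogeneous affine mapping $f\colon B_E\to F$ extends uniquely to a linear operator $L\colon E\to F$. The plan is to define $L$ by positive scaling and check that the algebraic identities force it to be linear. For $x\in E$ with $x\neq 0$, set $L(x)=\norm{x}\,f\bigl(x/\norm{x}\bigr)$, and $L(0)=0$. Note first that $f$ is already homogeneous under \emph{positive} real scalars $t\in(0,1]$: indeed, writing $tx=t\cdot x+(1-t)\cdot 0$ and using that $f$ is affine together with $f(0)=0$ (which holds because $0=\tfrac12 x+\tfrac12(-x)$ and, in the real case, $f(-x)=-f(x)$ by oddness, while in the complex case $f(0)=f(-1\cdot 0)=-f(0)$), we get $f(tx)=tf(x)$ for $x\in B_E$ and $t\in[0,1]$. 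Consequently the value $\norm{x}f(x/\norm{x})$ is consistent: for any $s>0$ with $sx\in B_E$ we have $f(sx)=s\norm{x}f(x/\norm{x})/\norm{x}\cdot\dots$, so $L$ restricted to $B_E$ agrees with $f$, and $L(tx)=tL(x)$ for all $t\ge 0$.

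The remaining work is to verify additivity and the negative/unimodular scalar identities. For additivity, take $x,y\in E$ and choose $R>0$ large enough that $x/R,\,y/R,\,(x+y)/(2R)$ all lie in $B_E$; then
\[
L\Bigl(\frac{x+y}{2R}\Bigr)=f\Bigl(\tfrac12\cdot\tfrac xR+\tfrac12\cdot\tfrac yR\Bigr)=\tfrac12 f\Bigl(\tfrac xR\Bigr)+\tfrac12 f\Bigl(\tfrac yR\Bigr)=\tfrac12 L\Bigl(\tfrac xR\Bigr)+\tfrac12 L\Bigl(\tfrac yR\Bigr),
\]
and multiplying through by $2R$ and using positive homogeneity of $L$ gives $L(x+y)=L(x)+L(y)$. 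Combined with positive homogeneity, additivity yields $L(-x)=-L(x)$ and hence $L$ is $\er$-linear. In the complex case one must still check $L(\alpha x)=\alpha L(x)$ for all $\alpha\in\ce$; writing $\alpha=|\alpha|\beta$ with $|\beta|=1$, positive homogeneity handles the factor $|\alpha|$, and the $\ef$-homogeneity hypothesis $f(\beta z)=\beta f(z)$ on $B_E$ propagates to $L$ by scaling, giving $\ce$-linearity.

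Uniqueness is immediate: any linear extension $L'$ must agree with $f$ on $B_E$, and since every point of $E$ is a positive multiple of a point of $B_E$, linearity forces $L'=L$. I do not expect a serious obstacle here; the only point requiring a little care is the bookkeeping in the complex case, where one separates a general scalar into its modulus (handled by affineness and $f(0)=0$) and its argument (handled by the homogeneity hypothesis), and the verification that $f(0)=0$, which in the real case uses oddness and in the complex case follows directly from $\ce$-homogeneity applied to the scalar $-1$.
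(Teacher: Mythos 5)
Your proposal is correct and follows essentially the same route as the paper: define $L$ on $E\setminus B_E$ by $L(x)=\norm{x}f\bigl(x/\norm{x}\bigr)$ (which agrees with $f$ on $B_E$ by positive homogeneity inside the ball, itself a consequence of affinity and $f(0)=0$), and check linearity by rescaling into the ball. The paper states the linearity and uniqueness of this extension without detail, so your verification simply fills in what the paper leaves implicit; the only blemish is the garbled displayed consistency computation, which is harmless since the claim it supports is immediate.
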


\begin{proof}
We define the required extension $L\colon E\to F$ by the formula
\[
Lx=\begin{cases} \norm{x}f(\frac{x}{\norm{x}}),& \norm{x}>1,\\
                 f(x),&\norm{x}\le 1.
    \end{cases}
\]
Then $L$ is a well defined linear extension of $f$ which is obviously unique.
\end{proof}

We are now going to describe a procedure of ``homogenization'' of functions. This procedure is easier in the real case.

Let $E$, $F$ be  real vector spaces and $B\subset E$ a symmetric set. For any function $f:B\to F$ we define a function $\odd f$ by the formula
\[
(\odd f)(x)=\frac12(f(x)-f(-x)),\quad x\in B.
\]
It is clear that $\odd f$ is an odd function and that $f$ is odd if and only if $f=\odd f$.

The complex procedure is a bit more involved. Again, we suppose that $E$ and $F$ are complex vector spaces, $B\subset E$ is a homogeneous set and $f:B\to F$ is a mapping. We would like to define a function $\hom f$ by the formula
\[
(\hom f)(x)=\frac1{2\pi}\int_0^{2\pi} e^{-it}f(e^{it} x)\di t, \quad x\in B.
\]
The difference from the real case is that a vector integral is involved in the formula and it need not have a sense. Therefore we restrict the assumptions -- we will assume that  $F$ is locally convex space and that the respective integral is well defined (in the Pettis sense defined in Section~\ref{ssec:pettis}) for each $x\in B$. These assumptions are satisfied, in particular, if $E$ is a locally convex space, $F$ is a \fr space and $f$ is a bounded Borel function.
Indeed, then for each $x\in B$ the mapping $t\mapsto e^{-it}f(e^{it} x)$ is a bounded Borel mapping from $[0,2\pi]$ to $F$. Since for compact metric spaces Baire and Borel $\sigma$-algebras coincide, we can conclude by Lemma~\ref{L:integr-for-baire}.
It is clear that $\hom f$ is homogeneous whenever it has a sense.

The following lemma sums up basic properties of the operator $\odd$.

\begin{lemma}\label{l:odd}
Let $E$, $F$ be real locally convex spaces and $A\subset E$, $B\subset F$ be symmetric sets.
\begin{itemize}
	\item[(a)] The function $\odd f$ is continuous for each $f:A\to F$ continuous.
	\item[(b)] If $f\in\C_\alpha(A,B)$ for some $\alpha<\omega_1$, then $\odd f\in\C_{\odd,\alpha}(A,B)$ where $\C_{\odd,\alpha}(A,B)=\left(\C_{\odd}(A,B)\right)_\alpha$ and $\C_{\odd}(A,B)$ is the space of all odd continuous mappings of $A$ into $B$.
		\item[(c)] If $A$ and $B$ are moreover convex and $f\in\fra_{\alpha}(A,B)$ for some $\alpha<\omega_1$, then $\odd f\in\fra_{\odd,\alpha}(A,B)$.
	\item[(d)] If $A$ and $B$ are moreover convex, then each odd function from $\fra_\alpha(A,B)$ belongs to $\fra_{\odd,\alpha}(A,B)$.
\end{itemize}
\end{lemma}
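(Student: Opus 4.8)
Let $E$, $F$ be real locally convex spaces and $A\subset E$, $B\subset F$ be symmetric sets.
\begin{itemize}
\item[(a)] $\odd f$ is continuous for each continuous $f\colon A\to F$.
\item[(b)] If $f\in\C_\alpha(A,B)$, then $\odd f\in\C_{\odd,\alpha}(A,B)$.
\item[(c)] If $A,B$ are moreover convex and $f\in\fra_\alpha(A,B)$, then $\odd f\in\fra_{\odd,\alpha}(A,B)$.
\item[(d)] If $A,B$ are moreover convex, then each odd function from $\fra_\alpha(A,B)$ belongs to $\fra_{\odd,\alpha}(A,B)$.
\end{itemize}

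The plan is to treat the four items essentially in one sweep by transfinite induction on $\alpha$, after recording a couple of structural facts about the operator $\odd$. First I would note the two linearity-type properties that make everything go through: $\odd$ commutes with pointwise limits of sequences (if $f_n\to g$ pointwise then $(\odd f_n)(x)=\tfrac12(f_n(x)-f_n(-x))\to\tfrac12(g(x)-g(-x))=(\odd g)(x)$, using continuity of vector-space operations), and $\odd$ maps $\C(A,F)$ into the odd continuous maps — which is exactly (a) — because $x\mapsto -x$ and $x\mapsto\tfrac12(u-v)$ are continuous, so $\odd f$ is continuous, and it is clearly odd. I would also record that $\odd$ is the identity on odd maps and that $\odd f$ has values in $B$ whenever $f$ does and $B$ is symmetric and convex (since $(\odd f)(x)=\tfrac12 f(x)+\tfrac12 f(-x)'$ with $f(-x)'=-f(-x)\in B$ by symmetry, hence a convex combination of two points of $B$); this last point is where convexity of $B$ is used and is what keeps us inside $B$ rather than merely inside $\span B$.

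For (b), argue by induction on $\alpha$. The case $\alpha=0$ is (a) together with the range observation: $\odd f\in\C_\odd(A,B)$. Assume the claim for all $\beta<\alpha$ and let $f\in\C_\alpha(A,B)$, so there is a sequence $(f_n)$ with $f_n\in\C_{\beta_n}(A,B)$ for some $\beta_n<\alpha$ and $f_n\to f$ pointwise. By the induction hypothesis $\odd f_n\in\C_{\odd,\beta_n}(A,B)\subset\bigcup_{\beta<\alpha}\C_{\odd,\beta}(A,B)$, and by the limit-interchange remark $\odd f_n\to\odd f$ pointwise; hence $\odd f\in\C_{\odd,\alpha}(A,B)$ by definition of the hierarchy. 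The proof of (c) is word-for-word the same, replacing $\C$ by $\fra$ throughout: the base case $\alpha=0$ requires in addition that $\odd f$ be affine when $f$ is, which is immediate since $x\mapsto -x$ is affine and $\tfrac12(f(x)-f(-x))$ is then an affine combination of affine maps; here convexity of $A$ and $B$ is what makes $\fra(A,B)$ meaningful and keeps the affine combination of points of $B$ inside $B$. The inductive step is identical to that of (b).

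Finally, (d) is an immediate corollary: if $g\in\fra_\alpha(A,B)$ is odd, then $g=\odd g$ (since $\odd$ fixes odd maps), and $\odd g\in\fra_{\odd,\alpha}(A,B)$ by (c); hence $g\in\fra_{\odd,\alpha}(A,B)$.

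I do not expect any genuine obstacle here; the only points requiring a moment's care are (i) making sure the range stays in $B$ at the base level, which needs symmetry of $B$ to rewrite $-f(-x)\in B$ and convexity of $B$ to take the midpoint — without convexity the statement is false as written; and (ii) keeping the bookkeeping of the hierarchy straight, i.e. noting that $\beta_n<\alpha$ implies $\odd f_n$ lies in $\bigcup_{\beta<\alpha}(\cdot)_\beta$, which is exactly what the definition of $(\cdot)_\alpha$ asks for. Everything else is a routine unwinding of definitions.
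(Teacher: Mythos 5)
Your argument is correct and is essentially the paper's (very terse) proof written out in full: (a) is immediate, (b) and (c) follow by transfinite induction from the base case using that $\odd$ preserves continuity and affinity and commutes with pointwise sequential limits, and (d) follows because $\odd$ fixes odd maps. Your side remark is also fair: keeping the values of $\odd f$ inside $B$ in part (b) genuinely uses convexity of $B$ (the statement there assumes only symmetry), a subtlety the paper's one-line proof glosses over and which is harmless in practice since the lemma is only applied with $B$ convex or $B=F$.
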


\begin{proof} The assertion (a) is obvious. The assertion (b) follows from (a) by transfinite induction.
The assertion (c) is obvious for $\alpha=0$, the general case follows by transfinite induction.
Finally, (d) is an immediate consequence of (c).
\end{proof}

We continue by basic properties of the operator $\hom$. The scalar version is proved in \cite[Lemma 2.2]{lusp-complex}.

\begin{lemma}
\label{l:hom}
Let $E$ be a complex locally convex space, $F$ a complex \fr space, $A\subset E$ a homogeneous set and $B\subset F$ a closed absolutely convex bounded set. Then the following assertions hold.
\begin{itemize}
\item [(a)] The function $\hom f$ is continuous for each $f:A\to B$ continuous.
\item [(b)] Let $f_n:A\to B$ be a Borel function for each $n\in\en$. Suppose that the sequence $(f_n)$ pointwise converges to a function $f$. Then the sequence $(\hom f_n)$ pointwise converges to $\hom f$.
\item [(c)] If $f\in\C_\alpha(A,B)$ for some $\alpha<\omega_1$, then $\hom f\in\C_{\hom,\alpha}(A,B)$  where $\C_{\hom,\alpha}(A,B)=\left(\C_{\hom}(A,B)\right)_\alpha$ and $\C_{\hom}(A,B)$ is the space of all homogeneous continuous mappings of $A$ into $B$.
\end{itemize}
\end{lemma}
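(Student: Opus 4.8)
The plan is to handle the three assertions in turn, using the Pettis-integral machinery developed above; the scalar case \cite[Lemma 2.2]{lusp-complex} could alternatively be pulled back through functionals, but the direct route is equally short. Throughout I would write $\mu$ for the normalized Lebesgue measure on $[0,2\pi]$, so that $(\hom f)(x)=\int e^{-it}f(e^{it}x)\di\mu(t)$. First I would record a preliminary common to all parts: for $f\colon A\to B$ and $x\in A$ the integrand $t\mapsto e^{-it}f(e^{it}x)$ has values in $B$ since $B$ is balanced, and it is a mapping of the compact metric space $[0,2\pi]$ into $F$; if $f$ is continuous (resp. bounded Borel) the integrand is continuous (resp. bounded Borel), hence Baire measurable and therefore $\mu$-integrable by Lemma~\ref{L:integr-for-baire}, with separable range by Lemma~\ref{L:baire}(a). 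As $\mu$ is a probability and $B$ is closed and convex, Lemma~\ref{L:norma}(b) gives $(\hom f)(x)\in B$. So $\hom f\colon A\to B$ is well defined, and it is homogeneous as noted before the lemma; this disposes of the target set $B$ in all three parts.

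For (a), given $x_0\in A$, a continuous seminorm $\rho$ on $F$ and $\varepsilon>0$, I would first show there is a neighbourhood $U$ of $x_0$ such that $\rho\bigl(f(e^{it}x)-f(e^{it}x_0)\bigr)<\varepsilon$ for every $x\in U\cap A$ and every $t\in[0,2\pi]$. This uniformity follows from the continuity of $(t,x)\mapsto f(e^{it}x)$ on $[0,2\pi]\times A$ combined with the compactness of $[0,2\pi]$: around each $t$ choose a product neighbourhood on which the map stays $\varepsilon/2$-close to its value at $t$ (taking the fixed first coordinate $x_0$ as well), extract a finite subcover of $[0,2\pi]$, and intersect the corresponding neighbourhoods of $x_0$. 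Then Lemma~\ref{L:norma}(d), applied to $t\mapsto e^{-it}\bigl(f(e^{it}x)-f(e^{it}x_0)\bigr)$, gives $\rho\bigl((\hom f)(x)-(\hom f)(x_0)\bigr)\le\int\rho\bigl(f(e^{it}x)-f(e^{it}x_0)\bigr)\di\mu(t)\le\varepsilon$ for $x\in U\cap A$, proving continuity of $\hom f$ at $x_0$. I expect this uniformity-over-the-parameter step to be the main (and really the only) obstacle; it is exactly what forces the use of compactness of $[0,2\pi]$.

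For (b), fixing $x\in A$, I would apply the Dominated Convergence Theorem~\ref{T:dct} with the measure $\mu$ to the sequence $t\mapsto e^{-it}f_n(e^{it}x)$ on $[0,2\pi]$: these are bounded Borel mappings with ranges in the bounded set $B$, hence of separable range by Lemma~\ref{L:baire}(a), and they converge pointwise to $t\mapsto e^{-it}f(e^{it}x)$, where $f$ is Borel as a pointwise limit of Borel mappings into the metrizable space $F$. The theorem yields $(\hom f_n)(x)\to(\hom f)(x)$ in $F$, and since $x\in A$ is arbitrary this is precisely the asserted pointwise convergence $\hom f_n\to\hom f$.

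Finally, (c) would be a transfinite induction on $\alpha$. For $\alpha=0$ it is part (a) together with homogeneity of $\hom f$. For $1\le\alpha<\omega_1$, take $f\in\C_\alpha(A,B)$ and a sequence $f_n\in\bigcup_{\beta<\alpha}\C_\beta(A,B)$ with $f_n\to f$ pointwise; each $f_n$ is Borel (by an auxiliary transfinite induction, since pointwise limits of Borel maps into the metrizable space $F$ are Borel), so part (b) gives $\hom f_n\to\hom f$ pointwise, while the inductive hypothesis places each $\hom f_n$ in some $\C_{\hom,\beta}(A,B)$ with $\beta<\alpha$. Hence $\hom f\in\C_{\hom,\alpha}(A,B)$ by the definition of the Baire classes, which completes the proof.
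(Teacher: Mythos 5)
Your proposal is correct and follows essentially the same route as the paper: part (a) via joint continuity of $(t,y)\mapsto e^{-it}f(e^{it}y)$, compactness of $[0,2\pi]$ to get uniformity in $t$, and the seminorm--integral estimate of Lemma~\ref{L:norma}(d); part (b) directly from the Dominated Convergence Theorem~\ref{T:dct}; and part (c) by transfinite induction combining (a) and (b). The extra bookkeeping you include (values in $B$, Borel integrands on the compact metric interval, separable range) matches the remarks the paper makes just before the lemma.
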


\begin{proof} (a) Let $x\in A$ be arbitrary. Fix any continuous seminorm $\rho$ on $F$ and any $\ep>0$. We will find a neighborhood $U$ of zero in $E$ such that for any $y\in (x+U)\cap A$ one has $\rho(\hom f(y)-\hom f(x))\le\ep$.

To do that first observe that the mapping $h:A\times\er\to B$ defined by $h(y,t)=e^{-it}f(e^{it}y)$ is continuous. Therefore for any $t\in \er$ there is $V_t$, an absolutely convex neighborhood of zero in $E$, and $\delta_t>0$ such that whenever $s\in(t-\delta_t,t+\delta_t)$ and $y\in (x+V_t)\cap A$, then $\rho(h(y,s)-h(x,t))<\frac\ep2$. Fix a finite set $J\subset[0,2\pi]$ such that the intervals $(t-\delta_t,t+\delta_t)$, $t\in J$, cover
$[0,2\pi]$. Let $U=\bigcap_{t\in J}V_t$. Given $s\in[0,2\pi]$, choose $t\in J$ with $s\in(t-\delta_t,t+\delta_t)$. Then for each $y\in (x+U)\cap A$ we have
$$\rho(h(y,s)-h(x,s))\le\rho(h(y,s)-h(x,t))+\rho(h(x,t)-h(x,s))<\ep.$$
Hence, for each $y\in (x+U)\cap A$ we have
$$\begin{aligned}
\rho(\hom f(y)-\hom f(x))&=\rho\left(\frac1{2\pi}\int_0^{2 \pi} (h(y,s)-h(x,s))\di s\right)
\\&\le \frac1{2\pi}\int_0^{2 \pi} \rho(h(y,s)-h(x,s))\di s\le\ep,
\end{aligned}$$
which concludes the proof.

The assertion (b) follows immediately from Theorem~\ref{T:dct}. The assertion (c) follows from (a) using (b) and transfinite induction.
\end{proof}

In case $f$ is affine, $\hom f$ is always well defined and no further measurability assumptions on $f$ are needed. Indeed, given $x\in E$, the set $\{e^{it}x\setsep t\in[0,2\pi]\}$ is contained in a finite-dimensional subspace of $E$ (more precisely, in a subspace of complex dimension one, hence of real dimension two). Moreover, $f$ is continuous on this finite-dimensional space and maps it into a finite-dimensional subspace of $F$.
The properties of the ``homogenization'' of affine mappings are summed up in the following lemma.

\begin{lemma} Let $E$, $F$ be complex locally convex spaces, $A\subset E$ an absolutely convex set and $B\subset F$ a closed absolutely convex set.
\label{l:homaff}
\begin{itemize}
\item [(a)] If $f:A\to F$ is affine, then $$\hom f(x)=\frac12(f(x)-if(ix))-\frac{1-i}2f(0),\quad x\in A.$$
Moreover, there are unique homogeneous affine functions $u,v:A\to F$ such that $f(x)=f(0)+u(x)+\ov{v(x)}$ for $x\in A$.
In this formula $u=\hom f$.
\item [(b)] If $f\in\fra_{\alpha}(A,B)$ for some $\alpha<\omega_1$, then $\hom f\in\fra_{\hom,\alpha}(A,B)$.
\item [(c)] Each homogeneous function from $\fra_{\alpha}(A,B)$ belongs to $\fra_{\hom,\alpha}(A,B)$ as well.
\end{itemize}
\end{lemma}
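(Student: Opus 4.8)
The plan is to prove (a) by a direct computation and then obtain (b) and (c) as short consequences.

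For (a) I would first pass to $g:=f-f(0)$, which is again affine and satisfies $g(0)=0$. The key structural point is that for each nonzero $x\in A$ the restriction of $g$ to the complex line $\ce x\cap A$ is the restriction of an $\er$-linear map $\ce x\to F$: indeed, $g$ is $\er$-affine on the convex set $\ce x\cap A$, it vanishes at $0$, and the three points $0,x,ix$ are $\er$-affinely independent and lie in $\ce x\cap A$ (as $A$ is absolutely convex); hence $g$ coincides on $\ce x\cap A$ with the unique $\er$-affine map on $\ce x$ determined by these values, and since it fixes $0$ this map is $\er$-linear. Consequently $g(e^{it}x)=\cos t\cdot g(x)+\sin t\cdot g(ix)$ for every $t$ (the case $x=0$ being trivial). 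Substituting this into
$$(\hom f)(x)=\frac1{2\pi}\int_0^{2\pi}e^{-it}\bigl(f(0)+g(e^{it}x)\bigr)\di t,$$
which is a legitimate Pettis integral because the integrand takes values in a finite-dimensional subspace, and evaluating the elementary integrals $\int_0^{2\pi}e^{-it}\di t=0$, $\int_0^{2\pi}e^{-it}\cos t\di t=\pi$, $\int_0^{2\pi}e^{-it}\sin t\di t=-i\pi$, one arrives at $(\hom f)(x)=\frac12\bigl(g(x)-ig(ix)\bigr)$, which is exactly the asserted formula. For the decomposition I would put $u:=\hom f$ — homogeneous by construction, affine by the formula — and define $v$ by $\ov{v(x)}:=f(x)-f(0)-u(x)=\frac12\bigl(g(x)+ig(ix)\bigr)$; then $f=f(0)+u+\ov v$ holds by construction, and a one-line check with $|\alpha|=1$ shows that $v$ is homogeneous affine. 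Uniqueness follows since a map that is simultaneously homogeneous and anti-homogeneous must vanish (evaluate at $\alpha=i$); equivalently, applying $\hom$ to $f=f(0)+u+\ov v$ and using that $\hom$ annihilates constants and anti-homogeneous maps forces $u=\hom f$, whence $\ov v$ is determined as well.

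For (b) I would argue by transfinite induction on $\alpha$, using that every $f\in\fra_\alpha(A,B)$ is affine (affinity is preserved under pointwise limits), so the formula from (a) applies both to $f$ and to any approximating functions. For $\alpha=0$: $\hom f$ is continuous by the formula, it is homogeneous and affine as noted, and $\hom f(x)$ is a Pettis average over a probability measure of a function with values in the closed convex set $B$ — note $e^{-it}f(e^{it}x)\in B$ since $B$ is absolutely convex — so $\hom f(x)\in B$ by Lemma~\ref{L:norma}(b); thus $\hom f\in\fra_{\hom}(A,B)=\fra_{\hom,0}(A,B)$. For the inductive step, if $f=\lim_n f_n$ pointwise with each $f_n\in\fra_{\beta_n}(A,B)$, $\beta_n<\alpha$, then the formula in (a) yields $\hom f_n\to\hom f$ pointwise, while $\hom f_n\in\fra_{\hom,\beta_n}(A,B)$ by the inductive hypothesis, so $\hom f\in\fra_{\hom,\alpha}(A,B)$.

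Finally, (c) follows at once from (b): if $f\in\fra_\alpha(A,B)$ is homogeneous, then $f(e^{it}x)=e^{it}f(x)$ plugged into the integral formula gives $\hom f=f$, so $f=\hom f\in\fra_{\hom,\alpha}(A,B)$. I expect the only genuine difficulty to lie in the structural claim of (a) — that $g$ restricts to an $\er$-linear map on each complex line — which has to be drawn carefully from affinity together with absolute convexity of $A$; in particular one must handle the quadrant cases when writing $e^{it}x$ as an $\er$-combination of $x$ and $ix$, since $|\cos t|+|\sin t|$ can exceed $1$. Everything else reduces to elementary integrals or formal bookkeeping with the Baire hierarchy, and the use of the explicit formula (in place of Lemma~\ref{l:hom}) is precisely what allows $B$ to be unbounded here.
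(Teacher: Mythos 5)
Your proposal is correct and follows essentially the same route as the paper: part (a) comes from the identity $g(e^{it}x)=\cos t\,g(x)+\sin t\,g(ix)$ for $g=f-f(0)$ plus elementary integrals, $u=\hom f$ and $v$ are defined by the residual, and (b), (c) are exactly the paper's transfinite induction on the explicit formula (which, as you note, is what permits unbounded $B$). The only variation is how that identity is justified: the paper proves directly that $h=f-f(0)$ satisfies $h(\alpha x+\beta y)=\alpha h(x)+\beta h(y)$ for real $\alpha,\beta$, using oddness of $h$ and rescaling by $2(\abs{\alpha}+\abs{\beta})$ (which is precisely how the ``quadrant'' issue you flag is handled), whereas you obtain real-linearity of $g$ on each complex line from the extension of an affine map on $\ce x\cap A$ to the affine hull $\ce x$ — both arguments are sound.
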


\begin{proof} (a) Set $h=f-f(0)$. Then $h$ is odd, hence $h(\alpha x+\beta y)=\alpha h(x)+\beta h(y)$ whenever $\alpha,\beta\in \er$ and $x,y,\alpha x+\beta y\in A$. Indeed, this is clear in case $\beta=0$. If $\beta\ne 0$, we have
$$\begin{aligned}h(\alpha x+\beta y)&=2(\abs{\alpha}+\abs{\beta})h\left(\frac{\alpha x+\beta y}{2(\abs{\alpha}+\abs{\beta})}\right)\\&
=(\abs{\alpha}+\abs{\beta})\left(h\left(\frac{\alpha}{\abs{\alpha}+\abs{\beta}}x\right)+h\left(\frac{\beta}{\abs{\alpha}+\abs{\beta}}y\right)\right)=\alpha h(x)+\beta h(y).\end{aligned}$$
 Therefore we have
$$\begin{aligned}
\hom f(x)&=\hom h(x)=\frac1{2\pi}\int_0^{2\pi}e^{-it}h(e^{it}x)\di t
\\&=\frac1{2\pi}\int_0^{2\pi} e^{-it}h(x\cos t+ix\sin t)\di t
\\&=\frac1{2\pi}\int_0^{2\pi} (\cos^2t-i\cos t\sin t)(h(x)) \di t
\\&\qquad\qquad+\frac1{2\pi}\int_0^{2\pi} (\cos t\sin t-i\sin^2t)h(ix)\di t
\\&=\frac12( h(x) - ih(ix))
=\frac12(f(x)-if(ix))-\frac{1-i}2f(0).
\end{aligned}$$
Set $u=\hom f$ and $v(x)=\ov{f(x)-f(0)-u(x)}$ for $x\in A$. Then $u$ is a homogeneous affine function.
Moreover, by the above formula we get
$$v(x)=\frac12\ov{(f(x)-f(0))+i(f(ix)-f(0))}, \quad x\in A.$$
 Hence
 $v(ix)=iv(x)$ for $x\in A$, so $v$ is homogeneous. The uniqueness is clear.

The assertion (b) follows from (a) by transfinite induction.
The assertion (c) follows immediately from (b).
\end{proof}

\subsection{Odd and anti-homogeneous measures}\label{ssec:miry}

Let $E$ be a Banach space over $\ef$ and $X=(B_{E^*},w^*)$. Then $X$ is a compact convex set. Odd and anti-homogeneous measures on $X$ were defined in Section~\ref{ssec:choquet}. In this section we will elaborate these notions and, in particular, provide a proof of Fact~\ref{fact:L1}.

There are two points of view on a Radon measure on $X$ -- we can view it as a set function or as a functional on $\C(X,\ef)$.
In the previous section we defined operators $\odd$ and $\hom$ on the space $\C(X,\ef)$. We use the same symbols to denote
the adjoint operator on $\C(X,\ef)^*$. Let $\mu$ be an $\ef$-valued Radon measure on $X$.
 \begin{itemize}
	\item If $\ef=\er$, we define $\odd\mu\in \C(X,\er)^*$ by $(\odd\mu)(f)=\mu(\odd f)$, $f\in\C(X,\er)$.
	\item If $\ef=\ce$, we define $\hom\mu\in \C(X,\ce)^*$ by $(\hom \mu)(f)=\mu(\hom f)$, $f\in \C(X,\ce)$.
\end{itemize}

The following lemma sums up basic properties of the operator $\odd$ on measures.

\begin{lemma}\label{l:odd-miry}
Let $E$ be a real Banach space, $X=(B_{E^*},w^*)$ and $\mu$ be a signed Radon measure on $X$.
Then the following assertions hold.
\begin{itemize}
	\item[(a)] $\odd\mu(A)=\frac12(\mu(A)-\mu(-A))$ for any  Borel set $A\subset X$.
	\item[(b)] $\odd\mu$ is an odd measure.
	\item[(c)] $\mu$ is odd if and only if $\mu=\odd\mu$.
	\item[(d)] Let $B\subset X$ be a symmetric $\mu$-measurable set and $f:B\to \er$ a bounded Borel function. Then $\int_B f\di\odd\mu=\int_B\odd
 f\di\mu$.
 \item[(e)] If $\mu$ is boundary, then $\odd\mu$ is boundary.
\end{itemize}
\end{lemma}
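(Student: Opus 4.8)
The plan is to establish the five assertions in order, since each relies on the previous ones. First, for (a) I would compute directly from the definition. The adjoint operator $\odd$ on $\C(X,\er)^*$ satisfies $(\odd\mu)(f)=\mu(\odd f)=\frac12\mu(f)-\frac12\mu(f\circ(-\id))$ by definition of $\odd$ on functions and linearity of $\mu$. Writing $\nu$ for the pushforward of $\mu$ under the homeomorphism $x\mapsto -x$ of $X$ (note $X=B_{E^*}$ is symmetric, so this map is well defined), we have $\mu(f\circ(-\id))=\nu(f)$, and since $\nu(A)=\mu(-A)$ for every Borel set, we get $\odd\mu=\frac12(\mu-\nu)$ as measures; evaluating on $\bchi_A$ gives the stated formula. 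Then (b) is immediate: $(\odd\mu)(-A)=\frac12(\mu(-A)-\mu(A))=-(\odd\mu)(A)$, which is exactly the oddness condition for measures (condition (b) of Fact~\ref{fact:L1} with $\ef=\er$, $\alpha=-1$). For (c), if $\mu=\odd\mu$ then $\mu$ is odd by (b); conversely if $\mu$ is odd then $\mu(-A)=-\mu(A)$ for all Borel $A$, so by (a) $\odd\mu(A)=\frac12(\mu(A)+\mu(A))=\mu(A)$.

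For (d), I would reduce to the defining property by a standard approximation argument. The identity $\int_B f\di\odd\mu=\int_B\odd f\di\mu$ holds for $f=\bchi_A$ with $A\subset B$ symmetric Borel: the left side is $\odd\mu(A)=\frac12(\mu(A)-\mu(-A))$ by (a), while $\odd\bchi_A=\frac12(\bchi_A-\bchi_{-A})$ so the right side is $\frac12(\mu(A)-\mu(-A))$ as well. For a general symmetric Borel $B$ and bounded Borel $f:B\to\er$, write $f=\frac12(f+\check f)+\frac12(f-\check f)$ where $\check f(x)=f(-x)$; the second summand is $\odd f$ (already odd, so both sides treat it the same via the case $f=\odd f$ which follows from (c) applied to the restriction—or more directly, $\odd(\odd f)=\odd f$), and for the symmetric part $g=\frac12(f+\check f)$ one has $\odd g=0$, so the right side vanishes, and $\int_B g\di\odd\mu=0$ because $g$ is symmetric while $\odd\mu$ is odd (one checks $\int g\di\odd\mu=\int g\di\odd\mu\circ(-\id)\cdot(\pm)$; cleanest is to approximate $g$ by simple functions built from symmetric sets, using that symmetric Borel sets generate the symmetric Borel $\sigma$-algebra, and invoke the $\bchi_A$ case). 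Combining the two parts gives (d). One must be a little careful that ``$\mu$-measurable'' versus ``Borel'' causes no trouble: since $f$ is assumed Borel and bounded, and $\mu$ (hence $\odd\mu$, which is Radon by the Riesz correspondence) is a finite Radon measure, integrability is automatic.

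For (e), suppose $\mu$ is boundary, i.e., $\mu=0$ or $\frac{|\mu|}{\|\mu\|}$ is maximal in the dilation order. If $\mu=0$ then $\odd\mu=0$ and we are done, so assume $\mu\ne0$. The key point is that $|\odd\mu|\le\frac12(|\mu|+|\mu|\circ(-\id))$ as measures, by (a) and the triangle inequality for total variation; and the pushforward $|\mu|\circ(-\id)$ is again a boundary (indeed maximal after normalization) measure, because the map $x\mapsto-x$ is an affine homeomorphism of $X$ and hence preserves the dilation order and maps $\ext X$ onto $\ext X$. Thus $|\odd\mu|$ is dominated by a sum of two boundary measures, which is boundary (a positive measure dominated by a boundary measure is boundary — this is the standard fact that the set of measures ``below'' a maximal one in the natural ordering consists of boundary measures; in the metrizable case it is just ``carried by $\ext X$'', in general one uses that boundary measures form a band, see e.g.\ the references on maximal measures). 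Hence $\odd\mu$ is boundary. The main obstacle here is making the last step rigorous in the non-metrizable setting: I would either cite the fact that boundary measures form a band in $\M(X,\er)$ (so that $|\odd\mu|\le\text{(boundary)}$ forces $\odd\mu$ boundary), or, since for the applications $X=(B_{E^*},w^*)$ one often has $E$ separable and $X$ metrizable, reduce to the characterization ``carried by $\ext X$'' and observe directly that if $|\mu|(X\setminus\ext X)=0$ then $|\mu|\circ(-\id)(X\setminus\ext X)=|\mu|(X\setminus\ext X)=0$ (using $-\ext X=\ext X$), hence $|\odd\mu|(X\setminus\ext X)=0$.
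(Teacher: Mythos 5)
Your proposal is correct and follows essentially the same route as the paper: identify $\odd\mu$ with $\frac12(\mu-\sigma(\mu))$, where $\sigma(x^*)=-x^*$, by checking agreement on $\C(X,\er)$ (Riesz), read off (b) and (c) from the resulting formula, obtain (d) by extending that computation to bounded Borel functions, and get (e) from Alfsen's characterization of boundary measures (equivalently, that they form a band, which legitimizes your domination argument $|\odd\mu|\le\frac12(|\mu|+\sigma(|\mu|))$ without any metrizability). The one soft spot is the odd-part step in (d): saying ``both sides treat it the same via (c)'' is not an argument, but it is repaired at once by noting that your indicator computation is valid for every Borel $A\subset B$ (not only symmetric $A$), so linearity and uniform approximation by simple functions give $\int_B f\di\odd\mu=\int_B\odd f\di\mu$ for all bounded Borel $f$ directly, making the even/odd decomposition unnecessary --- which is precisely what the paper means by repeating the computation from (a).
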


\begin{proof} (a) Define a measure $\nu$ by $\nu(A)=\frac12(\mu(A)-\mu(-A))$ for $A\subset X$ Borel. It is a well-defined measure. Moreover, for any $f\in\C(X,\er)$ we have $\nu(f)=\mu(\odd f)$. Indeed, set $\sigma(x^*)=-x^*$ for $x^*\in X$. Then, given $f\in\C(X,\er)$, we have
$$\nu(f)=\frac12\left(\int_X f\di\mu-\int_X f\di\sigma(\mu)\right)=\frac12\left(\int_X f\di\mu-\int_X f\circ \sigma^{-1}\di\mu\right)=\mu(\odd f).$$
It follows that $\nu=\odd\mu$.

The assertions (b) and (c) follow immediately from (a). The assertion (d) follows from (a) by repeating the above computation.
The assertion (e) follows easily from the characterization of boundary measures given in \cite[p. 34--35]{alfsen}.
 \end{proof}

We continue by the following lemma on odd measures and their relationship to probabilities.

\begin{lemma}\label{l:odd-miry2}
Let $E$ be a real Banach space, $X=(B_{E^*},w^*)$ and $\mu$ be an odd Radon measure on $X$ with $\|\mu\|\le 1$.
Then the following assertions hold.
\begin{itemize}
	\item[(a)] There is a Borel set $A\subset X$ with $A\cap(-A)=\emptyset$ such that $\mu^+$ is supported by $A$ and $\mu^-$ is supported by $-A$.
	\item[(b)] There is a unique $x^*\in X$ such that for each $x\in E$ one has $x^*(x)=\int y^*(x)\di\mu(y^*)$.
	\item[(c)] There is a probability $\nu$ on $X$ such that $\odd\nu=\mu$ and $r(\nu)=x^*$ (where $x^*$ is provided by (b)).
	If $\mu$ is boundary, $\nu$ can be chosen maximal.
\end{itemize}
\end{lemma}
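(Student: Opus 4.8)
The plan is to extract from the hypothesis the single identity $\sigma(\mu)=-\mu$, where $\sigma\colon X\to X$ is the affine homeomorphism $\sigma(y^*)=-y^*$; this is just a restatement of Lemma~\ref{l:odd-miry}(a) and (c), since $\sigma(\mu)(B)=\mu(-B)$ for every Borel $B\subset X$. Everything then follows from this relation together with the Jordan decomposition and the envelope characterization of maximal measures.

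\textbf{(a)} I would start from the Jordan decomposition $\mu=\mu^+-\mu^-$ and a Borel set $P\subset X$ on which $\mu^+$ is concentrated, with $\mu^-$ concentrated on $X\setminus P$. Since $\sigma$ is a homeomorphism, $\sigma(\mu^+)$ and $\sigma(\mu^-)$ are again nonnegative and mutually singular, so $\sigma(\mu^+)-\sigma(\mu^-)=\sigma(\mu)=-\mu=\mu^--\mu^+$ is the Jordan decomposition of $-\mu$; uniqueness gives $\sigma(\mu^+)=\mu^-$ and $\sigma(\mu^-)=\mu^+$. Hence $\mu^-$ is supported by $\sigma(P)=-P$ as well, thus by $(-P)\setminus P$, and $\mu^+=\sigma(\mu^-)$ is supported by $P\setminus(-P)$. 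The set $A=P\setminus(-P)$ then satisfies $A\cap(-A)=\emptyset$ (as $-A=(-P)\setminus P$) and does the job.

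\textbf{(b)} The map $x\mapsto\int_X y^*(x)\di\mu(y^*)$ is linear on $E$ and, since $|y^*(x)|\le\|x\|$ on $X$, is bounded by $\|x\|\,\|\mu\|\le\|x\|$; so it is an element $x^*\in B_{E^*}=X$, obviously the only one with the stated property. \textbf{(c)} Using $\mu^-=\sigma(\mu^+)$ one checks that $\odd(2\mu^+)(B)=\mu^+(B)-\mu^+(-B)=\mu(B)$ and that $\int_X y^*(x)\di(2\mu^+)(y^*)=\int_X y^*(x)\di\mu^+-\int_X(-y^*)(x)\di\mu^+=x^*(x)$ for every $x\in E$. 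Put $c=\tfrac12\|\mu\|\in[0,\tfrac12]$, so $\mu^+(X)=\mu^-(X)=c$. Choosing any $\sigma$-invariant Radon probability $\eta$ on $X$ with $r(\eta)=0$ and setting
\[
\nu=2\mu^++(1-2c)\eta,
\]
we obtain a Radon probability with $\odd\nu=\odd(2\mu^+)+(1-2c)\odd\eta=\mu$ (because $\sigma$-invariance of $\eta$ means $\odd\eta=0$) and, since $\eta$ represents $0$, with $\int_X y^*(x)\di\nu(y^*)=x^*(x)$ for all $x\in E$, i.e. $r(\nu)=x^*$. For the general statement one takes $\eta=\delta_0$ (symmetric as $-0=0$, with $r(\delta_0)=0$).

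If $\mu$ is boundary we must make $\nu$ maximal, and here I would invoke Mokobodzki's criterion: a nonnegative Radon measure $\rho$ on $X$ is maximal (after normalization) if and only if $\rho(\widehat f-f)=0$ for every convex continuous $f$, where $\widehat f\ge f$ is the upper envelope of $f$ and does \emph{not} depend on $\rho$. From $0\le\mu^+\le|\mu|$ and maximality of $|\mu|/\|\mu\|$ we get $0\le\mu^+(\widehat f-f)\le|\mu|(\widehat f-f)=0$, so $\mu^+$ is maximal; and for $\eta$ I would take a maximal $\lambda_0\in\M_0(X)$ (Choquet's theorem) and set $\eta=\tfrac12(\lambda_0+\sigma(\lambda_0))$, which is a symmetric probability representing $0$ and is maximal, because $\sigma(\lambda_0)$ is maximal ($\widehat{f\circ\sigma}=\widehat f\circ\sigma$ for convex continuous $f$, $\sigma$ being an affine homeomorphism) and positive combinations of maximal measures are maximal by the same criterion. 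Then $\nu(\widehat f-f)=2\mu^+(\widehat f-f)+(1-2c)\eta(\widehat f-f)=0$ for all convex continuous $f$, so $\nu$ is maximal. The main obstacle is precisely this maximality clause: it relies on the $\rho$-independence of the upper envelope (used both to pass maximality from $|\mu|$ to the dominated piece $\mu^+$ and to handle the symmetrization $\eta$), together with the routine verification that $\sigma$ preserves the dilation order; parts (a) and (b) are then immediate once the identity $\sigma(\mu)=-\mu$ has been isolated.
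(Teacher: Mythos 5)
Your proposal is correct and follows essentially the paper's own argument: part (a) via the symmetry $\sigma(\mu^+)=\mu^-$ and the set $A=P\setminus(-P)$, part (b) via the norm estimate, and part (c) via the same construction $\nu=2\mu^+ +(\text{symmetric remainder})$, with maximality obtained from domination by $\abs{\mu}$ and convex combinations of maximal measures. The only (harmless) deviations are that you use a symmetric probability representing $0$ (the Dirac measure at $0$, resp.\ a symmetrized maximal representing measure of $0$ in the boundary case) instead of the paper's normalized $\abs{\mu}$ as the filler, and that you make explicit, via Mokobodzki's criterion $\rho(\widehat f-f)=0$, the maximality facts the paper leaves implicit.
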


\begin{proof} (a) Let us fix two disjoint Borel sets $A^+$ and $A^-$ such that $\mu^+$ is supported by $A^+$ and $\mu^-$ is supported by $A^-$. It is enough to take $A=A^+\setminus(-A^+)$.

To show (b) it is enough to observe that $x\mapsto \int y^*(x)\di\mu(y^*)$ defines a linear functional on $E$ of norm not greater than $\|\mu\|$.

(c) If $\mu=0$, take $\nu=\frac12(\ep_{y^*}+\ep_{-y^*})$ where $y^*$ is an extreme point of $X$.
If $\mu\ne0$, one can take $\nu=2\mu^+ +\frac{1-2\norm{\mu^+}}{\norm{\mu}}\abs{\mu}$. Indeed, let $A$ be the set provided by (a). Then $\norm{\mu^+}=\mu^+(A)=\mu(A)=-\mu(-A)=\mu^-(-A)=\norm{\mu^-}$, thus $\norm{\mu^+}\le\frac12$. It follows that $\nu$ is a probability measure.
Moreover, given $B\subset X$ Borel, we have
$$\begin{aligned}
\odd\nu(B)&=\frac12(\nu(B)-\nu(-B))
=\mu^+(B)-\mu^+(-B)+\tfrac{1-2\norm{\mu^+}}{2\norm{\mu}} (\abs{\mu}(B)-\abs{\mu}(-B))
\\&=\mu(B\cap A)-\mu((-B)\cap A)\\&\quad+ \tfrac{1-2\norm{\mu^+}}{2\norm{\mu}} (\mu(B\cap A)-\mu(B\cap(-A))-\mu((-B)\cap A)+\mu((-B)\cap(-A)))
\\&=\mu(B\cap A)+\mu(B\cap(-A)) + 0 = \mu(B),\end{aligned}$$
thus $\odd\nu=\mu$.

Let $f:X\to\er$ be a continuous affine function. Then $f-f(0)$ is odd, thus there is $x\in E$ such that $f(x^*)-f(0)=x^*(x)$ for each $x^*\in X$. Hence
$$\nu(f)=\nu(f(0))+\nu(f-f(0))=f(0)+\mu(f-f(0))=f(0)+x^*(x)=f(x^*),$$
therefore $\nu$ represents $x^*$.

Finally, if $\mu$ is boundary, then both $|\mu|$ and $\mu^+$ are boundary, hence $\nu$ is maximal.
\end{proof}

The next lemma sums up the properties of the operator $\hom$ on complex measures. It is analogous to Lemma~\ref{l:odd-miry} but the proof is more involved due to the more complicated definition in the complex case.

\begin{lemma}\label{l:hom-miry1}
Let $E$ be a complex Banach space, $X=(B_{E^*},w^*)$ and $\mu$ be a complex Radon measure on $X$.
Then the following assertions hold.
\begin{itemize}
	\item[(a)] $\hom\mu(A)=\frac1{2\pi}\int_0^{2\pi}e^{-it}\mu(e^{-it}A)\di t$ for any  Borel set $A\subset X$.
	\item[(b)] $\hom\mu$ is an anti-homogeneous measure.
	\item[(c)] $\mu$ is anti-homogeneous if and only if $\mu=\hom\mu$.
	\item[(d)] Let $B\subset X$ be a homogeneous $\mu$-measurable set and $f:B\to \ce$ a bounded Borel function. Then $\int_B f\di\hom\mu=\int_B\hom f\di\mu$.
 \item[(e)] If $\mu$ is boundary, then $\hom\mu$ is boundary.
\end{itemize}
\end{lemma}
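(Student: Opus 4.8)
The plan is to establish the integral formula (a) first --- which gives an explicit description of $\hom\mu$ --- and then to deduce (b)--(e) from it, following the pattern of Lemma~\ref{l:odd-miry} with the reflection $x^*\mapsto-x^*$ replaced by the rotation average. For (a) I would package the construction as a pushforward. Let $\Psi\colon[0,2\pi]\times X\to X$ be the continuous map $\Psi(t,x^*)=e^{it}x^*$ (which does land in $X$, since $X=B_{E^*}$ is homogeneous), let $\ell$ denote Lebesgue measure on $[0,2\pi]$, and let $\theta$ be the complex Radon measure on the compact space $[0,2\pi]\times X$ given by $\di\theta(t,x^*)=e^{-it}\di\ell(t)\di\mu(x^*)$, i.e.\ $e^{-it}$ times the product $\ell\otimes\mu$. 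Put $\nu=\frac1{2\pi}\Psi_\#\theta$; as the pushforward of a Radon measure under a continuous map of compact spaces, $\nu$ is a complex Radon measure on $X$. Since $[0,2\pi]$ is second countable, $\mathcal B([0,2\pi]\times X)=\mathcal B([0,2\pi])\otimes\mathcal B(X)$, so the Fubini theorem applies to bounded Borel functions on $[0,2\pi]\times X$ integrated against $\theta$. For $f\in\C(X,\ce)$ this yields
\[
\nu(f)=\frac1{2\pi}\int_0^{2\pi}\int_X f(e^{it}x^*)e^{-it}\di\mu(x^*)\di t=\int_X(\hom f)\di\mu=(\hom\mu)(f),
\]
so $\nu=\hom\mu$. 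Applying the same reasoning to $\chi_{\Psi^{-1}(A)}$ for a Borel set $A\subset X$ (note that $\Psi^{-1}(A)$ is Borel, hence lies in the product $\sigma$-algebra, and that $\{x^*\setsep e^{it}x^*\in A\}=e^{-it}A$) gives $\hom\mu(A)=\frac1{2\pi}\int_0^{2\pi}e^{-it}\mu(e^{-it}A)\di t$, which is (a); in particular the integrand $t\mapsto\mu(e^{-it}A)$ is automatically Lebesgue measurable.

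Assertions (b) and (c) then follow by elementary manipulations of this formula. Writing $\alpha=e^{is}$ and substituting $t\mapsto t+s$ (using $2\pi$-periodicity of the integrand) gives $\hom\mu(\alpha A)=e^{-is}\hom\mu(A)=\ov\alpha\,\hom\mu(A)$, which is (b); for (c), $\mu=\hom\mu$ forces $\mu$ to be anti-homogeneous by (b), while if $\mu$ is anti-homogeneous then $\mu(e^{-it}A)=e^{it}\mu(A)$, so (a) gives $\hom\mu(A)=\frac1{2\pi}\int_0^{2\pi}\mu(A)\di t=\mu(A)$. For (d), homogeneity of $B$ gives $\chi_B(e^{it}x^*)=\chi_B(x^*)$, whence $\hom(f\chi_B)=(\hom f)\chi_B$; replacing $f$ by $f\chi_B$ therefore reduces (d) to the identity $\int_X g\di\hom\mu=\int_X\hom g\di\mu$ for bounded Borel $g$, which is exactly the computation used in (a) (valid for any bounded Borel $g$ by Fubini against $\theta$, using the change-of-variables formula $\int_X g\di\Psi_\#\theta=\int g\circ\Psi\di\theta$), or alternatively a monotone-class argument starting from the continuous case.

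For (e) assume $\mu$ is boundary and $\mu\ne0$. Each rotation $R_t\colon x^*\mapsto e^{it}x^*$ is an affine $w^*$-homeomorphism of $X$, hence preserves the dilation order and thus maximality of measures; since $R_t$ is a bijection, $|(R_t)_\#\mu|=(R_t)_\#|\mu|$, which is a positive multiple of a maximal probability. Let $\lambda=\frac1{2\pi}\Psi_\#(\ell\otimes|\mu|)=\frac1{2\pi}\int_0^{2\pi}(R_t)_\#|\mu|\di t$. Then $|\hom\mu|\le\frac1{2\pi}\Psi_\#|\theta|=\lambda$, and $\lambda$ is itself maximal: for any convex continuous $g\colon X\to\er$ with upper envelope $\widehat g$, maximality of each $(R_t)_\#|\mu|$ together with the characterization of boundary measures from \cite[p.~34--35]{alfsen} yields $\lambda(\widehat g-g)=\frac1{2\pi}\int_0^{2\pi}(R_t)_\#|\mu|(\widehat g-g)\di t=0$. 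Since $\widehat g-g\ge0$ and $|\hom\mu|\le\lambda$, it follows that $|\hom\mu|(\widehat g-g)=0$ for every such $g$, so $|\hom\mu|$ is maximal and $\hom\mu$ is boundary.

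The step I expect to need the most care is (a): verifying that the right-hand side of the formula genuinely defines a complex Radon measure and that $t\mapsto\mu(e^{-it}A)$ is measurable. The pushforward-plus-Fubini packaging above is meant to dispose of both points at once; once (a) is in hand, (b)--(d) are formal and (e) is a routine --- if slightly longer --- variant of the scalar boundary-measure argument, the new ingredient being that rotations act as affine automorphisms of $X$.
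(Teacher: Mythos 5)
Your proof of (a)--(d) is essentially the paper's own argument: the paper also forms the measure with density $e^{-it}$ with respect to the product of (normalized) Lebesgue measure and $\mu$ on $[0,2\pi]\times X$, pushes it forward under $(t,x)\mapsto e^{it}x$, identifies the result with $\hom\mu$ by testing against continuous functions via Fubini, and then reads off (b), (c) and (d) from the resulting formula exactly as you do (your explicit substitution $t\mapsto t+s$ for (b) and the reduction $\hom(f\chi_B)=(\hom f)\chi_B$ for (d) are just the ``follows immediately from (a)'' steps spelled out). The genuine difference is (e): the paper does not prove it but cites \cite[Lemma 4.2]{effros} and \cite[\S\,23, Lemma 10]{lacey}, whereas you give a self-contained argument --- each rotation $R_t$ is an affine homeomorphism of $X$, hence pushes maximal measures to maximal measures; $|\hom\mu|$ is dominated by the rotation average $\lambda=\frac1{2\pi}\Psi_\#(\ell\otimes|\mu|)$; and $\lambda(\widehat g-g)=0$ for every convex continuous $g$ by Mokobodzki's criterion, which forces $|\hom\mu|(\widehat g-g)=0$ and hence maximality of the normalized $|\hom\mu|$. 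This argument is correct (and is the complex analogue of the short remark the paper makes for the real operator $\odd$), at the cost of a few measure-theoretic details you pass over quickly --- Fubini/change of variables for the Radon product measure applied to the bounded Borel function $\widehat g-g$ (harmless here, since $\widehat g$ is upper semicontinuous, so one can approximate from above by continuous functions), and the inequality $|\Psi_\#\theta|\le\Psi_\#|\theta|$ with $|\theta|=\ell\otimes|\mu|$ --- but these are at the same level of rigor as the paper's own treatment of (a) and (d). So: same route for (a)--(d); for (e) your proof replaces an external citation by a direct and correct argument, which makes the lemma self-contained.
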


\begin{proof}
(a) Let $\lambda$ denote the normalized Lebesgue measure on $[0,2\pi]$ and  $\lambda\times \mu$ the product measure on $[0,2\pi]\times X$. Set $h(t,x)=e^{-it}$. Then $h$ is a bounded continuous function on $[0,2\pi]\times X$, hence we can set $\nu_1=h\cdot(\lambda\times\mu)$ to be the measure on $[0,2\pi]\times X$ with the density $h$ with respect to $\lambda\times\mu$. Further, consider the mapping $\varphi:[0,2\pi]\times X\to X$ defined by $\varphi(t,x)=e^{it}x$. This mapping is clearly continuous, thus the image of the product measure $\nu_2=\varphi(\nu_1)$ is a well-defined Radon measure on $X$. Then for any $A\subset X$ Borel we have
$$\begin{aligned}
\nu_2(A)&=\nu_1(\varphi^{-1}(A))=\nu_1(\{(t,x)\setsep e^{it}x\in A\})=
\nu_1(\{(t,e^{-it}x)\setsep t\in[0,2\pi],x\in A\})
\\&=\int_{\{(t,e^{-it}x)\setsep t\in[0,2\pi],x\in A\}} e^{-it}\di(\lambda\times\mu)
=\frac1{2\pi}\int_0^{2\pi}e^{-it}\mu(e^{-it}A)\di t.
\end{aligned}$$
This shows that the formula in the statement of (a) defines the measure $\nu_2$. To show $\nu_2=\hom\mu$ it is enough to prove that $\nu_2(f)=\mu(\hom f)$ for any $f\in\C(X,\ce)$. So, fix $f\in\C(X,\ce)$. Then
$$\begin{aligned}
\mu(\hom f)
&=\int_X\left(\frac1{2\pi}\int_0^{2\pi}e^{-it}f(e^{it} x)\di t\right)\di\mu
=\int_{[0,2\pi]\times X} e^{-it}f(e^{it} x) \di(\lambda\times\mu)(t,x)
\\&=\int_{[0,2\pi]\times X}f(e^{it} x) \di\nu_1(t,x)
=\int_{[0,2\pi]\times X}f(\varphi(t,x)) \di\nu_1(t,x)
=\int_X f\di\nu_2,
\end{aligned}$$
which completes the proof.

The assertions (b) and (c) follow immediately from the formula in (a).
The assertion (d) follows from the formula in (a) since the computation of $\mu(\hom f)$ can be repeated for any bounded Borel function $f$.

(e) For the proof see \cite[Lemma 4.2]{effros} or \cite[\S\,23, Lemma 10]{lacey}.
\end{proof}

We continue with the complex analogue of Lemma~\ref{l:odd-miry2}. The proof is again more involved than in the real case.

\begin{lemma}
\label{l:hom-miry2}
Let $E$ be a complex Banach space, $X=(B_{E^*},w^*)$ and $\mu$ be an anti-homogeneous Radon measure on $X$ with $\|\mu\|\le 1$.
Then the following assertions hold.
\begin{itemize}
	\item[(a)] $\Re\mu$ and $\Im\mu$ are odd measures.
	\item[(b)] $\Im\mu (A)=\Re\mu(iA)$ for $A\subset X$ Borel.
	\item[(c)] There is a unique $x^*\in X$ such that for each $x\in E$ one has $x^*(x)=\int y^*(x)\di\mu(y^*)$.
	\item[(d)] There is a Radon probability $\nu$ on $X$ such that $\hom\nu=\mu$ and $r(\nu)=x^*$. If $\mu$ is boundary, $\nu$ can be chosen maximal.
\end{itemize}
\end{lemma}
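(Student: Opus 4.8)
The plan is that parts (a)--(c) follow at once from the defining relation $\mu(\alpha A)=\ov\alpha\mu(A)$ (for Borel $A\subset X$ and $|\alpha|=1$), while part (d) is the substance and will be obtained from a polar decomposition of $\mu$ combined with Lemma~\ref{l:hom-miry1}. Concretely, for (a) take $\alpha=-1$, so $\mu(-A)=-\mu(A)$, whence $\Re\mu(-A)=-\Re\mu(A)$ and $\Im\mu(-A)=-\Im\mu(A)$. For (b) take $\alpha=i$, so $\mu(iA)=-i\mu(A)=\Im\mu(A)-i\Re\mu(A)$, and comparing real parts gives $\Re\mu(iA)=\Im\mu(A)$. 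For (c) the map $x\mapsto\int_X y^*(x)\di\mu(y^*)$ is a linear functional on $E$ with $\abs{\int_X y^*(x)\di\mu(y^*)}\le\abs\mu(X)\norm x\le\norm x$, hence it equals $x^*(x)$ for a unique $x^*\in B_{E^*}=X$.

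For (d) I first reduce the claim. Let $H$ denote the space of continuous homogeneous complex functions on $X$. Since $\hom g=g$ for $g\in H$ (directly from the definition of $\hom$) and $\hom f\in H$ for every $f\in\C(X,\ce)$ ($\hom f$ is homogeneous by construction and, by Lemma~\ref{l:hom}(a), continuous), the operator $\hom$ is the linear projection of $\C(X,\ce)$ onto $H$. As $\mu$ is anti-homogeneous, $\mu=\hom\mu$ by Lemma~\ref{l:hom-miry1}(c), so $\mu(f)=\mu(\hom f)$ for all $f$; therefore, for a probability $\nu$ on $X$, the equality $\hom\nu=\mu$ is equivalent to $\nu(g)=\mu(g)$ for all $g\in H$. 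Moreover, once $\hom\nu=\mu$ holds, $r(\nu)=x^*$ is automatic: each $g_x\colon y^*\mapsto y^*(x)$ ($x\in E$) lies in $H$, so $\int_X y^*(x)\di\nu(y^*)=\nu(g_x)=\mu(g_x)=x^*(x)$ for every $x\in E$; since by Lemma~\ref{l:homaff}(a) the real affine continuous functions on $X$ are spanned (over $\er$) by the constants and the functions $y^*\mapsto\Re y^*(x)$, this characterises the barycenter $r(\nu)$ as $x^*$.

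It remains to construct such a $\nu$. Let $g=\tfrac{\dd\mu}{\dd\abs\mu}$ be the Radon--Nikodym density, so $|g|=1$ $\abs\mu$-a.e. and $\mu=g\abs\mu$; since $|\mu(\alpha A)|=|\mu(A)|$, the measure $\abs\mu$ is invariant under each rotation $R_\alpha\colon y^*\mapsto\alpha y^*$ ($|\alpha|=1$). Altering $g$ on an $\abs\mu$-null set we may assume $|g(x)|=1$ for all $x$, so $x\mapsto g(x)x$ maps $X$ measurably into $X$. The functional $f\mapsto\int_X f(g(x)x)\di\abs\mu(x)$ is positive and linear on $\C(X,\ce)$ with norm $\abs\mu(X)=\norm\mu$, hence it is integration against a positive Radon measure $\nu_0$ with $\nu_0(X)=\norm\mu$. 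As $\hom f$ is homogeneous, $\hom f(g(x)x)=g(x)\,\hom f(x)$, so
\[
\hom\nu_0(f)=\nu_0(\hom f)=\int_X g(x)\,\hom f(x)\di\abs\mu(x)=\int_X\hom f\di\mu=\mu(\hom f)=\mu(f),
\]
i.e. $\hom\nu_0=\mu$. If $\norm\mu=1$ put $\nu=\nu_0$; otherwise put $\nu=\nu_0+(1-\norm\mu)\rho$ with $\rho$ a rotation-invariant Radon probability on $X$ (for instance $\rho=\delta_0$). A rotation-invariant measure annihilates $H$, so $\hom\rho=0$, hence $\hom\nu=\mu$, $\nu(X)=1$, and $r(\nu)=x^*$ by the reduction above. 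If $\mu$ is boundary, then $\abs\mu$ is a boundary measure; when $X$ is metrizable this means $\abs\mu$ is carried by $\ext X$, which is invariant under the affine automorphisms $R_\alpha$, so $g(x)x\in\ext X$ for $\abs\mu$-a.e. $x$ and $\nu_0$ is maximal — one then takes $\rho$ to be the rotation-average of a maximal probability, so that $\nu$ remains maximal.

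The crucial point is finding the ``gauge map'' $x\mapsto g(x)x$: pushing the \emph{positive} measure $\abs\mu$ forward by it aligns the phases of $\mu$ along the circle orbits so that the $\hom$-image of the push-forward is exactly $\mu$; this plays the role of the elementary device of splitting $\mu^+$ and $\mu^-$ onto a set $A$ and its antipode $-A$ used in the real case (Lemma~\ref{l:odd-miry2}). The one residual technicality is the maximality assertion for non-metrizable $X$, for which one needs in addition the standard facts that affine automorphisms preserve maximality and that an integral of maximal measures over the orbit decomposition of $\abs\mu$ is again maximal.
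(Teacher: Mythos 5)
Parts (a)--(c) and the core of (d) are essentially right, and in fact your construction is the paper's in disguise: your reduction ($\hom\nu=\mu$ iff $\nu$ and $\mu$ agree on all homogeneous continuous functions, after which $r(\nu)=x^*$ comes for free via Lemma~\ref{l:homaff}(a) and the evaluation description of homogeneous affine continuous functions) is correct, and the measure obtained in the paper from the section $P=\{h=1\}$ is precisely the image of $\abs{\mu}$ under the gauge map $y^*\mapsto h(y^*)y^*$, i.e.\ your $\nu_0$ (with $h$ an anti-homogeneous, Baire measurable version of your density $g$). One point you pass over is why the paper bothers to make the density Baire measurable (Luzin plus $\hom$-averaging): with a merely $\abs{\mu}$-measurable $g$ and $X$ non-metrizable you still need to know that $f\circ\psi$, $\psi(x)=g(x)x$, is $\abs{\mu}$-measurable for continuous $f$ (fixable by Stone--Weierstrass, but not written), and more seriously you need later to compare $\nu_0$ with $\abs{\mu}$ on homogeneous sets.

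The genuine gap is the maximality assertion in (d). The lemma carries no metrizability hypothesis and is used (in Fact~\ref{fact:L1} and Lemma~\ref{l:bauer}) for arbitrary complex $E$, so ``boundary'' cannot be read as ``carried by $\ext X$''; your argument covers only the metrizable case, and the appeal to ``standard facts'' does not attach to your construction: $\nu_0$ was never exhibited as an integral of maximal measures over circle orbits, and producing such a disintegration of $\abs{\mu}$ without metrizability is itself a nontrivial task. This is exactly where the paper's proof does its real work: it uses the envelope test sets $B(f)=\{f=\hat f\}$ for convex continuous $f$, the lemma of Effros (\cite[Lemma 4.1]{effros}) that $B(g)\subset B(f)$ when $g$ is the rotation average of $f$, the homogeneity of $X\setminus B(g)$ together with $\abs{\mu}(X\setminus B(g))=0$, and the identity $\nu_2(S)=\abs{\mu}(S)$ for homogeneous sets $S$, to conclude that the constructed measure is boundary. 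Your $\nu_0$ satisfies the same identity, but you must prove it for the Riesz-defined measure (e.g.\ by regularity, shrinking homogeneous closed sets into homogeneous open ones); moreover your filler $\rho$ creates an extra unproved obligation (that the rotation average of a maximal probability is maximal requires the same Effros-type argument), which the paper sidesteps by taking $\frac{1-\norm{\nu_2}}{\norm{\mu}}\abs{\mu}$ as the filler. Until these points are supplied, your (d) is established only for metrizable $X$.
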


\begin{proof} The assertions (a) and (b) are obvious.
 To show (c) it is enough to observe that $x\mapsto \int y^*(x)\di\mu(y^*)$
 defines a linear functional on $E$ of norm at most equal to $\|\mu\|$.

(d) If $\mu=0$ take $\nu=\frac12(\ep_{y^*}+\ep_{-y^*})$ where $y^*$ is an extreme point of $X$.

Let $\mu\ne0$. Then the absolute variation $\abs{\mu}$ is invariant with respect to rotations, i.e., $\abs{\mu}(\alpha A)=\abs{\mu}(A)$ for any $A\subset X$ Borel and $\alpha\in\ce$ with $\abs{\alpha}=1$. Let $h_0$ be a Borel function on $X$ such that $\mu=h_0\abs{\mu}$ (i.e., $h_0$ is the density of $\mu$ with respect to $\abs{\mu}$). Then $\abs{h_0}=1$ $\abs{\mu}$-almost everywhere. Observe that without loss of generality we may suppose that $h_0$ is Baire measurable. Indeed, by the Luzin theorem there is a sequence $(f_n)$ of continuous functions converging $\abs{\mu}$-almost everywhere to $h_0$. Then the function $h_1$ defined by
$$h_1(y^*)=\begin{cases}\lim f_n(y^*) &\mbox{if the limit exists},\\ 0  &\mbox{otherwise},\end{cases}$$
is a Baire measurable function which equals $h_0$ $\abs{\mu}$-almost everywhere. Further, define the function $h$ by the formula
$$
h(y^*)=\frac1{2\pi}\int_0^{2\pi}e^{-it}h_1(e^{-it}y^*)\di t,\quad y^*\in X.
$$
Then $h$ is also Baire measurable (by Lemma~\ref{L:baire} and Lemma~\ref{l:hom}(c)). Moreover, $h(\alpha y^*)=\ov{\alpha}h(y^*)$ for any $y^*\in X$ and any complex unit $\alpha$. Finally, $h=h_1$ $\abs{\mu}$-almost everywhere since $h$ is also a density of $\mu$ with respect to $\abs{\mu}$. Indeed, for any Borel set $A\subset X$ we have
$$\begin{aligned}
\mu(A)&=\hom\mu(A)=\frac1{2\pi}\int_0^{2\pi}e^{-it}\mu(e^{-it}A) \di t
\\&=\frac1{2\pi}\int_0^{2\pi}e^{-it}\int_{e^{-it}A} h_1(y^*)\di\abs{\mu}(y^*) \di t
\\&=\frac1{2\pi}\int_0^{2\pi}e^{-it}\int_{A} h_1(e^{-it}y^*)\di\abs{\mu}(y^*) \di t
\\&=\int_{A}\frac1{2\pi}\int_0^{2\pi}e^{-it}h_1(e^{-it}y^*) \di t\di\abs{\mu}(y^*)
=\int_A h(y^*)\di\abs{\mu}(y^*).
\end{aligned}$$
Set $X_1=\{y^*\in X\setsep \abs{h(y^*)}=1\}$ and $X_0=X\setminus X_1$. Then $X_0$ and $X_1$ are Baire subsets of $X$, $0\in X_0$ (since $h(0)=0$) and $X_0$ has $\abs{\mu}$-measure zero. Further, set $P=\{y^*\in X\setsep h(y^*)=1\}$. Then $P$ is a Baire subset of $X$ and the mapping $\Phi:[0,2\pi)\times P\to X_1$ defined by
$\Phi(t,y^*)=e^{it}y^*$ is a continuous bijection. Since the product space $Z=[0,2\pi)\times P$ is a Baire subset of the compact space $[0,2\pi]\times X$, $\Phi$ maps Baire sets in $Z$ to Baire sets of $X_1$. (Indeed, let $A\subset Z$ be a Baire set. Then both $A$ and $Z\setminus A$ are $K$-analytic,
hence $\Phi(A)$ and $\Phi(Z\setminus A)$ are two disjoint $K$-analytic subsets of $X_1$ covering $X_1$, hence they are Baire sets, see \cite[Theorems 4.10 and 5.8]{frolikcmj}.)

Therefore we can define measure $\nu_1$ as $\nu_1=\Phi^{-1}(\abs{\mu})$. Then $\nu_1$ is a positive measure defined on the Baire $\sigma$-algebra of $Z$. Further, define $\nu_2=\pi_2(\nu_1)$, where $\pi_2$ is the projection of $Z=[0,2\pi)\times P$ onto the second coordinate. Then $\nu_2$ is a positive measure defined on the Baire $\sigma$-algebra of $P$. Since $P$ is a Baire subset of $X$, we can consider $\nu_2$ to be defined on the Baire $\sigma$-algebra of $X$ (and supported by $P$). Therefore $\nu_2$ generates a positive functional on $\C(X,\ce)$. Since
$$\norm{\nu_2}=\nu_2(1)=\nu_2(P)=\nu_1(Z)=\abs{\mu}(X_1)=\norm{\mu},$$
we have $\norm{\nu_2}\le 1$. Moreover, if $f\in\C(X,\ce)$ is homogeneous, then
$$\begin{aligned}
\int_X f(y^*)\di\nu_2(y^*)&=\int_P f(y^*)\di\nu_2(y^*)
=\int_Z f(y^*)\di\nu_1(t,y^*)
=\int_Z e^{-it}f(e^{it}y^*)\di\nu_1(t,y^*)
\\&=\int_Z h(e^{it}y^*) f(e^{it}y^*)\di\nu_1(t,y^*)
=\int_{X_1} h(z^*)f(z^*)\di\abs{\mu}(z^*)
\\&=\int_{X} h(z^*)f(z^*)\di\abs{\mu}(z^*)
=\int_{X}f(z^*)\di{\mu}(z^*).
\end{aligned}$$
The first equality holds because $\nu_2$ is supported by $P$, the second one follows from the fact that $\nu_2=\pi_2(\nu_1)$. The third is valid since $f$ is homogeneous. In the fourth one we used the properties of $h$: $h(e^{it}y^*)=e^{-it}h(y^*)=e^{-it}$ for $y^*\in P$. The fifth inequality follows from the fact that $\nu_1=\Phi^{-1}(\abs{\mu})$, in the sixth one we used that $\abs{\mu}$ is supported by $X_1$ and, finally, the last one follows from the choice of $h$. Hence, for any $f\in\C(X,\ce)$ we have
$$\hom\nu_2(f)=\nu_2(\hom f)=\mu(\hom f)=\mu(f),$$
hence $\hom\nu_2=\mu$.

Set $\nu=\nu_2+ \frac{1-\norm{\nu_2}}{\norm{\mu}}\abs{\mu}$. Then $\nu$ is a probability measure and $\hom\nu=\mu$ (since $\hom\abs{\mu}=0$).
We continue by showing that $\nu$ represents $x^*$. Let $f$ be an affine continuous function on $X$. By Lemma~\ref{l:homaff}, there are $u,v$ homogeneous affine continuous functions from $X$ to $\ce$ such that $f=f(0)+u+\ov{v}$. Since $u$ and $v$ are given by evaluation at some points of $E$, we get
$$\nu(f)=\nu(f(0)+u+\ov{v})=f(0)+\nu(u)+\ov{\nu(v)}=f(0)+u(x^*)+\ov{v(x^*)}=f(x^*).$$

It remains to show that $\nu$ is maximal whenever $\mu$ is boundary. Since $\nu$ is a probability and $\abs{\mu}$ is a boundary measure, it is enough to check that $\nu_2$ is boundary. We will do that by the test using convex continuous functions. For a real-valued continuous convex function  $f$ on $X$ set
$$\hat{f}(y^*)=\sup\{\sigma(f)\setsep\sigma\in\M^1(X), r(\sigma)=y^*\}, \quad y^*\in X,$$ and
$$B(f)=\{y^*\in X\setsep f(y^*)=\hat{f}(y^*)\}.$$ Then boundary measures are exactly the measures supported by each $B(f)$ (see \cite[p. 34--35]{alfsen}). So, let $f$ be any real-valued continuous convex function on $X$. Set $$g(y^*)=\frac1{2\pi}\int_0^{2\pi} f(e^{it}y^*)\di t, \quad y^*\in X.$$
 Then $g$ is again a continuous convex function and, moreover, $B(g)\subset B(f)$ by \cite[Lemma 4.1]{effros}. Hence $\abs{\mu}(X\setminus B(g))=0$. Since $X\setminus B(g)$ is homogeneous, by the definition of $\nu_2$ we get $\nu_2(X\setminus B(g))=\abs{\mu}(X\setminus B(g))=0$. Hence $\nu_2$ is supported by $B(g)$ and, a fortiori, by $B(f)$. This completes the proof.
 \end{proof}

Now we are ready to provide the proof of Fact~\ref{fact:L1}:

\begin{proof}[Proof of Fact~\ref{fact:L1}.] \textit{{The real case:}} By \cite[\S 21, Theorem 7]{lacey}, $E$ is an
$L_1$-predual if and only if for any two maximal probability measures $\mu$,$\nu$ on $X$ with the same barycenter we have $\odd\mu=\odd\nu$.

Thus the sufficiency is clear: If $\mu$ and $\nu$ are two maximal probability measures with the same barycenter $x^*$, then $\odd\mu$ and $\odd\nu$ satisfy the conditions (a)--(d) (by Lemma~\ref{l:odd-miry}). So, by the uniqueness assumption we have $\odd\mu=\odd\nu$.

To show the necessity suppose that $E$ is an $L^1$-predual. Let $x^*\in X$ be arbitrary. Then there is a maximal probability $\nu$ representing $x^*$.
Set $\mu=\odd\nu$. Then $\mu$ satisfies (a)--(d) by Lemma~\ref{l:odd-miry}. To prove the uniqueness suppose that $\mu_1$ is any measure satisfying (a)--(d). By Lemma~\ref{l:odd-miry2} there is a maximal probability measure $\nu_1$ with  the barycenter $x^*$ such that $\odd\nu_1=\mu_1$.
Thus $\mu=\mu_1$.	

\textit{The complex case} is completely analogous. By \cite[Theorem 4.3]{effros} or \cite[\S 23, Theorem 5]{lacey}, $E$ is an $L_1$-predual if and only if for any two maximal probability measures $\mu$,$\nu$ on $X$ with the same barycenter we have $\hom\mu=\hom\nu$. Thus the sufficiency is again clear and the necessity can be proved in the same way, only using Lemma~\ref{l:hom-miry2}.

\end{proof}

\section{Affine maps of the first class and approximation properties}\label{sec:example}

The aim of this section is to prove Theorem~\ref{T:bap} and Example~\ref{E:cap}. To do that we first give the following observation on strongly affine maps.

\begin{lemma}\label{4.3}
Let $f\colon K\to F$ be a strongly affine mapping from a compact convex set $K$ to a locally convex space $F$ over $\ef$.
Then $f$ is bounded.\end{lemma}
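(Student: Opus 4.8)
The plan is to reduce the vector-valued statement to the scalar case via the Hahn--Banach theorem and Fact~\ref{fact1}, and then to invoke a closed graph / Baire category argument. More precisely, suppose for contradiction that $f(K)$ is unbounded in $F$. Then there is a continuous seminorm $\rho$ on $F$ with $\sup_{x\in K}\rho(f(x))=\infty$. Pick points $x_n\in K$ with $\rho(f(x_n))\ge n$. The idea is to ``average'' these points: since $K$ is compact convex, the series $\sum_n 2^{-n} x_n$ converges to a point $x_0\in K$ (interpreting this as the barycenter of the discrete probability measure $\mu=\sum_n 2^{-n}\varepsilon_{x_n}\in\M^1(K)$). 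Because $f$ is strongly affine, $f$ is $\mu$-integrable and $f(x_0)=\int_K f\di\mu$.

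First I would make the scalar reduction precise. By Fact~\ref{fact1}, for each $\tau\in F^*$ the function $\tau\circ f$ is strongly affine, hence $\tau\circ f\in L^1(\mu)$ and $\tau(f(x_0))=\int_K\tau\circ f\di\mu=\sum_n 2^{-n}\tau(f(x_n))$. In particular, for each fixed $\tau$ the series $\sum_n 2^{-n}\tau(f(x_n))$ converges absolutely — but this only controls one functional at a time and does not immediately bound $\rho(f(x_n))$ uniformly. To get a genuine contradiction I would instead argue as follows: the set $D=\{2^{-n}f(x_n):n\in\en\}\cup\{0\}$ must be bounded in $F$. Indeed, for every $\tau\in F^*$ we have $\sup_n |2^{-n}\tau(f(x_n))|\le \sum_n 2^{-n}|\tau(f(x_n))|<\infty$ by the integrability of $\tau\circ f$; so $D$ is weakly bounded, hence bounded in $F$ (a locally convex space, by Mackey's theorem). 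Therefore $\rho(2^{-n}f(x_n))$ is bounded in $n$, i.e. $\rho(f(x_n))=O(2^n)$, which is not yet the desired contradiction.

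To fix the growth rate I would choose the weights more carefully: replace $2^{-n}$ by weights $c_n>0$ with $\sum c_n=1$ but $c_n\rho(f(x_n))\to\infty$, which is possible precisely because $\rho(f(x_n))\to\infty$ — e.g. first pass to a subsequence with $\rho(f(x_n))\ge 4^n$, then take $c_n=2^{-n}$, so that $c_n\rho(f(x_n))\ge 2^n\to\infty$. Now $\mu=\sum_n c_n\varepsilon_{x_n}$ is a probability on $K$ with barycenter some $x_0\in K$; by Fact~\ref{fact1} every $\tau\circ f$ is $\mu$-integrable, so as above the set $\{c_nf(x_n):n\}$ is weakly bounded, hence bounded, so $\sup_n c_n\rho(f(x_n))<\infty$, contradicting $c_n\rho(f(x_n))\to\infty$. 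This yields the claim.

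The main obstacle I anticipate is the passage from ``each $\tau\circ f$ integrable'' to a \emph{uniform} bound: a single functional controls only one direction, so the argument genuinely needs the Banach--Steinhaus / Mackey principle that a weakly bounded subset of a locally convex space is bounded, applied to the countable set $\{c_n f(x_n)\}$. One should double-check that $\mu$-integrability of $\tau\circ f$ (which holds for \emph{every} $\tau\in F^*$ since $f$ is strongly affine) indeed gives $\sum_n c_n|\tau(f(x_n))|<\infty$; this is immediate because $\int_K|\tau\circ f|\di\mu<\infty$ and $\mu$ is the discrete measure $\sum c_n\varepsilon_{x_n}$. A minor point worth a sentence is that the barycenter $x_0=r(\mu)$ exists and lies in $K$ by compactness and convexity, which is standard. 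No metrizability or separability of $F$ is needed, and the argument is identical for $\ef=\er$ and $\ef=\ce$.
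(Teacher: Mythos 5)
Your argument is correct, and it takes a genuinely different route from the paper. The paper's proof of Lemma~\ref{4.3} is a two-line reduction: by Fact~\ref{fact1} each $\tau\circ f$, $\tau\in F^*$, is a scalar strongly affine function, hence bounded by the classical result on functions satisfying the barycentric calculus (Alfsen, Theorem~I.2.6, or Saint-Raymond), and then $f(K)$, being weakly bounded, is bounded by Mackey's theorem. You avoid the nontrivial scalar citation altogether: assuming a continuous seminorm $\rho$ is unbounded on $f(K)$, you pick $x_n$ with $\rho(f(x_n))\ge 4^n$, form the discrete probability $\mu=\sum_n 2^{-n}\varepsilon_{x_n}\in\M^1(K)$, and use only the integrability half of strong affinity (that $\tau\circ f\in L^1(\mu)$ for every $\tau\in F^*$, which is part of the definition and does not even need the barycentric identity or the existence of $r(\mu)$) to conclude that $\{2^{-n}f(x_n)\}$ is weakly bounded, hence bounded by Mackey, contradicting $2^{-n}\rho(f(x_n))\ge 2^n$. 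In effect you re-prove the scalar boundedness theorem in a vectorized form, with Mackey's theorem supplying the uniformity over $\tau$ that a single functional cannot give; both proofs share that last ingredient, but yours is self-contained and slightly longer, while the paper's is shorter modulo the literature. Your correctly identified and repaired the weight-growth issue (the naive $2^{-n}$ weights with $\rho(f(x_n))\ge n$ give no contradiction), and your closing remarks — that the barycenter exists by compactness and convexity and that no separability of $F$ or distinction between $\er$ and $\ce$ is needed — are accurate.
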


\begin{proof} Let $\tau\in F^*$ be arbitrary. By Fact~\ref{fact1} the mapping $\tau\circ f$ is strongly affine. By  \cite[Theorem~I.2.6]{alfsen} (or \cite[Section~14]{phelps-choquet}, \cite{sray-om} or \cite[Lemma 4.5]{lmns}) we get that $\tau\circ f$ is bounded. Hence $f$ is bounded by \cite[Theorem 4 on p. 151]{jarchow}.
\end{proof}

We continue by recalling definitions of some approximation properties of Banach spaces.

A Banach space $E$ is said to have
\begin{itemize}
  \item the \emph{approximation property} if for every $\ep>0$ and every compact set $K\subset E$ there exists a finite-rank operator $L$ on $E$ such that $\sup_{x\in K}\norm{Lx-x}\le \ep$.
	\item the \emph{compact approximation property} if for every $\ep>0$ and every compact set $K\subset E$ there is a compact operator $L$ on $E$ such that $\sup_{x\in K}\norm{Lx-x}\le \ep$.
\end{itemize}

In other words, $E$ has the (compact) approximation property if and only if the identity operator on $E$ is in the closure of finite-rank (compact) operators in the topology of uniform convergence on compact subsets of $E$. If the identity can be approximated by the operators of the respective type with norm at most $\lambda$, $E$ is said to have the \emph{$\lambda$-bounded approximation property} (\emph{$\lambda$-bounded compact approximation property}). Further, $E$ is said to have the \emph{bounded approximation property} if it has
the \emph{$\lambda$-bounded approximation property} for some $\lambda\ge1$.

Now we give the proof of Example~\ref{E:cap}. It is a strengthening of \cite[Example 2.22]{MeSta} which follows from \cite[Proposition 2.21]{MeSta}. In the quoted paper the authors prove that $f\notin\fra_1(X,E)$, we prove that it is
not in any affine Baire class.

\begin{proof}[Proof of Example~\ref{E:cap}]
Since $E$ is reflexive, the unit ball $B_E$ is weakly compact. By the Namioka theorem (see e.g. \cite[Corollary 14.4]{hhz}), the function $f$ (which is the identity from $X=(B_E,w)$ to $E$ equipped with the norm) has a dense (in fact residual) set of continuity points. The same is true for the restriction of $f$ to any closed subset of $X$. Since $E$ is separable, $X$ is moreover metrizable, thus $f$ is $F_\sigma$-measurable by \cite[\S31, X, Theorem 2]{kuratowski}. This implies that $f\in \C_1(X,B_E)$ by Lemma~\ref{L:baire}(c).

Suppose that $f\in\bigcup_{\alpha<\omega_1}\fra_\alpha(X,E)$. Since $f$ is odd (homogeneous), we get $f\in \bigcup_{\alpha<\omega_1}\fra_{\odd,\alpha}(X,E)$ by Lemma~\ref{l:odd} ($f\in \bigcup_{\alpha<\omega_1}\fra_{\hom,\alpha}(X,E)$ by Lemma~\ref{l:hom}).
Let $I$ denote the identity operator on $E$. Using Lemma~\ref{l:operator} we then get
$I\in\bigcup_{\alpha<\omega_1}(K(E))_\alpha$, where the notation $(K(E))_\alpha$ follows the pattern from the Section~\ref{ssec:baire}. (Indeed, a linear operator $L:E\to E$ is weak-to-norm continuous on $B_E$ if and only if it is compact.)
To finish the argument it is enough to check that all the operators from $\bigcup_{\alpha<\omega_1}(K(E))_\alpha$ are bounded and are contained in the closure of $K(E)$ in the topology of uniform convergence on norm compact subsets of $E$. This will be done by transfinite induction. For $\alpha=0$ it is clear. Let $\alpha>0$ and suppose that the assertion is valid for each operator in $\bigcup_{\beta<\alpha}(K(E))_\beta$. Fix $L\in (K(E))_\alpha$. Then there is
a sequence $(L_n)$ in $\bigcup_{\beta<\alpha}(K(E))_\beta$ pointwise converging to $L$. By the induction hypothesis the operators $L_n$ are bounded,
hence the uniform boundedness principle shows that the sequence $(L_n)$ is uniformly bounded. Thus $L$ is bounded and, moreover, $L_n\to L$ uniformly on compact sets.
\end{proof}

To prove Theorem~\ref{T:bap} we need some lemmata. Let us first introduce some notation.

Let $E,F$ be Banach spaces such that $F$ has dimension $n\in\en$. Fix a basis $y_1,\dots,y_n$ of $F$ and denote by $y_1^*,\dots,y_n^*$ the dual basis of $F^*$. For an $n$-tuple $\boldsymbol x^{**}=(x^{**}_1,\dots,x^{**}_n)\in (E^{**})^n$ define operators $\Phi(\boldsymbol x^{**})\in L(E^*,F)$ and $\Psi(\boldsymbol x^{**})\in L(F^*,E^{**})$ by the formulas
$$\begin{gathered}
\Phi(\boldsymbol x^{**})(x^*)=\sum_{i=1}^n x^{**}_i(x^*)y_i,\quad x^*\in E^*,\\
\Psi(\boldsymbol x^{**})(y^*)=\sum_{i=1}^n y^*(y_i)x^{**}_i,\quad x^*\in F^*.
\end{gathered}$$
It is clear that $\Phi$ is an isomorphism of $(E^{**})^n$ onto $L(E^*,F)$ and $\Psi$ is an isomorphism of  $(E^{**})^n$ onto $L(F^*,E^{**})$.

\begin{lemma}\label{L:locref} Under the above notation the following assertions hold.
\begin{itemize}
	\item[(i)] $\norm{\Phi(\boldsymbol x^{**})}=\norm{\Psi(\boldsymbol x^{**})}$ for each $\boldsymbol x^{**}\in (E^{**})^n$.
	\item[(ii)] $\Phi$ restricted to $E^n$ is an isomorphism of $E^n$ onto the subspace of $L(E^*,F)$ formed by weak$^*$ continuous operators.
	\item[(iii)] $\Psi$ restricted to $E^n$ is an isomorphism of $E^n$ onto $L(F^*,E)$.
	\item[(iv)] If we define on $E^n$ a norm $\norm{\cdot}$ by the formula $\norm{\boldsymbol x}=\norm{\Phi(\boldsymbol x)}$, then the bidual norm on $(E^{**})^n$ coming from the canonical duality of $E^n$, $(E^*)^n$ and $(E^{**})^n$ is given by the formula
	$\norm{\boldsymbol x^{**}}=\norm{\Phi(\boldsymbol x^{**})}$.
\end{itemize}
\end{lemma}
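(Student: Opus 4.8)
The plan is to get (i) directly from the definitions, to read off (ii) and (iii) from the fact (noted just before the lemma) that $\Phi,\Psi$ are isomorphisms of $(E^{**})^n$ onto $L(E^*,F)$ and $L(F^*,E^{**})$, and to do the actual work in (iv). For (i), the point is that $\norm T=\sup\{\abs{y^*(Tx^*)}\setsep x^*\in B_{E^*},\ y^*\in B_{F^*}\}$ for every $T\in L(E^*,F)$; applying this to $T=\Phi(\boldsymbol x^{**})$ and using
\[
y^*\bigl(\Phi(\boldsymbol x^{**})(x^*)\bigr)=\sum_{i=1}^n x^{**}_i(x^*)\,y^*(y_i)=\Bigl(\sum_{i=1}^n y^*(y_i)x^{**}_i\Bigr)(x^*)=\Psi(\boldsymbol x^{**})(y^*)(x^*)
\]
one obtains $\norm{\Phi(\boldsymbol x^{**})}=\sup_{x^*,y^*}\abs{\Psi(\boldsymbol x^{**})(y^*)(x^*)}=\sup_{y^*\in B_{F^*}}\norm{\Psi(\boldsymbol x^{**})(y^*)}_{E^{**}}=\norm{\Psi(\boldsymbol x^{**})}$.

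For (ii) and (iii): since $\Phi,\Psi$ are isomorphisms onto $L(E^*,F)$ and $L(F^*,E^{**})$, their restrictions to the closed subspace $E^n$ are isomorphisms onto their images, so only the images must be identified. If $\boldsymbol x\in E^n$ then $\Phi(\boldsymbol x)\colon x^*\mapsto\sum_i x^*(x_i)y_i$ is weak$^*$ continuous and $\Psi(\boldsymbol x)\colon y^*\mapsto\sum_i y^*(y_i)x_i$ has range in $E$. Conversely, if $T\in L(E^*,F)$ is weak$^*$ continuous then each functional $y_i^*\circ T$ on $E^*$ is weak$^*$ continuous, hence lies in $E$; and if $S\in L(F^*,E)$ then each $S(y_i^*)$ lies in $E$. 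Using $\sum_i y_i^*(z)y_i=z$ ($z\in F$) and $\sum_i y^*(y_i)y_i^*=y^*$ ($y^*\in F^*$) one checks that in these cases $T=\Phi(\boldsymbol x^{**})$, resp.\ $S=\Psi(\boldsymbol x^{**})$, with $\boldsymbol x^{**}\in E^n$. Hence $\Phi(E^n)$ is exactly the space of weak$^*$ continuous operators in $L(E^*,F)$ and $\Psi(E^n)=L(F^*,E)$.

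For (iv), equip $E^n$ with the norm $\norm{\boldsymbol x}=\norm{\Phi(\boldsymbol x)}$ and use the same symbol for the induced dual norm on $(E^*)^n$ and bidual norm on $(E^{**})^n$, with the canonical pairings $\langle\boldsymbol x,\boldsymbol x^*\rangle=\sum_i x_i^*(x_i)$ and $\langle\boldsymbol x^*,\boldsymbol x^{**}\rangle=\sum_i x^{**}_i(x_i^*)$; we want $\norm{\boldsymbol x^{**}}=\norm{\Phi(\boldsymbol x^{**})}$. For the inequality $\ge$: for fixed $x^*\in B_{E^*}$ and $y^*\in B_{F^*}$ set $\boldsymbol x^*:=\bigl(y^*(y_1)x^*,\dots,y^*(y_n)x^*\bigr)$; then $\langle\boldsymbol x,\boldsymbol x^*\rangle=y^*\bigl(\Phi(\boldsymbol x)(x^*)\bigr)$, so $\norm{\boldsymbol x^*}\le1$, while $\langle\boldsymbol x^*,\boldsymbol x^{**}\rangle=y^*\bigl(\Phi(\boldsymbol x^{**})(x^*)\bigr)$; taking the supremum over all such $x^*,y^*$ and recalling $\norm{\Phi(\boldsymbol x^{**})}=\sup_{x^*,y^*}\abs{y^*(\Phi(\boldsymbol x^{**})(x^*))}$ from (i) gives $\norm{\boldsymbol x^{**}}\ge\norm{\Phi(\boldsymbol x^{**})}$. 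For the inequality $\le$: $\bigl((E^{**})^n,\norm{\cdot}\bigr)$ is the bidual of the Banach space $\bigl(E^n,\norm{\cdot}\bigr)$ with coordinatewise embedding, so by Goldstine's theorem any $\boldsymbol x^{**}$ with $\norm{\boldsymbol x^{**}}\le1$ is a limit, in $\sigma\bigl((E^{**})^n,(E^*)^n\bigr)$, of a net $(\boldsymbol x_\alpha)$ in the unit ball of $\bigl(E^n,\norm{\cdot}\bigr)$. Testing against functionals concentrated in a single coordinate shows $x_{\alpha,i}\to x^{**}_i$ in $\sigma(E^{**},E^*)$ for each $i$; since $\dim F=n<\infty$, it follows that $\Phi(\boldsymbol x_\alpha)(x^*)=\sum_i x^*(x_{\alpha,i})y_i\to\sum_i x^{**}_i(x^*)y_i=\Phi(\boldsymbol x^{**})(x^*)$ in $F$ for every $x^*\in E^*$, and since $\norm{\Phi(\boldsymbol x_\alpha)(x^*)}_F\le1$ for $x^*\in B_{E^*}$ and the closed unit ball of $F$ is closed, $\norm{\Phi(\boldsymbol x^{**})}\le1$. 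Homogeneity then gives $\norm{\Phi(\boldsymbol x^{**})}\le\norm{\boldsymbol x^{**}}$.

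The step I expect to be the main obstacle is the inequality $\le$ in (iv). The naive route — using the bipolar theorem to describe the unit ball of $\bigl((E^*)^n,\norm{\cdot}\bigr)$ as the weak$^*$ closed absolutely convex hull of $\{(y^*(y_i)x^*)_i\setsep x^*\in B_{E^*},\ y^*\in B_{F^*}\}$ and then evaluating $\boldsymbol x^{**}$ on it — does not work, since $\boldsymbol x^{**}$ need not be weak$^*$ continuous on $(E^*)^n$. The Goldstine argument above is the right substitute, and it is exactly here that the finite-dimensionality of $F$ is used.
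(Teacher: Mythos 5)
Your proof is correct. Parts (i)--(iii) match the paper: (i) is the same supremum computation identifying $y^*\bigl(\Phi(\boldsymbol x^{**})(x^*)\bigr)$ with $\Psi(\boldsymbol x^{**})(y^*)(x^*)$, and (ii), (iii), which the paper merely asserts, you verify in detail via the dual-basis reconstruction formulas -- a harmless elaboration. The genuine difference is in (iv), the only nontrivial assertion. The paper disposes of it by observing (via (i)) that it suffices to prove the statement with $\Psi$ in place of $\Phi$, and then cites Dean's identification $L(F^*,E)^{**}=L(F^*,E^{**})$ (the principle of local reflexivity for operators from a finite-dimensional space), so no argument is given beyond the reference. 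You instead prove it directly: the inequality $\norm{\boldsymbol x^{**}}\ge\norm{\Phi(\boldsymbol x^{**})}$ by exhibiting the norm-at-most-one functionals $\bigl(y^*(y_i)x^*\bigr)_i$, and the converse by Goldstine's theorem together with coordinatewise weak$^*$ convergence and the finite-dimensionality of $F$, which lets you pass from pointwise weak$^*$ limits to norm limits in $F$ and conclude $\norm{\Phi(\boldsymbol x^{**})(x^*)}_F\le 1$. In effect you re-prove the relevant special case of Dean's result from scratch. The paper's route is shorter and places the lemma in its natural context (local reflexivity), while yours is self-contained, elementary, and makes explicit exactly where $\dim F<\infty$ is used; your closing remark about why the naive bipolar evaluation fails (non-weak$^*$-continuity of $\boldsymbol x^{**}$) is also accurate.
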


\begin{proof} (i) Let $\boldsymbol x^{**}\in(E^{**})^n$. Then
$$\begin{aligned}
\norm{\Phi(\boldsymbol x^{**})}
&=\sup\{ \norm{\Phi(\boldsymbol x^{**})(x^*)}_F \setsep x^*\in B_{E^*} \}
\\&=\sup\{ \abs{y^*(\Phi(\boldsymbol x^{**})(x^*))} \setsep x^*\in B_{E^*},y^*\in B_{F^*} \}
\\&=\sup\left\{ \abs{\sum_{i=1}^n x^{**}_i(x^*)y^*(y_i)} \setsep x^*\in B_{E^*},y^*\in B_{F^*} \right\}
\\&=\sup\{ \abs{\Psi(\boldsymbol x^{**})(y^*)(x^*)} \setsep x^*\in B_{E^*},y^*\in B_{F^*} \}
\\&=\sup\{ \norm{\Psi(\boldsymbol x^{**})(y^*)}_{E^{**}} \setsep y^*\in B_{F^*} \}
=\norm{\Psi(\boldsymbol x^{**})}.
\end{aligned}$$

The assertion (ii) follows from the fact that the weak$^*$ continuous functionals on $E^*$ are exactly those which come from $E$. The assertion (iii) is obvious.

(iv) Due to (i) it is enough to prove this assertion with $\Psi$ instead of $\Phi$. But this one is proved for example
in \cite[Section 3]{dean}.
\end{proof}

\begin{lemma}\label{L:mok-fin}
Let $X$ be a compact convex set, $F$ be a finite-dimensional Banach space and $f:X\to F$ be an affine function which belongs to $\C_1(X,F)$ and satisfies $\norm{f(x)}_F\le1$ for each $x\in X$. Then $f\in\fra_1(X,B_F)$.
\end{lemma}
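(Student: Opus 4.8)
The plan is to view $f$ as an element of norm at most one of the bidual of the Banach space $\fra(X,F)$ of continuous affine $F$-valued maps on $X$ (with the supremum norm), to read off an approximating sequence from Goldstine's theorem, and to correct it using the first Baire class hypothesis.

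First I would reduce to metrizable $X$. Fix a basis $y_1,\dots,y_n$ of $F$ with dual basis $y_1^*,\dots,y_n^*$ and put $f_i:=y_i^*\circ f$; each $f_i$ is affine and of the first Baire class, hence $f_i\in\fra_1(X,\ef)$ by the Mokobodzki theorem recalled in the introduction, so there are $g_j=\sum_i b_{i,j}y_i\in\fra(X,F)$ (with $b_{i,j}\in\fra(X,\ef)$) with $g_j\to f$ pointwise. Factoring $f$ and all $g_j$ through the metrizable quotient of $X$ determined by the countable affine family $(g_j)_j$ reduces the statement to a metrizable compact convex set, and then $\fra(X,F)$ is separable. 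Moreover $f$ is strongly affine by Theorem~\ref{T:c1sa}, hence so is each $f_i$ by Fact~\ref{fact1}; and a Baire category argument on the compact set $X$ — the closed sets $\{x:\norm{g_j(x)}_F\le k\text{ for all }j\}$ cover $X$, and affinity spreads a bound holding on a cap around an interior point of one of them over all of $X$ — shows that $C:=\sup_j\norm{g_j}_\infty<\infty$.

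Next I would build the bidual element. Every $\psi\in\fra(X,F)^*$ is represented by a tuple $(\mu_i)_{i=1}^n\subset\M(X,\ef)$ via $\psi(g)=\sum_i\int_X g_i\di\mu_i$, and since a measure annihilating $\fra(X,\ef)$ is, after normalisation, a difference of two probabilities with a common barycenter, the strong affineness of the $f_i$ makes $\langle\widehat f,\psi\rangle:=\sum_i\int_X f_i\di\mu_i$ independent of the representative; thus $\widehat f\in\fra(X,F)^{**}$ and $\langle\widehat f,\ep_x\rangle=f(x)$ for $x\in X$. Identifying $\fra(X,F)$ with $(\fra(X,\ef))^n$ and using Lemma~\ref{L:locref} (so that the bidual norm of $\widehat f$ equals $\norm{\Phi(\widehat f)}=\sup\{\norm{\int_X f\di\mu}_F:\norm\mu\le1\}$) together with Lemma~\ref{L:norma}(d), we get $\norm{\widehat f}_{\fra(X,F)^{**}}\le\norm f_\infty\le1$. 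As the closed unit ball of $\fra(X,F)$ is precisely $\fra(X,B_F)$, Goldstine's theorem places $\widehat f$ in the weak$^*$-closure of $\fra(X,B_F)$; since $\fra(X,F)$ is separable, a diagonal argument over a countable weak$^*$-dense family of functionals (including a countable set of evaluations at points of a dense set $\{x_t\}\subset X$) yields a sequence $(a_j)$ in $\fra(X,B_F)$ with $a_j(x_t)\to f(x_t)$ for every $t$.

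Finally I would upgrade this to pointwise convergence at every point of $X$; this is the step where real work is needed. A sequence extracted from Goldstine only controls $f$ on a dense set, and $f$ — being merely Baire-one — need not be continuous, so the passage to all of $X$ is not automatic. This is exactly what the first Baire class assumption is for: noting that $f\in\C_1(X,B_F)$ as well (compose the $g_j$ with the radial retraction of $F$ onto $B_F$), one deduces $f\in\fra_2(X,B_F)$ and concludes by Corollary~\ref{C:c21} — equivalently, one feeds the data into the double-array convex-combination lemma (Lemma~\ref{L:c21}), whose conclusion produces convex combinations of members of $\fra(X,B_F)$ converging to $f$ at every point of $X$. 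In either formulation $f\in\fra_1(X,B_F)$, as claimed.
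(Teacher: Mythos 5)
Your setup is sound up to and including the bidual step: coordinatewise Mokobodzki gives $f\in\fra_1(X,F)$, the uniform bound on the $g_j$ follows from your Baire category argument, strong affineness (Theorem~\ref{T:c1sa}, Fact~\ref{fact1}) makes $\widehat f$ well defined, and $\norm{\widehat f}\le\norm{f}_\infty\le 1$ is correct (modulo stating the dual of $\fra(X,F)$ via $F^*$-valued measures rather than scalar ones). The gap is in the last step. Goldstine gives a \emph{net}, and separability of $\fra(X,F)$ does not make the bidual ball weak$^*$-metrizable (that would require separability of $\fra(X,F)^*$, which fails in general since the evaluation functionals are uniformly separated); so your diagonal extraction only produces $a_j\in\fra(X,B_F)$ converging to $f$ at the countable dense set $\{x_t\}$ and at countably many further functionals. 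From this the assertion ``one deduces $f\in\fra_2(X,B_F)$'' does not follow: membership in $\fra_2(X,B_F)$ requires a sequence of affine maps into $B_F$ converging to $f$ at \emph{every} point of $X$, and convergence on a dense set gives no control at other points (a Baire-one affine $f$ is not determined there by density, and no convex-combination trick recovers it). For the same reason Lemma~\ref{L:c21} cannot be invoked: its hypotheses demand pointwise weak convergence of the double array at every point of $X$, which your data do not provide; and Corollary~\ref{C:c21} presupposes exactly the $\fra_2(X,B_F)$ membership you have not established. Note also that feeding the everywhere-convergent sequence $(g_j)$ into Lemma~\ref{L:c21} only returns maps into $C\,B_F$, so it cannot rescue the constant $1$.

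What is missing is precisely the quantitative sequential substitute for Goldstine that the paper uses: by \cite[Remark on p.~379]{odro}, an element of a bidual of norm at most $1$ which is the weak$^*$ limit of a \emph{sequence} from the space is the weak$^*$ limit of a sequence from the unit ball. In your framework this applies directly, because the sequence $(g_j)$ you already have is uniformly bounded and converges to $f$ pointwise on $X$, hence $\widehat{g_j}\to\widehat f$ weak$^*$ (dominated convergence against every representing vector measure); the cited remark then yields $a_j\in\fra(X,B_F)$ with $\widehat{a_j}\to\widehat f$ weak$^*$, in particular $a_j(x)\to f(x)$ for every $x\in X$ ($F$ being finite-dimensional), which is the conclusion. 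This is essentially the paper's proof, carried out there through the identification of $L(E^*,F)$ with the bidual of the weak$^*$-continuous operators (Lemma~\ref{L:locref}(iv)) together with \cite{affperf}; neither a metrizability reduction nor Corollary~\ref{C:c21} is needed for this lemma. Without this ingredient (or an independent proof of it), your final step fails.
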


\begin{proof} Without loss of generality we may suppose that $F$ is a real space. Since $F$ has finite dimension, it easily follows from the Mokobodzki theorem that $f\in\fra_1(X,F)$. Let $E=\fra(X)$. For any $x\in X$ let $\ep_x\in E^*$ be the respective evaluation functional. By \cite[Lemma 2.2]{affperf} there is a unique linear operator $L_f:E^*\to F$ such that
$L_f(\ep_x)=f(x)$ for each $x\in X$. If follows from \cite[Lemma 2.3]{affperf} that $\norm{L_f}\le 1$ and $L_f\r_{B_{E^*}}\in\fra_1((B_{E^*},w^*),F)$. The Banach-Dieudonn\'e theorem easily yields that $L_f\in\fra_1((E^*,w^*),F)$.

Let $H$ denote the subspace of $L(E^*,F)$ formed by the weak$^*$ continuous operators. By Lemma~\ref{L:locref}(iv) the bidual $H^{**}$ can be canonically identified with $L(E^*,F)$. Hence, in this identification we have $L_f\in H^{**}$ and, moreover,
$L_f$ is the weak$^*$-limit of a sequence from $H$. Since $\|L_f\|\le1$, it follows from \cite[Remark on p. 379]{odro} that $L_f$ is the weak$^*$-limit of a sequence from $B_H$. But this means that $L_f\in\fra_1((E^*,w^*),B_F)$, hence $f\in\fra_1(X,B_F)$.
\end{proof}

\begin{proof}[Proof of Theorem~\ref{T:bap}] It is enough to prove the `moreover' part since any affine mapping $f\in\C_1(X,E)$ is bounded by Theorem~\ref{T:c1sa} and Lemma~\ref{4.3}.

 Suppose that $E$ has the $\lambda$-bounded approximation property for some $\lambda\ge 1$. Let $f:X\to E$ be an affine mapping of the first Baire class such that $f(X)\subset B_E$. Observe that $f(X)$ is separable and fix a countable dense subset $C\subset f(X)$. The  $\lambda$-bounded approximation property yields a sequence $(L_n)$ of finite-rank operators pointwise converging to the identity on $C$ such that $\norm{L_n}\le\lambda$ for each $n$. It follows that $L_n$ converge to the identity also pointwise on $f(X)$.  Then $L_n\circ f$ is also affine and of the first Baire class. Moreover, since the range has finite dimension and $(L_n\circ f)(X)\subset\lambda B_E$, Lemma~\ref{L:mok-fin} yields $L_n\circ f\in\fra_1(X,\lambda B_E)$. Finally, $L_n\circ f\to f$ pointwise, we get $f\in\fra_2(X,\lambda B_E)$. Finally, Corollary~\ref{C:c21} yields $f\in\fra_1(X,\lambda B_E)$.
\end{proof}

\begin{remark} Similar problems are investigated in \cite{MeSta}. In particular, Theorem 2.12 in the quoted paper is a weaker variant of our Theorem~\ref{T:bap}. However, the proof given in \cite{MeSta} contains a gap. The authors confuse a projection on a one-dimensional space with a coordinate functional. Therefore, it is not clear why the family of operators $(T_{n,m})$ defined on p.\ 25 should be uniformly bounded. The correct estimate is $\norm{T_{n,m}}\le 2Kn\norm{T}$ which is not a uniform bound. The same type of a gap is in Remark 2.18.1 and in the proof of Theorem 2.19. These results can be repaired and improved using our Lemma~\ref{L:mok-fin}.
\end{remark}

\section{The affine class of the dilation mapping}\label{Sec:dilation}

The aim of this section is to prove Theorem~\ref{T:dilation} and Remark~\ref{rem:dilation}.
We start by proving Remark~\ref{rem:dilation}. First we prove that the dilation mapping is always strongly affine.
It is the content of the following lemma.

\begin{lemma}\label{l:te-sa} \
\begin{itemize}
	\item[(S)] Let $X$ be a simplex. Then the mapping $T:x\mapsto\delta_x$ is strongly affine.
	\item[(R,C)] Let $X=(B_{E^*},w^*)$ for an $L^1$-predual $E$. Then $T$ is strongly affine.
\end{itemize}
\end{lemma}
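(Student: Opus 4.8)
The plan is to reduce everything to the scalar case by means of Fact~\ref{fact1}, since the map $T$ takes values in a space of measures, which we view as a subspace of $\C(X,\ef)^*$ with its weak$^*$ topology. Thus a continuous linear functional on the range space is (up to the obvious identifications) of the form $\mu\mapsto\mu(g)$ for some $g\in\C(X,\ef)$, and by Fact~\ref{fact1} it suffices to show that for each such $g$ the scalar function $x\mapsto\de{x}(g)=\int_X g\di\delta_x$ (in case (S)), respectively $x^*\mapsto T(x^*)(g)$ (in cases (R,C)), is strongly affine on $X$.

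First I would treat case (S). Here $\delta_x$ is the unique maximal representing measure of $x$, and the classical fact is that for a bounded Borel (in particular continuous) function $g$ the map $x\mapsto\de{x}(g)$ is strongly affine on the simplex $X$; this is exactly the content of the standard theory (the ``dilation'' is affine and strongly affine in the scalar sense, cf. the Choquet--Meyer material). Concretely, affinity of $x\mapsto\de{x}(g)$ follows from uniqueness of maximal measures: if $\mu=\sum t_i\ep_{x_i}$ then $\sum t_i\delta_{x_i}$ is a maximal measure representing $r(\mu)$, hence equals $\delta_{r(\mu)}$. For strong affinity one needs the barycenter formula $\int_X \de{x}(g)\di\mu(x)=\de{r(\mu)}(g)$ for every $\mu\in\M^1(X)$; this is precisely the statement that the measure obtained by ``dilating $\mu$ to the boundary'' has the right barycenter, and is the known scalar strong-affinity of $x\mapsto\de{x}(g)$ on a simplex. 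I would cite or reprove this from the references already quoted in the excerpt (e.g.\ \cite{alfsen,lmns}).

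Next, cases (R) and (C). Here $X=(B_{E^*},w^*)$ with $E$ an $L_1$-predual, and $T(x^*)$ is the unique measure from Fact~\ref{fact:L1}. I would use the explicit descriptions built in Section~\ref{ssec:miry}: by Lemma~\ref{l:odd-miry2}(c) (real case) respectively Lemma~\ref{l:hom-miry2}(d) (complex case), $T(x^*)=\odd\nu$ (resp.\ $\hom\nu$) for some maximal probability $\nu$ representing $x^*$, and by Lemma~\ref{l:odd-miry}(d) (resp.\ Lemma~\ref{l:hom-miry1}(d)) one has $T(x^*)(g)=\odd\nu(g)=\nu(\odd g)$ (resp.\ $\nu(\hom g)$) for $g\in\C(X,\ef)$. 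Since $\odd g$ (resp.\ $\hom g$) is again a bounded continuous scalar function on $X$, and since $\nu$ is a maximal representing measure of $x^*$ in the simplex-like situation, the scalar function $x^*\mapsto T(x^*)(g)$ coincides with the scalar dilation map applied to $\odd g$ (resp.\ $\hom g$). The validity of the scalar case on dual balls of $L_1$-preduals is exactly the scalar analogue quoted at the start of Section~\ref{Sec:main}; alternatively one can argue directly: uniqueness in Fact~\ref{fact:L1} gives affinity of $x^*\mapsto T(x^*)$, and for strong affinity one checks, for fixed $\mu\in\M^1(X)$, that $x^*\mapsto T(x^*)(g)$ is $\mu$-integrable with $\int T(x^*)(g)\di\mu(x^*)=T(r(\mu))(g)$ by representing $\int T(x^*)\di\mu(x^*)$ as a boundary $\ef$-anti-homogeneous measure of norm $\le1$ representing $r(\mu)$, hence equal to $T(r(\mu))$ by uniqueness.

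The main obstacle I expect is the strong-affinity (barycenter) identity rather than mere affinity: one must justify interchanging the outer integral $\int_X(\cdot)\di\mu(x)$ with the inner evaluation $\delta_x(g)$ (resp.\ $T(x^*)(g)$), i.e.\ a Fubini-type argument showing that $x\mapsto\delta_x$ really is a ``dilation of $\mu$'' whose composite has barycenter $r(\mu)$. In the scalar simplex case this is standard (it is the heart of the Choquet--Meyer dilation theory); in the $L_1$-predual cases it follows by the same Fubini argument applied after passing to $\odd g$ / $\hom g$, using the uniqueness clause of Fact~\ref{fact:L1} to identify the resulting averaged measure with $T(r(\mu))$. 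Once that identity is in hand, Fact~\ref{fact1} immediately upgrades it to strong affinity of the vector-valued map $T$.
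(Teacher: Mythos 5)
Your reduction via Fact~\ref{fact1} to the scalar functions $x\mapsto T(x)(g)$, $g\in\C(X,\ef)$, is exactly the paper's first step, and for case (S) deferring to classical simplex theory is legitimate (the paper makes it precise: by Stone--Weierstrass it suffices to treat convex continuous $f$, for which $x\mapsto\delta_x(f)$ is upper semicontinuous by \cite[Theorem II.3.7]{alfsen}, and bounded semicontinuous affine functions are strongly affine by \cite[Proposition A.122 and Theorem 4.21]{lmns}). The trouble is in cases (R) and (C). Your direct argument starts by forming the averaged measure $\sigma=\int_X T(x^*)\di\mu(x^*)$, but this object is only defined once you know that $x^*\mapsto T(x^*)(g)$ is $\mu$-measurable (indeed $\mu$-integrable) for every $\mu\in\M^1(X)$ and every $g\in\C(X,\ef)$ --- and that is precisely the nontrivial content of strong affinity, not a harmless Fubini interchange. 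Nothing in your proposal supplies this measurability: a priori $T(\cdot)(g)$ is just a bounded affine function; the identity $T(x^*)(g)=\nu_{x^*}(\odd g)$ does not help without a measurable choice $x^*\mapsto\nu_{x^*}$ of maximal representing measures (there is no canonical dilation on $B_{E^*}$, which is not a simplex, and producing such a selection even in the metrizable case is the content of Theorem~\ref{T:dilation}, proved later with real effort); and the scalar statement you appeal to is not what is recalled at the beginning of Section~\ref{Sec:main} (that passage concerns Baire-one affine functions, not the map $T$).

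Even granting integrability, your uniqueness argument has a second unproved step: you must verify that $\sigma$ is a \emph{boundary} measure, i.e.\ that a $\mu$-average of boundary measures is again boundary; this requires Mokobodzki's maximality test together with measurability of $x^*\mapsto\abs{T(x^*)}(f)$ for convex continuous $f$, so it is not automatic either. The paper sidesteps both difficulties by a semicontinuity route you would need to import: for convex continuous $f$ the function $Tf$ is upper semicontinuous in case (S), a difference of two upper semicontinuous functions in case (R) by \cite[Chapter 7, \S 21, Theorem 7]{lacey}, and case (C) is quoted from \cite[Lemma 4.12]{petracek-spurny} (the authors themselves do not reprove it, which indicates it is not routine); semicontinuity yields universal measurability and the barycentric identity simultaneously, and Stone--Weierstrass density of differences of convex continuous functions finishes the proof. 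As written, your (R) and (C) parts assume the very measurability the lemma is meant to establish.
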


\begin{proof}
(S) It is clear that the mapping is affine. (Indeed, if $x,y\in X$ and $t\in[0,1]$, then $t\delta_x+(1-t)\delta_y$ is a probability measure with barycenter $tx+(1-t)y$. Since maximal probabilities form a convex set (by \cite[Theorem 3.70]{lmns}), necessarily $t\delta_x+(1-t)\delta_y=\delta_{tx+(1-t)y}$.) To prove that it is strongly affine, by Fact~\ref{fact1} it is enough to prove that $x\mapsto\delta_x(f)$ is strongly affine for each $f:X\to\er$ continuous. Since differences of convex continuous functions are norm-dense in $\C(X,\er)$ by the Stone-Weierstrass theorem, it is enough to prove it for convex continuous functions. But if $f$ is convex and continuous, then the mapping $x\mapsto \delta_x(f)$ is upper semicontinuous by \cite[Theorem II.3.7]{alfsen}, hence it is strongly affine by \cite[Proposition A.122 and Theorem 4.21]{lmns}.

(R) The proof in this case is completely analogous to the case (S), we only need to use \cite[Chapter 7, \S 21, Theorem 7]{lacey}
to show that $Tf$ is a difference of two upper semicontinuous functions whenever $f$ is continuous and convex.

(C) The assertion for complex $L^1$-preduals is proved in \cite[Lemma~4.12]{petracek-spurny}.
\end{proof}

\begin{lemma}\label{l:bauer}
If $\ext X$ is closed, then $T$ is continuous.
\end{lemma}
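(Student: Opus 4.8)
The plan is to recognize $T$ as the inverse of a continuous bijection between a weak${}^*$-compact space of measures living on $\ext X$ and the compact Hausdorff space $X$; the point is that $\ext X$ is now compact.

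\emph{Step 1: $T$ takes values among measures carried by $\ext X$.} Since $\ext X$ is closed it is compact, and by Krein--Milman $\ov{\co}\,\ext X=X$, so the barycenter map $\sigma\mapsto r(\sigma)$ maps $\M^1(\ext X)$ onto $X$. Fix a point and pick $\sigma\in\M^1(\ext X)$ representing it. As $\ext X\subset B(g)$ for every convex continuous $g$, $\sigma$ is maximal, so in case (S) the uniqueness of the maximal representing measure in a simplex gives $\sigma=\delta_x=T(x)$. In cases (R)/(C) put $\mu=\odd\sigma$, resp.\ $\mu=\hom\sigma$; by Lemma~\ref{l:odd-miry}, resp.\ Lemma~\ref{l:hom-miry1}, the measure $\mu$ is still carried by the symmetric, resp.\ homogeneous, set $\ext X$, is $\ef$-anti-homogeneous, satisfies $\norm{\mu}\le1$, and is boundary (a probability carried by $\ext X$ is maximal, hence so is $\abs{\mu}/\norm{\mu}$); moreover it still represents $x^*$, because the evaluation $z^*\mapsto z^*(y)$ is $\ef$-homogeneous and therefore fixed by $\odd$/$\hom$, so the change-of-variables formula Lemma~\ref{l:odd-miry}(d)/\ref{l:hom-miry1}(d) yields $\int z^*(y)\di\mu=\int z^*(y)\di\sigma=x^*(y)$. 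Thus $\mu$ satisfies (a)--(d) of Fact~\ref{fact:L1}, whence $\mu=T(x^*)$ and $T(x^*)$ is carried by $\ext X$.

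\emph{Step 2: the homeomorphism.} Let $\M_0$ be the set of all probability measures on $\ext X$ in case (S), resp.\ of all $\ef$-anti-homogeneous measures of norm $\le1$ carried by $\ext X$ in cases (R)/(C), regarded inside $(\M(X,\ef),w^*)$. Each defining requirement --- being a probability, having norm $\le1$, being carried by the closed set $\ext X$ (equivalently $\mu(f)=0$ for every $f\in\C(X,\ef)$ vanishing on $\ext X$), being fixed by the weak${}^*$-continuous adjoint operator $\odd$ or $\hom$ --- is weak${}^*$-closed, so $\M_0$ is weak${}^*$-compact. Define $\rho\colon\M_0\to X$ by $\rho(\mu)=r(\mu)$ in case (S), resp.\ $\rho(\mu)(y)=\int z^*(y)\di\mu(z^*)$ $(y\in E)$ in cases (R)/(C); since continuous affine functions on $X$, resp.\ the evaluations $z^*\mapsto z^*(y)$, are weak${}^*$-continuous, $\rho$ is weak${}^*$-to-weak${}^*$ continuous. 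It is injective, because a measure in $\M_0$ represents (in the sense of (d)) at most one point, and Step 1 shows it is onto and that $\rho^{-1}=T$. A continuous bijection from a compact space onto a Hausdorff space is a homeomorphism, so $T=\rho^{-1}$ is continuous, and composing with the inclusion $\M_0\hookrightarrow(\M(X,\ef),w^*)$ finishes the proof.

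The delicate point is Step 1: one cannot invoke ``boundary $\Rightarrow$ carried by $\ext X$'', which fails in general without metrizability; instead $T(x^*)$ must be manufactured out of a measure known a priori to live on $\ext X$ and then identified with $T(x^*)$ through the uniqueness in Fact~\ref{fact:L1}. The bookkeeping in cases (R)/(C) --- checking that $\odd\sigma$, resp.\ $\hom\sigma$, retains all four properties of Fact~\ref{fact:L1} --- is the only other point requiring care, and it follows routinely from the lemmas of Section~\ref{ssec:miry}.
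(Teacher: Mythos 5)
Your argument is correct and is essentially the paper's proof: in both approaches $T$ is exhibited as the inverse of a continuous bijection from a weak$^*$-compact set of measures carried by the (now compact) set $\ext X$ onto $X$, and continuity then follows because a continuous bijection from a compact space onto a Hausdorff space is a homeomorphism (the paper obtains compactness by writing the boundary measures as $\odd(\M^1(\ext X))$, you by checking the defining conditions of $\M_0$ are weak$^*$-closed; both work). The one point to tighten is your stated reason for injectivity of $\rho$: it is not that a measure in $\M_0$ represents at most one point, but that any $\mu\in\M_0$ with $\rho(\mu)=x^*$ is maximal (case (S)), respectively boundary and satisfying (a)--(d) of Fact~\ref{fact:L1} (cases (R), (C)) --- by the very maximality argument already in your Step 1 --- and hence equals $T(x^*)$ by simpliciality, respectively by the uniqueness in Fact~\ref{fact:L1}.
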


\begin{proof}
(S) The mapping $r:\M^1(X)\to X$ which assigns to each $\mu\in\M^1(X)$ its barycenter $r(\mu)$ is clearly continuous (see, e.g., \cite[Proposition 3.40]{lmns}).
Since $\ext X$ is compact, $\M^1(\ext X)$ is a compact subset of $\M^1(X)$. Moreover, maximal probabilities are exactly those supported by $\ext X$ (see, e.g., \cite[p. 35]{alfsen} or \cite[Proposition 3.80]{lmns}). By the simpliciality the operator $r$ restricted to $\M^1(\ext X)$ is one-to-one, hence it is a homeomorphism. The operator $T$ is its inverse.

(R) The proof is similar. Let $Y$ denote the set of all odd measures from $B_{\M(X,\er)}$ and  $Y_b$ the set of all boundary measures from $Y$. For any $\mu\in Y$ let $u(\mu)$ denotes the point of $X$ provided by Lemma~\ref{l:odd-miry2}(b). It is clear that the operator $u$ is continuous and, by Fact~\ref{fact:L1}, the restriction of $u$ to $Y_b$ is one-to-one. The operator $T$ is the inverse of $u\r_{Y_b}$. To prove that $T$ is continuous it is enough to check that $Y_b$ is compact. But $Y_b$ is the image of the set of all maximal probabilities on $X$ under the continuous operator $\odd$ (by Lemma~\ref{l:odd-miry2}(c)) and maximal probabilities on $X$ are exactly the probabilities from the compact set $\M^1(\ext X)$ (by  \cite[Proposition 3.80]{lmns}).

(C) The complex case is completely analogous to the real one. We just consider anti-homogeneous measures instead of odd ones and use Lemma~\ref{l:hom-miry2} instead of Lemma~\ref{l:odd-miry2}.
\end{proof}

Now we proceed with the proof of Theorem~\ref{T:dilation}. It is based on the following selection result, which can be viewed as an affine version of the Michael selection theorem. We recall that a set-valued mapping $\Phi\colon X\to F$ is said to be
\emph{lower semicontinuous} if $\{x\in X\setsep \Phi(x)\cap U\ne\emptyset\}$ is open in $X$ for any open set $U\subset F$.

\begin{thm}\label{T:selekcelsc}
Let $X$ be a compact convex set, $F$ a \fr space and $\Phi\colon X\to F$ a lower semicontinuous set-valued mapping with nonempty closed values.
\begin{itemize}
	\item[(S)] If $X$ is a simplex  and the graph of $\Phi$ is convex, $\Phi$ admits a continuous affine selection.
	\item[(R)] If $X=(B_{E^*},w^*)$, where $E$ is a real $L_1$-predual and the graph of $\Phi$ is convex and symmetric, $\Phi$ admits an odd continuous affine selection.
	\item[(C)] If $X=(B_{E^*},w^*)$, where $E$ is a complex $L_1$-predual, $F$ is complex and the graph of $\Phi$ is absolutely convex, $\Phi$ admits a homogeneous continuous affine selection.
\end{itemize}
\end{thm}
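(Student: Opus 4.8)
The plan is to prove Theorem~\ref{T:selekcelsc} by reducing the three cases to a single statement about lower semicontinuous convex-graph multifunctions and then invoking a Michael-type selection argument adapted to the affine setting. First I would observe that cases (R) and (C) follow from case (S) together with the machinery of the operators $\odd$ and $\hom$: if $X=(B_{E^*},w^*)$ for an $L_1$-predual, then $X$ is a simplex-like object only in the sense of Fact~\ref{fact:L1}, so one cannot directly apply (S) to $X$ itself. Instead, the natural route is to first prove a selection theorem valid on $X=(B_{E^*},w^*)$ by exploiting the fact that $B_{E^*}$ has a simplex-like structure at the level of boundary measures, and then post-compose with the homogenization. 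Concretely, in case (R), given $\Phi$ with convex symmetric graph, one produces a (not necessarily odd) continuous affine selection $g$ of $\Phi$ by the simplex argument applied to a suitable associated simplex (e.g. the state space of $\fra(X)$, using that affine continuous selections lift), and then replaces $g$ by $\odd g$; since the graph of $\Phi$ is symmetric and convex, $\odd g(x)=\tfrac12(g(x)-g(-x))$ still lands in $\Phi(x)$, and by Lemma~\ref{l:odd} it is an odd continuous affine map. Case (C) is analogous, using Lemma~\ref{l:homaff}(a): if $g$ is a continuous affine selection then $\hom g(x)=\tfrac12(g(x)-ig(ix))-\tfrac{1-i}2 g(0)$ is a convex combination of points of the absolutely convex graph fibre, hence still a selection, and is homogeneous continuous affine.

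The core is therefore case (S). The plan here is to mimic the proof of the Michael selection theorem, producing the selection as a uniform limit of approximate affine selections. The key point that makes the affine version work on a simplex is that one can average: given a continuous map $x\mapsto\mu_x$ into $\M^1(X)$ (for instance the dilation $\delta_x$, which by Remark~\ref{rem:dilation} is strongly affine, or more elementarily any continuous affine assignment of representing measures), one can convert an arbitrary continuous selection-like map $s\colon X\to F$ into an affine one by $x\mapsto\int_X s\di\mu_x$, using the Pettis integral developed in Section~\ref{Sec:auxiliary} (Lemma~\ref{L:norma}); convexity of the graph of $\Phi$ guarantees the barycenter still lies in $\Phi(x)$. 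So the iteration scheme is: build $\ep_n$-approximate continuous selections $g_n$ (via lower semicontinuity and a partition of unity, exactly as in Michael's theorem, working locally in $F$ with the seminorms $p_k$ generating the Fr\'echet topology), affinize each $g_n$ by integrating against a fixed continuous family of representing measures, correct so that the affinized maps stay within controlled distance of $\Phi$, and arrange $\sum\ep_n<\infty$ so that the sequence converges uniformly; the limit is continuous and affine, and closedness of the values of $\Phi$ gives that the limit is a genuine selection.

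The main obstacle I anticipate is the affinization step interacting with the approximate-selection step. In the classical Michael construction one needs, at stage $n+1$, an $\ep_{n+1}$-selection that stays within $\ep_n$ of the previous map $g_n$; here after integrating one must check that the affinized correction is still close to $g_n$ (which is affine by induction, so $\int g_n\di\mu_x=g_n(x)$ — this is where strong affinity of the already-constructed maps is essential) and still $O(\ep_{n+1})$-close to $\Phi$. The closeness to $\Phi$ is where convexity of the graph enters crucially: if $s(y)\in\Phi(y)+\ep V$ for all $y$ in the support region and the graph is convex, then $\int s\di\mu_x\in\Phi(x)+\ep V$ by Lemma~\ref{L:norma}(b) applied to the closed convex set $\Phi(x)+\ep V$ (here using that $X$ is a simplex so the representing measures can be chosen to depend continuously and affinely on $x$, or simply using finitely supported measures on a sufficiently fine approximation, which suffices since affinity only needs finitely supported probabilities). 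A secondary technical point is that $F$ is only Fr\'echet, not Banach, so all estimates must be run seminorm-by-seminorm and one needs the ranges of the approximants to stay inside a fixed closed bounded (hence weakly closed) absolutely convex set to guarantee the Pettis integrals exist; this is handled by Lemma~\ref{L:norma} and by noting that $\Phi$ can be replaced at the outset by its intersection with a large enough bounded set near the graph without losing lower semicontinuity on the relevant scale. I would also record that in the non-simplex cases (R), (C) the reduction above needs the preliminary continuous affine selection on $X=(B_{E^*},w^*)$, which one gets by pulling back through the quotient/embedding relating $\fra(X)$-states to $X$, so that no circularity with Fact~\ref{fact:L1} arises.
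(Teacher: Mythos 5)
Your proposal has two genuine gaps, and it is worth noting first that the paper does not prove this theorem at all: it cites Lazar's selection theorem for case (S), Lazar--Lindenstrauss (or Lacey) for (R) and Olsen for (C), adding only the remark that the hypothesis ``graph of $\Phi$ is convex'' is equivalent to the hypothesis ``$\Phi$ is an affine multimap'' used in those references. So any from-scratch argument must reproduce the substance of those classical results, and yours does not.

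First, the reduction of (R) and (C) to (S) is unfounded. The dual ball $X=(B_{E^*},w^*)$ of an $L_1$-predual is not a simplex, and the ``suitable associated simplex'' you invoke does not exist in the form you suggest: the state space of $\fra(X)$ is affinely homeomorphic to $X$ itself for \emph{any} compact convex set, so it is a simplex only when $X$ already is one; pulling the problem back to the genuine simplex $\M^1(X)$ does not help either, because to return from $\M^1(X)$ to $X$ you would need a continuous \emph{affine} section of the barycenter map, which is not available ($x\mapsto\ep_x$ is not affine). The symmetrization step itself is fine --- if $g$ is a continuous affine selection, then $\odd g$ (resp.\ $\hom g$, by the closed-convexity of the fibres and Lemma~\ref{L:norma}) is again a selection by the symmetry/absolute convexity of the graph, exactly as in the paper's proof of Theorem~\ref{T:selekceusc} --- but the existence of the plain continuous affine selection $g$ on $B_{E^*}$ is precisely the hard content of (R) and (C) (Lazar--Lindenstrauss, Olsen), and it does not follow from (S). Second, in the core case (S) the affinization device is circular or vacuous. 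Integrating an approximate continuous selection $s$ against a family $x\mapsto\mu_x\in\M^1(X)$ of representing measures produces an affine map only if $x\mapsto\mu_x$ is itself \emph{affine} and continuous; the Dirac family $x\mapsto\ep_x$ is not affine (and would anyway return $s$ unchanged), while the maximal-measure family $x\mapsto\delta_x$ is not continuous in general --- the paper's Example~\ref{ex:dikobraz} shows it need not even be of any Baire class, and its affine Baire-one quality in the metrizable case (Theorem~\ref{T:dilation}) is deduced in the paper \emph{from} Theorem~\ref{T:selekcelsc}, so you cannot use it here. Your fallback ``finitely supported measures on a sufficiently fine approximation'' does not yield affinity of the resulting map at all. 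What is missing is exactly the mechanism by which simpliciality enters Lazar's proof: the construction of affine approximate selections (and of each successive affine correction) via Edwards' separation theorem / the Riesz interpolation property of $\fra(X)$ for simplices, with the corresponding $L_1$-predual analogues needed in cases (R) and (C). Without that ingredient the Michael-type iteration produces continuous, but not affine, selections, and the theorem is not reached.
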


The case (S) is due to \cite[Theorem 3.1]{lazar-sel} (see also \cite[Theorem 11.6]{lmns}), the case (R) is proved in \cite[Theorem 2.2]{lali} or \cite[Chapter 7, \S 22, Theorem 2]{lacey} and the case (C) in \cite[Theorem 4.2]{olsen-sel}. More precisely, the assumptions of the quoted results are formulated
in a bit different way. Instead of assuming that the graph of $\Phi$ is convex, it is assumed that $\Phi$ is affine (sometimes called convex), i.e.,
$\lambda\Phi(x_1)+(1-\lambda)\Phi(x_2)\subset\Phi(\lambda x_1+(1-\lambda)x_2)$ whenever $x_1,x_2\in X$ and $\lambda\in[0,1]$. But these two assumptions are indeed equivalent.

\begin{proof}[Proof of Theorem~\ref{T:dilation}]
(S) This case is essentially due to \cite[Theorem 6.6]{lmnss03} (or \cite[Theorem 11.26]{lmns}). However, the formulation of these results is weaker and hence we give a complete proof here.
Since $X$ is metrizable, $\fra(X,\er)$ is separable, hence we can choose a countable set $\{e_n\setsep n\in\en\}$ dense in $\fra(X,\er)$.

Let $n\in\en$ be fixed. We define a multivalued mapping $\Gamma_n\colon X\to \M^1(X)$ by the formula
\[
\Gamma_n(x)=\{\mu\in \M^1(X) \setsep \abs{\mu(e_i)-e_i(x)}<\frac1n, i=1,\dots, n\},\quad x\in  X.
\]
We claim that $\Gamma_n$ is a nonempty valued lower semicontinuous mapping with convex graph. First, $\Gamma_n(x)\ne\emptyset$ as $\ep_x\in\Gamma_n(x)$. Further, the graph of $\Gamma_n$ is equal to
$$\{(x,\mu)\in X\times \M^1(X)\setsep  \abs{\mu(e_i)-e_i(x)}<\frac1n, i=1,\dots, n\},$$
hence it is convex.

We continue by showing that $\Gamma_n$ is lower semicontinuous.
Let $V\subset \M^1(X)$ be a nonempty open set and let $\Gamma_n(x)\cap V\neq \emptyset$ for some $x\in X$.
We select $\mu\in V$ satisfying $\abs{\mu(e_i)-e_i(x)}<\frac1n$ for each $i=1,\dots, n$. By the continuity of the functions $e_i$, $i=1,\dots,n$, there exists an open  neighborhood $U$ of $x$ satisfying
\[
\abs{\mu(e_i)-e_i(y)}<\frac1n,\quad y\in U, i=1,\dots,n.
\]
Hence $\mu\in \Gamma_n(y)\cap V$ for $y\in U$, and thus the set
\[
\{x\in X\setsep \Gamma_n(x)\cap V\neq\emptyset\}
\]
is open. This implies that $\Gamma_n$ is lower semicontinuous.

Next we modify the mapping $\Gamma_n$ to have closed values. To this end
we define a mapping $\ov{\Gamma}_n\colon X\to \M^1(X)$ as
\[
\ov{\Gamma}_n(x)=\ov{\Gamma_n(x)},\quad x\in X.
\]
Since, for an open set $V\subset \M^1(X)$ and $x\in X$, $\ov{\Gamma}_n(x)$ intersects $V$ if and only if $\Gamma_n(x)$ intersects $V$, $\ov{\Gamma}_n$ is also lower semicontinuous. Moreover, it is clear that the graph of $\ov{\Gamma}_n$ convex.

Now we want to use the selection result contained in Theorem~\ref{T:selekcelsc}. This is possible
since $\M^1(X)$, being a compact convex metrizable set, is affinely homeomorphic to a subset of $\ell_2$.
Hence there exists a continuous  affine selection $T_n$ from $\ov{\Gamma}_n$.

We continue by showing that $T_n\to T$ on $\ext X$. Fix $x\in \ext X$. Let $\mu$ be any cluster point of the sequence $(T_n(x))$ in $\M^1(X)$. By the very definition of $\Gamma_n$ and $\ov{\Gamma}_n$, $\mu(e_i)=e_i(x)$ for $i\in\en$. By the density of the sequence $(e_i)$ in $\fra(X,\er)$ we get that
$\mu(e)=e(x)$ for all $e\in \fra(X,\er)$, i.e., $r(\mu)=x$. Since $x$ is an extreme point of $X$, necessarily $\mu=\ep_{x}=\delta_x$. It follows that $T_n(x)\to \delta_x$.

Finally, we will prove that $T_n\to T$ on $X$. Fix any $x\in X$ and $f\in C(X,\er)$.
Then
$$\delta_x(f)=\int_X \delta_y(f)\di\delta_x(y)=\lim_{n\to\infty} \int_X T_n(y)(f)\di\delta_x(y)=\lim_{n\to\infty} T_n(x)(f).$$
The first equality follows by the strong affinity of $T$ (see Lemma~\ref{l:te-sa}). To verify the second one we use that $\delta_x$ is maximal, hence supported by $\ext X$ ,
the already proved fact that $T_n\to T$ on $\ext X$ and Theorem~\ref{T:dct}. The last one uses the fact that $T_n$ is affine and continuous.

(R) The construction of the sequence $(T_n)$ is analogous, we indicate the differences.
First, $\{e_n\setsep n\in\en\}$ will be a dense subset of $E$. Further, $\M^1(X)$ will be everywhere replaced by the set
$Y=\{\mu\in \M(X)\setsep\mu\mbox{ is odd and }\norm{\mu}\le 1\}$. Then $Y$ is a compact convex symmetric set.
The mappings $\Gamma_n$ and $\ov{\Gamma}_n$ will be defined in the same way. They are lower semicontinuous for the same reason and, moreover, their graphs are convex and symmetric.
Using Theorem~\ref{T:selekcelsc} we obtain a continuous odd affine selection $T_n$ from $\ov{\Gamma}_n$.

We continue by showing that $T_n\to T$ on $\ext X$. Fix $x\in \ext X$. Let $\mu$ be any cluster point of the sequence $(T_n(x))$ in $Y$. By the very definition of $\Gamma_n$ and $\ov{\Gamma}_n$, $\mu(e_i)=x(e_i)$ for $i\in\en$. By density of the sequence $(e_i)$ in $E$ we get that $\mu(e)=x(e)$ for all $e\in E$. It follows from Lemma~\ref{l:odd-miry2} that there is a probability measure $\nu$ representing $x$ such that $\odd\nu=\mu$. Since $x$ is an extreme point of $X$, necessarily $\nu=\ep_{x}$ and it is maximal, thus $\mu=T(x)$. It follows that $T_n(x)\to T(x)$.

Finally, we will prove that $T_n\to T$ on $X$. Fix any $x\in X$ and $f\in C(X,\er)$.
Let $\sigma$ be a maximal probability representing $x$. Then
$$T(x)(f)=\int_X T(y)(f)\di\sigma(y)=\lim_{n\to\infty} \int_X T_n(y)(f)\di\sigma(y)=\lim_{n\to\infty} T_n(x)(f).$$
The first equality follows by the strong affinity of $T$ (see Lemma~\ref{l:te-sa}). To verify the second one we use that $\sigma$ is maximal, hence supported by $\ext X$,  the already proved fact that $T_n\to T$ on $\ext X$ and Theorem~\ref{T:dct}. The last one uses the fact that $T_n$ is affine and continuous.

(C) The proof in the complex case is completely analogous to the real case. Instead of the set of odd measures we consider the set of anti-homogeneous measures. If we define $\Gamma_n$ and $\ov{\Gamma}_n$ by the same formula, their graphs are clearly absolutely convex and we can find a homogeneous affine continuous selection $T_n$ of $\ov{\Gamma}_n$.
The proof that $T_n\to T$ is analogous to the real case, we just use Lemma~\ref{l:hom-miry2}.
\end{proof}

We remark that the metrizability assumption was used in the previous proof in an essential way.
First, we used the existence of a countable dense set in $\fra(X)$ (or in $E$) and secondly, we used the selection theorem Theorem~\ref{T:selekcelsc} which works for mappings with values in a \fr space,
hence we need the metrizability of the respective set of measures. However, as we know by Lemma~\ref{l:bauer}, if $\ext X$ is moreover closed, $T$ is even continuous and no metrizability assumption is needed. So, it is natural to ask how far one can go in this direction. It follows from Lemma~\ref{L:fra-0}(iii) below that if $T$ is of class $\fra_1$, necessarily $Tf$ is Baire-one for any scalar continuous function $f$ on $X$. This is the case if $\ext X$ is Lindel\"of (by \cite{Jel,lusp23,lusp-complex})
but not only in this case  (see  \cite[Theorem 4]{kalenda-bpms}). The following example shows, in particular, that the Lindel\"of property of $\ext X$ is not a sufficient condition for $T$ being of class $\fra_1$.

\begin{example}\label{ex:dikobraz}
There	are simplices $X_1$ and $X_2$ with the following properties.
\begin{itemize}
	\item[(a)] $\ext X_1$ is Lindel\"of and $\ext X_2$ is an uncountable discrete set.
	\item[(b)] The function $x\mapsto \delta_x(f)$ is Baire-one for continuous $f:X_i\to\er$ ($i=1,2$).
	\item[(c)] The mapping $T:x\mapsto \delta_x$ is not in $\bigcup_{\alpha<\omega_1}\C_\alpha(X_i,\M^1(X_i))$ ($i=1,2$).
\end{itemize}
\end{example}

\begin{proof} We will use the well-known construction of `porcupine simplices' which was used for example in \cite{kalenda-bpms}. Let $A\subset [0,1]$ be an uncountable set. Let
$$K=([0,1]\times\{0\})\cup (A\times\{-1,1\})$$
equipped with the following topology. The points from $A\times\{-1,1\}$ are isolated and a basis of neighborhoods of a point $(x,0)\in [0,1]\times\{0\}$ is formed by the sets of the form
$$(U\times\{-1,0,1\})\cap K \setminus\{(x,1),(x,-1)\},$$
where $U$ is a standard neighborhood of $x$ in $[0,1]$. Then $K$ is a compact space. Let
$$\A=\{f\in \C(K,\er)\setsep f(x,0)=\frac12(f(x,-1)+f(x,1))\mbox{ for each }x\in A\}$$
and let $X=\{\xi\in\A^*\setsep\|\xi\|=1\ \&\ \xi(1)=1\}$ be equipped with the weak$^*$ topology.
Then $X$ is a simplex. Moreover, $K$ canonically homeomorphically embeds into $X$ (as evaluation mappings), in this way $\ext X$ is identified with $((K\setminus A)\times\{0\})\cup (A\times\{-1,1\})$ .

The assertion (b) is valid for any such $X$ by \cite[Theorem 1]{kalenda-bpms}. If we take $A=[0,1]$, then $\ext X$ is uncountable discrete (see \cite[Theorem 4]{kalenda-bpms}); and there is an uncountable $A$ such that $\ext X$ is Lindel\"of
(by \cite[Theorem 2]{kalenda-bpms} it is enough if $A$ contains no uncountable compact subset, cf. \cite[p. 69]{kalenda-bpms}).

Finally, we will prove that (c) is valid for any $X$ of the described form. We consider $K$ canonically embedded in $X$. For any $a\in A$ set $f_a=\chi_{\{(a,1)\}}-\chi_{\{(a,-1)\}}$. Then $f_a\in\A$. Let us define two sets in $X$ by the formula:
$$\begin{aligned}
U&=\{\xi\in X\setsep \exists a\in A: \xi(f_a)>\frac12\},\\
H&=\{\xi\in X\setsep \forall a\in A: \xi(f_a)\ge-\frac12\}.
\end{aligned}$$
Then $U$ is open and $H$ is closed. Moreover, clearly $A\times\{1\}\subset U$ and $H\cap( A\times\{-1\})=\emptyset$.
Further, $U\subset H$. Indeed, pick any $\xi\in U$ and fix $a\in A$ such that $\xi(f_a)>\frac12$. Fix any $b\in A$. We will show that $\xi(f_b)\ge-\frac12$. If $b=a$ the inequality is obvious. So, suppose $b\ne a$. Since the function $f_a-f_b$ belongs to $\A$ and has norm one, we have
$$1\ge\xi(f_a-f_b)=\xi(f_a)-\xi(f_b)>\frac12-\xi(f_b)$$
and the inequality follows.

Now consider the following system of mappings
$$\F=\{S\colon X\to\M^1(X)\setsep \{a\in A\setsep S(a,-1)(H)<S(a,0)(U)\}\mbox{ is countable}\}.$$

First observe that $\F$ contains all continuous mappings. Indeed, suppose that $S$ is continuous. Fix any $q\in\qe$ and set
$$M_q= \{a\in A\setsep S(a,-1)(H)<q<S(a,0)(U)\}.$$
Given $a\in M_q$, we have $S(a,0)(U)>q$. Since $U$ is open, the mapping $\mu\mapsto\mu(U)$ is lower semicontinuous on $\M^1(X)$, hence the set $\{\mu\in\M^1(X)\setsep\mu(U)>q\}$ is open. Therefore there is a neighborhood $W$ of $a$ in $[0,1]$ such that for any $x\in (W\times\{-1,0,1\})\cap K\setminus\{(a,-1),(a,1)\}$ we have $S(x)(U)>q$. In particular, $W\cap M_q=\{a\}$ (recall that $H\supset U$ and so $S(b,-1)(H)>q$ for $b\in W\setminus\{a\}$). It follows that each point of $M_q$ is isolated, hence $M_q$ is countable. Therefore $\bigcup_{q\in\qe}M_q$ is countable as well, hence $S\in\F$.

Further observe that $\F$ is closed with respect to pointwise limits of sequences. Let $(S_n)$ be a sequence in $\F$ pointwise converging to a mapping $S$. By the definition of the system $\F$ there is a countable set $C\subset A$ such that  $S_n(a,-1)(H)\ge S_n(a,0)(U)$ for each $n\in\en$ and each $a\in A\setminus C$. We will show that $S(a,-1)(H)\ge S(a,0)(U)$ for $a\in A\setminus C$ as well. So, fix $a\in A\setminus C$ and suppose that $S(a,-1)(H) < S(a,0)(U)$. Fix a number $q$ such that
$S(a,-1)(H) < q < S(a,0)(U)$. Since $S_n(a,0)\to S(a,0)$ and $U$ is open, we can find $n_0\in\en$ such that for each $n\ge n_0$ we have $S_n(a,0)(U)>q$. Since $a\in A\setminus C$, we get (for $n\ge n_0$) $S_n(a,-1)(H)\ge S_n(a,0)>q$. Since $H$ is closed and $S_n(a,-1)\to S(a,-1)$, we get $S(a,-1)(H)\ge q$, a contradiction.

We conclude that $\bigcup_{\alpha<\omega_1}\C_\alpha(X,\M^1(X))\subset\F$. Finally, the mapping $T$ does not belong to $\F$, since for any $a\in A$ we have
$$\begin{aligned}
T(a,0)(U)&=\delta_{(a,0)}(U)=\frac12(\ep_{(a,1)}(U)+\ep_{(a,-1)}(U))=\frac12,\\
T(a,-1)(H)&=\delta_{(a,-1)}(H)=\ep_{(a,-1)}(H)=0.
\end{aligned}$$
\end{proof}


\section{Strongly affine Baire mappings}\label{Sec:affbaire}

The aim of this section is to prove Theorem~\ref{T:aff-baire}. The proof will be done in two steps. Firstly, we give the proof in case $X$ is metrizable
with the use of Theorem~\ref{T:dilation}. Secondly, we reduce the general case to the metrizable case.

In the proof of the metrizable case we will need the following notation.
Let $X$ be a compact convex set, $F$ a \fr space over $\ef$, $U:X\to B_{\M(X,\ef)}$ be a mapping and $f:X\to F$ be
a bounded Baire mapping. Then we define a mapping $Uf:X\to F$ by
$$Uf(x)=\int_X f\di U(x),\quad x\in X.$$
The mapping $Uf$ is well defined due to Lemma~\ref{L:integr-for-baire}.

\begin{lemma}\label{L:fra-0} Using the above notation, the following assertions hold.
\begin{itemize}
	\item[(i)] If $U$ and $f$ are continuous, then $Uf$ is continuous as well.
	\item[(ii)] If $U$ is strongly affine, then $Uf$ is strongly affine as well.
	\item[(iii)] If $U\in\fra_\alpha(X,B_{\M(X,\ef)})$ and $f\in\C_\beta(X,F)$ is a bounded mapping, then $Uf\in \fra_{\alpha+\beta}(X,\ov{\aco} f(X))$.
\end{itemize}
 \end{lemma}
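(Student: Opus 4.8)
The plan is to handle (i) and (ii) directly and then prove (iii) by a double transfinite induction. For (i), note that when $f$ is continuous the set $L:=\ov{\aco}f(X)$ is compact: $f(X)$ is compact, and in a \fr space the closed absolutely convex hull of a compact set is compact. By Lemma~\ref{L:norma}(c) each value $Uf(x)=\int_X f\di U(x)$ lies in $L$. For every $\tau\in F^*$ the scalar map $x\mapsto\tau(Uf(x))=\int_X(\tau\circ f)\di U(x)$ is continuous, since $\tau\circ f\in\C(X,\ef)$ and $U$ is weak$^*$-continuous; hence $Uf$ is continuous as a map into $(L,w)$. On the compact set $L$ the weak topology and the original topology of $F$ agree, so $Uf$ is continuous; if $U$ is in addition affine, linearity of the Pettis integral in the measure variable makes $Uf$ affine, which gives (i).

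For (ii), by Fact~\ref{fact1} it suffices to show $\tau\circ Uf$ is strongly affine for each $\tau\in F^*$, and $\tau\circ Uf$ equals $x\mapsto U(x)(g)$ with $g:=\tau\circ f$ a bounded scalar Baire function. Thus it is enough to prove: if $U\colon X\to B_{\M(X,\ef)}$ is strongly affine, then $x\mapsto U(x)(g)$ is strongly affine for every bounded Baire $g\colon X\to\ef$. For continuous $g$ this is exactly the strong affinity of $U$ read off at the functional $\mu\mapsto\mu(g)$. The family of bounded $g$ with this property is closed under uniformly bounded pointwise sequential limits: if $g_n\to g$ pointwise with $\sup_n\norm{g_n}_\infty<\infty$ and each $h_n:=U(\cdot)(g_n)$ is strongly affine, then by the scalar Dominated Convergence Theorem $h_n\to h:=U(\cdot)(g)$ pointwise, each $h_n$ is bounded and universally measurable so the same is true of $h$, and for any $\mu\in\M^1(X)$ the Dominated Convergence Theorem yields $\mu(h)=\lim_n\mu(h_n)=\lim_n h_n(r(\mu))=h(r(\mu))$. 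Since truncating approximants preserves boundedness and Baire class, every bounded Baire $g$ is reached from $\C(X,\ef)$ by iterating such limits, so the property holds for all of them; Fact~\ref{fact1} then gives (ii).

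For (iii) I would proceed in two stages. \emph{Stage~1} treats continuous $f$ by induction on $\alpha$. For $\alpha=0$ this is (i) together with the affinity noted above. For $\alpha>0$, pick $U_n\in\bigcup_{\gamma<\alpha}\fra_\gamma(X,B_{\M(X,\ef)})$ with $U_n\to U$ pointwise in the weak$^*$ topology; then for each $x$ and $\tau\in F^*$, $\tau(U_nf(x))=U_n(x)(\tau\circ f)\to U(x)(\tau\circ f)=\tau(Uf(x))$, so $U_nf(x)\to Uf(x)$ weakly, and since all these points lie in the compact set $L=\ov{\aco}f(X)$ they converge in the topology of $F$; by the inductive hypothesis each $U_nf$ lies in $\bigcup_{\gamma<\alpha}\fra_\gamma(X,L)$, whence $Uf\in\fra_\alpha(X,L)$. \emph{Stage~2} treats general $f\in\C_\beta(X,F)$ by induction on $\beta$, with base $\beta=0$ given by Stage~1. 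For $\beta\ge1$, boundedness of $f$ and Corollary~\ref{C:baire} let us view $f\in\C_\beta(X,L)$, so there is $f_n\in\bigcup_{\gamma<\beta}\C_\gamma(X,L)$ with $f_n\to f$ pointwise; these $f_n$ are uniformly bounded, and by the inductive hypothesis $Uf_n\in\fra_{\alpha+\gamma_n}(X,\ov{\aco}f_n(X))\subset\fra_{\alpha+\gamma_n}(X,L)$ for suitable $\gamma_n<\beta$. Theorem~\ref{T:dct} gives $Uf_n(x)=\int_X f_n\di U(x)\to\int_X f\di U(x)=Uf(x)$ in $F$ for each $x$, and since $\alpha+\gamma_n<\alpha+\beta$ we get $Uf\in\fra_{\alpha+\beta}(X,L)$.

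The step I expect to be the crux is the passage in Stage~1 from weak$^*$ pointwise convergence of the approximants $U_n$ to convergence of $U_nf$ in the topology of $F$; the compactness of $L=\ov{\aco}f(X)$, and hence the coincidence of the weak and the original topologies on it, is the decisive point, and it lets one avoid a more hands-on finite-rank approximation of $f$. A secondary issue demanding constant care is keeping every auxiliary mapping with values inside the fixed set $\ov{\aco}f(X)$ throughout both inductions, which is exactly what makes the conclusion about the restricted-range affine class legitimate; Corollary~\ref{C:baire} is used in Stage~2 precisely to produce uniformly bounded lower-class approximants of $f$.
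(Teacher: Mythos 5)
Your proposal is correct and follows essentially the same route as the paper: (i) via compactness of $L=\ov{\aco}f(X)$ and weak continuity, (ii) via Fact~\ref{fact1} plus a transfinite induction with dominated convergence, and (iii) via the same two-stage induction (first on $\alpha$ for continuous $f$, using that weak and norm topologies agree on the compact set $L$, then on $\beta$ using Corollary~\ref{C:baire} and Theorem~\ref{T:dct}). The only cosmetic differences are that the paper runs the induction in (ii) at the level of the vector-valued $f$ (using Corollary~\ref{C:baire} to keep approximants bounded, where you truncate scalar approximants) and derives affinity in the base case of (iii) from (ii) rather than from linearity of the integral in the measure variable.
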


\begin{proof}
(i) Since $f$ is continuous, $f(X)$ is a compact subset of $F$. Thus $L=\ov{\aco} f(X)$ is an absolutely convex compact subset of $F$ (see, e.g., \cite[Proposition 6.7.2]{jarchow}). By Lemma~\ref{L:norma}(c), $Uf(X)\subset L$. We need to show that $Uf$ is continuous. To this end, let
$\tau\in F^*$ be given.
Then
\[
\tau(Uf(x))=\tau\left(\int_{X} f\di U(x)\right)=\int_{X} \tau\circ f\di U(x)=U(x)(\tau\circ f), \quad x\in X.
\]
Since $U$ is continuous and $\tau\circ f\in\C(X,\ef)$, the mapping $x\mapsto U(x)(\tau\circ f)$ is a continuous function on $X$. Thus $Uf\colon X\to (L,\text{weak})$ is continuous. Since $L$ is compact, the original topology of $F$ coincides on $L$ with the weak topology. Hence $Uf\in\C(X,L)$.

(ii) Let us first suppose that $f$ is continuous.
Let $\mu$ be any Radon probability on $X$. Then for any $\tau\in F^*$ we have
$$
\begin{aligned}
\int_X \tau(Uf(x))\di\mu(x)&=
\int_X \tau(\int_X f \di U(x))\di\mu(x)
=\int_X \int_X \tau\circ f \di U(x) \di\mu(x)
\\&=\int_X U(x)(\tau\circ f)\di\mu(x)
= U(r(\mu))(\tau\circ f)
=\int_X \tau\circ f\di U(r(\mu))
\\&=\tau(Uf(r(\mu))).\end{aligned}$$
Thus $\tau\circ Uf$ is strongly affine for each $\tau\in F^*$, so $Uf$ is strongly affine by Fact~\ref{fact1}.

The general case follows by transfinite induction on the class using Corollary~\ref{C:baire} and the following observation: If $(f_n)$ is a bounded sequence of Baire mappings pointwise converging to a mapping $f$, then for each $x\in X$ and each $\tau\in F^*$
we have
$$\tau(Uf_n(x))=\int_X \tau \circ f_n\di U(x)\to\int_X \tau \circ f\di U(x)=\tau(Uf(x)).$$

(iii) Set $L=\ov{\aco} f(X)$. By Lemma~\ref{L:norma}(c) we have $Uf(X)\subset L$.

Let us give the proof first for $\beta=0$, i.e., in case $f$ is continuous.
The case $\alpha=0$ follows from (i) and (ii). We continue by transfinite induction.
To proceed it is enough to observe that $U_n\to U$ pointwise on $X$ implies $U_nf\to Uf$ pointwise on $X$. So, suppose that $U_n\to U$ pointwise on $X$.
Fix any $x\in X$. Then $U_n(x)\to U(x)$. For any $\tau\in F^*$ we have
$$\tau(U_n f(x))=U_n(x)(\tau\circ f)\to U(x)(\tau\circ f)=\tau(Uf(x)).$$
Thus $U_nf(x)\to Uf(x)$ weakly in $F$. But since the sequence is contained in the compact set $L$, we deduce that $U_nf(x)\to Uf(x)$ in $F$. This completes the proof for $\beta=0$.

Suppose that $\gamma>0$ is given such that the assertion is valid for any $\beta<\gamma$. Suppose that $f\in\C_\gamma(X,F)$ is bounded. Then $L=\ov{\aco}(F)$ is bounded and $f\in\C_\gamma(X,L)$ due to Corollary~\ref{C:baire}. So, fix a sequence $(f_n)$ in $\bigcup_{\beta<\gamma} \C_\beta(X,L)$ pointwise converging to $f$. Then $Uf_n\in\bigcup_{\beta<\gamma} \fra_{\alpha+\beta}(X,L)$ by the induction hypothesis. Further, $Uf_n\to Uf$ pointwise by Theorem~\ref{T:dct}, hence $Uf\in\fra_{\alpha+\gamma}(X,L)$.
This completes the proof.
\end{proof}

Now we are ready to complete the first step:

\begin{proof}[Proof of Theorem~\ref{T:aff-baire} in case $X$ is metrizable.]

(S) Let $f$ be strongly affine and $f\in\C_\alpha(X,F)$. Then $f(x)=\delta_x(f)$ for each $x\in X$. Since the mapping $x\mapsto\delta_x$ belongs to $\fra_1(X,\M_1(X))$ by Theorem~\ref{T:dilation},  by Lemma~\ref{L:fra-0}(iii) we conclude that $f\in\fra_{1+\alpha}(X,F)$.

(R) If $f$ is odd and strongly affine, then $f=Tf$. Since $T\in\fra_1(X,B_{\M_{\odd}(X,\er)})$ by  Theorem~\ref{T:dilation}, we conclude by Lemma~\ref{L:fra-0}(iii) and Lemma~\ref{l:odd}(c). If $f$ is not odd, then $f=f(0)+(f-f(0))$. Since $f-f(0)$ is odd, we get $f-f(0)\in\fra_{\odd,1+\alpha}(X,F)$, thus $f\in\fra_{1+\alpha}(X,F)$.

(C) If $F$ is complex and $f$ is homogeneous and strongly affine, then $f=Tf$. Since $T\in\fra_1(X,B_{\M_{\ahom}(X,\ce)})$ by  Theorem~\ref{T:dilation}, we conclude by Lemma~\ref{L:fra-0}(iii) and Lemma~\ref{l:homaff}(b). If $F$ is complex and $f$ is not homogeneous, then we can write
$$f(x)=f(0)+u(x)+\ov{v(x)},\quad x\in X,$$
where $u$ and $v$ are homogeneous and strongly affine and, moreover, $u=\hom f$ (see Lemma~\ref{l:homaff}(a)). Then $u,v\in\C_\alpha(X,F)$ by Lemma~\ref{l:hom}(c),
hence $u,v\in \fra_{\hom,1+\alpha}(X,F)$. It follows that $f\in\fra_{1+\alpha}(X,F)$. Finally, if $F$ is a real \fr space, denote by $F_{\ce}$ its complexification. Then $F$ is a real-linear subspace of $F_{\ce}$, thus $f\in\C_\alpha(X,F_{\ce})$, so $f\in\fra_{1+\alpha}(X,F_{\ce})$. Since the canonical projection of $F_{\ce}$ onto $F$ is continuous and real-linear (hence affine), it is clear that $f\in\fra_{1+\alpha}(X,F)$.

\smallskip

Finally, consider the case $\alpha=1$. In all the cases we get as above by the use of Lemma~\ref{L:fra-0}(iii) that $f\in\fra_2(X,\ov{\aco f(X)})$. Hence, Corollary~\ref{C:c21} yields $f\in\fra_1(X,\ov{\aco f(X)})$.
\end{proof}

To prove the general statement we need a reduction to the metrizable case. The first step is the following lemma.

\begin{lemma}
\label{selekce5}
Let $E$ be a Banach space, $X\subset B_{E^*}$ a weak$^*$ compact convex set and $L$ be a convex subset of a \fr space $F$.
Let $f\colon X\to L$ be a mapping of type $\C_\alpha(X,L)$ for some $\alpha\in [0,\omega_1)$. Then there exist a closed separable subspace
$E_1\subset E$ and $g\colon \pi(X)\to L$ such that $g\in \C_\alpha(\pi(X),L)$ and $f=g\circ \pi$. (Here $\pi\colon E^*\to {E_1^*}$ denotes the restriction mapping.)
\end{lemma}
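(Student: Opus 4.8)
The plan is to build the separable subspace $E_1$ by a standard closing-off (Löwenheim–Skolem-type) argument, exploiting the characterization of the Baire class $\C_\alpha$ via Baire-measurability (Lemma~\ref{L:baire}(c)) together with the fact that the relevant $\sigma$-algebras are generated by countably many sets at each level.

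First I would reduce to the case that $L$ (hence $f(X)$) is separable: by Lemma~\ref{L:baire}(a) the image $f(X)$ is separable, and by Corollary~\ref{C:baire} we have $f\in\C_\alpha(X,\co f(X))$, so we may replace $L$ by a closed separable convex subset of $F$. Fix a countable dense set $D\subset F^*$ (in the weak$^*$ topology on the separable subspace spanned by $L$, using that $F$ is metrizable so the relevant dual piece is weak$^*$-separable on bounded sets; more simply, fix countably many continuous seminorms generating the topology of $F$ and use that each separable metrizable space has a countable network). For each $\tau$ in a suitable countable family, $\tau\circ f\colon X\to\ef$ is a scalar Baire function on $X$, hence $\Sigma^b_{\alpha+1}(X)$-measurable by Lemma~\ref{L:baire-n}(c). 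Now build an increasing sequence of separable subspaces $E_1^{(0)}\subset E_1^{(1)}\subset\cdots$ of $E$ as follows. Start with any separable $E_1^{(0)}$. At each stage, $X$ is a weak$^*$ compact convex subset of $B_{E^*}$ and $\pi_k\colon E^*\to (E_1^{(k)})^*$ is the restriction map; the cozero sets of $\pi_k(X)$ pulled back to $X$ generate a sub-$\sigma$-algebra. The point is that each set witnessing the $\Sigma^b_{\alpha+1}$-measurability of $\tau\circ f$ is built from countably many cozero sets of $X$, and each cozero set of $X$ is $\{x\in X: g(x)\neq 0\}$ for some $g\in\C(X,\ef)$, which (again by separability and the Stone–Weierstrass theorem) can be uniformly approximated using only finitely many coordinate functionals from $E$; throw all these countably many functionals into $E_1^{(k+1)}$. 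Take $E_1=\overline{\bigcup_k E_1^{(k)}}$.

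With this $E_1$ in hand, the map $\pi\colon E^*\to E_1^*$ restricts $X$ onto a weak$^*$ compact convex set $\pi(X)$, and by construction the partition of $X$ into fibers of $\pi$ is compatible with all the Baire sets needed to express $f$: concretely, each $\tau\circ f$ (for $\tau$ in the countable determining family) is constant on fibers of $\pi$ and the corresponding quotient function on $\pi(X)$ is $\Sigma^b_{\alpha+1}(\pi(X))$-measurable. Since $f$ itself is determined by the countably many $\tau\circ f$ (as $L$ is separable and the $\tau$'s separate points of $\overline L$), $f$ is constant on the fibers of $\pi$, so there is a well-defined $g\colon\pi(X)\to L$ with $f=g\circ\pi$, and $g$ is Baire-measurable of the right class. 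Applying Lemma~\ref{L:baire}(c) in the other direction on $\pi(X)$ — which is a Baire subset of a compact space (indeed itself compact) and normal — gives $g\in\C_\alpha(\pi(X),L)$.

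\textbf{The main obstacle} I anticipate is the bookkeeping in the closing-off step: one must verify that at each countable stage only countably many new functionals of $E$ are required to ``capture'' all the cozero sets appearing in a $\Sigma^b_{\alpha+1}$ representation of each scalar function $\tau\circ f$, and that after passing to the countable union the fibers of $\pi$ are genuinely contained in the common refinement of all those Baire sets. This is where one uses that (i) the Baire $\sigma$-algebra at level $\alpha+1$ over $X$ is generated, starting from cozero sets, by countable unions/intersections, so only countably many generating cozero sets enter; (ii) each cozero set of $X\subset(B_{E^*},w^*)$ depends, up to the approximation needed to decide membership, on countably many coordinates from $E$; and (iii) $L$ being separable means a countable set of functionals in $F^*$ suffices to recover $f$ from $\{\tau\circ f\}$. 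Once these three separability facts are assembled, the argument is the routine transfinite-free Löwenheim–Skolem construction, and no step beyond careful indexing remains.
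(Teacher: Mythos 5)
Your closing-off strategy is genuinely different from the paper's proof (which handles $\alpha=0$ via the Ross--Stone theorem on continuous maps defined on products depending on countably many coordinates, and then treats general $\alpha$ by fixing a countable family $\{f_n\}\subset\C(X,L)$ with $f\in(\{f_n\})_\alpha$, factoring each $f_n$ through a separable subspace, and finishing by transfinite induction). Most of your separability bookkeeping is sound: the reduction to separable $L$, the fact that a $\Sigma^b_{\alpha+1}$-witness involves only countably many cozero sets, and the fact that each cozero set of $X\subset(B_{E^*},w^*)$ is saturated with respect to the restriction to a suitable countable set of coordinates (Stone--Weierstrass, or the paper's Ross--Stone argument). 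The iteration over stages $E_1^{(k)}$ is unnecessary --- one pass collects everything --- but that is harmless.

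The genuine gap is the step ``$g$ is Baire-measurable of the right class.'' You derive the class of $g$ from the $\Sigma^b_{\alpha+1}(\pi(X))$-measurability of the countably many scalar quotients $\tau\circ g$, but scalar-wise measurability of this class does \emph{not} imply $\Sigma^b_{\alpha+1}$-measurability of the vector-valued map $g$: it only gives measurability of $g$ into the \emph{weak} topology of $F$, and passing back to the original topology costs one class (this is exactly the content of Lemmata~\ref{L:meas-0} and~\ref{L:meas}, and the loss is unavoidable in general --- e.g.\ the identity from $(B_{\ell_2},w)$ into $\ell_2$ has all scalar compositions continuous but is not continuous). So, as written, your argument yields only $g\in\C_{\alpha+1}(\pi(X),L)$, which defeats the purpose of the lemma: it must preserve the class $\C_\alpha$ exactly, since it is used that way in the proof of Theorem~\ref{T:aff-baire}. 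The repair stays within your framework: do not scalarize. Since $X$ is compact (hence normal) and $L$ may be assumed separable, $f$ itself is $\Sigma^b_{\alpha+1}(X)$-measurable by Lemma~\ref{L:baire-n}(c); run the closing-off on the preimages $f^{-1}(U_m)$ of a countable basis $\{U_m\}$ of $L$, so that after saturation every cozero set in the witnesses is a union of $\pi$-fibers, factors through $\pi(X)$ (here you need that $\pi\r_X$ is a closed, hence quotient, map), and the whole hierarchy pushes down to show $g^{-1}(U_m)\in\Sigma^b_{\alpha+1}(\pi(X))$; then Lemma~\ref{L:baire}(c) gives $g\in\C_\alpha(\pi(X),L)$. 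With that change your proof is correct, at the price of more transfinite bookkeeping than the paper's induction over a countable witnessing family of continuous maps.
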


\begin{proof}
Assume first that $\alpha=0$, i.e., that $f$ is continuous. Then $f(X)$ is a compact subset of $F$, hence it is a compact metrizable space. It follows that there is a homeomorphic injection $p\colon f(X)\to\er^\en$. Further, since $X$ is equipped with the weak$^*$-topology, it can be canonically embedded into the cartesian product $\ef^E$. The continuous mapping $p\circ f\colon X\to\er^\en$ can be extended to a continuous mapping $g\colon\ef^E\to\er^\en$ (by \cite[Theorem 3.1.7]{engelking}). Further, by \cite[Theorem 4]{Ross-Stone} (see also \cite[Problem 2.7.12(d)]{engelking}) there is a countable set $C\subset E$ such that
$$u,v\in \ef^E, u\r_{C}=v\r_{C} \Rightarrow g(u)=g(v).$$
Hence we can take $E_1$ to be the closed linear span of $C$. Then $E_1$ is a closed separable subspace of $E$ and, moreover,
$\pi(x_1^*)=\pi(x_2^*)$ for some $x_1^*,x_2^*\in X$ implies $(p\circ f)(x_1^*)=(p\circ f)(x_2^*)$, and so $f(x_1^*)=f(x_2^*)$. Thus there is a mapping $g:\pi(X)\to L$ with $f=g\circ\pi$. Since $f$ is continuous and $\pi$ is a closed continuous mapping, $g$ is continuous.

Assume now that $\alpha\in (0,\omega_1)$ and $f\colon X\to L$ of type $\C_\alpha(X,L)$ is given. We select a countable family $\F=\{f_n\setsep n\in\en\}$ in $\C(X,L)$ such that $f\in \F_\alpha$.
For each $n\in\en$ we find using the previous step a countable set $C_n\subset E$ such that for any $x_1^*,x_2^*\in X$ we have
$$x_1^*\r_{C_n}=x_2^*\r_{C_n}\Rightarrow f_n(x_1^*)=f_n(x_2^*).$$
Let $E_1$ be the closed linear span of $\bigcup_n C_n$. Then there are mappings $g:\pi(X)\to L$ and $g_n:\pi(X)\to L$ for $n\in\en$
such that $f=g\circ\pi$ and $f_n=g_n\circ\pi$. Similarly as above $g_n$ are continuous and, moreover, it is easy to check by transfinite induction
that $g\in\left(\{g_n:n\in\en\}\right)_\alpha$, thus $g\in\C_\alpha(\pi(X),L)$.
\end{proof}

The next step is the following lemma on cofinality.

\begin{lemma}
\label{L:rich} \
\begin{itemize}
\item[(S)] Let $X$ be a simplex and $f:X\to X_1$ an affine continuous surjection of $X$ onto a metrizable compact convex set. Then there is a metrizable simplex $X_2$ and affine continuous surjections $f_1:X_2\to X_1$ and $f_2:X\to X_2$ such that $f=f_1\circ f_2$.
	\item[(R,C)] Let $E$ be an $L_1$-predual over $\ef$ and $E_1$ be its separable subspace. Then there exists a separable $L_1$-predual $E_2$ satisfying $E_1\subset E_2\subset E$.
\end{itemize}
\end{lemma}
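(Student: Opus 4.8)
The plan is to prove the case (R,C) by a separable ``closing-off'' (L\"owenheim--Skolem type) argument and then to deduce (S) from it together with the function representation of complete order unit spaces.

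\emph{Case (R,C).} I would use a local characterization of $L_1$-preduals: a Banach space $G$ over $\ef$ is an $L_1$-predual if and only if for every finite-dimensional subspace $H\subset G$ and every $\varepsilon>0$ there is a finite-dimensional $H'$ with $H\subset H'\subset G$ whose Banach--Mazur distance to $\ell_\infty^{\dim H'}(\ef)$ is at most $1+\varepsilon$ (Lindenstrauss in the real case, with the known complex analogue otherwise; cf. \cite{lacey}). Granting this, I would build an increasing sequence $E_1=G_0\subset G_1\subset G_2\subset\cdots$ of separable closed subspaces of $E$ as follows: having chosen $G_n$, fix a countable dense set in $G_n$; for each of its finite subsets $F$ and each $k\in\en$ choose, by the criterion applied inside $E$, a finite-dimensional $H_{F,k}$ with $\span F\subset H_{F,k}\subset E$ and $d\bigl(H_{F,k},\ell_\infty^{\dim H_{F,k}}(\ef)\bigr)\le 1+\tfrac1k$; let $G_{n+1}$ be the closed linear span of $G_n$ together with all the $H_{F,k}$, which is again separable. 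Put $E_2=\overline{\bigcup_n G_n}$. Then $E_2$ is separable and contains $E_1$, and a routine perturbation argument (every finite-dimensional subspace of $E_2$ lies, up to an arbitrarily small Banach--Mazur perturbation, in some $G_n$) shows that $E_2$ again satisfies the criterion, hence is an $L_1$-predual.

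\emph{Case (S).} Since simplices and affine maps use only the underlying real structure, I would work throughout over $\er$. Put $E=\fra(X,\er)$; as $X$ is a simplex this is an $L_1$-predual, and it is a complete Archimedean order unit space with order unit the constant function $1$ and the order-unit norm. The map $h\mapsto h\circ f$ is an isometric, unital order embedding of $\fra(X_1,\er)$ into $\fra(X,\er)$ with separable closed range (the range is complete, and $\fra(X_1,\er)$ is separable because $X_1$ is metrizable). Running the closing-off of (R,C) starting from this range and keeping $1$ inside every $G_n$, I obtain a separable $L_1$-predual $A$ with $\fra(X_1,\er)\hookrightarrow A\subset\fra(X,\er)$ and $1\in A$. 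Equipped with the induced order $A_+=A\cap\fra(X,\er)_+$, the space $(A,A_+,1)$ is again a complete Archimedean order unit space which is an $L_1$-predual; hence, by the representation of such spaces and the correspondence between simplex spaces and Choquet simplices (see \cite{alfsen}, \cite{lacey}), the evaluation map identifies $A$ isometrically and order-isomorphically with $\fra(X_2,\er)$, where $X_2$ is the state space of $A$. The set $X_2$ is a metrizable simplex: metrizable because $A$ is separable, and a simplex because $\fra(X_2,\er)\cong A$ is an $L_1$-predual.

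\emph{The factorization.} For any compact convex set $Y$ the state space of $\fra(Y,\er)$ is canonically $Y$ (each normalized positive functional extends to a probability measure on $Y$ and hence equals evaluation at its barycenter). Dualizing the inclusions $\fra(X_1,\er)\hookrightarrow A=\fra(X_2,\er)\subset\fra(X,\er)$ and restricting to state spaces therefore gives continuous affine maps $f_2\colon X\to X_2$ and $f_1\colon X_2\to X_1$, each surjective because a normalized positive functional on a subspace containing the order unit extends to one on the whole space (Hahn--Banach for order unit spaces). Finally $f=f_1\circ f_2$, since for $x\in X$ and $h\in\fra(X_1,\er)$ one computes $h\bigl(f_1(f_2(x))\bigr)=\langle x,\,h\circ f\rangle=h(f(x))$. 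The step I expect to require the most care is having the local characterization of $L_1$-preduals available in the complex case and verifying that it passes to the inductive limit; the order-theoretic part of (S) is then essentially bookkeeping through the representation of order unit spaces.
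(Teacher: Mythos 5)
Your argument is correct in outline, but it takes a genuinely different route from the paper's. For (R,C) the paper gives no proof at all --- it simply cites \cite[\S23, Lemma 1]{lacey} --- so what you do is re-prove that cited lemma via the local characterization of $L_1$-preduals as $\mathcal{L}_{\infty,1+\varepsilon}$-spaces; this works (for the complex scalars the precise reference is Nielsen--Olsen \cite{nielsenolsen} rather than \cite{lacey}), and your ``routine perturbation'' step is indeed the standard small-perturbation argument, though it deserves a line (project onto the given finite-dimensional subspace and correct by an automorphism close to the identity, so that almost-containment upgrades to containment with a slightly worse constant). For (S) the paper does not pass through (R,C) at all: it runs a countable closing-off inside $\fra(X,\er)$ that preserves the \emph{weak Riesz interpolation property} rather than the local $\ell_\infty^n$-structure, defines $X_2$ concretely as the image of $X$ under the restriction map into $E^*$ (so surjectivity of $f_2$ is automatic), identifies $E\cong\fra(X_2,\er)$ by a direct Hahn--Banach computation in $\fra(X)^*$, and then quotes \cite[Corollary II.3.11]{alfsen} for ``interpolation implies simplex''. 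Your route instead deduces (S) from (R,C) via Kadison-type representation of complete order unit spaces ($A\cong\fra(S(A),\er)$, valid because restrictions of linear functionals plus constants are dense in $\fra$ of a compact convex set and $A$ is complete) together with the converse implication ``if $\fra(K,\er)$ is an $L_1$-predual then $K$ is a simplex''. That converse is true and classical (it can for instance be extracted from Fact~\ref{fact:L1}: maximal measures carried by the face $K$ of $B_{\fra(K,\er)^*}$ are recovered from their odd parts), but it is a nontrivial input which the paper's argument avoids; in exchange, your approach is more uniform, handling (S) and (R,C) by a single closing-off and obtaining the factorization, including surjectivity of $f_1$ and $f_2$, from positive Hahn--Banach extension in order unit spaces. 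Both proofs are sound; yours trades the paper's elementary order-theoretic bookkeeping for heavier but standard Banach-space machinery, and would benefit from pinning down the two cited background theorems precisely.
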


\begin{proof} (S) In this proof we will denote by $\fra(X)$ the space $\fra(X,\er)$ and similarly for other compact convex sets.
Let $f^*\colon\fra(X_1)\to\fra(X)$ denote the canonical isometric embedding defined by $f^*(u)=u\circ f$ for $u\in\fra(X_1)$. Let $E$ be a (for a while arbitrary) closed subspace of $\fra(X)$ containing $f^*(\fra(X_1))$. Denote by  $\pi_2:\fra(X)^*\to E^*$ the canonical restriction map. Define $\pi_1:E^*\to \fra(X_1)^*$ by $\pi_1(x^*)(u)=x^*(f^*(u))$, $x^*\in E^*$, $u\in\fra(X_1)$. 
Further, let $\kappa_X:X\to\fra(X)^*$ be the canonical evaluation mapping, i.e., $\kappa_X(x)(u)=u(x)$ for $x\in X$ and $u\in\fra(X)$. Similarly, $\kappa_{X_1}$ denotes the analogous mapping for $X_1$. Set $X_2=\pi_2(\kappa_{X}(X))$, $f_2=\pi_2\circ\kappa_X$ and $f_1=(\kappa_{X_1})^{-1}\circ \pi_1\r_{X_2}$. Then $X_2$ is a compact convex set, and $f_1:X_2\to X_1$ and $f_2:X\to X_2$ are affine continuous surjections satisfying $f=f_1\circ f_2$. So, to complete the proof it is enough to choose $E$ in such a way that $X_2$ is a metrizable simplex.

Observe that $E$ is canonically isometric to $\fra(X_2)$. More precisely, if we consider the isometric embedding $f_2^*\colon \fra(X_2)\to\fra(X)$
defined by $f_2^*(u)=u\circ f_2$ for $u\in\fra(X_2)$, then $E=f_2^*(\fra(X_2))$. Indeed, if $v\in E$, we define $u\in\fra(X_2)$ by $u(x^*)=x^*(v)$ for $x^*\in X_2$. Then for each $x\in X$ we have
$$f_2^*(u)(x)=u(f_2(x))=f_2(x)(v)=\pi_2(\kappa_X(x))(v)=\kappa_X(x)(v)=v(x),$$
thus $v\in f_2^*(\fra(X_2))$. Conversely, let $u\in \fra(X_2)$. Then $f_2^*(u)=u\circ f_2\in\fra(X)$. If $u\circ f_2\notin E$, by the Hahn-Banach separation theorem there is $x^*\in \fra(X)^*$ such that $x^*\r_{E}=0$ and $x^*(u\circ f_2)\ne 0$. By \cite[Proposition 4.31(a,b)]{lmns} we have $x^*=c_1\kappa_X(x_1)-c_2\kappa_X(x_2)$ for some $x_1,x_2\in X$ and $c_1,c_2$ nonnegative real numbers. Since the space $E$ contains the constant functions, we get $c_1=c_2$. Hence without loss of generality $c_1=c_2=1$. Then we get $v(x_1)=v(x_2)$ for each $v\in E$. It follows that $\pi_2\circ\kappa_X(x_1)=\pi_2\circ\kappa_X(x_2)$, thus $f_2(x_1)=f_2(x_2)$. So,
$$x^*(u\circ f_2)=(u\circ f_2)(x_1)-(u\circ f_2)(x_2)=0,$$
a contradiction.

Hence, $X_2$ is metrizable provided $E$ is separable, and $X_2$ is a simplex provided $E$ satisfies the weak Riesz interpolation property
(see \cite[Corollary II.3.11]{alfsen}). Recall that $E$ has the weak Riesz interpolation property if, whenever $u_1,u_2,v_1,v_2 \in E$ are such that
$\max\{u_1,u_2\}<\min\{v_1,v_2\}$, then there is $w\in E$ with $\max\{u_1,u_2\}<w<\min\{v_1,v_2\}$. The inequalities are considered in the pointwise meaning. Further, it is clear that it is enough to check this property for $u_1,u_2,v_1,v_2$ belonging to a norm-dense subset of $E$. Thus we can construct $E$ by a standard inductive procedure: We construct countable sets $A_0\subset B_1\subset A_1\subset B_2\subset\cdots \fra(X)$ as follows:
\begin{itemize}
	\item $A_0$ is a countable dense subset of $f^*(\fra(X_1))$.
	\item Whenever $u_1,u_2,v_1,v_2 \in A_{n-1}$ are such that
$\max\{u_1,u_2\}<\min\{v_1,v_2\}$, then there is $w\in B_n$ with $\max\{u_1,u_2\}<w<\min\{v_1,v_2\}$.
\item $A_n$ is the $\qe$-linear span of $B_n$.
\end{itemize}
Then we can set $E=\ov{\bigcup_n A_n}$.

(R,C) This is proved in \cite[\S23, Lemma 1]{lacey}.
\end{proof}

The final ingredient is the following lemma.

\begin{lemma}
\label{perfectaff}
Let $\pi\colon K\to L$ be an affine continuous surjection of a compact convex set $K$ onto a compact convex set $L$. Let $g\colon L\to F$ be a universally measurable mapping from $L$ to a \fr space $F$. Then $g$ is strongly affine if
and only if $g\circ \pi$ is strongly affine.
\end{lemma}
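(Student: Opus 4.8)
The plan is to reduce everything to the scalar case via Fact~\ref{fact1}, and to exploit the relation between Radon probabilities on $K$ and on $L$ induced by the surjection $\pi$. First I would record the easy implication. If $g$ is strongly affine, then for any $\tau\in F^*$ the scalar function $\tau\circ g$ is strongly affine on $L$ by Fact~\ref{fact1}; I then claim $\tau\circ g\circ\pi$ is strongly affine on $K$. For this, fix $\mu\in\M^1(K)$; its image $\pi(\mu)\in\M^1(L)$, and $r(\pi(\mu))=\pi(r(\mu))$ because $\pi$ is affine and continuous and carries barycenters to barycenters (the latter is checked by testing against $a\in\fra(L,\ef)$, using that $a\circ\pi\in\fra(K,\ef)$). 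Using the image-measure formula and strong affinity of $\tau\circ g$ on $L$, one gets
\[
\int_K \tau\circ g\circ\pi\di\mu=\int_L\tau\circ g\di\pi(\mu)=(\tau\circ g)(r(\pi(\mu)))=(\tau\circ g)(\pi(r(\mu)))=(\tau\circ g\circ\pi)(r(\mu)),
\]
so $\tau\circ g\circ\pi$ is strongly affine; integrability of $g\circ\pi$ against arbitrary $\mu$ follows since $g$ is universally measurable and bounded (strongly affine maps are bounded by Lemma~\ref{4.3}, or one argues directly), whence $g\circ\pi$ is strongly affine by Fact~\ref{fact1}.

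For the converse, suppose $g\circ\pi$ is strongly affine. Again by Fact~\ref{fact1} it suffices to show $\tau\circ g$ is strongly affine on $L$ for each $\tau\in F^*$, so I may reduce to the scalar case: $h:=\tau\circ g\colon L\to\ef$ is universally measurable and $h\circ\pi$ is strongly affine. Fix $\nu\in\M^1(L)$; I must produce $\mu\in\M^1(K)$ with $\pi(\mu)=\nu$. This is exactly where the surjectivity of the affine continuous map $\pi$ between compact convex sets is used: the pushforward $\pi_*\colon\M^1(K)\to\M^1(L)$ is continuous, affine, and surjective — surjectivity because its range is a (weak$^*$) compact convex subset of $\M^1(L)$ containing all Dirac measures $\varepsilon_y$, $y\in L$ (as $y=\pi(x)$ for some $x$, and $\pi(\varepsilon_x)=\varepsilon_y$), and Dirac measures are weak$^*$-dense-spanning: their closed convex hull is all of $\M^1(L)$. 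Having fixed such a $\mu$ with $\pi(\mu)=\nu$, I note $r(\mu)\in K$ satisfies $\pi(r(\mu))=r(\nu)$ as above, and compute
\[
\int_L h\di\nu=\int_L h\di\pi(\mu)=\int_K h\circ\pi\di\mu=(h\circ\pi)(r(\mu))=h(\pi(r(\mu)))=h(r(\nu)),
\]
using the change-of-variables formula for the image measure (valid since $h$ is universally measurable and bounded, so $h\circ\pi$ is $\mu$-integrable) together with the strong affinity of $h\circ\pi$. Hence $h$ is strongly affine on $L$, and therefore $g$ is strongly affine by Fact~\ref{fact1}.

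The routine points are the barycenter/image-measure identities and the boundedness needed for integrability; the one genuine point to get right is the surjectivity of $\pi_*\colon\M^1(K)\to\M^1(L)$, i.e.\ that every probability on $L$ lifts to a probability on $K$, which I expect to be the main (though standard) obstacle and is handled by the density of convex combinations of Dirac measures together with compactness of the range of $\pi_*$. One should also be slightly careful that ``universally measurable'' is preserved appropriately: $h\circ\pi$ is $\mu$-measurable for every Radon $\mu$ on $K$ because $\pi$ is continuous and $h$ is universally measurable on $L$, so all the integrals above make sense.
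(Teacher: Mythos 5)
Your proof is correct, and the reduction of the vector-valued statement to the scalar one via Fact~\ref{fact1} is exactly what the paper does; the difference is in how the scalar case is handled. The paper's proof of Lemma~\ref{perfectaff} is a one-liner: the scalar case is quoted from \cite[Proposition 5.29]{lmns}, and the vector-valued case is said to follow immediately from Fact~\ref{fact1}. You instead prove the scalar case from scratch, and your argument is sound: the barycenter identity $r(\pi(\mu))=\pi(r(\mu))$ is correctly checked by testing against $a\circ\pi$ for $a\in\fra(L,\ef)$; the change-of-variables formula applies because $\tau\circ g$ is universally measurable and bounded (boundedness of $\tau\circ g\circ\pi$, hence of $\tau\circ g$ by surjectivity of $\pi$, comes from strong affinity), and $\pi$-preimages of $\pi(\mu)$-null sets are $\mu$-null, so measurability transfers; and the one genuinely nontrivial point, surjectivity of the pushforward $\pi_*\colon\M^1(K)\to\M^1(L)$, is correctly obtained from the fact that its range is a weak$^*$ compact convex subset of $\M^1(L)$ containing all Dirac measures, whose closed convex hull is all of $\M^1(L)$. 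What the citation buys the paper is brevity; what your route buys is a self-contained argument that makes explicit where surjectivity of $\pi$ (the lifting of probabilities from $L$ to $K$) is actually used, which is precisely the content of the quoted proposition.
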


\begin{proof} In case $F=\ef$ it is proved in \cite[Proposition 5.29]{lmns}. The vector-valued case then follows immediately from Fact~\ref{fact1}.
\end{proof}

Now we can  complete  the proof:

\begin{proof}[Proof of Theorem~\ref{T:aff-baire} in the general case.]
(S) Let $f\in\C_\alpha(X,F)$ be strongly affine. Set $E=\fra(X,\er)$ and $\kappa_X:X\to E^*$ be the canonical embedding.
If we apply Lemma~\ref{selekce5} to $\kappa_X(X)$ in place of $X$ and $f\circ\kappa_X^{-1}$ in place of $f$, we get a separable
space $E_1\subset E$ and $g\in\C_\alpha(\pi(\kappa_X(X)),F)$ with $f\circ\kappa_X^{-1}=g\circ \pi$ (where $\pi:E^*\to E_1^*$ is the restriction mapping). Set $X_1=\pi(\kappa_X(X))$ and $h=\pi\circ\kappa_X$. Then $X_1$ is a metrizable compact convex set and $h$ is an affine continuous surjection of $X$ onto $X_1$. By Lemma~\ref{L:rich} there is a metrizable simplex $X_2$ and affine continuous surjections $h_1:X_2\to X_1$ and $h_2:X\to X_2$ with $h=h_1\circ h_2$. Then $g\circ h_1\in\C_\alpha(X_2,F)$ and $g\circ h_1$ is strongly affine by Lemma~\ref{perfectaff}. Thus by the metrizable case, $g\circ h_1\in \fra_{1+\alpha}(X_2,F)$ It follows that $f=g\circ h_1\circ h_2\in\fra_{1+\alpha}(X,F)$.

(R), (C) Given $f$ as in the premise, we use Lemma~\ref{selekce5} to find a separable subspace $E_1\subset E$ and $g\colon B_{E_1^*}\to F$ in $\C_\alpha(B_{E_1^*},F)$ satisfying $f=g\circ \pi$ ($\pi\colon {E^*}\to {E_1^*}$ is again the restriction mapping).
By Lemma~\ref{L:rich} we can find a separable $L_1$-predual $E_2$ satisfying $E_1\subset  E_2\subset E$. Denote $\pi_1:E_2^*\to E_1^*$ and $\pi_2:E^*\to E_2^*$ the restriction maps. Then $g\circ\pi_1\in\C_{\alpha}(B_{E_2^*},F)$. Moreover,  $g\circ\pi_1$ is strongly affine by Lemma~\ref{perfectaff}. Hence by the metrizable case we get $g\circ\pi_1\in\fra_{1+\alpha}(B_{E_2^*},F)$ (or $g\circ\pi_1\in\fra_{\odd,1+\alpha}(B_{E_2^*},F)$ or $g\circ\pi_1\in\fra_{\hom,1+\alpha}(B_{E_2^*},F)$
in the special cases). Since $f=g\circ\pi_1\circ\pi_2$, the proof is complete.

\smallskip

If $\alpha=1$ we obtain by the same procedure that $f\in\fra_1(X,F)$.
\end{proof}

Finally, let us settle the remaining special case:

\begin{proof}[Proof of Theorem~\ref{T:aff-baire} in case $\ext X$ is an $F_\sigma$-set.]
If $\ext X$ is $F_\sigma$, the result follows from Theorem~\ref{T:dirichlet} proved below. More precisely:

(S) Let $f\in\C_\alpha(X,F)$ be strongly affine. Then $f\r_{\ext X}\in\C_\alpha(\ext X,F)$, hence by Theorem~\ref{T:dirichlet} this function can be extended to a function $g\in\fra_\alpha(X,F)$. Since $f=g$ on $\ext X$, both functions are strongly affine and each maximal measure is supported by $\ext X$ we conclude that $f=g$ on $X$, i.e., $f\in\fra_\alpha(X,F)$.

(R) Let $f\in\C_\alpha(X,F)$ be strongly affine. Then $f=f(0)+(f-f(0))$ and the function $f-f(0)$ is odd, strongly affine and belongs to $\C_\alpha(X,F)$. Using Theorem~\ref{T:dirichlet} as in the case (S) we get $f-f(0)\in\fra_\alpha(X,F)$, thus $f\in\fra_\alpha(X,F)$.

(C) Let $f\in\C_\alpha(X,F)$ be strongly affine. Then $f=f(0)+u+\ov{v}$, where $u=\hom f$ and $v$ are homogeneous, affine and belong to $\C_\alpha(X,F)$ (see Lemma~\ref{l:homaff}(a)).
Using Theorem~\ref{T:dirichlet} as in the case (S) we get $u,v\in\fra_\alpha(X,F)$, thus $f\in\fra_\alpha(X,F)$.
\end{proof}

\section{Extensions of Baire mappings}\label{Sec:dirichlet}

In this section we will prove Theorem~\ref{T:dirichlet}. We will proceed in several steps, imitating, generalizing and simplifying the approach of \cite{lusp23}. The strategy is the following:
\begin{itemize}
	\item Given a bounded Baire function $f\colon\ext X\to F$, we extend it to a bounded Baire function $h:X\to F$.
	(We do not require affinity and we do not control the class of $h$.)
	\item We take the function $Th(x)=\int_X h\dd T(x)$, $x\in X$, used in the previous section, and we prove that it is the unique strongly affine extension of $f$.
	\item We show that $Th$ is of the right affine class.
\end{itemize}

The first step is made by the following lemma which is a vector-valued variant of \cite[Lemma 2.8]{lusp23}
with a simplified proof.

\begin{lemma}
\label{L:extension}
Let $X$ be a compact convex set with $\ext X$ \lin. Let $F$ be a \fr space over $\ef$,  and $f$ be a bounded function in $\C_\alpha(\ext X,F)$ for some $\alpha<\omega_1$. Let $L=\ov{\co}f(\ext X)$. Then there exists a  Baire
measurable function $h\colon X\to L$ extending $f$.

If $f\in\C_1(\ext X,F)$, $h$ may be chosen from $\C_1(X,L)$.
\end{lemma}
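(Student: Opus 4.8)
The plan is to reduce the problem to a scalar extension problem and to let the Lindel\"of hypothesis enter through $z$-embeddedness. First I would record the elementary reductions. The set $L=\ov{\co}f(\ext X)$ is bounded, since the closed convex hull of a bounded subset of a Fr\'echet space is bounded. It is also separable: a continuous image of the \lin{} space $\ext X$ in the metrizable $F$ is \lin{}, hence separable, and a transfinite induction on the Baire class shows that every member of $\C_\alpha(\ext X,F)$ has separable range (if $f=\lim_n f_n$ pointwise with each $f_n$ of separable range, then $f(\ext X)\subset\ov{\bigcup_n f_n(\ext X)}$); finally the closed convex hull of a separable set is separable. Being a closed subset of the completely metrizable $F$, $L$ is itself completely metrizable, hence Polish. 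Lastly, since $\ext X$ is \lin{} and $X$ is compact Hausdorff, $\ext X$ is $z$-embedded in $X$, i.e.\ every cozero (equivalently, zero) set of $\ext X$ is the trace of one of $X$. Passing to countable unions and intersections, $\Sigma^b_\xi(\ext X)=\{B\cap\ext X\setsep B\in\Sigma^b_\xi(X)\}$ for every $\xi<\omega_1$; in particular every Baire subset of $\ext X$ is the trace of a Baire subset of $X$. This is the only place where the \lin{} property is used.

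For the first assertion I would proceed as follows. Fix a homeomorphic embedding $\iota\colon L\to Q:=[0,1]^\en$; as $L$ is Polish, $\iota(L)$ is a $G_\delta$ in $Q$. Each coordinate $f_j=\pi_j\circ\iota\circ f$ is a bounded function in $\C_\alpha(\ext X,[0,1])$, hence is $\Sigma^b_{\alpha+1}(\ext X)$-measurable. Using $z$-embeddedness I would extend each $f_j$ to a bounded Baire measurable $\widetilde f_j\colon X\to[0,1]$ by the standard dyadic device: choose Baire sets $A_{n,k}\subset X$ whose traces on $\ext X$ are the level sets $\{f_j>k2^{-n}\}$, arranged so that $A_{n,0}\supset A_{n,1}\supset\cdots$, $A_{n+1,2k}=A_{n,k}$ and $A_{n,k+1}\subset A_{n+1,2k+1}\subset A_{n,k}$ (possible since $\{f_j>(2k+1)2^{-(n+1)}\}$ is again a trace), and put $\widetilde f_j=\lim_n 2^{-n}\sum_k\chi_{A_{n,k}}$. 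Then $\widetilde f=(\widetilde f_j)\colon X\to Q$ is Baire measurable and extends $\iota\circ f$, and $B:=\widetilde f^{-1}(\iota(L))$ is a Baire subset of $X$ containing $\ext X$. The formula
\[
h(x)=\begin{cases}\iota^{-1}(\widetilde f(x)),& x\in B,\\ \ell_0,& x\in X\setminus B\end{cases}
\]
(with $\ell_0$ any fixed point of $L$) then defines a Baire measurable map $h\colon X\to L$ with $h|_{\ext X}=f$.

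For the refinement one must control the Baire class, which the construction above does not do: characteristic functions of $\Sigma^b_2$-sets are in general only of class two, and $B$ is only $\Pi^b_3$. When $f\in\C_1(\ext X,F)$ I would therefore invoke a relative Baire-one extension theorem -- a bounded Baire-one map of a \lin{} subspace of a compact space into a closed convex subset of a Fr\'echet space extends to a Baire-one map of the whole space -- applied directly to $f\colon\ext X\to L$; this again rests on the class-controlled $z$-embeddedness of $\ext X$ and yields $h\in\C_1(X,L)$ at once. Equivalently, one may extend $f$ into the separable Fr\'echet space $M=\ov{\span}\,L$ to some $\widetilde f\in\C_1(X,M)$ and compose with a Dugundji retraction $r\colon M\to L$ of the metrizable $M$ onto the closed convex, hence absolute retract, set $L$: then $h=r\circ\widetilde f\in\C_1(X,L)$ extends $r\circ f=f$. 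The step I expect to be the main obstacle is precisely this interplay between the two requirements on $h$ -- that its range remain inside the prescribed convex set $L$, and that its class be kept at one: truncating $\widetilde f$ on a Baire set is affordable only when all one wants is Baire measurability, whereas the class-one conclusion genuinely needs the relative Baire-one extension theorem (whose validity is exactly what the \lin{} hypothesis provides), supplemented, if the extension is carried out coordinatewise, by a retraction onto $L$ that does not raise the class.
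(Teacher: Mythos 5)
Your proposal is correct, but it reaches the lemma by a genuinely different route than the paper, so a comparison is worthwhile. The paper argues by transfinite induction on $\alpha$: the case $\alpha=1$ is delegated to the authors' earlier extension theorem (\cite[Theorem 30 and Proposition 28]{kalenda-spurny}), which produces a $\Sigma^b_2(X)$-measurable extension into the separable completely metrizable set $L$, converted into membership in $\C_1(X,L)$ by Lemma~\ref{L:baire-n}(c); for $\alpha>1$ one extends a pointwise approximating sequence $f_n\in\bigcup_{\beta<\alpha}\C_\beta(\ext X,L)$ by the induction hypothesis, notes that the set $C$ of points where the extensions converge is a Baire set containing $\ext X$, and patches with a constant point of $L$ off $C$. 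You instead prove the Baire-measurable assertion for all $\alpha$ at once and essentially from scratch: Lindel\"ofness of $\ext X$ gives $z$-embeddedness in the Tychonoff space $X$ (this is a correct standard fact), hence class-preserving traces of Baire sets; you then extend coordinatewise through an embedding of the Polish set $L$ into $[0,1]^{\en}$ by the dyadic level-set device (your nesting conditions do force $\abs{g_{n+1}-g_n}\le 2^{-(n+1)}$, so the construction converges), and clip back to $L$ on the Baire set $\widetilde f^{-1}(\iota(L))$, using that $\iota(L)$ is $G_\delta$ in the cube. This is self-contained for the measurable statement and makes the role of the Lindel\"of hypothesis more transparent, whereas the paper's inductive argument is shorter, never leaves $L$, and automatically reuses its class-one case; your separability and measurability reductions are the same ones the paper leaves implicit. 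For the $\C_1$ refinement you correctly diagnose that the dyadic device loses a class and that a genuine relative Baire-one extension theorem is needed; invoking it as a black box is legitimate, but be aware that this unnamed ingredient is precisely what the paper cites from \cite{kalenda-spurny} (plus Lemma~\ref{L:baire-n}(c)), so at that point your proof and the paper's coincide in their external dependence. Your variant via an extension into the closed linear span of $L$ followed by a Dugundji retraction $r$ onto the closed convex set $L$ is also fine and does not raise the class, but it presupposes the same extension theorem with the larger target, so it does not remove that dependence.
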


\begin{proof} We will prove the result by transfinite induction on $\alpha$. Suppose first that $\alpha=1$, i.e., that $f\in\C_1(\ext X,F)$. Since $L$ is separable and completely metrizable, by \cite[Theorem 30 and Proposition 28]{kalenda-spurny}
there is an extension $h:X\to L$ which is $\Sigma^b_2(X)$-measurable. Lemma~\ref{L:baire-n} now implies that $h\in\C_1(X,L)$.

Assume now that $\alpha>1$ and the assertion is valid for all $\beta<\alpha$. Suppose that $f\in\C_\alpha(\ext X,F)$ is a bounded mapping and let $L$ be as above. Then $f\in \C_\alpha(\ext X,L)$ by
Corollary~\ref{C:baire} (note that $\ext X$ is normal, being Lindel\"of and regular), and thus there exist mappings $f_n\in\bigcup_{\beta<\alpha}\C_\beta(\ext X,L)$ converging pointwise to $f$ on $\ext X$. Let $h_n\colon X\to L$ be their Baire measurable extensions and let
\[
C=\{x\in X\setsep (h_n(x))\text{ converges}\}.
\]
Let $\rho$ be a compatible complete metric on $F$. Then
\[
C=\{x\in X\setsep \forall k\in\en\ \exists l\in\en\ \forall m_1,m_2\ge l\colon \rho(h_{m_1}(x),h_{m_2}(x))<\frac1k\},
\]
which gives that $C$ is a Baire subset of $X$. Let $z$ be an arbitrary element of $L$. Then the function
\[
h(x)=\begin{cases}
\lim_{n\to \infty} h_n(x),& x\in C,\\
z,& x\in X\setminus C,
\end{cases}
\]
is the required extension.
\end{proof}

The next lemma enables us to deduce the vector-valued version from the scalar one.

\begin{lemma}
\label{L:meas}
Let $K$ be a compact space and $L$ be a separable convex subset of a \fr space $F$.
Let $f\colon K\to L$ satisfy $\tau\circ f\in \C_{\alpha}(K,L)$ for each $\tau\in F^*$. Then $f\in \C_{\alpha+1}(K,L)$.
\end{lemma}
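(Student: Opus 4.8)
The plan is to deduce the statement from Lemma~\ref{L:baire-n}(c): it suffices to prove that $f$ is $\Sigma^b_{\alpha+2}(K)$-measurable, and then the lemma gives $f\in\C_{\alpha+1}(K,L)$. Replacing $F$ by the closed linear span of $L$ (a separable \fr space; by the Hahn--Banach theorem this changes neither the hypothesis, since every functional on the span extends to $F$, nor the meaning of the conclusion), we may and do assume that $F$ itself is separable. Fix an increasing sequence $(p_k)$ of seminorms generating the topology of $F$.

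First I would reduce each seminorm $p_k$ to a countable family of functionals. For each $k$ the absolute polar $V_k$ of $\{y\in F\setsep p_k(y)\le1\}$ is weak$^*$-compact by the Alaoglu theorem and, $F$ being separable, weak$^*$-metrizable, hence weak$^*$-separable; choose a countable weak$^*$-dense set $\{\tau_{k,i}\setsep i\in\en\}\subset V_k$. Since $p_k(v)=\sup\{|\tau(v)|\setsep\tau\in V_k\}$ for each $v\in F$ (Bipolar Theorem, exactly as in the proof of Lemma~\ref{L:norma}(d)) and $\tau\mapsto|\tau(v)|$ is weak$^*$-continuous on the compact set $V_k$, density yields $p_k(v)=\sup_{i\in\en}|\tau_{k,i}(v)|$ for every $v\in F$.

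Now fix a countable dense set $\{y_j\setsep j\in\en\}$ in $L$. The key point is that $x\mapsto p_k(f(x)-y_j)$ belongs to $\C_{\alpha+1}(K,\er)$ for all $k,j\in\en$. Indeed, by hypothesis $\tau_{k,i}\circ f\in\C_\alpha(K,\ef)$, so composing with the continuous map $t\mapsto|t-\tau_{k,i}(y_j)|$ gives $x\mapsto|\tau_{k,i}(f(x))-\tau_{k,i}(y_j)|\in\C_\alpha(K,\er)$; since finite maxima of functions of class $\alpha$ are again of class $\alpha$, the function $x\mapsto p_k(f(x)-y_j)=\sup_{i}|\tau_{k,i}(f(x))-\tau_{k,i}(y_j)|$ is the pointwise limit of an increasing sequence of functions of class $\alpha$, hence lies in $\C_{\alpha+1}(K,\er)$ by the very definition of the Baire hierarchy. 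By the scalar instance of Lemma~\ref{L:baire-n}(c) this function is $\Sigma^b_{\alpha+2}(K)$-measurable, so $f^{-1}(\{y\in L\setsep p_k(y-y_j)<q\})=\{x\in K\setsep p_k(f(x)-y_j)<q\}\in\Sigma^b_{\alpha+2}(K)$ for every positive $q\in\qe$.

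It remains to observe that the sets $\{y\in L\setsep p_k(y-y_j)<q\}$, $k,j\in\en$ and $q\in\qe$ positive, form a base for the topology of $L$: given $y_0\in L$, $k\in\en$ and $\ep>0$, choose $j$ with $p_k(y_0-y_j)<\ep/3$ and a positive rational $q\in(p_k(y_0-y_j),\ep/2)$; then $y_0\in\{y\setsep p_k(y-y_j)<q\}\subset\{y\setsep p_k(y-y_0)<\ep\}$ by the triangle inequality. Since $L$ is second countable and $\Sigma^b_{\alpha+2}(K)$ is closed under countable unions, $f^{-1}(U)\in\Sigma^b_{\alpha+2}(K)$ for every open $U\subset L$, i.e. $f$ is $\Sigma^b_{\alpha+2}(K)$-measurable. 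As $K$ is normal and $L$ is a separable metrizable convex subset of the \fr space $F$, Lemma~\ref{L:baire-n}(c) yields $f\in\C_{\alpha+1}(K,L)$. The only genuinely delicate point is to locate where the loss of one Baire class is forced: it occurs in passing from the class-$\alpha$ functions $|\tau_{k,i}(f(\cdot))-\tau_{k,i}(y_j)|$ to their countable supremum $p_k(f(\cdot)-y_j)$, which need only be of class $\alpha+1$; this matches the shift already built into Lemma~\ref{L:baire-n}(c). A secondary technical care is the reduction of the seminorms to countably many functionals, which is exactly where separability of $F$ (equivalently, of $L$) is used.
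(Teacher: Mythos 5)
Your proof is correct, and it reaches the paper's intermediate goal --- $\Sigma^b_{\alpha+2}(K)$-measurability of $f$, followed by Lemma~\ref{L:baire-n}(c) --- by a genuinely different middle step. The paper first notes that $f$ is $\Sigma^b_{\alpha+1}$-measurable as a map into the \emph{weak} topology of the (separable) space $F$, using that this weak topology is hereditarily Lindel\"of, and then loses the one class at the level of sets via Lemma~\ref{L:meas-0} (every open subset of $F$ is weakly $F_\sigma$). You instead place the loss at the level of functions: writing each seminorm $p_k$ as a countable supremum of functionals from a weak$^*$-dense sequence in the polar $V_k$ (Alaoglu plus weak$^*$-metrizability of equicontinuous sets, and the bipolar identity already used in Lemma~\ref{L:norma}(d)), you show each $x\mapsto p_k(f(x)-y_j)$ is of class $\alpha+1$, and then exploit the countable base of seminorm balls of the separable set $L$. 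The class count comes out the same, and your route avoids Lemma~\ref{L:meas-0} and the hereditary Lindel\"of argument altogether, at the price of the polar/metrizability step (machinery the paper in any case employs in Lemma~\ref{L:c21}). Two facts you use tacitly --- that $\C_\alpha(K,\er)$ is stable under post-composition with continuous maps and under finite maxima --- are standard but would merit a one-line transfinite-induction remark; with that, the argument is complete, including the correct reduction to separable $F$ via the closed span of $L$ and the Hahn--Banach extension of functionals.
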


\begin{proof} Without loss of generality we can suppose that $F$ is separable. Let $f$ satisfy the assumptions. By Lemma~\ref{L:baire-n}, the composition $\tau\circ f$ is $\Sigma_{\alpha+1}^b(K)$-measurable for each $\tau\in F^*$. Since $(\tau\circ f)^{-1}(U)=f^{-1}(\tau^{-1}(U))$ for any $U\subset \ef$ and $F$, being separable, is hereditarily Lindel\"of in the weak topology, we get that $f$ is  $\Sigma_{\alpha+1}^b$-measurable as a mapping from $K$ to the weak topology of $F$. It follows from Lemma~\ref{L:meas-0} that it is  $\Sigma_{\alpha+2}^b(K)$-measurable as a mapping from $K$ to the original topology of $F$.
Thus $f\in\C_{\alpha+1}(K,L)$ by Lemma~\ref{L:baire-n}.
\end{proof}

\begin{lemma}\label{L:diri} Let $X$ be a compact convex set with $\ext X$ Lindel\"of, $F$ a \fr space and $h\colon X\to F$ a bounded Baire mapping.
Suppose moreover that one of the following conditions is satisfied.
\begin{itemize}
	\item[(S)] $X$ is simplex.
	\item[(R)] $X=(B_{E^*},w^*)$, where $E$ is a real $L_1$-predual and $h\r_{\ext X}$ is odd.
	\item[(C)] $X=(B_{E^*},w^*)$, where $E$ is a complex $L_1$-predual and $h\r_{\ext X}$ is homogeneous.
\end{itemize}
Then $Th$ is the unique strongly affine mapping which coincides with $h$ on $\ext X$.
Moreover, $Th$ is a Baire mapping and it is odd in case (R) and homogeneous in case (C).
\end{lemma}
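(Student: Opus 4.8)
The plan is to check the conclusions of Lemma~\ref{L:diri} one after another, reducing each vector-valued statement to a scalar one by composing with functionals $\tau\in F^*$. First, $Th$ is well defined by Lemma~\ref{L:integr-for-baire}, and it is strongly affine: $T$ is strongly affine by Lemma~\ref{l:te-sa} and takes values in $B_{\M(X,\ef)}$, so Lemma~\ref{L:fra-0}(ii) applies. Next I would verify $Th=h$ on $\ext X$. Fix $x\in\ext X$; the only Radon probability on $X$ with barycenter $x$ is $\ep_x$. In case (S) this forces $T(x)=\delta_x=\ep_x$, so $Th(x)=\int_X h\di\ep_x=h(x)$. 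In case (R), the construction behind Fact~\ref{fact:L1} (via Lemma~\ref{l:odd-miry2}(c)) gives $T(x)=\odd\ep_x$, and for $\tau\in F^*$, using Lemma~\ref{l:odd-miry}(d) on the symmetric set $\{x,-x\}$,
$$\tau(Th(x))=\int_X\tau\circ h\di\odd\ep_x=\int_X\odd(\tau\circ h)\di\ep_x=\tfrac12\bigl(\tau(h(x))-\tau(h(-x))\bigr)=\tau(h(x)),$$
the last equality because $h$ is odd on $\ext X\ni x,-x$; hence $Th(x)=h(x)$. Case (C) is analogous: $T(x)=\hom\ep_x$, and Lemma~\ref{l:hom-miry1}(d) applied to the homogeneous compact orbit $\{e^{it}x\setsep t\in[0,2\pi]\}$, together with the homogeneity of $h$ on $\ext X$, give $\tau(Th(x))=\hom(\tau\circ h)(x)=\tau(h(x))$.

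For uniqueness, let $g\colon X\to F$ be strongly affine with $g=h$ on $\ext X$. Given $x\in X$, choose a maximal Radon probability $\nu$ with barycenter $x$ (Choquet's theorem); since $\ext X$ is \lin, $\nu$ is carried by $\ext X$. Applying the strong affinity of both $g$ and $Th$ to $\nu$ and using $g=h=Th$ on $\ext X$,
$$g(x)=\int_X g\di\nu=\int_{\ext X}h\di\nu=\int_X Th\di\nu=Th(x),$$
so $g=Th$.

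For the Baire property, set $L=\ov{\aco}\,h(X)$; since $h(X)$ is separable (Lemma~\ref{L:baire}(a)), $L$ is a separable bounded closed convex subset of $F$, and $Th(X)\subset L$ by Lemma~\ref{L:norma}(c) (in case (S) even $Th(X)\subset\ov{\co}\,h(X)$). Fix $\alpha<\omega_1$ with $h\in\C_\alpha(X,F)$ (such $\alpha$ exists by Lemma~\ref{L:baire}). For each $\tau\in F^*$ the function $\tau\circ h$ lies in $\C_\alpha(X,\ef)$, its restriction to $\ext X$ is odd in case (R) resp.\ homogeneous in case (C), and $\tau\circ Th$ is strongly affine (Fact~\ref{fact1}) and extends $(\tau\circ h)|_{\ext X}$. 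By the scalar version of Theorem~\ref{T:dirichlet} --- \cite{Jel} for (S), \cite[Theorem~2.14]{lusp23} for (R), \cite[Theorem~2.22]{lusp-complex} for (C) --- the function $(\tau\circ h)|_{\ext X}$ has a strongly affine extension in $\C_{1+\alpha}(X,\ef)$, which by the scalar uniqueness (valid since $\ext X$ is \lin) must equal $\tau\circ Th$. Hence $\tau\circ Th\in\C_{1+\alpha}(X,\ef)$ for all $\tau\in F^*$, and Lemma~\ref{L:meas} yields $Th\in\C_\gamma(X,L)$ for some $\gamma<\omega_1$; in particular $Th$ is a Baire mapping.

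Finally, the oddness of $Th$ in case (R) and its homogeneity in case (C) follow at once from the corresponding rigidity of $T$: in case (R) the measure $-T(x)$ satisfies conditions (a)--(d) of Fact~\ref{fact:L1} for the point $-x$, so $T(-x)=-T(x)$ by uniqueness and $Th(-x)=\int_X h\di(-T(x))=-Th(x)$; in case (C), for a complex unit $\alpha$ the measure $\alpha\,T(x)$ satisfies (a)--(d) for $\alpha x$, so $T(\alpha x)=\alpha\,T(x)$ and $Th(\alpha x)=\alpha\,Th(x)$. I expect the Baire-measurability step to be the main obstacle: $T$ itself need not be a Baire mapping even when $\ext X$ is \lin (Example~\ref{ex:dikobraz}), so one cannot simply invoke Lemma~\ref{L:fra-0}(iii); the key observation is that, after composing with a functional, one lands exactly in the \emph{scalar} abstract Dirichlet problem, which is already solved in the quoted references.
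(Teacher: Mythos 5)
Most of your argument runs parallel to the paper's: well-definedness and strong affinity of $Th$ via Lemma~\ref{L:integr-for-baire}, Lemma~\ref{l:te-sa} and Lemma~\ref{L:fra-0}(ii); the identities $T(x)=\ep_x$, $T(x)=\odd\ep_x$, $T(x)=\hom\ep_x$ for $x\in\ext X$ giving $Th=h$ on $\ext X$; and the oddness/homogeneity of $Th$ from the uniqueness in Fact~\ref{fact:L1}. Your uniqueness argument is a genuine variant: the paper works with Baire supersets of $\ext X$ (maximal measures vanish on Baire sets disjoint from $\ext X$) and for that needs $\tau\circ g$ to be Baire measurable, which it gets from \cite[Theorem 5.2]{lusp}; you instead integrate directly over $\ext X$, which needs the stronger fact that for $\ext X$ Lindel\"of every maximal measure has outer measure one on $\ext X$. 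That fact is true (cover $\ext X$ by cozero sets whose closures miss a given compact subset of $X\setminus\ext X$, take a countable subcover, and apply the pseudo-support property to the complementary zero set), but you assert it without any justification; with that supplied, your uniqueness proof is fine and even avoids the citation the paper needs there.

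The genuine gap is in the Baire-measurability step. You identify $\tau\circ Th$ with the extension of $(\tau\circ h)\r_{\ext X}$ furnished by the scalar Dirichlet theorems and then invoke ``scalar uniqueness''. But the uniqueness you (and the paper) establish holds only among \emph{strongly affine} extensions, while the scalar results \cite{Jel}, \cite[Theorem 2.14]{lusp23}, \cite[Theorem 2.22]{lusp-complex}, as quoted in this paper, only provide an extension lying in $\fra_{1+\alpha}(X,\ef)$; for classes $\ge 2$ membership in an affine Baire class does not imply strong affinity even on a metrizable simplex (the paper itself recalls Choquet's example, \cite[Example~I.2.10]{alfsen}), and an affine Baire function is not determined by its values on $\ext X$ --- Choquet's function and the constant $1$ are two different affine Baire extensions of the same boundary data. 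So from what you quote you cannot conclude that the cited extension coincides with $\tau\circ Th$, hence you have not shown $\tau\circ Th$ is Baire. The paper closes exactly this point with a different tool: \cite[Theorem 5.2]{lusp} states that a scalar strongly affine function whose restriction to a Lindel\"of $\ext X$ is of Baire class $\alpha$ is itself of class $1+\alpha$; applied to $\tau\circ Th$ (or $\tau\circ g$ for any strongly affine extension $g$) and followed by Lemma~\ref{L:meas}, it yields the Baire property, and it is also what makes the paper's uniqueness argument work. Your proof is repaired either by citing that result, or by verifying that the extensions in the quoted scalar theorems are indeed strongly affine --- a property their statements, as reproduced here, do not assert.
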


\begin{proof} Let us show first the uniqueness. Suppose that $g_1,g_2$ are two strongly affine mappings which coincide with $h$ on $\ext X$. Fix any $\tau\in F^*$. Then, for each $i=1,2$ the function $\tau\circ g_i$ is strongly affine and $\tau\circ g_i\r_{\ext X}$ is a Baire mapping (as it coincides with $h\r_{\ext X}$), hence $\tau\circ g_i$ is a Baire mapping by \cite[Theorem 5.2]{lusp}. In particular, 
the set $\{x\in X\setsep \tau(g_1(x))=\tau(g_2(x))\}$ is a Baire set containing $\ext X$. Given any $x\in X$ let $\mu$ be a maximal probability representing $x$. Then $\mu$ is carried by any Baire set containing $\ext X$, hence
$\tau(g_1(x))=\mu(\tau\circ g_1)=\mu(\tau\circ g_2)=\tau(g_2(x))$. Hence $\tau\circ g_1=\tau\circ g_2$. Since $\tau\in F^*$ is arbitrary, we conclude $g_1=g_2$.

Further, observe that any strongly affine mapping coinciding with $h$ on $\ext X$ is a Baire mapping. Indeed, let $g$ be such a mapping and suppose that $h\in\C_\alpha(X,F)$. Using \cite[Theorem 5.2]{lusp} as in the previous paragraph we get $\tau\circ g\in\C_{1+\alpha}(X,\ef)$ for each $\tau\in F^*$, hence $g\in\C_{1+\alpha+1}(X,F)$ by Lemma~\ref{L:meas}. In particular, $g$ is a Baire mapping.

Finally, $Th$ is strongly affine by Lemma~\ref{l:te-sa} and Lemma~\ref{L:fra-0}(ii). To finish the proof it is enough to
show that $Th$ coincides with $h$ on $\ext X$ and that it enjoys the appropriate symmetry in cases (R) and (C). The fact that $Th$ is a Baire mapping then follows from the previous paragraph. Hence distinguish the three cases:

(S) For $x\in\ext X$ we have $Th(x)=\delta_x(h)=h(x)$ since $\delta_x$ is the Dirac measure supported at $x$.

(R) To show that $g$ is odd it is enough to observe that $T$ is odd (if $x^*\in X$, then $-T(x^*)$ satisfies the conditions (a)--(d) from Fact~\ref{fact:L1} for $-x^*$). Further, for any $x^*\in \ext X$ we have $T(x^*)=\odd\ep_{x^*}$, hence
$$Th(x^*)=\int_X h\di \odd\ep_{x^*}=\int_X\odd h\di\ep_{x^*}=\odd h(x^*)=h(x^*).$$
 
(C) This case is completely analogous to the case (R).
\end{proof}

The final ingredient is the following easy lemma:

\begin{lemma}\label{L:limita} Let $X$ be a compact convex set, $F$ a \fr space and $f$, $f_n$, $n\in\en$, strongly affine Baire mappings defined on $X$ with values in $F$. If the sequence $(f_n)$ is uniformly bounded and converges to $f$ pointwise on $\ext X$, it converges to $f$ pointwise on $X$.
\end{lemma}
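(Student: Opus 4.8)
The plan is to fix $x\in X$, pick a maximal probability measure $\mu$ representing $x$ (which exists by the Choquet representation theorem), and exploit strong affinity to write $f_n(x)=\int_X f_n\di\mu$ and $f(x)=\int_X f\di\mu$; then the whole assertion reduces to showing $\int_X f_n\di\mu\to\int_X f\di\mu$ in $F$. These integrals make sense: the $f_n$ and $f$ are Baire mappings on the compact space $X$, hence $\mu$-integrable by Lemma~\ref{L:integr-for-baire} and of separable range by Lemma~\ref{L:baire}(a); moreover $f$ is bounded by Lemma~\ref{4.3} (being strongly affine), so together with the uniform boundedness of $(f_n)$ the set $\bigcup_n(f_n-f)(X)$ is bounded in $F$, and each $f_n-f$ has separable range as well.

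The key step is to identify the set $C=\{y\in X\setsep f_n(y)\to f(y)\text{ in }F\}$ as a Baire subset of $X$ containing $\ext X$. Fixing an increasing sequence $(p_k)$ of continuous seminorms generating the topology of $F$, each $f_n-f$ is Baire measurable (the difference of two Baire measurable mappings with separable range into the metrizable space $F$), so each $y\mapsto p_k\bigl((f_n-f)(y)\bigr)$ is a Baire measurable real function on $X$, whence
$$C=\bigcap_{k\in\en}\bigcap_{j\in\en}\bigcup_{l\in\en}\bigcap_{m\geq l}\Bigl\{y\in X\setsep p_k\bigl(f_m(y)-f(y)\bigr)<\tfrac1j\Bigr\}$$
is a Baire set, and $\ext X\subseteq C$ by hypothesis. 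Since $\mu$ is maximal, it is carried by every Baire set containing $\ext X$ (the standard property of maximal measures, used already in the proof of Lemma~\ref{L:diri}), so $|\mu|(X\setminus C)=0$; that is, $f_n\to f$ $|\mu|$-almost everywhere.

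It then remains to pass to the limit under the integral sign, one seminorm at a time. For an arbitrary continuous seminorm $p$ on $F$, Lemma~\ref{L:norma}(d) applied to $f_n-f$ gives
$$p\bigl(f_n(x)-f(x)\bigr)=p\Bigl(\int_X(f_n-f)\di\mu\Bigr)\le\int_X p\circ(f_n-f)\di|\mu|,$$
where the integrand tends to $0$ $|\mu|$-almost everywhere (on $C$) and is bounded by the finite constant $\sup\{p(v)\setsep v\in\bigcup_n(f_n-f)(X)\}$; since $|\mu|$ is finite, the classical dominated convergence theorem yields $\int_X p\circ(f_n-f)\di|\mu|\to0$. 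As $p$ is arbitrary, $f_n(x)\to f(x)$ in $F$, and since $x\in X$ was arbitrary the lemma follows. I expect the only genuinely substantive point to be the measure-theoretic reduction of the second paragraph — that $C$ is Baire and that a maximal measure only ``sees'' Baire sets containing $\ext X$; everything afterwards is the routine dominated convergence passage.
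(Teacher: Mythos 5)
Your proposal is correct and follows essentially the same route as the paper: the paper also shows the convergence set is a Baire set containing $\ext X$, uses that a maximal representing measure is carried by it, and then combines strong affinity with dominated convergence. The only cosmetic difference is that you re-derive the vector-valued limit passage seminorm by seminorm via Lemma~\ref{L:norma}(d) and the classical scalar dominated convergence theorem, whereas the paper simply invokes Theorem~\ref{T:dct}.
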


\begin{proof} Let $A=\{x\in X\setsep f_n(x)\to f(x)\}$. By the assumption we have $A\supset \ext X$ and, moreover, $A$ is a Baire set (as the functions in question are Baire functions, cf. the proof of Lemma~\ref{L:extension}). Therefore any maximal probability measure is carried by $A$. Fix any $x\in X$ and a maximal probability $\mu$ representing $x$. Using Theorem~\ref{T:dct} we have
$$f(x)=\mu(f)=\lim\mu(f_n)=\lim f_n(x).$$
\end{proof}

Now we are prepared to complete the proof:

\begin{proof}[Proof of Theorem~\ref{T:dirichlet}.] Let us first prove the theorem for $\alpha=1$.
So, suppose that $f\in\C_1(\ext X,F)$ is a bounded mapping. Let $L=\ov{\aco} f(\ext X)$. By Lemma~\ref{L:extension}
there is $h\in\C_1(X,L)$ extending $f$. Set $g=Th$. By Lemma~\ref{L:diri} we know that $g$ is a strongly affine Baire mapping extending $f$. Now let us distinguish the cases:

(S) In the same way as in the proof of Theorem~\ref{T:aff-baire} we find (using Lemmata~\ref{selekce5} and~\ref{L:rich}) 
a metrizable simplex $Y$, an affine continuous surjection $\pi\colon X\to Y$, $\tilde h\in\C_1(Y,L)$ and a Baire mapping $\tilde g\colon Y\to L$ such that $h=\tilde h\circ \pi$ and $g=\tilde g\circ \pi$. By Lemma~\ref{perfectaff} the mapping $\tilde g$ is strongly affine. Moreover, since $\pi(\ext X)\supset\ext Y$, $\tilde g$ coincide with $\tilde h$ on $\ext Y$.
Lemma~\ref{L:diri} then yields  $\tilde g= T\tilde h$. By Theorem~\ref{T:dilation} and Lemma~\ref{L:fra-0}(iii) we get $\tilde g\in\fra_2(Y,L)$. Hence $g\in\fra_2(X,L)$.

The cases (R) and (C) are analogous.

Let us continue with the general case. Lemmata~\ref{L:extension} and~\ref{L:diri} imply that for any (odd, homogeneous) bounded $F$-valued map $f$ on $\ext X$ there is a unique strongly affine map $Sf$ extending $f$. By Lemma~\ref{L:limita} we know that $Sf_n\to Sf$ pointwise whenever $(f_n)$ is a bounded sequence converging to $f$ pointwise on $\ext X$. The already proved case
$\alpha=1$ shows that $Sf\in\fra_2(X,F)$ whenever $f\in\C_1(\ext X,F)$. Hence the result follows by transfinite induction.

\medskip

The case of $\ext X$ being $F_\sigma$ is proved in the next section using Theorem~\ref{T:weakDP}.

\medskip

Finally, suppose that $\ext X$ is closed. For each bounded $f\in\C_\alpha(\ext X,F)$  (odd, homogeneous) we construct the extension $g$ by the same method as above. We will prove that $g\in\fra_\alpha(\ext X,F)$. Since boundary measures are supported by $\ext X$, we have
$$g(x)=\int_{\ext X} f\di T(x),\quad x\in X.$$
Suppose first that $\alpha=0$, i.e., $f$ is continuous. Then we can prove that $g$ is continuous by a minor modification of the proof of Lemma~\ref{L:fra-0}(i) with the help of Lemma~\ref{l:bauer}. The general case then follows by transfinite induction using Lemma~\ref{L:limita}.
\end{proof}

\section{The weak Dirichlet problem for Baire mappings}\label{Sec:wdp}

In this section we will prove Theorem~\ref{T:weakDP}. The proof will be a simplified and generalized version of the proof of the main result of \cite{spurny-wdp}.

\begin{proof}[Proof of Theorem~\ref{T:weakDP}.]
(S) Suppose that $X$, $K$, $F$ and $f$ satisfy the assumptions. Set $H=\ov{\co}(K)$ and $L=\ov{\co}(f(K))$.
By Corollary~\ref{C:baire} we have $f\in\C_\alpha(K,L)$. Fix a countable family $\F=\{f_n\setsep n\in\en\}\subset\C(K,L)$
such that $f\in(\F)_\alpha$.

We define $\varphi\colon K\to F^\en$ by
\[
\varphi(x)=(f_n(x))_{n=1}^\infty,\quad x\in K.
\]
Then $\varphi$ is a continuous mapping of $K$ into $F^\en$.  Further we set
\[
\Gamma(x)=\begin{cases} \{\varphi(x)\}, & x\in K,\\ \ov{\co}\, \varphi(K),& x\in X\setminus K. \end{cases}
\]
Then $\Gamma$ is a  lower semicontinuous mapping with closed values. Moreover, the graph of $\Gamma$ is convex. By Theorem~\ref{T:selekcelsc} there exists a continuous
affine  selection $\gamma\colon X\to F^\en$ of $\Gamma$.

Let $h_n=\pi_n\circ \gamma$, $n\in\en$, where $\pi_n\colon F^\en\to F$ is the $n$-th projection mapping. Then $h_n$ are continuous affine mappings of $X$ into $F$ such that
\[
h_n(x)=\pi_n(\gamma(x))=\pi_n(\varphi(x))=f_n(x),\quad x\in K.
\]
Further, for each $x\in X$ we choose some $r(x)\in H$ such that $h_n(x)=h_n(r(x))$ for each $n\in\en$. If $x\in H$ we set $r(x)=x$. If $x\in X \setminus H$, then $\gamma(x)\in\ov{\co}\,\varphi(K)=\ov{\co}\,\gamma(K)=\gamma(H)$, thus there exists an element $r(x)\in H$ such that $\gamma(x)=\gamma(r(x))$.

Finally, we can construct the required extension by
\begin{equation}
\label{eq:rozsireni}
\hat{f}(x)=\begin{cases} \delta_x(f), & x\in H,\\ \hat{f}(r(x)), & x\in X\setminus H.\end{cases}\end{equation}

First, if $x\in H$, then  $\delta_x$  is supported by $K$, thus $\hat{f}$ is well defined on $H$. Subsequently, $\hat{f}$ is extended to $X$ using the mapping $r$ fixed above. It is clear that $\hat{f}$ is an extension of $f$. Moreover,
\begin{equation}
\label{eq:fhat}
\hat{f}\in\left(\{h_n\setsep n\in\en\}\right)_\alpha\subset\fra_{\alpha}(X,L).\end{equation}
Indeed, the formula \eqref{eq:rozsireni} enables us to assign to each Borel function $g\colon K\to L$ its extension $\hat{g}\colon X\to L$. Note that 
$$\hat{f_n}=h_n \mbox{ for }n\in\en.$$ 
Further, it follows from Theorem~\ref{T:dct} that
$$g_k\to g\mbox{ pointwise on }K\Rightarrow \hat{g_k}\to\hat{g}\mbox{ pointwise on }X.$$
Hence it is easy to prove by transfinite induction on $\beta$ that
$$g\in\left(\{f_n\setsep n\in\en\}\right)_\beta\Rightarrow\hat{g}\in\left(\{h_n\setsep n\in\en\}\right)_\beta,$$
so, in particular, \eqref{eq:fhat} holds.

(R) We assume that $K$ is moreover symmetric and $f$ is odd. We proceed in the same way. The sets $H$ and $L$ are convex and symmetric. The family $\F$ may consist of odd functions by Lemma~\ref{l:odd}. Then $\varphi$ is moreover odd and the graph of $\Gamma$ is convex and symmetric. Thus the selection $\gamma$ may be chosen to be moreover odd and the functions $h_n$ are odd as well. The formula for the extension is similar:
$$\hat{f}(x)=\begin{cases} T(x)(f), & x\in H,\\ \hat{f}(r(x)), & x\in X\setminus H.\end{cases}$$
The rest of the proof is the same, we use the fact that $T(x)$ is supported by $K$ whenever $x\in H$.

(C) The proof is completely analogous to the real case.
\end{proof}

Now we give the proof of the missing part from Theorem~\ref{T:dirichlet}.

\begin{proof}[Proof of Theorem~\ref{T:dirichlet} in case $\ext X$ is $F_\sigma$.]
(S) It is enough to prove that $x\mapsto\delta_x(f)$ is in $\fra_1(X,F)$ for any bounded $f\in \C_1(\ext X,F)$.
Hence choose any bounded $f\in\C_1(\ext X,F)$. Fix a bounded sequence of continuous functions $f_n:\ext X\to F$ pointwise converging to $f$ (this can be done by Corollary~\ref{C:baire}). Let $\ext X=\bigcup_n K_n$, where $(K_n)$ is an increasing sequence of compact sets. By Theorem~\ref{T:weakDP} there are affine continuous maps $h_n$ extending $f_n\r_{K_n}$. Then $h_n$ converge to $f$ pointwise on $\ext X$, thus for each $x\in X$
we have
$$\delta_x(f)=\lim_{n\to\infty}\delta_{x}(h_n)=\lim_{n\to\infty} h_n(x).$$
This completes the proof.

(R) The proof is analogous. Assume that $f$ is moreover odd. We can choose $f_n$ to be odd (Lemma~\ref{l:odd}) and $K_n$ to be symmetric.
In the final computation we use $T(x)$ instead of $\delta_x$.

(C) If $f$ is homogeneous, we can choose $f_n$ to be homogeneous by Lemma~\ref{l:hom}. Moreover, $K_n$ can be chosen to be
homogeneous as well. Indeed, if $K$ is compact, then $\bigcup\{\alpha K\setsep \alpha\in\ce,|\alpha|=1\}$ is compact as a continuous image of the compact set $K\times \{\alpha\in\ce\setsep|\alpha|=1\}$.
\end{proof}

As an another consequence of Theorem~\ref{T:weakDP} we get the following extension theorem.

\begin{thm}
\label{T:weak2}
Let $K$ be a compact subset of a completely regular space $Z$, $F$ be a \fr space and $f\colon K\to F$ be a bounded mapping in $\C_\alpha(K,F)$. Then there exists a mapping $h\colon Z\to F$ in $\C_\alpha(Z,F)$ extending $f$ such
that $h(Z)\subset \overline{\co} f(K)$.
\end{thm}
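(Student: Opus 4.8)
The plan is to reduce Theorem~\ref{T:weak2} to the simplex case of Theorem~\ref{T:weakDP} by embedding $Z$ into a suitable compact convex set in which $K$ sits as a subset of the extreme points. First I would invoke complete regularity of $Z$ to embed $Z$ homeomorphically into a compact Hausdorff space; concretely, let $\beta Z$ be the \v Cech--Stone compactification (or any compactification), so that $K$ remains compact in $\beta Z$ and $f\colon K\to F$ is still a bounded mapping in $\C_\alpha(K,F)$ (the Baire class is unchanged since $K$ carries its own topology). Now consider the space of probability measures $P(\beta Z)=\M^1(\beta Z)$ equipped with the weak$^*$ topology; this is a metrizable-if-$Z$-is-metrizable, but in general just compact convex, Bauer simplex whose extreme points are exactly the Dirac measures, so $\ext P(\beta Z)$ is homeomorphic to $\beta Z$ and in particular $K$ embeds as a compact subset $\widetilde K=\{\varepsilon_x\setsep x\in K\}$ of $\ext P(\beta Z)$. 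The composition of $f$ with the canonical homeomorphism $x\mapsto\varepsilon_x$ gives a bounded mapping $\widetilde f\in\C_\alpha(\widetilde K,F)$.

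Next I would apply Theorem~\ref{T:weakDP}(S) to the simplex $X=P(\beta Z)$, the compact set $\widetilde K\subset\ext X$, and the mapping $\widetilde f$: this yields an extension $\widehat f\in\fra_\alpha\bigl(P(\beta Z),\ov{\co}\,\widetilde f(\widetilde K)\bigr)$. Note $\widetilde f(\widetilde K)=f(K)$, so $\widehat f$ takes values in $\ov{\co}\,f(K)$. Finally, restrict $\widehat f$ to the copy of $Z$ inside $P(\beta Z)$ given by $z\mapsto\varepsilon_z$. This restriction is the desired $h\colon Z\to F$: it extends $f$ because on $K$ the embedding $z\mapsto\varepsilon_z$ agrees with $x\mapsto\varepsilon_x$ and $\widehat f(\varepsilon_x)=\widetilde f(\varepsilon_x)=f(x)$; it lands in $\ov{\co}\,f(K)$ since $\widehat f$ does; and it is of class $\C_\alpha(Z,F)$ because $z\mapsto\varepsilon_z$ is a homeomorphic embedding of $Z$ into $P(\beta Z)$, so composing a class-$\fra_\alpha\subset\C_\alpha$ mapping with a continuous map of class $0$ keeps the class $\le\alpha$ (this is a routine transfinite induction on $\alpha$, reducing the continuous-combination structure witnessing membership in $\C_\alpha$ along the embedding).

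The main obstacle I anticipate is purely bookkeeping rather than conceptual: one must check carefully that $K$ really sits as a \emph{closed} (compact) subset of $\ext P(\beta Z)$ and that the Baire class of $\widetilde f$ computed intrinsically on $\widetilde K$ equals $\alpha$ — this is fine because $\widetilde K$ with its subspace topology from $P(\beta Z)$ is homeomorphic to $K$, and Baire classes are an intrinsic notion of a topological space together with the target. One should also confirm that $P(\beta Z)$ is genuinely a simplex (it is a Bauer simplex, the maximal representing measure of $\varepsilon_\mu$ being $\mu$ itself pushed forward appropriately — more precisely every point of $P(\beta Z)$ has a unique maximal representing measure because $\ext P(\beta Z)$ is closed). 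Once these standard facts are in place, no further estimates are needed; the whole argument is a transport of Theorem~\ref{T:weakDP}(S) along the embedding $Z\hookrightarrow\ext P(\beta Z)\subset P(\beta Z)$.
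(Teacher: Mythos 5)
Your proposal is correct and follows essentially the same route as the paper: pass to the \v{C}ech--Stone compactification $\beta Z$, view $K$ as a compact subset of $\ext \M^1(\beta Z)$ (a Bauer simplex), apply Theorem~\ref{T:weakDP}(S), and restrict the resulting extension along the Dirac embedding $z\mapsto\varepsilon_z$. The details you supply (that $\M^1(\beta Z)$ is a simplex with closed extreme boundary $\beta Z$, and that composing a class-$\fra_\alpha$ map with the continuous embedding preserves the class $\C_\alpha$) are exactly the facts the paper leaves implicit.
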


\begin{proof}
Let $L=\ov{\aco} f(K)$. Let $\beta Z$ be the \v{C}ech-Stone compactification of $Z$ and $X=\M^1(\beta Z)$. Then $X$ is  a simplex and $\ext X$ is canonically identified with $\beta Z$. Hence $K$ is a compact subset of $\ext X$. Therefore the result follows from Theorem~\ref{T:weakDP}.
\end{proof}

\section{Affine version of the Jayne-Rogers selection theorem}

The aim of this section is to prove Theorem~\ref{T:selekceusc}. To this end we need the following lemma on `measure-convexity':

\begin{lemma}
\label{L:sel-convex}
Let $X$ be a compact convex set, $F$ be a \fr space over $\ef$ and $\Gamma\colon X\to F$ be an upper or lower semicontinuous mapping with closed values and convex graph, such that $\Gamma(X)$ is bounded. Let $f\colon X\to F$ be a Baire
measurable selection from $\Gamma$. If $\mu\in\M^1(X)$, then
\[
\mu(f)\in \Gamma(r(\mu)).
\]
\end{lemma}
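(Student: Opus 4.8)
The plan is to reduce the vector-valued assertion to a scalar Jensen-type inequality by Hahn--Banach separation in $F$, and then to establish that inequality, splitting according to whether $\Gamma$ is upper or lower semicontinuous.

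First I would note that $f$ is bounded (its range lies in the bounded set $\Gamma(X)$) and Baire measurable, hence $\mu$-integrable by Lemma~\ref{L:integr-for-baire}, so that $\mu(f)=\int_X f\di\mu$ is well defined; write $b=r(\mu)$. Since the graph $G$ of $\Gamma$ is convex, every fibre $\Gamma(x)$ is convex, and by hypothesis it is closed and, being a value of the selection $f$, nonempty; in particular $\Gamma(b)$ is a nonempty closed convex subset of $F$. Assuming $\mu(f)\notin\Gamma(b)$, I would apply the Hahn--Banach separation theorem to obtain $\tau\in F^*$ and $c\in\er$ with
$$\Re\tau(\mu(f))>c\ge\sup\{\Re\tau(y)\setsep y\in\Gamma(b)\}.$$
Then I would introduce the scalar function $\psi\colon X\to\er$, $\psi(x)=\sup\{\Re\tau(y)\setsep y\in\Gamma(x)\}$. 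It is finite (as $\Re\tau$ is bounded on $\Gamma(X)$), concave (as $G$ is convex), and — by a direct check against the definitions of semicontinuity for set-valued maps — upper semicontinuous if $\Gamma$ is, and lower semicontinuous if $\Gamma$ is; in either case $\psi$ is bounded and Borel. Setting $g=\Re\tau\circ f$, a bounded Baire measurable real function with $g\le\psi$ pointwise (because $f(x)\in\Gamma(x)$), I get $\Re\tau(\mu(f))=\int_X g\di\mu\le\int_X\psi\di\mu$, so the whole proof reduces to the inequality $\int_X\psi\di\mu\le\psi(b)$, which contradicts the displayed one.

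To prove $\mu(\psi)\le\psi(r(\mu))$ for a bounded concave $\psi$ that is upper or lower semicontinuous I would argue as follows. If $\psi$ is upper semicontinuous, then $-\psi$ is a bounded lower semicontinuous convex function, hence the supremum of the continuous affine functions it dominates (the standard Hahn--Banach argument applied to the closed convex epigraph of $-\psi$; cf.\ \cite[Chapter I]{alfsen}), so $\psi$ is the infimum of its continuous affine majorants and $\mu(\psi)\le\inf\{a(r(\mu))\setsep a\in\fra(X,\er),\ a\ge\psi\}=\psi(r(\mu))$. If $\psi$ is lower semicontinuous, I would instead approximate: for every finitely supported $\nu\in\M^1(X)$ with $r(\nu)=b$ the elementary Jensen inequality gives $\nu(\psi)\le\psi(b)$; the finitely supported probabilities with barycenter $b$ are weak$^*$ dense in $\M_{b}(X)$ (partition $X$ into finitely many Borel sets each contained in a small convex translate and replace each by a point mass at its barycenter — this preserves the barycenter and approximates the action on continuous functions); and $\nu\mapsto\nu(\psi)$ is lower semicontinuous on $\M^1(X)$ since $\psi$ is bounded and lower semicontinuous. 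Taking a net of such $\nu_\alpha\to\mu$ then yields $\mu(\psi)\le\liminf_\alpha\nu_\alpha(\psi)\le\psi(b)$.

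I expect the lower semicontinuous case to be the main obstacle. There the graph of $\Gamma$ need not be closed, so one cannot simply observe that $(r(\mu),\mu(f))$ lies in the (closed convex) graph, and the usual semicontinuous-regularization tricks only give $\mu(\psi)\le\psi^{*}(r(\mu))$ for the upper regularization $\psi^{*}$, which may strictly exceed $\psi(r(\mu))$; it is precisely the weak$^*$ density of finitely supported measures in $\M_{b}(X)$ that pins the estimate down at the exact point $b$. The upper semicontinuous case is routine, since there $G$ is closed and the scalar inequality is classical.
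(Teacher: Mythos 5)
Your argument is correct and follows essentially the same route as the paper: separate $\mu(f)$ from $\Gamma(r(\mu))$ by a functional $\tau$, pass to the concave marginal function $x\mapsto\sup(\Re\tau)(\Gamma(x))$, which inherits upper (resp.\ lower) semicontinuity from $\Gamma$, and conclude from the barycentric inequality $\mu(\psi)\le\psi(r(\mu))$. The only difference is that the paper simply cites \cite[Proposition 4.7]{lmns} for that inequality, whereas you prove it directly (continuous affine majorants in the u.s.c.\ case, weak$^*$ density of finitely supported measures with barycenter $r(\mu)$ plus lower semicontinuity of $\nu\mapsto\nu(\psi)$ in the l.s.c.\ case), both of which are sound.
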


\begin{proof}
Since $f$ is Baire measurable, $\mu(f)$ is well defined by Lemma~\ref{L:integr-for-baire}. Set $x=r(\mu)$. Assuming $\mu(f)\notin \Gamma(x)$, $\Gamma(x)$, as a convex closed set, can by separated from $\mu(f)$ by some element from $F^*$, i.e.,
there exists $\tau\in F^*$ and $c\in\er$ such that
\[
(\Re \tau)(\mu(f))>c>\sup (\Re \tau)(\Gamma(x)).
\]
Let
\[
\varphi(y)=\sup (\Re \tau)(\Gamma(y)),\quad y\in X.
\]
Since the graph of $\Gamma$ is convex, $\varphi$ is a  concave function on $X$. Moreover, if $\Gamma$ is upper semicontinuous, then $\varphi$ is upper semicontinuous; and if $\Gamma$ is lower semicontinuous, then $\varphi$ is lower semicontinuous.
 In both case we get by  \cite[Proposition 4.7]{lmns}
\[
c<(\Re \tau)(\mu(f))=\int_X (\Re \tau)(f(y))\di \mu(y)\le \int_X \varphi(y)\di\mu(y)\le \varphi(x)<c.
\]
This contradiction finishes the proof.
\end{proof}

Now we can give the proof:

\begin{proof}[Proof of Theorem~\ref{T:selekceusc}.]
(S) By \cite[Theorem]{jaro82c} (which is an improved version of \cite[Theorem 2]{jaro82}) and Lemma~\ref{L:baire}(c) there is a selection $f$ from $\Gamma$ which belongs to $\C_1(X,F)$. Set $g(x)=\delta_x(f)$ for $x\in X$.
By Theorem~\ref{T:dilation} and Lemma~\ref{L:fra-0}(iii) we have $g\in\fra_2(X,F)$ and by Lemma~\ref{L:sel-convex}, $g$ is a selection of $\Gamma$.

(R) We proceed in the same way. We find a selection $f$ in $\C_1(X,F)$ and set $f_1=\odd f$. Observe that $f_1$ is still a selection from $\Gamma$. Indeed, if $x\in X$, then $(x,f(x))$ and $(-x,f(-x))$ belong to the graph of $\Gamma$. Since the graph of $\Gamma$ is symmetric, $(x,-f(-x))$ belongs there, too. By convexity we conclude that $(x,f_1(x))=(x,\odd f(x))$ belongs there as well. Further, set $g=Tf_1$. By Theorem~\ref{T:dilation} and Lemma~\ref{L:fra-0}(iii) we have $g\in\fra_2(X,F)$. Moreover, let $x\in X$ be arbitrary. Fix a maximal representing measure $\mu$ of $x$. Then
$$g(x)=T(x)(f_1)=\odd\mu(f_1)= \mu(f_1)\in \Gamma(x)$$
by Lemmata~\ref{L:sel-convex} and~\ref{l:odd-miry}(d), hence $g$ is a selection from $\Gamma$. It remains to observe that $g$ is odd since $T$ is odd.

(C) This case is analogous to the case (R). We find a selection $f$ in $\C_1(X,F)$ and set $f_1=\hom f$. Observe that $f_1$ is still a selection of $\Gamma$. Indeed, if $x\in X$ and $t\in[0,2\pi]$, then $(e^{it}x,f(e^{it}x))$ belongs to the graph of $\Gamma$. Since the graph is homogeneous, it contains also $(x,e^{-it}f(e^{it}x))$, hence $e^{-it}f(e^{it}x)\in\Gamma(x)$. Since $\Gamma(x)$ is closed and convex, we get $f_1(x)=\hom f(x)\in\Gamma(x)$. The rest of the proof is the same as in case (R).
\end{proof}

It is natural to ask whether we can obtain a nice affine selection even in case $X$ is not metrizable. We stress that the Jayne-Rogers theorem requires metrizability both of the domain space and of the range space. However, if $F$ is separable, it easily follows from the Kuratowski--Ryll-Nardzewski selection theorem (see \cite{kurn} or \cite[Theorem 5.2.1]{srivastava}) that $F$ admits a Borel measurable selection, in fact a selection of the first Borel class in the sense of \cite{spurny-amh}. However, an affine Borel selection need not exist even in the scalar case. This is illustrated by the following example.
In the same example we show that the class of the affine selection in the metrizable case cannot be improved.

\begin{example}\label{ex:selekce}
There are simplices $X_1$, $X_2$ and upper semicontinuous mappings $\Gamma_i\colon X_i\to\er$ with closed values, bounded range and convex graph for $i=1,2$ such that the following assertions hold:
\begin{itemize}
	\item[(i)] $X_1$ is metrizable and $\Gamma_1$ admits no affine Baire-one selection.
	\item[(ii)] $X_2$ is non-metrizable and $\Gamma_2$ admits no affine Borel selection.
\end{itemize}
\end{example}

\begin{proof} Let $A\subset [0,1]$ be any subset. Let $K$, $\A$ and $X$ be defined as in Example~\ref{ex:dikobraz}.
We again consider $K$ canonically embedded into $X$. Fix an arbitrary set $B\subset A$ and define two real-valued functions on $K$ by
$u_0=\chi_{B\times\{1\}}$ and $v_0=\chi_{(B\times\{1\})\cup ([0,1]\times\{0\})}$. Then $u_0\le v_0$. We further define two functions on $X$ by the formulas
$$\begin{aligned}
u(x)&=\sup\{ h(x)\setsep h\in\fra(X,\er), h\le u_0\mbox{ on }K\}, \\
v(x)&=\inf\{ h(x)\setsep h\in\fra(X,\er), h\ge v_0\mbox{ on }K\}.
\end{aligned}$$
Then $u$ is a lower semicontinuous convex function on $X$ and $v$ is an upper semicontinuous concave function on $X$. Moreover,
clearly $u\le v$ (if $h_1,h_2\in\fra(X,\er)$ with $h_1\le u_0$ and $h_2\ge v_0$ on $K$, then $h_1\le h_2$ on $\ext X$, hence $h_1\le h_2$ on $X$). Therefore the formula $\Gamma(x)=[u(x),v(x)]$, $x\in X$, defines a bounded upper semicontinuous mapping with nonempty closed values and convex graph.

We claim that $u\r_K=u_0$ and $v\r_K=v_0$. This follows from abstract results \cite[Propositions 3.48 and 3.55]{lmns}, but it can be seen also directly:

Since $0\le u_0$, it follows that $u(x)=0=u_0(x)$ for $x\in K\setminus(B\times \{1\})$. If $x=(b,1)$ for some $b\in B$, consider
the function $f_b=\chi_{(b,1)}-\chi_{(b,-1)}$. Then $f_b\in \A$ and $f_b\le u_0$. Since $f_b$ defines a function in $\fra(X,\er)$, we conclude that $u(x)=1=u_0(x)$.

Since $v_0\le 1$, we get $v(x)=1=v_0(x)$ for $x\in (B\times\{1\})\cup ([0,1]\times\{0\})$. If $x=(a,-1)$ for some $a\in A$, then $1+f_a\ge v_0$, thus
$v(x)=0=v_0(x)$. If $x=(a,1)$ for some $a\in A\setminus B$, then $1-f_a\ge v_0$, thus $v(x)=0=v_0(x)$.

In particular, $u=v$ on $A\times\{-1,1\}$. Let $g$ be any affine selection from $\Gamma$. Then
$$f(a,0)=\begin{cases} \frac12, & a\in B,\\ 0, & a\in A\setminus B.\end{cases}$$

Therefore, to construct $X_1$ it is enough to take $A$ to be a countable dense subset of $[0,1]$ and $B\subset A$ such that both $B$ and $A\setminus B$ are dense. To construct $X_2$ it is enough to take $A$ to be an uncountable Borel set and $B\subset A$ a non-Borel subset.
\end{proof}

\section{Sharpness of results and open problems}

In this final section we collect several open questions and discuss which of the results are sharp.
We start by pointing out a gap between Theorem~\ref{T:bap} and Example~\ref{E:cap}.

\begin{question} Let $X$ be a compact convex set and $E$ a Banach space having the approximation property
(or the compact approximation property). Does any affine function $f\in\C_1(X,E)$ belong to $\fra_1(X,E)$?
\end{question}

The next question concerns optimality of constants in Theorem~\ref{T:bap}.

\begin{question} Let $X$ be a compact convex set and $E$ a Banach space with the bounded approximation property.
Does any affine function $f\in\C_1(X,B_E)$ belong to $\fra_1(X,B_E)$? 
\end{question}

We continue with problems concerning Theorem~\ref{T:dilation}. In case of a  metrizable simplex we have proved that the mapping $T$ is of class $\fra_1$ when the target space is $\M^1(X)$. The same proof gives a slightly stronger conclusion that $T$ is of class $\fra_1$ when the target space is $\M^1(\ov{\ext X})$ and similar statement in cases (R) and (C). However the following question seems to be open.

\begin{question} Let $X$ be a metrizable simplex. Is $T\in\fra_1(X,\M^1(\ext X))$? Is the analogous statement valid for $L_1$-preduals?
\end{question}

As we have remarked above, the metrizability assumption was used in the proof of Theorem~\ref{T:dilation} in an essential way.
Further, the metrizability assumption cannot be omitted or just weakened to the Lindel\"of property of $\ext X$ by Example~\ref{ex:dikobraz}. However, the general question when the operator $T$ belongs to the class $\C_1$ or even $\fra_1$ seems to be hard. A sufficient condition is metrizability of $X$ or closedness of $\ext X$. But these conditions are not necessary. It is easy to construct a simplex $X$ when $T$ is of class $\fra_1$ even though $X$ is not metrizable and $\ext X$ is not closed. For example, the simplex constructed by the method used in \cite[Theorem 4]{kalenda-bpms} starting with
$K=[0,\omega_1]$ and $A=\{\omega\}$. (This construction is based upon the well-known constructions from \cite{stacey} and \cite{bi-de-le}.) It seems that the following question is natural. (By an $(S,R,C)$ compact set we mean a compact convex set which is either a simplex of the dual unit ball of a real or complex $L_1$-predual.)

\begin{question}\label{q:kan} Let $X$ be an (S,R,C) compact set. Is $T$ of the class $\C_1$ or even $\fra_1$ if one of the below conditions is satisfied?
\begin{itemize}
	\item $\ext X$ is $K$-analytic.
	\item $\ext X$ is a Baire set.
	\item $\ext X$ is $F_\sigma$.
\end{itemize}
\end{question}

Let us further remark that the operator $T$ cannot be `too non-measurable' since it is strongly affine due to Lemma~\ref{l:te-sa}. More precisely, it follows from the proof of this lemma that in cases (S) and (R) $Tf$ is a uniform limit of a sequence of differences of lower semicontinuous functions for any $f\in\C(X,\er)$.
In case (C) the same property is enjoyed by $\Re Tf$ and $\Im Tf$ for each $f\in\C(X,\ce)$ by the proof of \cite[Lemma 4.12]{petracek-spurny}. Hence, in all three cases, $Tf$ is Borel measurable for any scalar continuous function $f$. (In fact, in this case $Tf$ is of the first Borel class in the sense of \cite{spurny-amh}.) It easily follows that $T^{-1}(U)$ is a Borel set (of the first additive class in the sense of \cite{spurny-amh}) for any cozero set $U$ in the respective set of measures (but it need not be Borel measurable as the spaces from Example~\ref{ex:dikobraz} easily show).
However, the function $Tf$ need not be of the first Baire class -- an example is given in \cite[Theorem 4]{kalenda-bpms}.
In fact, in the quoted example $T$ is not in the class $\fra_\alpha$ for any $\alpha<\omega_1$ since $Tf$ need not even be a Baire function (it is a consequence of the quoted example and \cite[Theorem 5]{kalenda-bpms}).

We continue by looking at Theorem~\ref{T:aff-baire}.  It is sharp even in the scalar case, i.e. the shift $\alpha\to1+\alpha$ may occur. In the case (S) it was showed in \cite{spurnytams} for $\alpha=2$ and in \cite{spu-zel} for general $\alpha\in[2,\omega)$. The same examples can be easily transferred to the cases (R) and (C). What we further do not know is the following question.

\begin{question} Let $X$ be an (S,R,C) compact set such that $\ext X$ is a Lindel\"of resolvable set. Can one replace $1+\alpha$ by $\alpha$ in Theorem~\ref{T:aff-baire}?
\end{question}

Recall that a subset of a $X$ is said to be \emph{resolvable} (or a \emph{H-set}) if its characteristic function has the point of continuity property. For more details see \cite{spu-ka}. Within metrizable spaces this class is defined in \cite[\S12, III]{kuratowski} using an equivalent condition. Further, a subset of a completely metrizable space is resolvable if and only if it is simultaneously $F_\sigma$ and $G_\delta$. Thus, the answer is positive if $X$ is metrizable.
Further, the answer is also positive in the scalar case. The case (S) follows from \cite{spu-ka}, the case (R) from \cite[Theorem 1.4]{lusp}, the case (C) from \cite[Theorem 2.23]{lusp-complex}.

The conclusion of Theorem~\ref{T:dirichlet} is optimal since it follows from the optimality of Theorem~\ref{T:aff-baire} (the above-mentioned examples from \cite{spurnytams} and  \cite{spu-zel} are metrizable). However, it is not clear whether it is optimal for $\alpha=0$. 

\begin{question} Let $X$ be an (S,R,C) compact with $\ext X$ Lindel\"of, $F$ be a \fr space and $f\colon\ext X\to F$ a bounded continuous (odd, homogeneous) map. Can $f$ be extended to an element of $\fra_1(X,F)$?
\end{question}

In the scalar case the answer is positive (by \cite{Jel,lusp23,lusp-complex}), but in the vector case our proof gives just an extension in $\fra_2(X,F)$. Moreover, if $g$ is such an extension, the scalar version yields $\tau\circ g\in\fra_1(X,\ef)$ for each $\tau\in F^*$.

Another question is the following.

\begin{question} Let $X$ be an (S,R,C) compact set such that $\ext X$ is a Lindel\"of resolvable set. Can one replace $1+\alpha$ by $\alpha$ in Theorem~\ref{T:dirichlet}?
\end{question}

In the scalar case the answer is positive (by \cite{spu-ka,lusp23,lusp-complex}). In the vector case we are able to prove it
only if $\ext X$ is an $F_\sigma$ set, hence the answer is positive if $X$ is metrizable.

Theorem~\ref{T:dirichlet} deals with extending Baire mappings. It is natural to ask whether we can extend Baire measurable mappings. If the target space $F$ is separable, then any Baire measurable mapping with values in $F$ which is defined on a regular Lindel\"of space is a Baire mapping of the respective class (by Lemma \ref{L:baire-n}).
Hence the question of extending Baire measurable mappings reduces to the question whether a Baire measurable mapping has separable range.

\begin{question} Let $X$ be a compact convex set with $\ext X$ Lindel\"of. Let $f:\ext X\to F$ be a Baire measurable mapping with values in a \fr space. Is the range of $f$ separable?\end{question}

The answer is positive if $\ext X$ is a Baire set (or, more generally, a $K$-analytic set) -- see Lemma~\ref{L:baire}(a).
For a general Lindel\"of (even separable metric) space in place of $\ext X$ the answer is negative by \cite[Example 2.4(3)]{koumou}. But we do not know whether an example can be of the form of  $\ext X$.

Finally, one can ask whether the assumption that $\ext X$ is Lindel\"of is necessary. It cannot be just omitted
(even in the scalar case, see, e.g., \cite[Theorem 4]{kalenda-bpms}). However, the following question asked already in \cite{Jel} is still open.

\begin{question}\label{q:lind} Let $X$ be a simplex such that each bounded continuous real-valued function on $\ext X$ can be extended to an affine Baire function on $X$. Is necessarily $\ext X$ Lindel\"of?
Does the analogous implication hold for dual balls of $L_1$-preduals?
\end{question}

Let us remark that the answer is positive within a special class of `Stacey simplices' by \cite[Theorems 2 and 6]{kalenda-bpms},
but the general case is not clear.

Finally, Theorem~\ref{T:weakDP} is clearly sharp and Theorem~\ref{T:selekceusc} is sharp due to Example~\ref{ex:selekce}.

\section*{Acknowledgement}

We are grateful to the referee for valuable comments which helped to improve the presentation
of the paper. In particular, we thank for a simplification of the proof of Lemma~\ref{selekce5}.


\begin{thebibliography}{10}

\bibitem{alfsen}
{\sc E.~Alfsen}, {\em Compact convex sets and boundary integrals},
  Springer-Verlag, New York, 1971.
\newblock Ergebnisse der Mathematik und ihrer Grenzgebiete, Band 57.

\bibitem{bednar1972concerning}
{\sc J.~Bednar and H.~Lacey}, {\em Concerning {B}anach spaces whose duals are
  abstract {L}-spaces}, Pacific Journal of Mathematics, 41 (1972), pp.~13--24.

\bibitem{bi-de-le}
{\sc E.~Bishop and K.~de~Leeuw}, {\em The representations of linear functionals
  by measures on sets of extreme points}, Ann. Inst. Fourier. Grenoble, 9
  (1959), pp.~305--331.

\bibitem{capon}
{\sc M.~Capon}, {\em Sur les fonctions qui v\'erifient le calcul
  barycentrique}, Proc. London Math. Soc. (3), 32 (1976), pp.~163--180.

\bibitem{casazza}
{\sc P.~G. Casazza}, {\em Approximation properties}, in Handbook of the
  geometry of {B}anach spaces, {V}ol. {I}, North-Holland, Amsterdam, 2001,
  pp.~271--316.

\bibitem{castillo2009extending}
{\sc J.~Castillo and J.~Su{\'a}rez}, {\em Extending operators into
  {L}indenstrauss spaces}, Israel Journal of Mathematics, 169 (2009),
  pp.~1--27.

\bibitem{kranti}
{\sc M.~K. Das}, {\em A note on complex {$L_1$}-predual spaces}, Internat. J.
  Math. Math. Sci., 13 (1990), pp.~379--382.

\bibitem{dean}
{\sc D.~W. Dean}, {\em The equation {$L(E,\,X^{\ast\ast})=L(E,\,X)^{\ast\ast}$}
  and the principle of local reflexivity}, Proc. Amer. Math. Soc., 40 (1973),
  pp.~146--148.

\bibitem{dieckmann1994korovkin}
{\sc G.~Dieckmann}, {\em Korovkin theory in {L}indenstrauss spaces.},
  Mathematica Scandinavica, 75 (1994), pp.~240--254.

\bibitem{dulin}
{\sc Y.~Duan and B.-L. Lin}, {\em Characterizations of {$L^1$}-predual spaces
  by centerable subsets}, Comment. Math. Univ. Carolin., 48 (2007),
  pp.~239--243.

\bibitem{effros-real}
{\sc E.~Effros}, {\em On a class of real {B}anach spaces}, Israel Journal of
  Mathematics, 9 (1971), pp.~430--458.

\bibitem{effros}
\leavevmode\vrule height 2pt depth -1.6pt width 23pt, {\em On a class of
  complex {B}anach spaces}, Illinois J. Math., 18 (1974), pp.~48--59.

\bibitem{engelking}
{\sc R.~Engelking}, {\em General topology}, PWN---Polish Scientific Publishers,
  Warsaw, 1977.
\newblock Translated from the Polish by the author, Monografie Matematyczne,
  Tom 60. [Mathematical Monographs, Vol. 60].

\bibitem{hhz}
{\sc M.~Fabian, P.~Habala, P.~H{\'a}jek, V.~Montesinos, and V.~Zizler}, {\em
  Banach space theory}, CMS Books in Mathematics/Ouvrages de Math\'ematiques de
  la SMC, Springer, New York, 2011.
\newblock The basis for linear and nonlinear analysis.

\bibitem{fakhoury1}
{\sc H.~Fakhoury}, {\em Pr{\'e}duaux de {L}-espace: Notion de centre}, Journal
  of Functional Analysis, 9 (1972), pp.~189--207.

\bibitem{fakhoury2}
\leavevmode\vrule height 2pt depth -1.6pt width 23pt, {\em Une characterisation
  des {L}-espaces duax}, Bull. Sci. Math., 96 (1972), pp.~129--144.

\bibitem{fonf1978massiveness}
{\sc V.~Fonf}, {\em Massiveness of the set of extreme points of the dual ball
  of a {B}anach space. {P}olyhedral spaces}, Functional Analysis and its
  Applications, 12 (1978), pp.~237--239.

\bibitem{fonf}
{\sc V.~P. Fonf, J.~Lindenstrauss, and R.~R. Phelps}, {\em Infinite dimensional
  convexity}, in Handbook of the geometry of {B}anach spaces, {V}ol. {I},
  North-Holland, Amsterdam, 2001, pp.~599--670.

\bibitem{frolik-bulams}
{\sc Z.~Frol{\'{\i}}k}, {\em A measurable map with analytic domain and
  metrizable range is quotient.}, Bull. Amer. Math. Soc., 76 (1970),
  pp.~1112--1117.

\bibitem{frolikcmj}
\leavevmode\vrule height 2pt depth -1.6pt width 23pt, {\em A survey of
  separable descriptive theory of sets and spaces}, Czechoslovak Math. J., 20
  (95) (1970), pp.~406--467.

\bibitem{gasparis2002contractively}
{\sc I.~Gasparis}, {\em On contractively complemented subspaces of separable
  {$L_1$}-preduals}, Israel Journal of Mathematics, 128 (2002), pp.~77--92.

\bibitem{jarchow}
{\sc H.~Jarchow}, {\em Locally convex spaces}, B. G. Teubner, Stuttgart, 1981.
\newblock Mathematische Leitf{\"a}den. [Mathematical Textbooks].

\bibitem{jaro82}
{\sc J.~E. Jayne and C.~A. Rogers}, {\em Upper semicontinuous set-valued
  functions}, Acta Math., 149 (1982), pp.~87--125.

\bibitem{jaro82c}
\leavevmode\vrule height 2pt depth -1.6pt width 23pt, {\em Correction to:
  ``{U}pper semicontinuous set-valued functions'' [{A}cta {M}ath. {\bf 149}
  (1982), no. 1-2, 87--125; {MR}0674168 (84b:54038)]}, Acta Math., 155 (1985),
  pp.~149--152.

\bibitem{Jel}
{\sc F.~Jellett}, {\em On affine extensions of continuous functions defined on
  the extreme boundary of a {C}hoquet simplex}, Quart. J. Math. Oxford Ser.
  (1), 36 (1985), pp.~71--73.

\bibitem{affperf}
{\sc O.~Kalenda and J.~Spurn\'y}, {\em Preserving affine {B}aire classes by
  perfect affine maps}, Quaest. Math.
\newblock DOI:10.2989/16073606.2015.1073813, preprint available at
  http://arxiv.org/abs/1501.05118.

\bibitem{kalenda-bpms}
{\sc O.~F.~K. Kalenda}, {\em Remark on the abstract {D}irichlet problem for
  {B}aire-one functions}, Bull. Pol. Acad. Sci. Math., 53 (2005), pp.~55--73.

\bibitem{kalenda-spurny}
{\sc O.~F.~K. Kalenda and J.~Spurn{\'y}}, {\em Extending {B}aire-one functions
  on topological spaces}, Topology Appl., 149 (2005), pp.~195--216.

\bibitem{koumou}
{\sc G.~Koumoullis}, {\em A generalization of functions of the first class},
  Topology Appl., 50 (1993), pp.~217--239.

\bibitem{kuratowski}
{\sc K.~Kuratowski}, {\em Topology. {V}ol. {I}}, New edition, revised and
  augmented. Translated from the French by J. Jaworowski, Academic Press, New
  York, 1966.

\bibitem{kurn}
{\sc K.~Kuratowski and C.~Ryll-Nardzewski}, {\em A general theorem on
  selectors}, Bull. Acad. Polon. Sci. S\'er. Sci. Math. Astronom. Phys., 13
  (1965), pp.~397--403.

\bibitem{lacey}
{\sc H.~Lacey}, {\em The isometric theory of classical {B}anach spaces},
  Springer-Verlag, New York, 1974.
\newblock Die Grundlehren der mathematischen Wissenschaften, Band 208.

\bibitem{Lau1973}
{\sc K.-S. Lau}, {\em The dual ball of a {L}indenstrauss space.}, Mathematica
  Scandinavica, 33 (1973), pp.~323--337.

\bibitem{lazar}
{\sc A.~Lazar}, {\em The unit ball in conjugate {$L_1$} spaces}, Duke
  Mathematical Journal, 39 (1972), pp.~1--8.

\bibitem{lazar-sel}
{\sc A.~J. Lazar}, {\em Spaces of affine continuous functions on simplexes},
  Trans. Amer. Math. Soc., 134 (1968), pp.~503--525.

\bibitem{lali}
{\sc A.~J. Lazar and J.~Lindenstrauss}, {\em Banach spaces whose duals are
  {$L_{1}$} spaces and their representing matrices}, Acta Math., 126 (1971),
  pp.~165--193.

\bibitem{limaroy}
{\sc {\AA}.~Lima and A.~K. Roy}, {\em Characterizations of complex
  {$L^1$}-preduals}, Quart. J. Math. Oxford Ser. (2), 35 (1984), pp.~439--453.

\bibitem{lusp-complex}
{\sc P.~Ludv\'{\i}k and J.~Spurn\'y}, {\em Baire classes of complex {$L_1$}
  preduals}, Czech. Math. J., \rm(to appear), preprint available at
  http://www.karlin.mff.cuni.cz/$\sim$spurny/articles.php.

\bibitem{lusp2}
{\sc P.~Ludv{\'{\i}}k and J.~Spurn{\'y}}, {\em Baire classes of
  {$L_1$}-preduals and {$C^*$}-algebras}, Illinois J. Math., \rm (to appear),
  preprint available at
  http://www.karlin.mff.cuni.cz/$\sim$spurny/articles.php.

\bibitem{lusp}
\leavevmode\vrule height 2pt depth -1.6pt width 23pt, {\em Descriptive
  properties of elements of biduals of {B}anach spaces}, Studia Math., 209
  (2012), pp.~71--99.

\bibitem{lusp23}
\leavevmode\vrule height 2pt depth -1.6pt width 23pt, {\em Baire classes of
  nonseparable {$L_1$} preduals}, Quart. J. Math., 66 (2015), pp.~251--263.

\bibitem{lmnss03}
{\sc J.~Luke{\v{s}}, J.~Mal{\'y}, I.~Netuka, M.~Smr{\v{c}}ka, and
  J.~Spurn{\'y}}, {\em On approximation of affine {B}aire-one functions},
  Israel J. Math., 134 (2003), pp.~255--287.

\bibitem{lmns}
{\sc J.~Luke{\v{s}}, J.~Mal{\'y}, I.~Netuka, and J.~Spurn{\'y}}, {\em Integral
  representation theory}, vol.~35 of de Gruyter Studies in Mathematics, Walter
  de Gruyter \& Co., Berlin, 2010.
\newblock Applications to convexity, Banach spaces and potential theory.

\bibitem{lusky1977separable}
{\sc W.~Lusky}, {\em On separable {L}indenstrauss spaces}, Journal of
  Functional Analysis, 26 (1977), pp.~103--120.

\bibitem{lusky-comp}
\leavevmode\vrule height 2pt depth -1.6pt width 23pt, {\em Every
  {$L_1$}-predual is complemented in a simplex space}, Israel Journal of
  Mathematics, 64 (1988), pp.~169--178.

\bibitem{lusky-compl}
\leavevmode\vrule height 2pt depth -1.6pt width 23pt, {\em Every separable
  {$L_1$}-predual is complemented in a {$C^*$}-algebra}, Studia Math., 160
  (2004), pp.~103--116.

\bibitem{MeSta}
{\sc S.~Mercourakis and E.~Stamati}, {\em Compactness in the first {B}aire
  class and {B}aire-1 operators}, Serdica Math. J., 28 (2002), pp.~1--36.

\bibitem{nielsenolsen}
{\sc N.~J. Nielsen and G.~H. Olsen}, {\em Complex preduals of {$L_{1}$} and
  subspaces of {$l^{n}_{\infty }(C)$}}, Math. Scand., 40 (1977), pp.~271--287.

\bibitem{odro}
{\sc E.~Odell and H.~P. Rosenthal}, {\em A double-dual characterization of
  separable {B}anach spaces containing {$l^{1}$}}, Israel J. Math., 20 (1975),
  pp.~375--384.

\bibitem{olsen-sel}
{\sc G.~H. Olsen}, {\em Edwards' separation theorem for complex {L}indenstrauss
  spaces with application to selection and embedding theorems}, Math. Scand.,
  38 (1976), pp.~97--105.

\bibitem{petracek-spurny}
{\sc P.~Petr\'a\v{c}ek and J.~Spurn\'y}, {\em A characterization of complex{
  $L_1$}-preduals via a complex barycentric mapping}.

\bibitem{phelps-choquet}
{\sc R.~R. Phelps}, {\em Lectures on {C}hoquet's theorem}, vol.~1757 of Lecture
  Notes in Mathematics, Springer-Verlag, Berlin, second~ed., 2001.

\bibitem{rao82}
{\sc T.~S. S. R.~K. Rao}, {\em Characterizations of some classes of
  {$L^{1}$}-preduals by the {A}lfsen-{E}ffros structure topology}, Israel J.
  Math., 42 (1982), pp.~20--32.

\bibitem{rao85}
\leavevmode\vrule height 2pt depth -1.6pt width 23pt, {\em A note on real
  structure in complex {$L^1$}-preduals}, Arch. Math. (Basel), 45 (1985),
  pp.~267--269.

\bibitem{rogalski}
{\sc M.~Rogalski}, {\em Op\'erateurs de {L}ion, projecteurs bo\'eliens et
  simplexes analytiques}, J. Functional Analysis, 2 (1968), pp.~458--488.

\bibitem{Ross-Stone}
{\sc K.~A. Ross and A.~H. Stone}, {\em Products of separable spaces}, Amer.
  Math. Monthly, 71 (1964), pp.~398--403.

\bibitem{roy1979convex}
{\sc A.~Roy}, {\em Convex functions on the dual ball of a complex
  {L}indenstrauss space}, Journal of the London Mathematical Society, 2 (1979),
  pp.~529--540.

\bibitem{sray-om}
{\sc J.~Saint-Raymond}, {\em Fonctions convexes sur un convexe born\'e
  complet}, Bull. Sci. Math. (2), 102 (1978), pp.~331--336.

\bibitem{sray}
\leavevmode\vrule height 2pt depth -1.6pt width 23pt, {\em Fonctions convexes
  de premi\`ere classe}, Math. Scand., 54 (1984), pp.~121--129.

\bibitem{spurny-cejm}
{\sc J.~Spurn{\'y}}, {\em The {D}irichlet problem for {B}aire-one functions},
  Cent. Eur. J. Math., 2 (2004), pp.~260--271 (electronic).

\bibitem{spurny-aus}
{\sc J.~Spurn{\'y}}, {\em Affine {B}aire-one functions on {C}hoquet simplexes},
  Bull. Austral. Math. Soc., 71 (2005), pp.~235--258.

\bibitem{spurny-wdp}
{\sc J.~Spurn{\'y}}, {\em The weak {D}irichlet problem for {B}aire functions},
  Proc. Amer. Math. Soc., 134 (2006), pp.~3153--3157 (electronic).

\bibitem{spu-aus}
{\sc J.~Spurn{\'y}}, {\em The {D}irichlet problem for {B}aire-two functions on
  simplices}, Bull. Aust. Math. Soc., 79 (2009), pp.~285--297.

\bibitem{spurnytams}
\leavevmode\vrule height 2pt depth -1.6pt width 23pt, {\em Baire classes of
  {B}anach spaces and strongly affine functions}, Trans. Amer. Math. Soc., 362
  (2010), pp.~1659--1680.

\bibitem{spurny-amh}
\leavevmode\vrule height 2pt depth -1.6pt width 23pt, {\em Borel sets and
  functions in topological spaces}, Acta Math. Hungar., 129 (2010), pp.~47--69.

\bibitem{spu-ka}
{\sc J.~Spurn{\'y} and O.~Kalenda}, {\em A solution of the abstract {D}irichlet
  problem for {B}aire-one functions}, J. Funct. Anal., 232 (2006),
  pp.~259--294.

\bibitem{spu-zel}
{\sc J.~Spurn{\'y} and M.~Zelen{\'y}}, {\em Baire classes of strongly affine
  functions on simplices and on {$C^*$}-algebras}, J. Funct. Anal., 267 (2014),
  pp.~3975--3993.

\bibitem{srivastava}
{\sc S.~M. Srivastava}, {\em A course on {B}orel sets}, vol.~180 of Graduate
  Texts in Mathematics, Springer-Verlag, New York, 1998.

\bibitem{stacey}
{\sc P.~J. Stacey}, {\em Choquet simplices with prescribed extreme and \v
  {S}ilov boundaries}, Quart. J. Math. Oxford Ser. (2), 30 (1979),
  pp.~469--482.

\bibitem{talagrand}
{\sc M.~Talagrand}, {\em A new type of affine {B}orel function.}, Mathematica
  Scandinavica, 54 (1984), pp.~183--188.

\bibitem{thomas}
{\sc G.~E.~F. Thomas}, {\em Integration of functions with values in locally
  convex {S}uslin spaces}, Trans. Amer. Math. Soc., 212 (1975), pp.~61--81.

\bibitem{utter}
{\sc U.~Uttersrud}, {\em Geometrical properties of subclasses of complex
  {$L_1$}-preduals}, Israel J. Math., 72 (1990), pp.~353--371 (1991).

\bibitem{vesely}
{\sc L.~Vesel{\'y}}, {\em Characterization of {B}aire-one functions between
  topological spaces}, Acta Univ. Carolin. Math. Phys., 33 (1992),
  pp.~143--156.

\bibitem{wu76}
{\sc D.~Wulbert}, {\em An approximation proof in the theory of complex
  {$L_{1}$}-preduals}, in Approximation theory, {II} ({P}roc. {I}nternat.
  {S}ympos., {U}niv. {T}exas, {A}ustin, {T}ex., 1976), Academic Press, New
  York, 1976, pp.~575--580.

\bibitem{wu78}
{\sc D.~E. Wulbert}, {\em Real structure in complex {$L_{1}$}-preduals}, Trans.
  Amer. Math. Soc., 235 (1978), pp.~165--181.

\end{thebibliography}

\end{document}